\newtheorem{theorem}[equation]{Theorem}
\newtheorem{lemma}[equation]{Lemma}
\newtheorem{corollary}[equation]{Corollary}
\theoremstyle{definition}
\newtheorem{definition}[equation]{Definition}
\newtheorem{remark}[equation]{Remark}
\newcommand{\RR}{\mathbb{R}}\let\R\RR
\newcommand{\NN}{\mathbb{N}}
\newcommand{\CC}{\mathbb{C}}
\newcommand{\es}{\text{ess sup}}
\newcommand{\rn}[1]{\MakeUppercase{\romannumeral #1}}
\DeclareMathOperator{\Div}{div}
\DeclareMathOperator{\diam}{diam}
\DeclareMathOperator{\dist}{dist}
\DeclareMathOperator{\supp}{supp}
\newcommand\mat{\bm} 
\newcommand{\dmn}{d}
\newcommand{\pdmn}{d}
\newcommand{\dmnMinusOne}{{d-1}}
\newcommand{\arr}[1]{\mathbf{\dot{#1}}}
\newcommand{\abs}[1]{|#1|}
\newcommand{\doublebar}[1]{\|#1\|}
\title[Gradient estimates and the fundamental solution]{Gradient Estimates And The Fundamental Solution For Higher-Order Elliptic Systems With Lower-Order Terms}
\author{Ariel Barton}
\address{Ariel Barton, Department of Mathematical Sciences,
			309 SCEN,
			University of Ar\-kan\-sas,
			Fayetteville, AR 72701}
\email{aeb019@uark.edu}
\author{Michael J. Duffy Jr.}
\address{Michael J. Duffy, Department of Mathematical Sciences,
			309 SCEN,
			University of Ar\-kan\-sas,
			Fayetteville, AR 72701}
\email{mjduffy@uark.edu}
\keywords{Fundamental solution, Caccioppoli inequality, reverse H\"older inequality, elliptic partial differential equation, higher order partial differential equation}
\subjclass[2020]{Primary
35J08, 
Secondary
35A23, 
35C15, 
35J48
}
\begin{document}

\begin{abstract}
We establish the Caccioppoli inequality, a reverse H\"older inequality in the spirit of the classic estimate of Meyers, and construct the fundamental solution for linear elliptic differential equations of order $2m$ with certain lower order terms.
\end{abstract}

\maketitle

\tableofcontents

\section{Introduction}\label{intro}

There is at present a very extensive theory for second order linear elliptic differential operators without lower order terms. Such an operator~$L$ may be written as
\begin{equation}
\label{eqn:intro:classic}
(L\vec{u})_j=-\sum_{k=1}^N\sum_{a=1}^d\sum_{b=1}^d
\partial_{a}(A^{j,k}_{a,b}\partial_{b} u_k)\end{equation}
where $\vec u$ is a function defined on a subset of~$\R^d$.
Two important generalizations are higher order operators
\begin{equation}\label{eqn:intro:higher}
(L\vec{u})_j=\sum_{k=1}^N\sum_{|\alpha|=|\beta|=m}(-1)^{m} \partial^\alpha(A^{j,k}_{\alpha,\beta}\partial^\beta u_k)
\end{equation}
and operators with lower order terms
\begin{align}\label{eqn:intro:lower}
(L\vec{u})_j
&=\sum_{k=1}^N
\Bigl(A^{j,k}_{0,0} u_k
+\sum_{b=1}^d A^{j,k}_{0,b}\partial_{b} u_k
-\sum_{a=1}^d \partial_{a}(A^{j,k}_{a,0} u_k)
-\sum_{a=1}^d\sum_{b=1}^d \partial_{a}(A^{j,k}_{a,b}\partial_{b} u_k)\Bigr)
\\\nonumber&=
\sum_{k=1}^N
\sum_{\substack{0\leq|\alpha|\leq 1\\ 0\leq|\beta|\leq 1}}
(-1)^{|\alpha|}\partial^\alpha(A^{j,k}_{\alpha,\beta}\partial^\beta u_k)
\end{align}
where $\alpha$ and $\beta$ denote multiindices.

Operators of higher order \eqref{eqn:intro:higher} with variable coefficients $A_{\alpha,\beta}^{j,k}$ have been investigated in many recent papers, including \cite{
MazMS10,CaoMY16,CaoMY17,Tol18,NiuZ18,NiuX19,Zat20,XuN21}, and the first author's papers with Hofmann and Mayboroda \cite{Bar16,Bar17,BarHM17,BarHM19A,BarHM19B,BarHM18,BarHM20,Bar19p,Bar20p}. (The theory of higher order operators with constant coefficients is older and more developed; we refer the interested reader to the references in the above papers or to the survey paper \cite{BarM16B} for more details.) Harmonic analysis of second order operators with general lower order terms \eqref{eqn:intro:lower} has been done in a number of recent papers, including \cite{%
CaoMY17,%
DavHM18,MayP19,KimS19,
Sak19,
Dav20,DavW20,
Bai21,BaiMR21,
Sak21,
Mou19p,BorHLMP20p,DavI22p%
}.


In this paper we will combine the two approaches and investigate operators $L$ of order $2m\geq 2$ with certain lower order terms
\begin{equation}\label{eqn:L:intro}
(L\vec{u})_j=\sum_{k=1}^N\sum_{\substack{\mathfrak{a}\leq |\alpha|\leq m
\\
\mathfrak{b}\leq|\beta|\leq m}} (-1)^{|\alpha|}\partial^\alpha(A^{j,k}_{\alpha,\beta}\partial^\beta u_k).\end{equation}
Specifically, three of the foundational results of the theory of elliptic operators of the form~\eqref{eqn:intro:classic}, which have all received considerable study in the cases of operators of the forms~\eqref{eqn:intro:higher} and~\eqref{eqn:intro:lower}, are Caccioppoli's inequality, Meyers's reverse H\"older inequality for gradients, and the fundamental solution. In this paper we investigate these three topics in the case of operators of the form~\eqref{eqn:L:intro} under certain assumptions on the coefficients.

For operators \eqref{eqn:intro:classic} or \eqref{eqn:intro:higher} without lower order terms, it is usual to require that all coefficients be bounded. Applying H\"older's inequality yields the bound
\[|\langle L\vec u,\vec \varphi\rangle|
=
\biggl|\sum_{j,k=1}^N\sum_{\substack{|\alpha|=m\\|\beta|=m}}\int_{\RR^d}{\partial^\alpha\varphi_j} \,\overline{A_{\alpha,\beta}^{j,k}\,\partial^\beta u_k}
\biggr|
\leq \|\mat A\|_{L^\infty(\R^d)} \|\nabla^m\vec{\varphi}\|_{L^{p'}(\RR^d)} \|\nabla^m\vec{u}\|_{L^{p}(\RR^d)}
\]
for any $1\leq p\leq \infty$.
Thus, under these assumptions $L$ is a bounded linear operator from the Sobolev space $\dot W^{m,p}(\R^d)$ (with norm $\|\vec u\|_{\dot W^{m,p}(\R^d)}=\|\nabla^m\vec  u\|_{L^p(\R^d)}$) to the dual space $\dot W^{-m,p}(\R^d)=(\dot W^{m,p'}(\R^d))^*$ for any $1\leq p\leq \infty$. This is a useful property we would like to preserve.

Observe that elements of $\dot W^{m,p}(\R^d)$ are, strictly speaking, equivalence classes of functions with the same $m$th order gradient. Their lower order derivatives may differ by polynomials. In investigating operators with lower order terms \eqref{eqn:intro:lower} and~\eqref{eqn:L:intro}, the spaces $\dot W^{m,p}(\R^d)$ are not satisfactory; we will need the lower order derivatives of functions in the domain of $L$ to be well defined.

The Gagliardo-Nirenberg-Sobolev inequality gives a natural normalization condition on $\dot W^{1,p}(\R^d)$ if $p<d$. Specifically, if $p<d$ then every element (equivalence class of functions) in $\dot W^{1,p}(\R^d)$ contains a representative that lies in a Lebesgue space $L^{p^*}(\R^d)$ for a certain $p^*$ with $p< p^*<\infty$. This representative is unique as a $L^{p^*}$ function (that is, up to sets of measure zero).

An induction argument shows that, if $u\in \dot W^{m,p}(\R^d)$, then there is a representative of $u$ such that $\partial^\alpha u$ lies in a Lebesgue space for all $\alpha$ with $m-d/p<|\alpha|\leq m$. This representative is unique (as a locally integrable function) up to adding polynomials of degree at most $m-d/p$. (Specifically, $\partial^\alpha u\in L^{p_{m,d,\alpha}}(\R^d)$, where $p_{m,d,\alpha}$ is given by formula~\eqref{pk} below.)

We define the $Y^{m,p}(\R^d)$ norm by
\begin{equation*}
\|u\|_{Y^{m,p}(\R^d)}:=\sum_{m-d/p<|\alpha|\leq m}\|\partial^\alpha u\|_{L^{p_{m,d,\alpha}}(\R^d)}.\end{equation*}
$Y^{m,p}(\R^d)$ is thus a space of equivalence classes of functions up to adding polynomials of degree at most $m-d/p$. The Gagliardo-Nirenberg-Sobolev inequality gives a natural isomorphism between $Y^{m,p}(\R^d)$ and the space $\dot W^{m,p}(\R^d)$.

We will consider operators that satisfy, for all suitable test functions $\vec\psi$ and~$\vec\varphi$, the Gårding inequality (or ellipticity or coercivity condition)
\begin{equation}\label{gi}
\text{Re}\sum_{j,k=1}^N
\sum_{\substack{\mathfrak{a}\leq |\alpha|\leq m
\\
\mathfrak{b}\leq|\beta|\leq m}} \int_{\RR^d}{\partial^\alpha\varphi_j} \,\overline{A_{\alpha,\beta}^{j,k}\,\partial^\beta\varphi_k}
\geq \lambda\|\vec{\varphi}\|^2_{Y^{m,2}(\RR^d)}
\end{equation}
and the bound
\begin{equation}\label{eqn:elliptic:bound}
\int_{\RR^d}
\biggl|
\sum_{j,k=1}^N
\sum_{\substack{\mathfrak{a}\leq |\alpha|\leq m
\\
\mathfrak{b}\leq|\beta|\leq m}}
{\partial^\alpha\varphi_j} \,\overline{A_{\alpha,\beta}^{j,k}\,\partial^\beta\psi_k}
\biggr|
\leq \Lambda(p)\,
\|\vec{\varphi}\|_{Y^{m,p'}(\R^d)} \,\|\vec{\psi}\|_{Y^{m,p}(\R^d)}
\end{equation}
for a range of $p$ near~$2$.

(In Section~\ref{CacSec}, following \cite{AusQ00}, we will consider operators satisfying a slightly weaker form~\eqref{wgi} of the Gårding inequality~\eqref{gi}.)

Note that if $d=2$ and $p\geq 2$, then $m-d/p\geq m-1$ and so $\|u\|_{Y^{m,p}(\R^d)}=\|u\|_{\dot W^{m,p}(\R^d)}$. In this case the Gagliardo-Nirenberg-Sobolev inequality provides no normalization and so the bound~\eqref{eqn:elliptic:bound}, for $p=2$, can only be expected to hold if $\mathfrak{a}=\mathfrak{b}=m$. Thus, in dimension~$2$, the results of the present paper do not represent a generalization of previous results such as \cite{Bar16}. We will include the case $d=2$ in our results, but only for the sake of completeness and ease of reference.

There are many possible conditions that can be imposed on the coefficients~$A_{\alpha,\beta}^{j,k}$ that yield the bound~\eqref{eqn:elliptic:bound}.
Following (or modifying) \cite{%
DavHM18,
KimS19,
Sak19,%
DavW20,
BaiMR21,
Sak21,
BorHLMP20p%
}, we will focus our attention on the case
\begin{equation}\label{eqn:intro:bound}
\max_{\substack{\mathfrak{a}\leq |\alpha|\leq m\\\mathfrak{b}\leq |\beta|\leq m}}
\|A^{j,k}_{\alpha,\beta}\|_{ L^{2_{\alpha,\beta}}(\R^d)} \leq \Lambda,
\qquad \mathfrak{a},\mathfrak{b}>m-\frac{d}{2},
\qquad
2_{\alpha,\beta}=\frac{d}{2m-|\alpha|-|\beta|}.
\end{equation}
For all $p$ in a certain range including~$2$, the bound~\eqref{eqn:elliptic:bound} follows immediately from the bound~\eqref{eqn:intro:bound}, H\"older's inequality, and the Gagliardo-Nirenberg-Sobolev inequality.
See Lemma~\ref{lem:p:+} for further discussion. Note that if $2m=2$ and $d\geq 3$, the condition $\mathfrak{a}$, $\mathfrak{b}>m-\frac{d}{2}$ holds for $\mathfrak{a}=\mathfrak{b}=0$ and so we may ignore this condition.

We will also consider coefficients satisfying Bochner norm estimates
\begin{equation}
\label{eqn:intro:bound:Bochner}
\max_{\substack{\mathfrak{a}\leq |\alpha|\leq m\\\mathfrak{b}\leq |\beta|\leq m}}
\|A^{j,k}_{\alpha,\beta}\|_{L_t^\infty L_x^{\widetilde 2_{\alpha,\beta}}(\R^d)}\leq \Lambda,
\quad \mathfrak{a},\mathfrak{b}>m-\frac{d-1}{2},
\quad
\widetilde 2_{\alpha,\beta}=\frac{d-1}{2m-|\alpha|-|\beta|}.
\end{equation}
Again, for second order operators ($2m=2$), if $d\geq 4$ then we may take $\mathfrak{a}=\mathfrak{b}=0$.
For example, this includes the case where coefficients are constant in a specified direction, that is, where $A^{j,k}_{\alpha,\beta}(x,t)=a^{j,k}_{\alpha,\beta}(x)$ for all $x\in\R^{d-1}$, $t\in\R$, and some function $a^{j,k}_{\alpha,\beta}\in L^{\widetilde 2_{\alpha,\beta}}(\R^{d-1})$. This is the case studied in \cite{BorHLMP20p}. Operators of the form \eqref{eqn:intro:classic} and~\eqref{eqn:intro:lower} that satisfy $A^{j,k}_{\alpha,\beta}(x,t)=a^{j,k}_{\alpha,\beta}(x)$ (for $|\alpha|=|\beta|=m$) have been studied in the higher order case in \cite{BarHM17,BarHM19A,BarHM19B,BarHM18,BarHM20,Bar19p,Bar20p}, and in the second order case in many papers, including but not limited to \cite{JerK81A,
KenP93,KenKPT00,Rul07,AusAH08,KenR09, AusAM10A,Axe10,
AlfAAHK11,Ros13,
AusM14, HofKMP15A,HofMitMor15,HofKMP15B,AusS16,  BarM16A, MaeM17,AmeA18,AusM19,AusE20,HofZ21}.
Nontrivial coefficients constant in a specified direction cannot lie in $L^p(\R^d)$ for any $p<\infty$, but can easily lie in Bochner spaces.

Like the condition~\eqref{eqn:intro:bound}, the condition and~\eqref{eqn:intro:bound:Bochner} implies the bound~\eqref{eqn:elliptic:bound} for a range of~$p$ including~$2$; see Lemma~\ref{lem:p:+} below.

We note that the conditions \eqref{eqn:intro:bound} and~\eqref{eqn:intro:bound:Bochner} differ from those of \cite{CaoMY17,Wan20}, in which the authors investigate the system \eqref{eqn:intro:lower} or~\eqref{eqn:L:intro} for coefficients $A_{\alpha,\beta}^{j,k}\in L^\infty(\R^d)$ for all $\alpha$ and $\beta$. (Our conditions imply $A_{\alpha,\beta}^{j,k}\in L^\infty(\R^d)$ only for $|\alpha|=|\beta|=m$.)

\subsection{The Caccioppoli inequality and Meyers's reverse H\"older inequality}

The Caccioppoli inequality (established in the early twentieth century)
is valid for all operators $L$ of the form~\eqref{eqn:intro:classic} where the coefficients $A^{j,k}_{a,b}$ are bounded and satisfy the Gårding inequality~\eqref{gi}, and is often written as
\begin{equation*}\int_{B(X_0,r)} |\nabla\vec u|^2\leq \frac{C}{r^2}\int_{B(X_0,2r)} |\vec u|^2
\quad\text{whenever }L\vec u=0\text{ in }B(X_0,2r)
.\end{equation*}
It can be generalized to the case $L\vec u\neq 0$ by adding an appropriate term on the right hand side; a very general form is
\begin{equation*}\int_{B(X_0,r)} |\nabla\vec u|^2\leq \frac{C}{r^2}\int_{B(X_0,2r)} |\vec u|^2
+C\|L\vec u\|_{\dot W^{-1,2}(B(X_0,2r))}
\end{equation*}
where $\dot W^{-1,2}(B(X_0,2r))$ is the dual space to $\dot W^{1,2}_0(B(X_0,2r))$, the closure in $\dot W^{1,2}(B(X_0,2r))$ of the set of smooth functions compactly supported in $B(X_0,2r)$. By the Poincar\'e or Gagliardo-Nirenberg-Sobolev inequality, this is equal (with equivalence of norms) to the closure in $ W^{1,2}_0(B(X_0,2r))$ or $ Y^{1,2}_0(B(X_0,2r))$.

\begin{remark}It is common to formulate the Caccioppoli inequality (and Meyers's reverse H\"older inequality below) for solutions to $L\vec u=\vec f-\Div \arr F$ (that is, $(L\vec u)_j=f_j-\sum_{a=1}^d \partial_a F_{a,j}$). This is equivalent to our formulation in terms of operator norms of $L\vec u$ if appropriate norms on $\vec f$, $\vec F$ are used.

Specifically, if $L\vec u=-\Div \arr F$, then $|\langle L\vec u,\vec \varphi\rangle| = |\langle \arr F, \nabla \vec\varphi\rangle|$ for all test functions $\vec\varphi\in \dot W^{1,2}_0(B(X_0,2r))$, and so by H\"older's inequality, $\|L\vec u\|_{\dot W^{-1,2}(B(X_0,2r))}\leq \|\arr F\|_{L^2(B(X_0,2r))}$. By the Gagliardo-Nirenberg-Sobolev inequality, if $d\geq 3$ and $p=2d/(d-2)$ then $\|\vec \varphi\|_{L^p(B(X_0,2r))} \leq C\|\nabla\vec \varphi\|_{L^2(B(X_0,2r))}$ for all $\vec\varphi\in \dot W^{-1,2}_0(B(X_0,2r))$, and so if $L\vec u=\vec f$ then $\|L\vec u\|_{\dot W^{-1,2}(B(X_0,2r))} \leq C \|\vec f\|_{L^{p'}(B(X_0,2r))}$.

Conversely, if $L\vec u\in \dot W^{-1,2}(B(X_0,2r))$, then by the Hahn-Banach theorem there is some $\arr F\in L^2(B(X_0,2r))$ with $\|\arr F\|_{L^2(B(X_0,2r))}\approx \|L\vec u\|_{\dot W^{-1,2}(B(X_0,2r))}$ such that $L\vec u=\Div \arr F$.
\end{remark}

\begin{remark} In the case of equations ($N=1$) with real-valued coefficients, a Caccioppoli inequality can also be established for subsolutions; that is, instead of a norm $\|Lu\|$ appearing on the right hand side, it is required that $L u\geq 0$ in $B(X_0,2r)$. See, for example, \cite[Section~3]{Mou19p}. This approach is not available in the case of systems or complex coefficients, and has received little study in the case of higher order equations. \end{remark}

The Caccioppoli inequality has been generalized to operators of the form~\eqref{eqn:intro:higher} (higher order equations without lower order terms) in \cite{Cam80} and with some refinements in \cite{AusQ00,Bar16}. It has been extended to operators of the form~\eqref{eqn:intro:lower} (second order operators with lower order terms) in \cite{DavHM18} (see also \cite{BorHLMP20p}).
In the case of higher order operators with lower order terms of the form~\eqref{eqn:intro:higher}, a parabolic Caccioppoli inequality was established in \cite{CaoMY17} under the assumption that all coefficients (including the lower order coefficients) are bounded; this is different from the assumptions of this paper.

In \cite{Mey63}, Meyers established a reverse H\"older estimate. Specifically, he established that for equations ($N=1$) with bounded and elliptic coefficients, for all $p$ and $q$ sufficiently close to~$2$ (and, in particular, for some $p>2$ and $q\leq 2$) we have the estimate
\begin{equation*}
\biggl(\int_{B(X_0,r)} |\nabla u|^p\biggr)^{1/p}
\leq
Cr^{d/p-d/q}\biggl(\int_{B(X_0,r)} |\nabla u|^q\biggr)^{1/q}
+C\|L u\|_{\dot W^{-1,p}(B(X_0,r))}
.\end{equation*}
The exponent $q$ on the right hand side can be lowered if desired; see \cite[Section~9, Lemma~2]{FefS72} in the case of harmonic functions, and \cite[Lemma~33]{Bar16} for more general functions. Meyers's results have been generalized to second order systems (even nonlinear systems) without lower order terms (see \cite[Chapter~V]{Gia83}), and to higher order equations without lower order terms (see \cite{Cam80,AusQ00,Bar16}).

Caccioppoli's inequality is still valid for systems of the form~\eqref{eqn:L:intro}, that is, higher order equations with lower order terms. The argument is largely that of \cite{Cam80,Bar16} and is presented in Section~\ref{CacSec}.

The obvious generalization of Meyers's reverse H\"older inequality is \emph{not} valid in the case of operators (even second order operators) with lower order terms. That is, for any given positive integers $m$, $d$ and nonnegative integers $\mathfrak{a}\in (m-d/2,m]$, $\mathfrak{b}\in (m-d/2,m)$, there exists an operator $L$ of the form
\begin{equation*}
L{u}=\sum_{\substack{\mathfrak{a}\leq|\alpha|\leq m
\\
\mathfrak{b}\leq|\beta|\leq m}} (-1)^{|\alpha|}\partial^\alpha(A_{\alpha,\beta}\partial^\beta u)
\end{equation*}
with coefficients satisfying the conditions~\eqref{gi} and~\eqref{eqn:intro:bound},
and a function $u:Q_0\to\R$ with $L u=0$ in $Q_0$, such that for any $p>2$ and any natural number~$k$, there is a ball $B(X_k,r_k)$ with $B(X_k,2r_k)\subset Q_0$ and with
\begin{equation*}
\biggl(\int_{B(X_k,r_k)} |\nabla^m\vec u|^p\biggr)^{1/p}
\geq
kr_k^{d/p-d/2}\biggl(\int_{B(X_k,2r_k)} |\nabla^m\vec u|^2\biggr)^{1/2}
\end{equation*}
and, indeed, the stronger bound
\begin{equation}\label{eqn:intro:counter}
\biggl(\int_{B(X_k,r_k)} |\nabla^m\vec u|^p\biggr)^{1/p}
\geq
k\sum_{i=\mathfrak{b}+1}^m r_k^{d/p-d/2-(m-i)}\biggl(\int_{B(X_k,2r_k)} |\nabla^i\vec u|^2\biggr)^{1/2}
.\end{equation}
See Section~\ref{sec:meyers:counter}.

Weaker generalizations have been investigated in \cite{BorHLMP20p} and the argument of Section~\ref{Lpc} takes many ideas therefrom. The following theorem is the first main result of this paper. It will be proven in Sections~\ref{CacSec} (the case $p=q=\mu=2$) and Section~\ref{Lpc} (the general case), and represents a simultaneous statement of the Caccioppoli and Meyers inequalities for systems of the form~\eqref{eqn:L:intro}.
\begin{theorem}\label{umpm:intro}
Let $m\geq 1$ and $d\geq 2$ be integers. Let $L$ be an operator of the form~\eqref{eqn:L:intro} for some coefficients $\mat{A}$ that satisfy the ellipticity condition~\eqref{gi} and one of the bounds \eqref{eqn:intro:bound} or~\eqref{eqn:intro:bound:Bochner}.

Then there is a $\delta>0$ depending on $m$, $d$ and the constants $\lambda$ and $\Lambda$ in the bounds~\eqref{gi} and \eqref{eqn:intro:bound} or~\eqref{eqn:intro:bound:Bochner} with the following significance.

Let $p\in [2,2+\delta)$, $\mu\in (2-\delta,2+\delta)$, and let $0<q\leq\infty$. Let $j$ and $\varpi$ be integers with $0\leq j\leq m$ and
$0\leq \varpi\leq \min(j,\mathfrak{b})$. If $p=2$, we impose the additional requirement that either $q\geq 2$ or $\varpi\geq 1$ (and thus $j\geq 1$).

Let $Q\subset\RR^d$ be a cube with sides parallel to the coordinate axes. Let $\vec u\in Y^{m,\mu}(\theta Q)$ be such that $\|L\vec u\|_{\dot W^{-m,p}(\theta Q)}<\infty$.

Then $\nabla^j u\in L^p(Q)$, and there exist positive constants $\kappa$ and $C$ depending on $p$, $q$, $m$, $d$, $\lambda$, and~$\Lambda$ such that
\begin{multline*}
\frac{1}{|Q|^{(m-j)/d}}
\|\nabla^j u\|_{L^{p}(Q)}
\\\leq
\frac{C}{(\theta-1)^\kappa}\|L\vec u\|_{Y^{-m,p}(\theta Q)}
+ \frac{C|Q|^{1/p-1/q-(m-\varpi)/d}}{(\theta-1)^\kappa} \|\nabla^\varpi\vec u\|_{L^q(\theta Q\setminus Q)}
\end{multline*}
for all $1<\theta\leq 2$.
\end{theorem}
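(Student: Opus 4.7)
The plan is to split the argument into the two regimes announced in the paper: first establish the base Caccioppoli inequality in the case $p=q=\mu=2$, then bootstrap to general $p$, $q$, $\mu$ by a Gehring-type self-improvement.

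For the base case, the natural approach is to test the Gårding inequality~\eqref{gi} against $\vec\varphi=\eta^{2m}(\vec u-\vec P)$, where $\eta$ is a smooth cutoff adapted to a cube intermediate between $Q$ and $\theta Q$ and $\vec P$ is an appropriate polynomial projection (chosen so that $\vec\varphi$ lies in $Y^{m,2}_0$ of the slightly larger cube; this is justified by the Gagliardo-Nirenberg-Sobolev embedding together with the hypothesis $\mathfrak{b}>m-d/2$). Expanding $\partial^\alpha(\eta^{2m}(\vec u-\vec P))$ via Leibniz and absorbing the coercive contribution with a full $\nabla^m$ on $\vec u-\vec P$ into the left-hand side, two kinds of remainders appear: (i)~commutators in which some derivatives fall on $\eta$, producing the characteristic $(\theta-1)^{-\kappa}$ factors; (ii)~terms involving the non-top-order coefficients $A^{j,k}_{\alpha,\beta}$ with $|\alpha|<m$ or $|\beta|<m$. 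The hypothesis $\mathfrak{a},\mathfrak{b}>m-d/2$ ensures that the exponents $2_{\alpha,\beta}=d/(2m-|\alpha|-|\beta|)$ are finite, so terms of type~(ii) are controlled by Hölder's inequality together with the Gagliardo-Nirenberg-Sobolev embedding, exactly as in Lemma~\ref{lem:p:+}. An absorption parameter then returns these to the Gårding side. The Bochner case is identical after using Hölder with exponent~$\infty$ in one variable. A standard iteration across a family of concentric cubes (in the style of \cite[Lemma~24]{Bar16}) localises the $\nabla^\varpi\vec u$ term onto the annulus $\theta Q\setminus Q$ rather than the full interior.

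For general $p$, $q$, and $\mu$, I would follow the template of Giaquinta-Modica and \cite{BorHLMP20p}. Combining the $L^2$ Caccioppoli with a Sobolev-Poincaré inequality applied to $\vec u-\vec P$ produces a reverse Hölder inequality
\[
\biggl(\frac{1}{|Q|}\int_{Q}|\nabla^m\vec u|^2\biggr)^{1/2}
\leq C\biggl(\frac{1}{|\theta Q|}\int_{\theta Q}|\nabla^m\vec u|^s\biggr)^{1/s}
+(\text{forcing and lower-order terms})
\]
for some $s<2$ with constants independent of the cube, which Gehring's lemma self-improves to an $L^p$ bound for some $p>2$. The freedom in $\mu\in(2-\delta,2+\delta)$ is absorbed into the usable range of~$s$. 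Lower-order derivatives $\nabla^j\vec u$ for $j<m$ are recovered from the same polynomial-corrected $\vec u-\vec P$ by Gagliardo-Nirenberg-Sobolev, with $\vec P$ chosen so that $\nabla^\varpi\vec P$ matches a suitable average of $\nabla^\varpi\vec u$ on the annulus; the scaling factor $|Q|^{1/p-1/q-(m-\varpi)/d}$ is exactly what this polynomial-averaging procedure produces.

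The main technical obstacle, in my view, lies in the Gehring step rather than the Caccioppoli step: the coefficient norms $\|A^{j,k}_{\alpha,\beta}\|_{L^{2_{\alpha,\beta}}}$ do not shrink on small cubes, so one must verify that the reverse Hölder inequality has a scale-invariant constant. This is why the bound~\eqref{eqn:elliptic:bound} is isolated as a separate hypothesis via Lemma~\ref{lem:p:+}: once $L$ is known to be bounded from $Y^{m,p'}$ to $Y^{-m,p}$ for a range of $p$ near~$2$, the coefficient norms disappear from the argument, and the two hypotheses \eqref{eqn:intro:bound} and \eqref{eqn:intro:bound:Bochner} are treated uniformly. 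A minor subtlety is the condition at $p=2$ that either $q\geq 2$ or $\varpi\geq 1$; this reflects the failure of Gagliardo-Nirenberg-Sobolev to provide an $L^2$ bound on $\vec u$ itself in low dimensions, forcing one to work either with a higher integrability assumption on $\vec u$ or with $\nabla\vec u$ in place of $\vec u$ on the right-hand side.
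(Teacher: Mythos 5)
Your Caccioppoli step (testing the G\aa{}rding inequality against a cutoff power of $\vec u-\vec P$, expanding via Leibniz, and absorbing the leading term) is essentially the paper's argument in Section~\ref{CacSec}; the choice of cutoff power and polynomial differs only cosmetically.

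The Gehring step has a genuine gap. You propose to derive, from the $L^2$ Caccioppoli inequality and Sobolev--Poincar\'e applied to $\vec u-\vec P$, a uniform reverse H\"older inequality for $\nabla^m\vec u$. For that you would need $\vec P$ to be a polynomial of degree at most $m-1$ satisfying $L\vec P=0$ (so the forcing term $L\vec u$ is unchanged by the subtraction) together with $\|\vec u-\vec P\|_{L^2(\theta Q)}\lesssim |Q|^{m/d}\|\nabla^m\vec u\|_{L^s(\theta Q)}$. But $L$ annihilates only polynomials of degree $<\mathfrak{b}$, since every term of $L$ contains a factor $\partial^\beta$ with $|\beta|\geq\mathfrak{b}$. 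Poincar\'e applied with $\deg\vec P<\mathfrak{b}$ therefore only reaches $|Q|^{\mathfrak{b}/d}\|\nabla^{\mathfrak{b}}\vec u\|_{L^2(\theta Q)}$, not $\|\nabla^m\vec u\|_{L^s}$, and whenever $\mathfrak{b}<m$ (the only new case) the reverse H\"older inequality on which Gehring's lemma rests simply does not hold with a scale-invariant constant. This is not a technical wrinkle that can be smoothed over: Section~\ref{sec:meyers:counter} and Theorem~\ref{thm:meyers:counterexample} exhibit an operator of this form and a solution for which $\|\nabla^m\vec u\|_{L^p(Q)}/\bigl(|Q|^{1/p-1/2}\|\nabla^m\vec u\|_{L^2(2Q)}\bigr)$ is unbounded over subcubes. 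The paper's actual mechanism for the $L^p$ gain bypasses Gehring entirely: Lemma~\ref{NPLp} uses \v{S}ne\v{\i}berg's lemma on the complex interpolation scale $Y^{m,p}(\R^d)$ to prove $L:Y^{m,p}(\R^d)\to Y^{-m,p}(\R^d)$ is \emph{invertible} for $p$ near $2$, and Lemmas~\ref{ux-m}--\ref{umpm3} then localize by writing $\chi(\vec u-\vec P)=L^{-1}L(\chi(\vec u-\vec P))$ and estimating $\|L(\chi(\vec u-\vec P))\|_{Y^{-m,p}(\R^d)}$ through a Leibniz expansion; the higher integrability comes from the global invertibility of $L$, not from a local self-improving iteration. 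Your instinct that the right hypothesis is the operator bound~\eqref{eqn:elliptic:bound} rather than the raw coefficient norms is correct, but the way it is exploited is through interpolation and inversion, not through Gehring.
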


Here $\theta Q$ is the cube concentric to~$Q$ with volume $|\theta Q|=\theta^d|Q|$.
Note that the condition $\vec u\in Y^{m,\mu}(\theta Q)$ is stronger than the condition $\nabla^\varpi\vec u\in L^q(\theta Q\setminus Q)$, that is, that the right hand side of the given bound be finite.
The assumption $\nabla^m\vec u\in L^\mu(\theta Q)$ implies that $L\vec u$ is a bounded linear operator on $\dot W^{m,\mu'}_0(\theta Q)=\{\vec\psi:
\nabla^m\vec\psi\in L^{\mu'}(\R^d),
\>\vec\psi=0\text{ in }\R^d\setminus\theta Q\}$; we require $L\vec u$ to be a bounded linear operator on $\dot W^{m,p'}_0(\theta Q)$ (or, more precisely, on the space $\dot W^{m,\mu'}_0(\theta Q)\cap \dot W^{m,p'}_0(\theta Q)$ equipped with the $\dot W^{m,p'}$-norm).

If $L\vec u\in \dot W^{-m,p}(\theta Q)$ for some $p<2$ but sufficiently close to~$2$, a weaker result is still available; see Theorem~\ref{umpm:less} below.

\subsection{The fundamental solution}

The fundamental solution $\vec E^L_{X,j}$ for the operator $L$ is, formally, the solution to $L\vec E^L_{X,j}=\delta_X \vec e_j$, where $\delta_X$ denotes the Dirac mass at~$X$. The fundamental solution has proven to be a very useful tool in the theory of differential equations without lower order terms (that is, of the forms \eqref{eqn:intro:classic} and~\eqref{eqn:intro:higher}). By definition, integrating against the fundamental solution allows one to solve the Poisson problem $L\vec u=\vec f$ in~$\R^d$. The fundamental solution is also used in the theory of layer potentials, an essential tool in the theory of boundary value problems; for example, layer potentials based on the fundamental solution for certain variable coefficient operators of the form~\eqref{eqn:intro:classic} were used in \cite{KenR09,Rul07,Agr09,AlfAAHK11,MitM11,Bar13,BarM16A,Ros13,AusM14,HofKMP15B,HofMayMou15, HofMitMor15,AusS16,AmeA18, AusM19} and of the form~\eqref{eqn:intro:higher} in \cite{BarHM18,BarHM20,Bar19p,Bar20p}.

Formally, the fundamental solution can be written as $\vec E^L_{X,j}=L^{-1} (\delta_X \vec e_j)$, where $\delta_X \vec e_j$ is the element of a dual space given by $\langle \delta_X \vec e_j, \vec \varphi\rangle=\varphi_j(X)$.
In the case of constant coefficient operators, one can directly solve the equation $\vec E^L_{X,j}=L^{-1} (\delta_X \vec e_j)$ using the Fourier transform. For some well behaved variable coefficients, $L$ is an invertible map from  some function space into a space containing $\delta_X\vec e_j$, and so this approach is still valid. In case~\eqref{eqn:intro:classic} of second order operators without lower order terms, see \cite{LitSW63} ($N=1$ and real symmetric coefficients), \cite{KenN85} ($N=1$, real nonsymmetric coefficients, and $d=2$) or \cite{Fuc86,DolM95} ($N\geq 1$ and continuous coefficients).

This is the approach taken in both \cite{Bar16} and the present paper for general higher order operators of the form~\eqref{eqn:intro:higher} or~\eqref{eqn:L:intro}. By the assumptions \eqref{gi} and~\eqref{eqn:elliptic:bound} and the Lax-Milgram lemma, $L$ is invertible $Y^{m,2}(\R^d)\to Y^{-m,2}(\R^d)$.
If $2m>d$, then by Morrey's inequality, all representatives of elements of $Y^{m,2}(\R^d)$ are H\"older continuous. Recall that elements of $Y^{m,2}(\R^d)$ are equivalence classes of functions up to adding polynomials of degree at most $m-d/2$. If a suitable (although somewhat artificial) normalization condition is applied, then $\delta_X \vec e_j$ is a well defined and bounded linear operator on $Y^{m,2}(\R^d)$, that is, an element of $Y^{-m,2}(\R^d)$. We therefore may construct $\vec E^L_{X,j}$ as $\vec E^L_{X,j}=L^{-1} (\delta_X \vec e_j)$ if $2m>d$. If $2m\leq d$, then the above argument yields a fundamental solution for the operator $\widetilde L=(-\Delta)^ML(-\Delta)^M$ of order $4M+2m$ if $M$ is large enough; the fundamental solution for $L$ may be then derived from that for~$\widetilde L$.

This approach, with some attention to the details and use of the Caccioppoli and Meyers inequalities, yields the following theorem. This theorem is the second main result of the present paper.

\begin{theorem}\label{thm:intro:fundamental}
Let $L$ be an operator of order $2m$ of the form~\eqref{eqn:L:intro} that satisfies the ellipticity condition~\eqref{gi} and one of the bounds \eqref{eqn:intro:bound} or~\eqref{eqn:intro:bound:Bochner}.

Then there exists a number $\delta>0$ and an array of functions $E^L_{j,k}$ for pairs of integers $j$, $k$ in $[1,N]$ and defined on $\R^d\times\R^d$ with the following properties. This array of functions is unique up to adding functions $ P_{j,k}$ defined on $\R^d\times\R^d$ that satisfy $\partial^\zeta_X\partial^\xi_Y P_{j,k}(Y,X)=0$ whenever $m-d/2\leq |\zeta|\leq m$, $m-d/2\leq |\xi|\leq m$, and $(|\zeta|,|\xi|)\neq (m-d/2,m-d/2)$.

Suppose that $\alpha$ and $\beta$ are two multiindices with
$m-d/2\leq |\alpha|\leq m$, $m-d/2\leq |\beta|\leq m$, and $(|\alpha|,|\beta|)\neq (m-d/2,m-d/2)$.

Suppose further that $Q$ and $\Gamma$ are two cubes in $\R^d$ with $|Q|=|\Gamma|$ and $\Gamma\subset 8Q\setminus 4Q$. Then the partial derivative $\partial^\alpha_X \partial^\beta_Y E^{L}_{j,k}(Y,X)$ exists as a $L^2(Q\times\Gamma)$ function and satisfies the bounds
\begin{align}
\int_{Q}\int_{\Gamma}|\partial^\alpha_X \partial^\beta_Y E^L_{j,k}(Y,X)|^2\,dX\,dY \leq C|Q|^{(4m-2|\alpha|-2|\beta|)/d}
.
\end{align}
If $2-\delta<{{p}}<2+\delta$, and if ${{p}}<2$ or $|\beta|>m-d/2$, then
\begin{align}
\int_\Gamma \biggl(\int_{Q} |\partial_X^\alpha \partial_Y^\beta  E^L_{j,k}(Y,X)|^{{{p}}_\beta}\,dY\biggr)^{2/{{p}}_\beta}\,dX
\leq C|Q|^{2m/d-1+2/{{p}}-2|\alpha|}
\end{align}
where $\frac{1}{{{p}}_\beta}=\frac{1}{{{p}}}-\frac{m-|\beta|}{d}$.

Furthermore, we have the symmetry property
\begin{equation}\partial^\alpha_X \partial^\beta_Y E^L_{j,k}(Y,X)=\overline{\partial^\alpha_X \partial^\beta_Y E^{L^*}_{k,j}(X,Y)}.
\end{equation}

Finally, suppose that $2-\delta<q<2+\delta$ 
and that $m-d/q<|\xi|\leq m$.
Let $ F\in L^{(q_\xi)'}(\R^d)$ be compactly supported, where $\frac{1}{(q_\xi)'} = 1-\frac{1}{q}+\frac{m-|\xi|}{d}$. Let $1\leq \ell\leq N$.
For each $\beta$ with $m\geq |\beta|>m-d/q'$ and each $1\leq k\leq N$, let
\begin{equation}
(u_\beta)_k(X)
=
\int_{\RR^d} {\partial_X^\beta\partial_Y^\xi  E^{L}_{k,\ell}(X,Y)}\,{ F(Y)}\,dY
.\end{equation}
The integral converges absolutely for almost every $X\in\R^d\setminus \supp F$ for all such $\beta$ and $\xi$; if $|\beta|<m$ or $|\xi|<m$ then the integral converges absolutely for almost every $X\in\R^d$.

Then there is a function $\vec u\in Y^{m,q}(\R^d)$ with $\partial^\beta \vec u=\vec u_\beta$ for all such $\beta$ almost everywhere (if $|\beta|+|\xi|<2m$) or almost everywhere in $\R^d\setminus\supp F$ (otherwise) and such that
\[
\int_{\RR^d} \partial^\xi \varphi_\ell\,{ F}
=
\sum_{k,j=1}^N \sum_{\substack{\mathfrak{a}\leq |\alpha|\leq m\\\mathfrak{b}\leq |\beta|\leq m}}
\int_{\R^d}
\partial^\alpha \varphi_j\,{A_{\alpha,\beta}^{j,k}} \,
{\partial^\beta  u_k}
\]
for all $\vec\varphi\in Y^{m,q'}(\R^d)$.
\end{theorem}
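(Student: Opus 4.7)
The plan is to realize $\vec E^L_{X,j}$ as $L^{-1}(\delta_X\vec e_j)$ after making that expression rigorous, then use Theorem~\ref{umpm:intro} (Caccioppoli and Meyers) to derive the quantitative bounds. The ellipticity~\eqref{gi} together with the boundedness~\eqref{eqn:elliptic:bound}, a consequence of \eqref{eqn:intro:bound} or~\eqref{eqn:intro:bound:Bochner} via Lemma~\ref{lem:p:+}, puts us in the Lax--Milgram setting on the Hilbert space $Y^{m,2}(\R^d)$, so $L\colon Y^{m,2}(\R^d)\to Y^{-m,2}(\R^d)$ is an isomorphism. When $2m>d$, Morrey's inequality applied to derivatives of order less than $m-d/2$ shows each equivalence class in $Y^{m,2}(\R^d)$ contains a H\"older continuous representative; after fixing a linear normalization that selects one such representative uniquely, $\langle\delta_X\vec e_j,\vec\varphi\rangle:=\varphi_j(X)$ is a bounded functional on $Y^{m,2}(\R^d)$, and I define $\vec E^L_{X,j}:=L^{-1}(\delta_X\vec e_j)$. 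Changing the normalization modifies $E^L$ by a polynomial of degree $\leq m-d/2$ in one variable whose coefficients depend on the other, which is exactly the ambiguity captured by $P_{j,k}$. When $2m\leq d$, $\delta_X\vec e_j\notin Y^{-m,2}(\R^d)$, and I follow the indication in the introduction: choose $M$ with $4M+2m>d$, form $\widetilde L:=(-\Delta)^M L(-\Delta)^M$, check that this is an operator of order $4M+2m$ in the class~\eqref{eqn:L:intro} still satisfying the hypotheses of the theorem, construct $\widetilde E$ as in the high-order case, and recover the asserted derivatives of $E^L$ by differentiating $\widetilde E$ appropriately. The restriction $|\alpha|,|\beta|\geq m-d/2$ (away from the corner $(m-d/2,m-d/2)$) is exactly what is needed for the derivatives in question to be intrinsic, independent of the choice of $M$ and of the normalizations.

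The bounds then follow from Theorem~\ref{umpm:intro} applied away from the diagonal, where $L\vec E^L_{\cdot,j}(\cdot,X)=0$. For $Q$ and $\Gamma$ with $\Gamma\subset 8Q\setminus 4Q$ and $|Q|=|\Gamma|$, the doubled cubes used in the Caccioppoli and Meyers inequalities miss the diagonal; Theorem~\ref{umpm:intro} in the $Y$ variable on a slight dilate of $\Gamma$ bounds $\partial_Y^\beta\vec E^L(\cdot,X)$ on $\Gamma$ by a low-order tail, which is controlled by the global bound on $\vec E^L_{\cdot,j}$ coming from the Lax--Milgram invertibility combined with Poincar\'e and Sobolev. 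Iterating in the $X$ variable via $L^*\partial_Y^\beta\vec E^L(\cdot,Y)=0$ away from $Y$ (valid since $L^*$ satisfies the same hypotheses) yields the full $L^2$ and $L^p$ estimates. The symmetry $E^L_{j,k}(Y,X)=\overline{E^{L^*}_{k,j}(X,Y)}$ is the usual one: testing $L\vec E^L_{X,j}=\delta_X\vec e_j$ against $\vec E^{L^*}_{Y,k}$ with suitable cutoffs near the two singular points and integrating by parts gives the identity, which passes to the desired derivatives. For the Poisson formula, the functional $\vec\varphi\mapsto\int\partial^\xi\varphi_\ell\,F$ lies in $Y^{-m,q}(\R^d)$ by H\"older and the Gagliardo--Nirenberg--Sobolev inequality, so by the Meyers-range invertibility of $L\colon Y^{m,q}(\R^d)\to Y^{-m,q}(\R^d)$ (a consequence of Theorem~\ref{umpm:intro} combined with the $p=2$ invertibility and a duality/perturbation argument) there is a unique $\vec u$ solving the corresponding equation; expressing $L^{-1}$ through the kernel and moving the $\partial^\xi$ onto the second argument then produces the claimed integral representation.

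The main obstacle is the low-order case $2m\leq d$: verifying that $\widetilde L=(-\Delta)^ML(-\Delta)^M$ genuinely falls under~\eqref{eqn:L:intro} with usable ellipticity and boundedness constants, and identifying precisely which mixed derivatives of $\widetilde E$ produce the stated mixed derivatives of $E^L$ with the correct scaling exponents in the theorem. A secondary challenge is the uniqueness bookkeeping: one must show that all the ambiguity in the construction (normalization of $\delta_X$, choice of $M$, polynomial ambiguity inherent in $L^{-1}$) is captured exactly by polynomials $P_{j,k}$ whose mixed derivatives of order $\geq m-d/2$ in each variable vanish except possibly at the corner $(m-d/2,m-d/2)$.
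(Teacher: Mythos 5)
Your overall strategy---define $\vec E^L_{X,j}$ as $L^{-1}(\delta_X\vec e_j)$ after fixing a normalization (when $2m>d$), reduce to this case via $\widetilde L=(-\Delta)^M L(-\Delta)^M$ (when $2m\le d$), and extract bounds from the Caccioppoli--Meyers inequality away from the diagonal---is the same strategy used in the paper (Sections~\ref{sec:FS:high:preliminary}--\ref{sec:FS:low}). But as a proof it has several concrete gaps, two of which are substantive enough to undermine the argument as written.

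First, you treat the existence of the \emph{mixed} weak derivatives $\partial_X^\alpha\partial_Y^\beta E^L$ as if it follows immediately from applying Theorem~\ref{umpm:intro} ``in $Y$'' and then ``iterating in $X$.'' This is not a direct consequence of the one-variable Caccioppoli/Meyers estimates: what those give you is control of $\nabla_Y^\beta \vec E^L_{X,j}(\cdot)$ for each fixed $X$, not a Sobolev-space-valued function of $X$ with weak derivatives in $X$. The paper's proof of Theorem~\ref{thm:fundamental} has to mollify in the $X$ variable (the family $\vec u_{\varepsilon,\alpha,X}$ in formula~\eqref{eqn:u:E:conv}), bound $\|\vec u_{\varepsilon,\alpha,X}\|_{Y^{m,p}(Q)}$ in $L^2_X(\Gamma)$ uniformly in $\varepsilon$ (Lemmas~\ref{lem:E:Lu}--\ref{lem:u:Bochner}), and then pass to a weak limit in $L^2(\Gamma\times Q)$. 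Without some such argument the objects $\partial_X^\alpha\partial_Y^\beta E^L$ appearing in your stated bounds are not yet defined.

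Second, you assert that for $|\alpha|,|\beta|\ge m-d/2$ with $(|\alpha|,|\beta|)\ne(m-d/2,m-d/2)$ the mixed derivative is ``intrinsic, independent of the choice of $M$ and of the normalizations.'' At the boundary $|\alpha|=m-d/2$ (or $|\beta|=m-d/2$), when $d$ is even, this is false for the single fundamental solution built from a fixed exponent $q=2$: the normalization ambiguity allows polynomials of degree exactly $m-d/2$ in $X$, and $\partial_X^\alpha$ with $|\alpha|=m-d/2$ does \emph{not} annihilate these. The paper addresses this by observing (Lemma~\ref{lem:indep}) that $\partial_X^\alpha\partial_Y^\beta\vec E^L_{X,j,Z_0,r,q}$ is well-defined only when $|\alpha|>m-d/q'$ and $|\beta|>m-d/q$, so in even dimensions one needs both some $q<2$ and some $q>2$ to cover the claimed range, and then needs Theorem~\ref{thm:renormalize} to patch these into a single function $\vec E^L_{X,j}$ whose derivatives agree with the correct $q$-dependent choice in each regime. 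Your proposal would only deliver the strict inequalities $|\alpha|,|\beta|>m-d/2$ without an extra argument.

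Two smaller issues: the symmetry identity is more safely obtained directly from the adjoint relation $\langle T_{Y,k},L^{-1}T_{X,j}\rangle=\overline{\langle T_{X,j},(L^*)^{-1}T_{Y,k}\rangle}$ (as in Theorem~\ref{thm:fundamental:1}) rather than by cutting off near both singular points and integrating by parts, since in this generality you have no pointwise control of $\vec E^L$ or its gradients near the diagonal. And for the Poisson representation, ``expressing $L^{-1}$ through the kernel and moving $\partial^\xi$ onto the second argument'' requires a Fubini-type justification that the defining integral converges absolutely; when $|\alpha|=|\xi|=m$ this fails on $\supp F$, which is why the paper's Theorem~\ref{thm:deriv:Linv} carries out the dyadic decomposition near the diagonal and produces a conclusion valid only for a.e.\ $X\notin\supp F$. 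Your proposal asserts the formula without addressing this convergence.
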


Many assumptions on the coefficients other than \eqref{eqn:intro:bound} and~\eqref{eqn:intro:bound:Bochner} are reasonable. We construct the fundamental solution in Section~\ref{FSS}. In that section, we will not explicitly use the assumptions \eqref{eqn:intro:bound} and~\eqref{eqn:intro:bound:Bochner}; instead we will use their consequences, the Caccioppoli and Meyers inequalities, for the operator $\widetilde L=\Delta^ML\Delta^M$. The results in Section~\ref{FSS}, and in particular Theorem~\ref{Fslowth}, will allow the interested reader to construct the fundamental solution for other classes of coefficients once a suitable higher order Caccioppoli inequality has been established.

\subsubsection{Other approaches.}

The approach of this paper and of \cite{Bar16} uses higher order operators, and in particular the higher order Caccioppoli and Meyers inequalities, to construct the fundamental solution, and as such has only been available since the development of a strong theory of higher order operators. The fundamental solution for second order operators has been of interest for a long time and other approaches to its construction have been used.

If $d\geq 2$, then $\delta_X\vec e_j$ is not an element of $Y^{-1,2}(\R^d)$. Specifically, elements of $Y^{1,2}(\R^d)$ are elements of Lebesgue spaces (or of $BMO$) and so their value at a single point is not well defined. In some special cases (discussed above), $L$ is invertible from $Y^{1,p}_0(B)$ to $Y^{-1,p}(B)$ for open balls $B$ and $p$ large enough to apply Morrey's inequality, and so the fundamental solution can be constructed using the approach discussed above and some attention to the behavior outside of~$B$. However, this approach is not available in other cases.

In some cases, solutions to $L\vec u=0$ may be locally H\"older continuous even if general $Y^{1,2}$ functions are not. In this case, the fundamental solution may be constructed as a limit of $L^{-1}T_\rho$, where $T_\rho\to\delta_X \vec e_j$ as $\rho\to 0^+$ and each $T_\rho$ is in $Y^{-1,2}(\R^d)$. Careful application of the Caccioppoli inequality, the local H\"older continuity, and other arguments yields that $L^{-1}T_\rho$ converges to a fundamental solution.

This was done in for operators of the form~\eqref{eqn:intro:classic} (\cite{GruW82,HofK07}) and \eqref{eqn:intro:lower} (\cite{DavHM18}) in dimension $d\geq 3$ under the assumption that solutions are locally H\"older continuous. 
Green's functions in domains (rather than in all of $\R^d$) were constructed using this method in \cite{KanK10,KimS19,Sak21,Mou19p}.

A different approach involving kernels for the heat semigroup $e^{-tL}$ was used in \cite{AusMT98} to construct the fundamental solution in dimension~$2$; as observed in \cite{DonK09} their approach is valid for systems of the form~\eqref{eqn:intro:classic} with $N\geq 1$ and with complex nonsymmetric coefficients. The papers \cite{DonK09,ChoDK12} establishes results analogous to those of \cite{AusMT98} for the Green's function of a domain rather than all of~$\R^2$.

Considerably more work must be expended to apply the semigroup approach in dimension $d\geq 3$; heat semigroups were used in \cite{MayP19} to construct the fundamental solution for the magnetic Schr\"odinger operator, and a different form of semigroup was used in \cite{Ros13} to construct the fundamental solution assuming only local boundedness, not local H\"older continuity.

This approach does require the Di Giorgi-Nash property of elliptic operators, or a condition, such as real coefficients, that implies this property. However, this approach often yields stronger estimates than those of the present paper, and indeed stronger estimates than those true of the fundamental solution for the Laplace operator. See, for example, \cite{She99,MayP19,DavI22p}.



\subsection{Outline} The outline of this paper is as follows. In Section~\ref{defs} we will define our terminology.
We will give some results concerning function spaces (in particular, Sobolev spaces) in Section~\ref{sec:function}.

We will prove the Caccioppoli inequality in Section~\ref{CacSec}. We will prove our generalization of Meyers's reverse H\"older inequality in Section~\ref{sec:meyers:proof}, and construct the counterexample of the inequality~\eqref{eqn:intro:counter} in Section~\ref{sec:meyers:counter}.

We will construct the fundamental solution in Section~\ref{FSS}.

Some results concerning invertibility of the operator~$L$ between certain function spaces will be used both in Section~\ref{Lpc} and Section~\ref{FSS}; we will give these results in Section~\ref{FS}.

\section{Definitions}\label{defs}

\subsection{Basic notation}

We consider divergence-form elliptic systems of $N$ partial differential equations of order $2m$ in $d$-dimensional Euclidean space $\RR^d$, $d\geq 2$.

When $\Omega\subset\RR^d$ is a set of finite measure, we let $\fint_\Omega f=\frac{1}{|\Omega|}\int f$, where $|\Omega|$ denotes the Lebesgue measure of~$\Omega$.

As mentioned in Theorem~\ref{umpm:intro}, if $Q$ is a cube in $\RR^d$ or $\RR^{d-1}$ and $\theta>0$ is a positive real number, we let $\theta Q$ denote the concentric cube with $|\theta Q|=\theta^d |Q|$ (so the side length of $\theta Q$ is $\theta$ times the side length of~$Q$).

We employ the use of multiindices in $(\NN_0)^d$. We will define
$$|\gamma|=\sum_{i=1}^d\gamma_i\quad \text{and}\quad\gamma!=\gamma_1!\cdot\gamma_2!\cdots\gamma_d!
$$
for any multiindex $\gamma=(\gamma_1,\dots,\gamma_d)$.
When $\delta$ is another multiindex in $\NN^d$ we say that $\delta\leq\gamma$ if $\delta_i\leq\gamma_i$ for each $1\leq i\leq d$.  Futhermore, we say $\delta<\gamma$ if $\delta_i<\gamma_i$ for at least one such $i$.

We will use the Leibniz Rule for multiindices, that is, that for all suitably differentiable functions $u$ and $v$ and a multiindex $\alpha$, we have that
\begin{equation*}\partial^\alpha(uv)=\sum_{\gamma\leq\alpha}\frac{\alpha!}{\gamma!(\alpha-\gamma)!}\partial^\gamma u\,\partial^{\alpha-\gamma}v.\end{equation*}

\subsection{Function spaces}
\label{sec:Y:dfn}
Let $\Omega\subseteq\RR^d$ be a domain.
We denote by $L^p(\Omega)$ and $L^\infty (\Omega)$ the standard Lebesgue spaces with respect to Lebesgue measure, with norms given by
\begin{equation*}
\|u\|_{L^p(\Omega)}=\bigg(\int_\Omega|u|^p\bigg)^{1/p}
\end{equation*} if $1\leq p<\infty $, and
\begin{equation*}
\|u\|_{L^\infty (\Omega)}=\es_\Omega|u|.
\end{equation*}
If $1\leq p\leq\infty $, we let $p'$ be the extended real number that satisfies $1/p+1/p'=1$.

If $t\in\R$, let $[\Omega]^t=\{x\in\RR^{d-1}:(x,t)\in \Omega\}$.
We define the Bochner norm $L_t^q L^p_{\vphantom{t}x}(\Omega)$ by
\begin{equation}\label{eqn:bochner}
\|u\|_{L_t^q L^p_{\vphantom{t}x}(\Omega)}
=
\biggl(\int_{-\infty }^\infty \biggl(
\int_{[\Omega]^t} |u(x,t)|^p\,dx
\biggr)^{q/p} dt\biggr)^{1/q}
\end{equation}
with a suitable modification in the case $p=\infty $ or $q=\infty $.

We define the inhomogeneous Sobolev norm as
\begin{equation*}
\|\vec u\|_{W^{k,p}(\Omega)} = \sum_{j=0}^k\|\nabla^j\vec{u}\|_{L^p(\Omega)}
\end{equation*}
where derivatives are required to exist in the weak sense.
We then define the homogeneous Sobolev norm as
\begin{equation}\label{dfn:W:norm}
\|\vec u\|_{\dot{W}^{k,p}(\Omega)}=\|\nabla^k\vec{u}\|_{L^p(\Omega)}.
\end{equation}
Observe that by the Poincar\'e inequality, if $\vec u\in \dot{W}^{k,p}(\Omega)$ and $\Omega$ is bounded, then $\nabla^j u\in L^p(\Omega)$ for all $0\leq j<k$; however, the Poincar\'e inequality does not yield finiteness of $\|\nabla^j u\|_{L^p(\Omega)}$ in the case where $\Omega$ is unbounded. 

The Sobolev spaces are then the spaces of equivalence classes of functions whose Sobolev norm is finite, with the equivalence relation $\vec u\sim\vec v$ if $\|\vec u-\vec v\|=0$. Observe that elements of inhomogeneous Sobolev spaces, like elements of Lebesgue spaces, are defined up to sets of measure zero, while elements of homogeneous Sobolev spaces (in connected domains) are defined up to sets of measure zero and also up to adding polynomials of degree at most~$k-1$.

Recall that for $1\leq p<d$, the Sobolev conjugate of $p$ is defined to be
\begin{equation*}
p^*=\frac{dp}{d-p}.
\end{equation*}
See, for example, \cite[Section 5.6]{Eva98}.
Notice that
\begin{equation}\label{pstar}
\frac{1}{p^*}=\frac{1}{p}-\frac{1}{d}.
\end{equation}
We will now generalize equation \eqref{pstar}.  Let $k$ be an integer so that $m-\frac{d}{p}<k\leq m$.  We then define $p_{m,d,k}$ so that
\begin{equation}\label{pk}
\frac{1}{p_{m,d,k}}=\frac{1}{p}-\frac{m-k}{d}.
\end{equation}

When considering elliptic operators of order $2m$ in dimension~$d$, and  the numbers $m$ and $d$ are clear from context, we will let $p_k=p_{m,d,k}$.
If $\alpha$ is a multiindex, we will let $p_\alpha=p_{m,d,\alpha}=p_{m,d,|\alpha|}$.  Notice that when $|\alpha|=m$ we have that $2_\alpha=2$, when $|\alpha|=m-1$ then $2_\alpha=2_*$ and so on.  This definition for $2_\alpha$ will help keep the notation throughout this paper relatively clean and help us to avoid backwards summation.

If $\Omega\subseteq\RR^d$ is a domain, $m\geq 1$ is an integer, and $1\leq p\leq \infty $, we define the $Y^{m,p}(\Omega)$ norm as
\begin{equation}\label{dfn:Y:norm}
\|u\|_{Y^{m,p}(\Omega)}:=\sum_{m-d/p<|\alpha|\leq m}\|\partial^\alpha u\|_{L^{p_{m,d,\alpha}}(\Omega)}.\end{equation}
We then define $Y^{m,p}(\Omega)$ analogously to $\dot W^{m,p}(\Omega)$. Observe that elements of $Y^{m,p}(\Omega)$ are defined up to adding polynomials of degree at most $m-d/p$. We let
\begin{equation*}Y^{m,p}_0(\Omega)=\{\vec\varphi\in Y^{m,p}(\RR^d):\vec\varphi=0\text{ outside }\Omega\}.\end{equation*}
Then $Y^{m,p}_0(\Omega)$ is the space of functions in $Y^{m,p}(\Omega)$ which are zero near the boundary in an appropriate sense. Note that $Y^{m,p}_0(\RR^d)=Y^{m,p}(\RR^d)$. Conversely, if $\RR^d\setminus\Omega$ has nonempty interior, then elements of $Y^{m,p}(\Omega)$ have a natural normalization condition (that is, nonzero polynomials are not representatives of elements of $Y^{m,p}_0(\Omega)$).

We will generally write bounded linear operators on $Y^{m,p}_0(\Omega)$ as $\langle T,\,\cdot\,\rangle_\Omega$; if $\Omega=\RR^d$ we will omit the $\Omega$ subscript. We define the antidual space $Y^{-m,p'}(\Omega)=(Y^{m,p}_0(\Omega))'$, for $1/p+1/p'=1$, by
\begin{equation}\label{eqn:antidual}
\langle T,\,\cdot\,\rangle_\Omega\text{ is a bounded linear operator on $Y^{m,p}_0(\Omega)$ if and only if }T\in Y^{-m,p'}(\Omega).\end{equation}
Note that if $\alpha\in\CC$ then $\langle \alpha T,\vec\Phi\rangle_\Omega=\overline\alpha\langle T,\vec\Phi\rangle_\Omega$.

\subsection{Elliptic operators}\label{sec:elliptic}

Let $m$ be a positive integer.
Let $\mat{A}=(A^{j,k}_{\alpha,\beta})$ be an array of measurable real or complex coefficients defined on $\RR^d$ indexed by integers $j$ and $k$ such that $1\leq j\leq N$ and $1\leq k\leq N$ and multiindices $\alpha$ and $\beta$ with $|\alpha|\leq m$ and $|\beta|\leq m$.

We define the differential operator $L$ with coefficients $\mat{A}$ as follows.
If $\vec u$ is a Sobolev function, we let $\langle L\vec u,\,\cdot\,\rangle_\Omega$ be the linear operator that satisfies
\begin{equation}\label{dfn:L}
\sum_{j,k=1}^N\sum_{|\alpha|\leq m}\sum_{|\beta|\leq m}
\int_{\Omega}\partial^\alpha\varphi_j\,  \overline{A^{j,k}_{\alpha,\beta} \,\partial^\beta u_k}
={\langle L\vec u,\vec\varphi\rangle_\Omega}
\end{equation}
for all appropriate test functions~$\vec\varphi$.

\begin{remark}
If $\mat{A}$, $\vec u$, and $\vec\varphi$ are sufficiently smooth and decay sufficiently rapidly at infinity, we may integrate by parts to see that
\begin{equation*}\langle L\vec u,\vec\varphi\rangle
= \sum_{j=1}^N
\int_{\R^\dmn}\varphi_j \overline{\sum_{k=1}^N\sum_{|\alpha|\leq m}\sum_{|\beta|\leq m} (-1)^{|\alpha|} \partial^\alpha(A^{j,k}_{\alpha,\beta} \,\partial^\beta u_k)}
\end{equation*}
Thus, in this case we may write
\begin{equation*}(L\vec u)_j=\sum_{k=1}^N\sum_{|\alpha|\leq m}\sum_{|\beta|\leq m} (-1)^{|\alpha|} \partial^\alpha(A^{j,k}_{\alpha,\beta} \,\partial^\beta u_k)\end{equation*}
as a classically defined linear differential operator; this coincides with formula~\eqref{dfn:L} if $\langle\,\cdot\,,\,\cdot\,\rangle_\Omega$ denotes the usual (complex) inner product in $L^2(\R^\dmn;\CC^N)$.
\end{remark}

We define
\begin{align}\label{eqn:tildealpha}
\mathfrak{a}&=\mathfrak{a}_L=\min\{|\alpha|:A_{\alpha,\beta}^{j,k}(X)\neq 0\text{ for some }j,k,\beta,X\},
\\\label{eqn:tildebeta}
\mathfrak{b}&=\mathfrak{b}_L=\min\{|\beta|:A_{\alpha,\beta}^{j,k}(X)\neq 0\text{ for some }j,k,\alpha,X\}.
\end{align}

\begin{definition}\label{dfn:ppm}
We let $\Pi_L$ be the largest interval with
\begin{equation*}\Pi_L
\subseteq(1,\infty)\cap\biggl\{p:\frac{m-\mathfrak{b}}{d}<\frac{1}{p} < \frac{d-m+\mathfrak{a}}{d}\biggr\}
\end{equation*}
and such that if $p\in\Pi_L$, then there is a $\Lambda(p)\in[0,\infty)$ such that the bound~\eqref{eqn:elliptic:bound} is valid, that is, such that
\begin{equation}\label{eqn:PiL}
\int_{\RR^d}
\biggl|
\sum_{j,k=1}^N
\sum_{\substack{\mathfrak{a}\leq |\alpha|\leq m
\\
\mathfrak{b}\leq|\beta|\leq m}}
{\partial^\alpha\varphi_j} \,\overline{A_{\alpha,\beta}^{j,k}\,\partial^\beta\psi_k}
\biggl|
\leq \Lambda(p)\,
\|\vec{\varphi}\|_{Y^{m,p'}(\R^d)} \,\|\vec{\psi}\|_{Y^{m,p}(\R^d)}
\end{equation}
for all $\vec\varphi\in Y^{m,p'}(\R^d)$, $\vec \psi\in Y^{m,p}(\R^d)$.
\end{definition}

We consider singleton sets to be intervals, so $\{2\}=[2,2]$ is a possible value of~$\Pi_L$.
We will usually assume that $2\in\Pi_L$; in particular, this implies that $\mathfrak{a}$, $\mathfrak{b}>m-d/2$.

\begin{remark}\label{rmk:L:bounded}
If $p\in\Pi_L $, then $|\langle L\vec u,\vec\varphi\rangle|\leq \Lambda(p) \|\vec\varphi\|_{Y^{m,p'}(\R^\dmn)} \|\vec u\|_{Y^{m,p}(\R^\dmn)}$ and the integral in the definition of $\langle L\vec u,\vec\varphi\rangle$ converges absolutely for such $\vec u$ and $\vec\varphi$; thus, if $\vec u\in Y^{m,p}(\R^\dmn)$ then the given integral is a linear operator on $Y^{m,p'}_0(\R^\dmn)$, and so $L\vec u\in Y^{-m,p}(\R^\dmn)$. Our conventions for $Y^{-m,p}$ yield that $L$ is a bounded linear operator (and not a conjugate linear operator) from $Y^{m,p}(\R^\dmn)$ to $Y^{-m,p}(\R^\dmn)$.
\end{remark}

\begin{remark}
The condition $d/(d+\mathfrak{a}-m)<p<d/(m-\mathfrak{b})$ ensures that the derivatives $\partial^\alpha\vec\varphi$, $\partial^\beta\vec\psi$ appearing in the bound~\eqref{eqn:PiL} satisfy $|\alpha|>m-d/p'$ and $|\beta|>m-d/p$. By the definition~\eqref{dfn:Y:norm} of $Y^{m,p}(\R^d)$, this means that $\partial^\alpha\vec\varphi\in L^{p'_\alpha}(\R^d)$, $\partial^\beta\vec\psi\in L^{p_\beta}(\R^d)$. Derivatives of $Y^{m,p}(\RR^d)$ or $Y^{m,p'}(\RR^d)$ functions of lower order are defined only up to adding constants or polynomials, which would preclude validity of the bound~\eqref{eqn:PiL}. It might be possible to consider the case $\mathfrak{a}\leq m-d/2$ or $\mathfrak{b}\leq m-d/2$ by considering more delicate cancellation conditions or Hilbert spaces other than $Y^{m,2}(\RR^d)$, but such constructions are beyond the scope of this paper.

As noted in the introduction, if $m=1$ and $d\geq 3$, then the condition $\mathfrak{a}$, $\mathfrak{b}>m-d/2$  is vacuous, as $m-d/2<0$ and so there are no multiindices $\alpha\in (\NN_0)^d$ with $|\alpha|\leq m-d/2$.
Conversely, if $d=2$,
then $A_{\alpha\beta}\neq 0$ only in the case when $|\alpha|=|\beta|=m$, and so the present paper does not represent a generalization of previous results such as~\cite{Cam80,AusQ00,DavHM18,Bar16}. 
\end{remark}

We will consider coefficients which satisfy the G\aa{}rding inequality~\eqref{gi}.
In \cite{AusQ00}, Auscher and Qafsaoui consider higher order elliptic systems in divergence form in which ellipticity is in the sense of the following weaker G\aa{}rding inequality
\begin{equation}\label{wgi}
\text{Re}\sum_{j,k=1}^N\sum_{|\alpha|\leq m}\sum_{|\beta|\leq m}\int_{\RR^d}\overline{\partial^\alpha\varphi_j} \,A_{\alpha,\beta}^{j,k}\,\partial^\beta\varphi_k
\geq\lambda\|\nabla^m\vec{\varphi}\|^2_{L^2(\RR^d)}-\delta\|\vec{\varphi}\|^2_{L^2(\RR^d)}
\end{equation}
where $\lambda>0$ and $\delta\geq 0$ are real numbers, for all $\vec{\varphi}$ which are smooth and compactly supported in $\RR^d$. The standard Gårding inequality \eqref{gi} is thus the weak inequality~\eqref{wgi} with $\delta=0$. In Section~\ref{CacSec}, we will prove results in the generality of the bound~\eqref{wgi} instead of~\eqref{gi}.

Throughout we will let $C$ denote a positive constant whose value may change from line to line, but that depends only on the dimension~$d$, the order $2m$ of our differential operators, the size~$N$ of our system of equations, the constant $\lambda$ in the bound~\eqref{gi} (or~\eqref{wgi}), and the constant $\Lambda(2)$ in the bound~\eqref{eqn:elliptic:bound}. A constant depending on a number $p\in \Pi_L$ may also depend on $\Lambda(p)$.

A standard argument involving the Lax-Milgram lemma (see Lemma~\ref{lem:L:invertible} below) shows that if $L$ satisfies the condition~\eqref{gi} and $2\in\Pi_L$, then $L$ is not only bounded but invertible $Y^{m,2}(\R^d)\to Y^{-m,2}(\R^d)$.
\begin{definition}\label{dfn:compatible}
If $L:Y^{m,2}(\R^d)\to Y^{-m,2}(\R^d)$ is bounded and invertible, then we define
\begin{equation}\label{eqn:SL}\Upsilon_L=\{p:L\text{ is bounded and compatibly invertible } Y^{m,p}(\RR^d)\rightarrow Y^{-m,p}(\RR^d)\}.\end{equation}
By compatibly invertible, we mean that $L:Y^{m,p}(\RR^d)\to Y^{-m,p}(\RR^d)$ is invertible with bounded inverse and that if $T\in Y^{-m,p}(\RR^d)\cap Y^{-m,2}(\RR^d)$ then $L^{-1}T\in Y^{m,p}(\RR^d)\cap Y^{m,2}(\RR^d)$. (Thus, $L^{-1}T$ has the same value whether we regard $L$ as an operator on $Y^{m,2}(\RR^d)$ or $Y^{m,p}(\RR^d)$.)
\end{definition}
Compatibility is not automatically true; see \cite{Axe10} for an example of operators which are invertible, but not compatibly invertible, in some sense.

We will conclude this section by reminding the reader that our main focus is on coefficients that satisfy the bound~\eqref{eqn:intro:bound}, that is,
\begin{equation*}
\begin{cases}
\|A^{j,k}_{\alpha,\beta}\|_{L^{2_{\alpha,\beta}}(\R^d)}
\leq \Lambda
&\text{if } m\geq|\alpha|>m-\frac{d}{2}\text{ and } m\geq|\beta|>m-\frac{d}{2},
\\
\quad A^{j,k}_{\alpha,\beta}=0 &\text{otherwise,}\end{cases}
\end{equation*}
or the bound~\eqref{eqn:intro:bound:Bochner}, that is,
\begin{equation*}
\begin{cases}
\|A_{\alpha,\beta}^{j,k}\|_{L_t^\infty L^{\widetilde 2_{\alpha,\beta}}_{x}(\R^d)}
\leq \Lambda
&\text{if }m\geq|\alpha|>m-\frac{d-1}{2}\text{ and }m\geq|\beta|>m-\frac{d-1}{2},
\\
\quad A^{j,k}_{\alpha,\beta}=0 &\text{otherwise}.\end{cases}
\end{equation*}
where
\[
2_{\alpha,\beta}=\frac{d}{2m-|\alpha|-|\beta|},
\qquad
\widetilde 2_{\alpha,\beta}=\frac{d-1}{2m-|\alpha|-|\beta|}.\]

Elementary computations involving H\"older's inequality (see Lemma~\ref{lem:p:+}) shows that the conditions \eqref{eqn:intro:bound} and~\eqref{eqn:intro:bound:Bochner} both imply that $\Pi_L$ contains an interval around~$2$ whose radius depends only on the dimension~$d$.

\section{The Gagliardo-Nirenberg-Sobolev and Poincar\'e inequalities and their consequences}
\label{sec:function}

In this section we will collect some results regarding Sobolev functions that will be useful throughout the paper. These results are mainly consequences of the Gagliardo-Nirenberg-Sobolev inequality and induction arguments.

We will begin with Section~\ref{sec:W:Y}, in which we will consider the global function spaces $\dot W^{m,p}(\R^d)$ and $Y^{m,p}(\R^d)$.
In Section~\ref{sec:evlem} we will study $Y^{m,p}(Q)$ for a cube~$Q$.

We will often wish to consider the behavior of functions in thin annuli. Thus, in Section~\ref{sec:annuli} we will establish results in (possibly thin) annuli rather than cubes. We will sometimes need different forms of estimates, and so will also investigate the Poincar\'e inequality in thin annuli.

Finally, in Section~\ref{sec:cutoff}, we will investigate the behavior of Sobolev functions when multiplied by cutoff functions; since our standard cutoff functions have gradients supported in an annulus, this will build on the results of Section~\ref{sec:annuli}.

\subsection{Global Sobolev spaces}\label{sec:W:Y} In this section we will establish some basic properties of the spaces $\dot W^{m,p}(\RR^d)$ and $Y^{m,p}(\RR^d)$.
We begin by citing the Gagliardo-Nirenberg-Sobolev inequality; a proof may be found in (for example) \cite[Section~5.6.1, Theorem~1]{Eva98}.

\begin{theorem}[The Gagliargo-Nirenberg-Sobolev inequality]
\label{GNSi}
Assume $1\leq p<d$.  Then there is a constant $C$ which depends only on $p$ and $d$ so that
\begin{equation*}
\|u\|_{L^{p^*}(\RR^d)}\leq C\|\nabla u\|_{L^p(\RR^d)}
\end{equation*}
for all $u\in C^1_c(\RR^d)$.
\end{theorem}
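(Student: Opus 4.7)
The plan is to prove the Gagliardo–Nirenberg–Sobolev inequality first in the endpoint case $p=1$, and then reduce the case $1<p<d$ to it by applying the $p=1$ bound to a suitable power $|u|^\gamma$. Both steps are classical; the main technical ingredients are the iterated application of the generalized Hölder inequality and the careful choice of exponent $\gamma$.

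For the case $p=1$, I would start from the one-dimensional fundamental theorem of calculus applied along each coordinate axis. Since $u\in C^1_c(\R^d)$, for every $i\in\{1,\dots,d\}$ and every $x\in\R^d$,
\[
|u(x)|\leq \int_{-\infty}^{\infty}|\partial_i u(x_1,\dots,y_i,\dots,x_d)|\,dy_i=:f_i(\hat x_i),
\]
where $\hat x_i$ denotes the variables other than $x_i$. Multiplying these $d$ inequalities and taking the $(d-1)$-th root gives
\[
|u(x)|^{d/(d-1)}\leq \prod_{i=1}^d f_i(\hat x_i)^{1/(d-1)}.
\]
I would then integrate successively in $x_1,x_2,\dots,x_d$; at each step the factor independent of the current variable comes out of the integral, while the remaining $d-1$ factors are combined by the generalized Hölder inequality with all exponents equal to $d-1$. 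The outcome is the Loomis–Whitney–type bound $\|u\|_{L^{d/(d-1)}(\R^d)}\leq \prod_{i=1}^d \|\partial_i u\|_{L^1(\R^d)}^{1/d}$, which by the AM–GM inequality is controlled by $C\|\nabla u\|_{L^1(\R^d)}$.

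For $1<p<d$, I would set $v=|u|^\gamma$ with $\gamma>1$ to be chosen. Since $u\in C^1_c(\R^d)$, $v$ is Lipschitz and compactly supported, and $|\nabla v|\leq \gamma |u|^{\gamma-1}|\nabla u|$ almost everywhere. Applying the $p=1$ case to $v$ and then using Hölder's inequality with conjugate exponents $p'$ and $p$ yields
\[
\Bigl(\int_{\R^d}|u|^{\gamma d/(d-1)}\Bigr)^{(d-1)/d}\leq C\gamma\Bigl(\int_{\R^d}|u|^{(\gamma-1)p'}\Bigr)^{1/p'}\|\nabla u\|_{L^p(\R^d)}.
\]
Choosing $\gamma=p(d-1)/(d-p)$ makes $\gamma d/(d-1)=(\gamma-1)p'=p^*$, so the Lebesgue norm of $|u|$ appears with the same exponent on both sides. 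Dividing through gives exactly $\|u\|_{L^{p^*}(\R^d)}\leq C\|\nabla u\|_{L^p(\R^d)}$ with $C=C(p,d)$.

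The main obstacle is the iterated Hölder argument in the $p=1$ case; it is notationally heavy but becomes routine once one fixes the order of integrations and recognizes that at each stage one factor has already been extracted and the remaining $d-1$ factors naturally pair with exponents $d-1$. A secondary technical point is justifying the chain rule for $v=|u|^\gamma$ when $\gamma$ is not an integer and $u$ may vanish; this is handled by first replacing $|u|$ by the smooth approximation $(\varepsilon^2+u^2)^{1/2}$, carrying out the computation, and passing to the limit $\varepsilon\to 0^+$ by dominated convergence, which is available because $u$ is compactly supported and $C^1$.
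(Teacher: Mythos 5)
The paper does not prove this theorem; it cites it directly from \cite[Section~5.6.1, Theorem~1]{Eva98}. Your proposed proof is correct and is precisely the standard argument given in that reference (Loomis--Whitney/iterated Hölder for $p=1$, then the substitution $v=|u|^\gamma$ with $\gamma=p(d-1)/(d-p)$ to bootstrap to $1<p<d$), so it fills in the proof the paper omits without introducing any gap.
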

Here $p^*$ is as in formula~\eqref{pstar}.

\begin{remark}\label{rmk:GNS} If $u$ is a representative of an element of $\dot W^{1,p}(\R^d)$ for $p<d$, (that is, if $\nabla u\in L^p(\RR^d)$,) a standard argument involving the Poincar\'e inequality in an annulus shows that even if $u\notin C^1_c(\RR^d)$, there is a unique constant $c$ such that $u-c\in L^{p^*}(\RR^d)$ and
\begin{equation*}
\|u-c\|_{L^{p^*}(\RR^d)}\leq C\|\nabla u\|_{L^p(\RR^d)}.
\end{equation*}
\end{remark}

\begin{corollary}\label{cor:GNS:global}
Suppose that $m\geq 1$, $d\geq 2$ are integers and that $1\leq p<\infty $. Then there exists a constant $c$ depending only on $d$, $m$ and~$p$ with the following significance.
Suppose $\vec u$ is a representative of an element of $\dot W^{m,p}(\RR^d)$. Then there is a polynomial $\vec P$ of order at most $m-1$, unique up to adding polynomials of order at most $m-d/p$, such that
\begin{equation*}
\| u- P\|_{Y^{m,p}(\RR^d)}
\leq c\| u\|_{\dot W^{m,p}(\RR^d)}.\end{equation*}
In particular $\| u- P\|_{Y^{m,p}(\RR^d)}$ is finite.
\end{corollary}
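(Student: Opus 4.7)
The plan is to construct $P$ by iteratively peeling off a homogeneous polynomial piece of each degree, working downward from degree $m-1$. First dispose of the trivial range $p\geq d$: then $m-d/p\geq m-1$, so the sum defining $\|u\|_{Y^{m,p}(\RR^d)}$ reduces to $\|\nabla^m u\|_{L^p(\RR^d)}=\|u\|_{\dot W^{m,p}(\RR^d)}$ and we may take $P=0$. Assume henceforth that $p<d$.

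Set $P^{(0)}=0$, so $\partial^\alpha(u-P^{(0)})\in L^p(\RR^d)=L^{p_{m,d,m}}(\RR^d)$ for every $|\alpha|=m$. Inductively, suppose $P^{(j)}$ is a polynomial of degree at most $m-1$ with $\partial^\alpha(u-P^{(j)})\in L^{p_{m,d,|\alpha|}}(\RR^d)$ for every $m-j\leq|\alpha|\leq m$, and suppose $m-j-1>m-d/p$. For each $|\alpha|=m-j-1$ the first-order gradient $\nabla\partial^\alpha(u-P^{(j)})=(\partial^{\alpha+e_i}(u-P^{(j)}))_{i=1}^d$ then lies in $L^{p_{m,d,m-j}}(\RR^d)$; moreover a direct computation from~\eqref{pk} gives $(p_{m,d,m-j})^*=p_{m,d,m-j-1}$, and the hypothesis $m-j-1>m-d/p$ is exactly what guarantees $p_{m,d,m-j}<d$, so the Gagliardo--Nirenberg--Sobolev inequality in the form of Remark~\ref{rmk:GNS} applies. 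It furnishes a unique constant $c_\alpha$ such that $\partial^\alpha(u-P^{(j)})-c_\alpha\in L^{p_{m,d,m-j-1}}(\RR^d)$, with norm controlled by $\|\nabla\partial^\alpha(u-P^{(j)})\|_{L^{p_{m,d,m-j}}(\RR^d)}$. Define
\[
P^{(j+1)}=P^{(j)}+\sum_{|\alpha|=m-j-1}\frac{c_\alpha}{\alpha!}\,x^\alpha.
\]
The added piece is homogeneous of degree $m-j-1$, so its derivatives of order strictly greater than $m-j-1$ vanish, which preserves the normalization at all higher orders; meanwhile for each $|\alpha|=m-j-1$ one has $\partial^\alpha\bigl(P^{(j+1)}-P^{(j)}\bigr)=c_\alpha$, so $\partial^\alpha(u-P^{(j+1)})\in L^{p_{m,d,m-j-1}}(\RR^d)$ as required. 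Iterating until $m-j-1\leq m-d/p$ yields the desired polynomial $P$ of degree at most $m-1$, and telescoping the individual Gagliardo--Nirenberg--Sobolev estimates produces the bound $\|u-P\|_{Y^{m,p}(\RR^d)}\leq c\|u\|_{\dot W^{m,p}(\RR^d)}$.

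For uniqueness, if $P_1$ and $P_2$ are two polynomials of degree at most $m-1$ both satisfying the conclusion, then $Q=P_1-P_2$ is a polynomial with $\partial^\alpha Q\in L^{p_{m,d,|\alpha|}}(\RR^d)$ for every $m-d/p<|\alpha|\leq m$. Since no nonzero polynomial on $\RR^d$ belongs to a Lebesgue space with finite exponent, each such $\partial^\alpha Q$ vanishes identically, forcing $Q$ to have degree at most $m-d/p$. The only technical nuisance in the entire argument is keeping track of the Sobolev conjugate exponents and verifying at every stage of the induction that $p_{m,d,m-j}<d$; both reduce to routine algebra from the defining relation~\eqref{pk} together with the standing hypothesis on $|\alpha|$.
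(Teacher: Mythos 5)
Your proof is correct and follows essentially the same route as the paper: iterate the Gagliardo--Nirenberg--Sobolev inequality (via Remark~\ref{rmk:GNS}) downward one derivative at a time, using the identity $(p_{m,d,k+1})^*=p_{m,d,k}$. You spell out the construction of $P$ more explicitly as a sum of homogeneous pieces and add a short uniqueness argument, but the underlying mechanism is the same as the paper's induction.
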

\begin{proof}
Recall the definition~\eqref{pk} of~$p_{m,d,k}$.
Because $(p_{m,d,k+1})^* = p_{m,d,k}$, if $m-d/p<k< m$, the bound
\begin{equation*}
\|\nabla^k (u-P)\|_{L^{p_{m,d,k}}(\RR^d)}\leq C_k\|\nabla^{k+1} u\|_{L^{p_{m,d,k+1}}(\RR^d)}
\end{equation*}
for some $C_k$ follows from Remark~\ref{rmk:GNS}. By induction, and because $p_{m,d,m}=p$,
\begin{equation*}
\|\nabla^k (u-P)\|_{L^{p_{m,d,k}}(\RR^d)}\leq C_k\|\nabla^{m} u\|_{L^{p}(\RR^d)}
.\end{equation*}
Applying the definitions \eqref{dfn:W:norm} and~\eqref{dfn:Y:norm} of $\dot W^{m,p}(\RR^d)$ and~$Y^{m,p}(\RR^d)$ completes the proof.
\end{proof}

We will now establish a bound on the Bochner norm of elements of~$Y^{m,p}(\R^d)$.

\begin{corollary}\label{cor:GNS:slices}
Let $m\in\NN$, $p\in[1,d-1)$. Let $k\in\NN_0$ satisfy $m-(d-1)/p<k< m $. Let $u$ be a representative of an element of $\dot W^{m,p}(\R^d)$ and let $P$ be the polynomial in Corollary~\ref{cor:GNS:global}.
Then
\begin{equation*}
\|\nabla^k (u-P)\|_{L_t^pL_x^{p_{m,d-1,k}}(\R^d)}
\leq C\|\nabla^m u\|_{L^p(\RR^d)}
.\end{equation*}
In particular, if $u\in Y^{m,p}(\RR^d)$ then this bound is valid with $P=0$.
\end{corollary}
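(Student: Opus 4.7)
The plan is to derive this as an iteration of the Gagliardo-Nirenberg-Sobolev inequality in dimension $d-1$, applied slice by slice, using downward induction on the order $k$ starting from $k=m$. Set $w = u - P$; since $\deg P \leq m-1$, we have $\nabla^m w = \nabla^m u \in L^p(\RR^d)$, and from Corollary~\ref{cor:GNS:global} we have $w \in Y^{m,p}(\RR^d)$, so $\partial^\gamma w \in L^{p_{m,d,|\gamma|}}(\RR^d)$ for all $\gamma$ with $m-d/p < |\gamma| \leq m$. Our hypothesis $k > m-(d-1)/p$ implies $k > m - d/p$, so these latter memberships are available for the orders that will arise.

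The base case $k=m$ is immediate, since $p_{m,d-1,m} = p$ and the Bochner norm collapses to $\|\nabla^m w\|_{L^p(\RR^d)} = \|\nabla^m u\|_{L^p(\RR^d)}$. For the inductive step, assume the bound has been established at order $k+1$ (still in the admissible range), and fix a multiindex $\alpha$ with $|\alpha|=k$. For almost every $t$ (by Fubini applied to the derivatives of $w$), the slice $\partial^\alpha w(\cdot, t)$ is a function on $\RR^{d-1}$ whose $x$-gradient components $\partial_{x_i}\partial^\alpha w(\cdot, t)$ have order $k+1 \leq m$. Remark~\ref{rmk:GNS} applied in dimension $d-1$ (noting $(p_{m,d-1,k+1})^\ast = p_{m,d-1,k}$) produces a unique constant $c_\alpha(t)$ with
\[
\|\partial^\alpha w(\cdot, t) - c_\alpha(t)\|_{L^{p_{m,d-1,k}}(\RR^{d-1})}
\leq C \|\nabla_x \partial^\alpha w(\cdot, t)\|_{L^{p_{m,d-1,k+1}}(\RR^{d-1})}.
\]

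Next I would show $c_\alpha(t) = 0$ for almost every $t$. Since $w \in Y^{m,p}(\RR^d)$ and $m-d/p < k$, we have $\partial^\alpha w \in L^{p_{m,d,k}}(\RR^d)$; by Fubini, $\partial^\alpha w(\cdot, t) \in L^{p_{m,d,k}}(\RR^{d-1})$ for almost every $t$. The difference $\partial^\alpha w(\cdot, t) - c_\alpha(t)$ lies in $L^{p_{m,d-1,k}}(\RR^{d-1})$, so the constant $c_\alpha(t)$ itself would have to be in $L^{p_{m,d,k}} + L^{p_{m,d-1,k}}$ on $\RR^{d-1}$; as both exponents are finite, this forces $c_\alpha(t) = 0$.

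Raising the slice bound to the $p$-th power, integrating in $t$, and invoking the inductive hypothesis on the order-$(k+1)$ derivatives $\partial_{x_i}\partial^\alpha w$ then gives
\[
\|\partial^\alpha w\|_{L_t^p L_x^{p_{m,d-1,k}}(\RR^d)}
\leq C\|\nabla_x\partial^\alpha w\|_{L_t^p L_x^{p_{m,d-1,k+1}}(\RR^d)}
\leq C\|\nabla^m w\|_{L^p(\RR^d)},
\]
and summing over $|\alpha|=k$ completes the induction. The final sentence of the corollary is immediate: if $u$ itself lies in $Y^{m,p}(\RR^d)$, the normalization that $Y^{m,p}$ imposes forces $P=0$ as a representative. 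The main subtlety of the argument is not the GNS application per se but verifying that the slicewise Remark~\ref{rmk:GNS} constants $c_\alpha(t)$ vanish; this is what lets us bound $\nabla^k(u-P)$ itself rather than a slicewise-adjusted version.
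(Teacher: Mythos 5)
Your proof is correct and takes essentially the same approach as the paper: apply a slice-wise Gagliardo–Nirenberg–Sobolev inequality in $\R^{d-1}$, and pin down the normalization by observing that a nonzero constant (the paper uses a nonzero polynomial) cannot belong to a finite-exponent Lebesgue space on $\R^{d-1}$, since $\partial^\alpha w(\cdot,t)$ lies in $L^{p_{m,d,k}}(\R^{d-1})$ for a.e.\ $t$ by Fubini. The only organizational difference is that you inline the iteration via a downward induction with Remark~\ref{rmk:GNS}, whereas the paper invokes the already-iterated Corollary~\ref{cor:GNS:global} in dimension $d-1$ once and then identifies the resulting slice polynomial $P_{j,t}$ with $\partial_t^j P$; these are equivalent, since Corollary~\ref{cor:GNS:global} is itself proved by exactly this iteration.
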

\begin{proof}
By definition, 
\begin{align*}
\|\nabla^m u\|_{L^p(\RR^d)}
&=
\left(\int_{-\infty }^\infty \int_{\R^{d-1}} |\nabla^{m} u(x,t)|^{p}\,dx\,dt\right)^{1/p_k}
\\
&=
\left(\int_{-\infty }^\infty \|\nabla^{m} u(\,\cdot\,,t)\|_{L^{p}(\R^{d-1})}^{p}\,dt\right)^{1/p}
.\end{align*}
Let $0\leq j\leq k$.
Applying Corollary~\ref{cor:GNS:global} in $\R^{d-1}$ with $d$ replaced by $d-1$ yields that, for some polynomial $P_{j,t}$ defined on $\R^{d-1}$,
\begin{align*}\|\nabla_x^{k-j}\partial_t u(\,\cdot\,,t)-\nabla_x^{k-j} P_{j,t}\|_{L^{ p_{m,d-1,k}}(\R^{d-1})}
&\leq C\|\nabla_x^{m-j}\partial_t u(\,\cdot\,,t)\|_{L^{p}(\R^{d-1})}
\\&\leq C\|\nabla^{m} u(\,\cdot\,,t)\|_{L^{p}(\R^{d-1})}
\end{align*}
where $\nabla_x$ denotes the gradient taken strictly in the horizontal variables.

For almost every $t\in\R$, by Corollary~\ref{cor:GNS:global} (in $\R^d$) we have that
\begin{equation*}\|\nabla^{k-j}_x\partial_t^j u(\,\cdot\,,t) - \nabla^{k-j}_x\partial_t^j P(\,\cdot\,,t)\|_{L^{p_{m,d,k}}(\R^{d-1})} <\infty .\end{equation*}
Because $\partial_t^j P(x,t)$ and $P_{j,t}(x)$ are polynomials in~$x$, we must have that for almost every $t\in\R$, $\nabla^{k-j}_x\partial_t^j P(x,t)=\nabla_x^{k-j} P_{j,t}(x)$ for all $x\in\R^{d-1}$. This completes the proof.
\end{proof}

\subsection{Sobolev functions in cubes}

\label{sec:evlem}

In this section we will establish analogues to Corollaries \ref{cor:GNS:global} and~\ref{cor:GNS:slices} in cubes (rather than in all of Euclidean space).

\begin{lemma}\label{evlem} 
Let $m$, $d\in\NN$, $d\geq 2$, $p\in[1,\infty )$, and let $j$, $k\in\NN_0$ satisfy
$0\leq j\leq k$ and $m-d/p< k\leq  m$. Let $p_k=p_{m,d,k}$. Then there is a constant $C$ depending only on~$p$, $d$, and~$m$ such that if $Q\subset\R^d$ is a cube and $u\in W^{m,p}(Q)$, then
\begin{equation*}
\|\nabla^j u\|_{L^{p_{k}}(Q)}\leq C\sum_{i=j}^{ m-k+j} |Q|^{(i-j+k- m)/d} \|\nabla^i u\|_{L^{p}(Q)}
.\end{equation*}

\end{lemma}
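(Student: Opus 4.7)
The inequality has the shape of a standard Sobolev embedding: apply $W^{m-k,p}(Q)\hookrightarrow L^{p_k}(Q)$ to the function $\nabla^j u$, and use a dilation to the unit cube to produce the scale factors $|Q|^{(i-j+k-m)/d}$. The hypothesis $m-d/p<k\leq m$ means exactly that $0\leq m-k<d/p$, so $1/p_k=1/p-(m-k)/d>0$, putting us in the non-endpoint range of the Sobolev embedding. Thus no delicate issues arise; the work is purely bookkeeping.

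\textbf{Step 1: reduce to the unit cube.} Let $\lambda=|Q|^{1/d}$ be the side length of $Q$, write $Q=x_0+\lambda Q_1$ where $Q_1$ is the unit cube at the origin, and set $\tilde u(y)=u(x_0+\lambda y)$ for $y\in Q_1$. Then $\nabla^i\tilde u(y)=\lambda^i(\nabla^i u)(x_0+\lambda y)$, so a change of variables gives
\begin{equation*}
\|\nabla^i\tilde u\|_{L^p(Q_1)}=\lambda^{i-d/p}\|\nabla^i u\|_{L^p(Q)},\qquad
\|\nabla^j\tilde u\|_{L^{p_k}(Q_1)}=\lambda^{j-d/p_k}\|\nabla^j u\|_{L^{p_k}(Q)}.
\end{equation*}
Thus it suffices to prove the claim on the unit cube (where all the factors $|Q|^{(i-j+k-m)/d}$ equal $1$); the weights in the general case are then recovered by plugging in these scalings and using $d/p_k-d/p=-(m-k)$, which yields exactly the exponent $i-j+k-m$ on $\lambda$.

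\textbf{Step 2: Sobolev embedding on the unit cube.} On $Q_1$, apply the classical Sobolev embedding $W^{m-k,p}(Q_1)\hookrightarrow L^{p_k}(Q_1)$ (see e.g.\ \cite[Section~5.6]{Eva98}) to the function $\nabla^j\tilde u$, which is of order $j\leq m$ and therefore lies in $W^{m-k,p}(Q_1)$ provided $j+(m-k)\leq m$, i.e.\ $k\geq j$ - wait: we need $j+(m-k)$ to be $\leq m$, i.e., simply that $\tilde u\in W^{m,p}(Q_1)$, which is our hypothesis. This yields
\begin{equation*}
\|\nabla^j\tilde u\|_{L^{p_k}(Q_1)}\leq C\sum_{\ell=0}^{m-k}\|\nabla^\ell(\nabla^j\tilde u)\|_{L^p(Q_1)}= C\sum_{i=j}^{m-k+j}\|\nabla^i\tilde u\|_{L^p(Q_1)},
\end{equation*}
with $C$ depending only on $m$, $d$, $p$. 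The embedding is valid because $1/p_k>0$ (non-endpoint case) and $Q_1$ is a Lipschitz domain. When $k=m$ (so $m-k=0$), the inequality is simply the trivial identity, and when $m-k\geq 1$ the classical Gagliardo--Nirenberg--Sobolev theory applies.

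\textbf{Step 3: combine.} Substituting the scaling identities from Step 1 into the unit-cube bound of Step 2, we obtain
\begin{equation*}
\|\nabla^j u\|_{L^{p_k}(Q)}\leq C\sum_{i=j}^{m-k+j}\lambda^{d/p_k-j+i-d/p}\|\nabla^i u\|_{L^p(Q)}= C\sum_{i=j}^{m-k+j}|Q|^{(i-j+k-m)/d}\|\nabla^i u\|_{L^p(Q)},
\end{equation*}
which is the desired inequality. The only potential obstacle is confirming that the classical Sobolev embedding on a Lipschitz domain is valid in the full range $1\leq p<\infty$ with $1/p_k>0$; this is standard, so the proof is essentially routine.
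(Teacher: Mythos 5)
Your proof is correct and follows essentially the same route as the paper: rescale to the unit cube, apply a Sobolev embedding, and recover the powers of $|Q|$ from the scaling. The only difference is that you invoke the higher-order embedding $W^{m-k,p}(Q_1)\hookrightarrow L^{p_k}(Q_1)$ in one shot, whereas the paper iterates the first-order embedding $W^{1,p_{k+1}}(Q)\hookrightarrow L^{p_k}(Q)$ a total of $m-k$ times --- a cosmetic distinction, since the multi-order embedding is proved by exactly such an iteration.
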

\begin{proof} 
Suppose first that $|Q|=1$.
By the Gagliardo-Nirenberg-Sobolev inequality in bounded domains (see, for example, \cite[Section~5.6.1, Theorem 2]{Eva98}) and the definition~\eqref{pk} of~$p_k$, we have that
\begin{equation*}\|w\|_{L^{p_{k}}(Q)}\leq C\|w\|_{W^{1,p_{k+1}}(Q)}
=C\sum_{i=0}^{1}\|\nabla^i w\|_{L^{p_{k+1}}(Q)}\end{equation*}
for any function $w\in W^{1,p_{k+1}}(Q)$. Taking $w=\nabla^j u$, we see that
\begin{equation*}\|\nabla^ju\|_{L^{p_{k}}(Q)}\leq C\sum_{i=j}^{j+1}\|\nabla^{i} u\|_{L^{p_{k+1}}(Q)}\end{equation*}
Iterating this argument with $w=\nabla^i u$ and recalling that $p=p_m$ yields the $|Q|=1$ case of the lemma. A chage of variables establishes the case for general~$Q$.
\end{proof}

We may also control Bochner norms; this is very useful in the case that the coefficients satisfy the condition~\eqref{eqn:intro:bound:Bochner}.
\begin{lemma}\label{evlem:t} %
Let $m$, $d\in\NN$, $d\geq 2$, $p\in[1,\infty )$, and let $j$, $k\in\NN_0$ satisfy $0\leq j\leq k$ and $m-(d-1)/p< k\leq m$. Let $\widetilde p_k=p_{m,d-1,k}$.
There is a constant $C$ depending only on~$p$, $d$, and~$m$ such that if $Q\subset\R^d$ is a cube with sides parallel to the coordinate axes and $u\in W^{m,p}(Q)$, then
\begin{equation*}
\|\nabla^j u\|_{L_t^pL^{\widetilde p_k}_k(Q)}
\leq
C\sum_{i=j}^{ m-k+j} |Q|^{(i-j+k- m)/d}\|\nabla^i u\|_{L^p(Q)}
.\end{equation*}
\end{lemma}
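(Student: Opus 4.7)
The plan is to mirror the proof of Lemma~\ref{evlem}, replacing the Gagliardo-Nirenberg-Sobolev inequality in $\R^d$ by its $(d-1)$-dimensional counterpart applied on each horizontal slice $[Q]^t$, and then integrating in~$t$ via Fubini. Because $Q$ has sides parallel to the coordinate axes, every nonempty slice $[Q]^t$ is a $(d-1)$-dimensional cube of the common side length $r=|Q|^{1/d}$, so the constants produced by the slicewise Sobolev embedding are uniform in~$t$.

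The key observation is the slicewise Sobolev chain: in dimension $d-1$, the Sobolev conjugate of $\widetilde p_{\ell+1}$ is exactly $\widetilde p_\ell$, as one reads off directly from the definition~\eqref{pk} of $p_{m,d-1,\ell}$. Iterating the Gagliardo-Nirenberg-Sobolev inequality on bounded domains in $\R^{d-1}$ (Theorem~\ref{GNSi} together with the standard scaling argument, exactly as in the proof of Lemma~\ref{evlem}) then gives, for each fixed $t$ and each multiindex $\alpha$ with $|\alpha|=j$,
\begin{equation*}
\|\partial^\alpha u(\,\cdot\,,t)\|_{L^{\widetilde p_k}_x([Q]^t)}\leq C\sum_{\ell=0}^{m-k}|Q|^{(\ell+k-m)/d}\,\|\nabla_x^\ell\partial^\alpha u(\,\cdot\,,t)\|_{L^p_x([Q]^t)}.
\end{equation*}
The scaling exponent $(\ell+k-m)/d$ comes from the identity $(d-1)(1/\widetilde p_k-1/p)=k-m$ combined with the side length $r=|Q|^{1/d}$ of the slice.

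After summing over $|\alpha|=j$ and bounding $|\nabla_x^\ell\partial^\alpha u|\leq |\nabla^{j+\ell}u|$ pointwise, I would raise both sides to the $p$-th power, integrate in $t\in\R$, apply $(\sum_\ell a_\ell)^p\leq C\sum_\ell a_\ell^p$ to the finitely many values of $\ell$, and recognize by Fubini that
\begin{equation*}
\int_\R \|\nabla^i u(\,\cdot\,,t)\|_{L^p_x([Q]^t)}^p\,dt=\|\nabla^i u\|_{L^p(Q)}^p.
\end{equation*}
Relabeling $i=j+\ell$ and taking $p$-th roots yields the advertised inequality. No serious obstacle is expected: the argument is simply a slicewise repetition of Lemma~\ref{evlem} followed by Fubini, and the only point requiring care is the book-keeping of the exponent $(i-j+k-m)/d$ of $|Q|$, which one verifies via the scaling identity above.
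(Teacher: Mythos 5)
Your proposal is correct and takes essentially the same route as the paper: slice $Q$ horizontally, apply the $(d-1)$-dimensional Gagliardo--Nirenberg--Sobolev chain on each slice $[Q]^t$ (the paper simply cites Lemma~\ref{evlem} in dimension $d-1$ rather than re-deriving it as you do), and then integrate in $t$ and recognize the resulting $t$-integrals as $\|\nabla^i u\|_{L^p(Q)}$. The only cosmetic difference is that the paper concludes by the triangle inequality for the $L^p_t$ norm of the finite sum, whereas you pass through $(\sum_\ell a_\ell)^p\leq C\sum_\ell a_\ell^p$ after raising to the $p$th power; both give the stated bound.
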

\begin{proof}
Let $Q=\Delta\times [t_0,t_0+R]$, where $\Delta\subset\R^\dmnMinusOne$ is a cube, $t_0\in\R$, and $R=|Q|^{1/d}$.
Recall that
\begin{equation*}
\|\nabla^j u\|_{L_t^pL^{\widetilde p_k}_k(Q)}
=\left(\int_{t_0}^{t_0+R}\left(\int_{\Delta} |\nabla^j u(x,t)|^{\widetilde p_k}\,dx\right)^{p/\widetilde p_k} dt \right)^{1/p}
.\end{equation*}
Applying Lemma~\ref{evlem} in dimension $d-1$, we see that
\begin{equation*}
\left(\int_{\Delta} |\nabla^j u(x,t)|^{\widetilde p_k}\,dx\right)^{1/\widetilde p_k}
\leq
C\sum_{i=j}^{ m-k+j} R^{i-j+k- m}\|\nabla^i u(\,\cdot\,,t)\|_{L^p(\Delta)}.
\end{equation*}
Integrating in~$t$ completes the proof.
\end{proof}

\subsection{Sobolev functions in annuli}
\label{sec:annuli}

We will now establish analogues to Lemmas~\ref{evlem} and~\ref{evlem:t} in annuli.

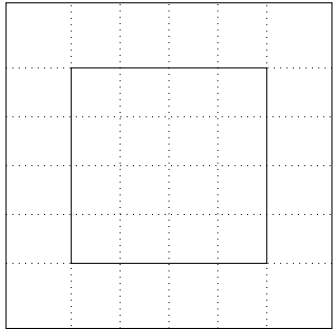
\begin{figure}
\begin{center}
\begin{tikzpicture}[scale=1.3]
\def\r{1cm}
\def\N{4}
\def\R{1.6666cm} 
\count255=\N\advance\count255 -1 \xdef\NMinusOne{\the\count255}

\draw (-\r,-\r) rectangle (\r,\r);
\draw (-\R,-\R) rectangle (\R,\R);

\foreach \i in {1,...,\NMinusOne} {
\draw [dotted] (-\R,-\r+2*\i*\r/\N) -- (\R,-\r+2*\i*\r/\N);
\draw [dotted] (-\r+2*\i*\r/\N,-\R) -- (-\r+2*\i*\r/\N,\R);
}

\draw [dotted] (-\R,-\r)--(-\r,-\r)--(-\r,-\R);
\draw [dotted] (\R,-\r)--(\r,-\r)--(\r,-\R);
\draw [dotted] (-\R,\r)--(-\r,\r)--(-\r,\R);
\draw [dotted] (\R,\r)--(\r,\r)--(\r,\R);


\end{tikzpicture}
\end{center}
\caption{The rectangles in the proof of Lemma~\ref{evlem:annulus}}
\end{figure}

\begin{lemma}\label{evlem:annulus}
Let $m$, $d\in\NN$, $d\geq 2$, $p\in[1,\infty )$, and let $j$, $k\in\NN_0$ satisfy
$0\leq j\leq k$ and $m-d/p< k\leq m$. Let $p_k=p_{m,d,k}$. Let $1<\theta\leq 2$.

Then there is a constant $C$ depending only on $p$, $d$, and~$m$ such that if $Q\subset\RR^d$ is a cube with sides parallel to the coordinate axes and $u\in W^{m,p}(\theta Q\setminus Q)$, then
\begin{equation*}
\|\nabla^j u\|_{L^{p_{k}}(\theta Q\setminus Q)}\leq C\sum_{i=j}^{m-k+j} \frac{C}{((\theta-1)|Q|^{1/d})^{m-k+j-i}} \|\nabla^i u\|_{L^p(\theta Q\setminus Q)}.\end{equation*}

If in addition $k>m-(d-1)/p$, then
\begin{equation*}
\|\nabla^j u\|_{L_t^pL_x^{\widetilde p_k}(\theta Q\setminus Q)}
\leq
C\sum_{i=j}^{m-k+j} \frac{C}{((\theta-1)|Q|^{1/d})^{m-k+j-i}} \|\nabla^i u\|_{L^p(\theta Q\setminus Q)}.\end{equation*}
\end{lemma}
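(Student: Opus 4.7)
Both inequalities will be obtained by decomposing the annulus $\theta Q\setminus Q$ into a bounded-overlap family of small axis-aligned cubes of side length $s$ comparable to $(\theta-1)|Q|^{1/d}$, applying Lemma~\ref{evlem} (respectively Lemma~\ref{evlem:t}) on each, and summing. The overall strategy is a standard reduction from an annulus to a union of cubes of the ``thin'' scale.

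Write $r=|Q|^{1/d}$ and set $s=(\theta-1)r/C_0$ for a fixed $C_0$ (say $C_0=4$). Split $\theta Q\setminus Q$ into the $2d$ natural axis-aligned rectangular slabs, each of the form ``one side of thickness $(\theta-1)r/2$, the rest of length at most $\theta r$.'' Each slab can be tiled by essentially disjoint axis-aligned cubes $\{Q_\ell\}$ of side length exactly $s$; combining the tilings of the $2d$ slabs yields a family of axis-aligned cubes that covers $\theta Q\setminus Q$, is contained in $\theta Q\setminus Q$, and has finite overlap depending only on~$d$.

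For the first inequality, apply Lemma~\ref{evlem} on each $Q_\ell$ (noting $|Q_\ell|^{1/d}=s$) to get
\[
\|\nabla^j u\|_{L^{p_k}(Q_\ell)}\leq C\sum_{i=j}^{m-k+j}s^{i-j+k-m}\|\nabla^i u\|_{L^p(Q_\ell)}.
\]
Raise to the $p_k$-th power, sum over $\ell$, and use the elementary inequality $\sum_\ell a_\ell^{p_k}\leq(\sum_\ell a_\ell^p)^{p_k/p}$ (valid for $p\leq p_k$ and $a_\ell\geq 0$) together with bounded overlap to dominate $\sum_\ell\|\nabla^i u\|_{L^p(Q_\ell)}^{p_k}$ by $C\,\|\nabla^i u\|_{L^p(\theta Q\setminus Q)}^{p_k}$. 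Since the $Q_\ell$ cover $\theta Q\setminus Q$, the left-hand side dominates $\|\nabla^j u\|_{L^{p_k}(\theta Q\setminus Q)}^{p_k}$. Taking $p_k$-th roots (using that the index set $\{j,\dots,m-k+j\}$ has cardinality depending only on $m$) and substituting $s\approx(\theta-1)r$ yields the first claimed bound.

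For the Bochner variant, begin similarly with Lemma~\ref{evlem:t} on each cube,
\[
\|\nabla^j u\|_{L_t^p L_x^{\widetilde p_k}(Q_\ell)}\leq C\sum_{i=j}^{m-k+j}s^{i-j+k-m}\|\nabla^i u\|_{L^p(Q_\ell)}.
\]
The subtle point is that $L_t^p L_x^{\widetilde p_k}$ does not simply tile: here is where I expect the only real work. Fix $t$ and let the cross-section $[\theta Q\setminus Q]^t$ be tiled by the cross-sections $\Delta_\ell$ of those $Q_\ell$ containing this height. Using $\widetilde p_k\geq p$ and the embedding $\ell^p\hookrightarrow\ell^{\widetilde p_k}$, estimate
\[
\|u(\,\cdot\,,t)\|_{L_x^{\widetilde p_k}([\theta Q\setminus Q]^t)}^{p}
=\Bigl(\sum_\ell\chi_{Q_\ell}(t)\|u(\,\cdot\,,t)\|_{L_x^{\widetilde p_k}(\Delta_\ell)}^{\widetilde p_k}\Bigr)^{p/\widetilde p_k}
\leq\sum_\ell\chi_{Q_\ell}(t)\|u(\,\cdot\,,t)\|_{L_x^{\widetilde p_k}(\Delta_\ell)}^{p},
\]
where in the last step I use that $p/\widetilde p_k\leq 1$. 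Integrating in $t$ gives $\|\nabla^j u\|_{L_t^pL_x^{\widetilde p_k}(\theta Q\setminus Q)}^p\leq\sum_\ell\|\nabla^j u\|_{L_t^pL_x^{\widetilde p_k}(Q_\ell)}^p$. Now insert the Lemma~\ref{evlem:t} bound on each $Q_\ell$, raise to the $p$-th power, sum, and use bounded overlap of the $Q_\ell$ to recover the $L^p$ norm on $\theta Q\setminus Q$ on the right-hand side. Taking $p$-th roots and substituting $s\approx(\theta-1)r$ finishes the second estimate.
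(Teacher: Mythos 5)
Your proposal follows essentially the same strategy as the paper: decompose the annulus into pieces of scale $(\theta-1)|Q|^{1/d}$, apply Lemmas~\ref{evlem} and~\ref{evlem:t} on each piece, and sum using $p\leq p_k$ (respectively $p\leq\widetilde p_k$) together with the $\ell^p$ triangle inequality. The one cosmetic difference is in the decomposition itself: the paper chooses an integer $n$ and exactly partitions $\theta Q\setminus Q$ into axis-aligned \emph{rectangles} whose side lengths are comparable but not equal (then invokes Lemmas~\ref{evlem}--\ref{evlem:t} on rectangles via a change of variables), while you cover the annulus by cubes of a fixed side length $s$ with bounded overlap. One small inaccuracy: each slab cannot in general be ``tiled by essentially disjoint'' cubes of side length exactly $s$, since $s$ need not divide the slab's longer dimensions; you must allow bounded overlap \emph{within} each slab too (e.g., laying cubes in from both ends). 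Since the rest of your argument already absorbs bounded overlap into the constant, this is a one-line fix, and both the Lebesgue and Bochner estimates go through as you outline.
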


\begin{proof}
Observe that there exists an integer $n\geq 2$ with $\frac{1}{n}\leq\frac{\theta-1}{2}< \frac{1}{n-1}\leq \frac{2}{n}$. Without loss of generality we assume that $Q$ is open. Let $I_1,
\dots,I_d$ be the $d$ open intervals that satisfy $Q=I_1\times\dots\times I_d$ . If $I_k=(a_k,b_k)$, and $r=b_k-a_k=|Q|^{1/d}$, define the $d(n+2)$ intervals $I_{k,j}$ by
\begin{gather*}I_{k,0}=\biggl(a_k-\frac{\theta-1}{2}r,a_k\biggr),
\quad I_{k,n+1}=\biggl(b_k,b_k+\frac{\theta-1}{2}r\biggr),
\\ I_{k,j}=\biggl(a_k+\frac{j-1}{n}r,a_k+\frac{j}{n}r\biggr)\text{ if } 1\leq j\leq n. \end{gather*}
Let $G=\{I_{1,j_1}\times I_{2,j_2}\times\dots \times I_{d,j_d}:j_k\in \{0,1,\dots,n+1\}\}$, and let $H\subset G$ be given by $H=\{I_{1,j_1}\times I_{2,j_2}\times\dots \times I_{d,j_d}:j_k\in \{1,\dots,n\}\}$. Up to a set of measure zero,
\begin{equation*}\theta Q=\bigcup_{R\in G} R,\qquad Q=\bigcup_{R\in H} R.\end{equation*}
Furthermore, the rectangles in $G$ are pairwise disjoint. If $R\in G$ then the shortest side of $R$ is at least $r/n$ and the longest side is at most $(\theta-1)r/2<2r/n$. A change of variables argument shows that Lemmas~\ref{evlem} and~\ref{evlem:t} are valid in $R$ with uniformly bounded constants.

Suppose $m-(d-1)/p<k<m$.
If $\Omega\subseteq\RR^d$, recall that $[\Omega]^t=\{x\in\RR^{d-1}:(x,t)\in \Omega\}$.
Then
\begin{align*}
\|\nabla^j u\|_{L_t^p L_x^{\widetilde p_k}(\theta Q\setminus Q)}
&=
\biggl(
\int_{-\infty }^\infty
\biggl(\int_{[\theta Q\setminus Q]^t} |\nabla^j u(x,t)|^{\widetilde p_k} \,dx\biggr)^{p/\widetilde p_k}
dt
\biggr)^{1/p}
\\&=
\biggl(
\int_{-\infty }^\infty
\biggl(
\sum_{R\in G\setminus H}
\int_{[R]^t} |\nabla^j u(x,t)|^{\widetilde p_k} \,dx\biggr)^{p/\widetilde p_k}
dt
\biggr)^{1/p}
.\end{align*}
Because $p/\widetilde p_k \leq 1$, we have that
\begin{align*}
\|\nabla^j u\|_{L_t^p L_x^{\widetilde p_k}(\theta Q\setminus Q)}
&\leq
\biggl(
\sum_{R\in G\setminus H}
\int_{-\infty }^\infty
\biggl(
\int_{[R]^t} |\nabla^j u(x,t)|^{\widetilde p_k} \,dx\biggr)^{p/\widetilde p_k}
dt
\biggr)^{1/p}
\\&=
\biggl(
\sum_{R\in G\setminus H}
\|\nabla^j u\|_{L_t^p L_x^{\widetilde p_k}(R)}^p\biggr)^{1/p}
.\end{align*}
By Lemma~\ref{evlem:t} in rectangles,
\begin{align*}
\|\nabla^j u\|_{L_t^p L_x^{\widetilde p_k}(\theta Q\setminus Q)}
&\leq
\biggl(
\sum_{R\in G\setminus H}
\biggl(\sum_{i=j}^{m-k+j} \frac{C}{((\theta-1)r)^{m-k+j-i}} \|\nabla^i u\|_{L^p(R)}\biggr)^p\biggr)^{1/p}
.\end{align*}
By the triangle inequality in the sequence space $\ell^p$,
\begin{align*}
\|\nabla^j u\|_{L_t^p L_x^{\widetilde p_k}(\theta Q\setminus Q)}
&\leq
\frac{C}{((\theta-1)r)^{m-k+j-i}}\sum_{i=j}^{m-k+j}
\biggl(
\sum_{R\in G\setminus H}
\bigl( \|\nabla^i u\|_{L^p(R)}\bigr)^p\biggr)^{1/p}
\\&=
\frac{C}{((\theta-1)r)^{m-k+j-i}}\sum_{i=j}^{m-k+j}
\|\nabla^i u\|_{L^p(\theta Q\setminus Q)}
.\end{align*}
A similar (and simpler) argument establishes the bound on $\|\nabla^j u\|_{L^{p_k}(\theta Q\setminus Q)}$.
\end{proof}

Lemma~\ref{evlem:annulus} generalizes the Gagliardo-Nirenberg-Sobolev inequality to thin annuli. We remark on the presence of the term $\theta-1$ in the denominator of the right-hand side. In a thin annulus, this term is potentially very small and so Lemma~\ref{evlem:annulus} yields a poor bound.

The following lemma allows us to bound a function $u$ in an annulus by its gradient, without powers of $(\theta-1)$. We observe that the following lemma is a special case of the Poincar\'e inequality and not of the Gagliardo-Nirenberg-Sobolev inequality; that is, we do not gain higher integrability (a higher power of~$u$) on the left-hand side. We will use both Lemma~\ref{evlem:annulus} and Lemma~\ref{lem:annulus} in different contexts.

\begin{lemma}\label{lem:annulus}
Let $d\geq 2$ be an integer and let $1\leq p<\infty$. There is a constant $C=C_{d,p}$ depending only on $d$ and~$p$ such that if $Q\subset\R^\dmn$ is a cube, $1<\theta\leq 2$, and $u\in W^{1,p}(\theta Q\setminus Q)$, then
\begin{equation*}\int_{\theta Q\setminus Q} \bigl|u-{\textstyle\fint_{\theta Q\setminus Q}u}\bigr|^p\leq C_{d,p}|Q|^{p/d}\int_{\theta Q\setminus Q} |\nabla u|^p.\end{equation*}
\end{lemma}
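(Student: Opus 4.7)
By the dilation $x \mapsto x/|Q|^{1/d}$ I may reduce to the case $Q = [0,1]^d$, so that the annulus becomes $[-a, 1+a]^d \setminus [0,1]^d$ with $a := (\theta-1)/2 \in (0, 1/2]$; the rescaling restores the factor $|Q|^{p/d}$ at the end. The key challenge is that the annulus becomes arbitrarily thin as $\theta \to 1^+$, yet its diameter stays on the order of $|Q|^{1/d}$, and the Poincar\'e constant must remain bounded independently of this thinness. (Note that this is a Poincar\'e-type, not a Gagliardo--Nirenberg--Sobolev, inequality: no gain in integrability is asserted.)

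My plan is a decomposition-and-chaining argument. I decompose $\theta Q \setminus Q$ into the $2d$ face slabs $S_i^+ = \{x \in \theta Q : x_i \geq 1\}$ and $S_i^- = \{x \in \theta Q : x_i \leq 0\}$ for $i = 1, \ldots, d$. Each slab is a rectangular box with one short side of length $a$ and $d-1$ long sides of length at most $1 + 2a \leq 2$, hence convex with diameter at most $2\sqrt{d}$. Applying the classical Poincar\'e inequality for convex domains to each slab gives
\[\int_{S_i^\pm} |u - (u)_{S_i^\pm}|^p \leq C_d \int_{S_i^\pm} |\nabla u|^p\]
with constant depending only on $d$ and $p$, crucially independent of $a$.

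To pass from local to global, I compare the various slab averages to $(u)_{\theta Q \setminus Q}$. For two adjacent slabs $S$ and $S'$ whose intersection $S \cap S'$ is a nonempty convex rectangle (again of diameter at most $2\sqrt d$), the identity
\[(u)_S - (u)_{S'} = \fint_{S \cap S'}\bigl(u - (u)_{S'}\bigr) - \fint_{S \cap S'}\bigl(u - (u)_S\bigr)\]
combined with H\"older's inequality and the local Poincar\'e estimates yields a bound of the form $|(u)_S - (u)_{S'}| \leq C\,|S \cap S'|^{-1/p} \|\nabla u\|_{L^p(S \cup S')}$. Chaining such estimates through the finitely many slabs lets me compare every $(u)_{S_i^\pm}$ to $(u)_{\theta Q \setminus Q}$. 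I then use the triangle inequality $|u - (u)_{\theta Q \setminus Q}|^p \leq 2^{p-1}|u - (u)_{S_i^\pm}|^p + 2^{p-1}|(u)_{S_i^\pm} - (u)_{\theta Q \setminus Q}|^p$, integrate over each slab, and sum over $i$ and the two signs, using that $|\theta Q \setminus Q| \sim a$ to balance the factors of $a$ introduced by the chain.

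The main obstacle is the chaining step: in dimensions $d \geq 3$, the pairwise corner overlaps $|S_i^+ \cap S_j^+|$ have measure $\sim a^2$ (two thin directions) while the slab measures are $\sim a$, so a single-hop chain introduces loss factors that are not obviously absorbed by the small total measure $|\theta Q \setminus Q| \sim a$. To overcome this I either chain through multiple intermediate slabs, exploiting the richer connectivity of the $(d-1)$-dimensional annular shell (effectively averaging over parallel paths so that the effective overlap is a positive fraction of each piece), or equivalently invoke the fact that $\theta Q \setminus Q$ is a uniform (Jones) domain with uniformity constants bounded independently of $\theta \in (1,2]$, from which the Boman chain-condition theorem yields the Poincar\'e inequality with constant depending only on $d$ and $p$. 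Either way, careful bookkeeping shows all powers of $a$ cancel, and undoing the rescaling restores the factor $|Q|^{p/d}$.
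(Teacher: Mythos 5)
Your decomposition-and-chaining strategy is genuinely different from the paper's, and you correctly identify the central obstacle, but the resolutions you offer do not close the gap.

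The most concrete problem is the claim that ``$\theta Q\setminus Q$ is a uniform (Jones) domain with uniformity constants bounded independently of $\theta\in(1,2]$.'' This is false. If $x$ and $y$ lie on opposite faces of the annulus, then $|x-y|\approx\diam Q$, while any curve $\gamma\subset\theta Q\setminus Q$ joining them has, at its midpoint $z$, both $\min\bigl(\mathrm{length}(\gamma_{xz}),\mathrm{length}(\gamma_{zy})\bigr)\gtrsim\diam Q$ and $\dist(z,\partial(\theta Q\setminus Q))\lesssim(\theta-1)\,\diam Q$. Thus the uniformity (and John, and Boman-chain) constants degenerate as $\theta\to 1^{+}$, and the Poincar\'e inequality with a $\theta$-independent constant cannot be deduced by citing the Boman chain theorem. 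The inequality is nonetheless true, but it lives outside the reach of that machinery. The first alternative you mention (``averaging over parallel paths'') is closer to a workable idea, but as sketched it is not a proof: with the face-slab decomposition, the corner overlaps of measure $\sim(\theta-1)^2|Q|$ remain the bottleneck, and the hand-waving about ``effective overlap being a positive fraction'' does not explain how the necessary factors of $(\theta-1)$ are absorbed. A rigorous chaining proof can be written (using a covering by cubes of side $\sim(\theta-1)|Q|^{1/d}$, \emph{pairwise} geodesic chains rather than chains to a single central cube, and a count showing each cube lies on $\lesssim(\theta-1)^{-d}|Q|^{-1}$ chains), but this is substantially more work than your outline suggests and is not what your proposal actually does.

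The paper's proof sidesteps all of this via an explicit change of variables. It builds a map $\psi\colon 2Q\setminus Q\to\theta Q\setminus Q$ that rescales only in the ``radial'' direction (via the reparametrization $r(t)$ of the level sets of $\rho(X)=2\max_i|X_i|$). The Jacobian \emph{matrix} of $\psi$ is bounded by $1+2(\theta^d-1)\leq C_d$ independently of $\theta$, while the volume Jacobian factor $(\theta^d-1)/(2^d-1)$ appears identically on the left side (from pushing forward $|u-\bar u|^p$) and the right side (from pulling back $|\nabla u|^p$), so it cancels. This reduces the lemma to the ordinary Poincar\'e inequality on the \emph{fixed} domain $2Q\setminus Q$, with no chaining at all. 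If you want to salvage a decomposition proof you will need to replace the Jones/Boman citation with a genuine argument; otherwise, the change-of-variables route is considerably cleaner here.
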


\begin{proof}
We restrict to the case $|Q|=1$ and where the midpoint of~$Q$ is the origin (that is, the case $Q=(-1/2,1/2)^\dmn$); rescaling and translating yields the general case.

Let $\rho(X)=2\max\{X_1,\dots,X_d\}$. Thus, if $X\in\RR^d$, then $\rho(X)$ is the unique real number with $X\in\partial(\rho(X)\,Q)$. Observe that $\rho$ is a Lipschitz function with $|\nabla\rho|=2$ almost everywhere and with $\rho(X)\leq 2|X|\leq \sqrt{d}\rho(X)$.
Define
\begin{equation*}r(t)=\biggl(\frac{\theta^d-1}{2^d-1}t^d+ \frac{2^d-\theta^d}{2^d-1}\biggr)^{1/d}.
\end{equation*}
Observe that $r(1)=1$, $r(2)=\theta$, $r$ is increasing, $r(t)/t$ is decreasing, and $r(t)^{d-1}\,r'(t) = \frac{\theta^d-1}{2^d-1} t^{d-1}$. In particular, if $1\leq t\leq 2$ then $0<r'(t)\leq \theta^d-1$.

Let $\psi(X)= X\,r(\rho(X))/\rho(X)$. Then $\psi$ is a bilipschitz change of variables $\psi:2Q\setminus Q\to \theta Q\setminus Q$.

If $f\in L^1(2 Q\setminus Q)$, then
\begin{align*}\int_{2Q\setminus Q} f
&= \frac{1}{2}
\int_1^2 \int_{\partial(t Q)} f(X)\,d\sigma(X)\,dt
\end{align*}
where $\sigma$ denotes $d-1$-dimensional Hausdorff measure (that is, surface measure on the boundary of the cube~$tQ$). In particular, letting $f=g\circ\psi$ and making the change of variables $X=tY$ in the inner integral, we have that
\begin{align*}\int_{2Q\setminus Q} g\circ\psi
&= \frac{1}{2}
\int_1^2 t^{d-1}\int_{\partial Q} g\circ\psi(tY)\,d\sigma(Y)\,dt
.\end{align*}
If $Y\in \partial Q$, then $\rho(tY)=t$ and so $\psi(tY)=r(t)\,Y$. Thus
\begin{align*}\int_{2Q\setminus Q} g\circ\psi
&= \frac{1}{2}
\int_1^2 \int_{\partial Q} g(r(t)\,Y)\,d\sigma(Y)\,t^{d-1}\,dt
.\end{align*}
Applying our above formula for~$r'(t)$,
\begin{align*}\int_{2Q\setminus Q} g\circ\psi
&= \frac{2^d-1}{2(\theta^d-1)}\int_1^2 \int_{\partial Q} g(r(t)\,X)\,d\sigma(X)\,r(t)^{d-1}\,r'(t)\,dt
.\end{align*}
Using the chain rule of single variable calculus and reversing our above arguments,
\begin{align*}\int_{2Q\setminus Q} g\circ\psi
&= \frac{2^d-1}{2(\theta^d-1)}\int_1^\theta \int_{\partial Q} g(r\,X)\,d\sigma(X)\,r^{d-1}\,dr
\\&= \frac{2^d-1}{2(\theta^d-1)}\int_1^\theta \int_{\partial (rQ)} g(X)\,d\sigma(X)\,dr
\\&= \frac{2^d-1}{\theta^d-1}\int_{\theta Q\setminus Q} g
.\end{align*}
We will apply this argument to $g=u$ and to $g=|u|^p$. In particular,
\begin{align*}
\fint_{\theta Q\setminus Q} u
&= \frac{1}{|\theta Q\setminus Q|}\int_{\theta Q\setminus Q} u
=\frac{\theta^d-1}{(2^d-1)|\theta Q\setminus Q|}
\int_{2Q\setminus Q} u\circ\psi
=
\fint_{2Q\setminus Q} u\circ\psi
.\end{align*}

We also need to integrate the gradient.
Let $J_\psi$ be the Jacobian matrix for the change of variables $\psi$, so that $\nabla (u\circ\psi)=(J_\psi\nabla u)\circ\psi$. If $X\in 2Q\setminus Q$, then
\begin{equation*}\left|\frac{\partial \psi_j}{\partial X_k}\right|
=\left|\frac{r(\rho(X))}{\rho(X)} \delta_{jk}
+X_j \frac{r'(\rho(X)) \,\rho(X)-r(\rho(X))}{\rho(X)^2}\partial_k\rho(X)\right|
\leq
1+2(\theta^d-1)
\end{equation*}
and so $J_\psi$ is a bounded matrix. Thus,
\begin{align*}
\biggl(\int_{2Q\setminus Q} |\nabla (u\circ\psi)|^p\biggr)^{1/p}
&=\biggl(\int_{2Q\setminus Q} |(J_\psi\nabla u)\circ\psi|^p\biggr)^{1/p}
\\&\leq C_{d}\biggl(\int_{2Q\setminus Q} |(\nabla u)\circ\psi|^p\biggr)^{1/p}
.\end{align*}

Now, 
\begin{align*}
\int_{\theta Q\setminus Q} |u-{\textstyle\fint_{\theta Q\setminus Q}u}|^p
&=\frac{\theta^d-1}{2^d-1} \int_{2 Q\setminus Q} |u\circ\psi-{\textstyle\fint_{2 Q\setminus Q}u\circ\psi}|^p
\\&\leq C_{d,p}\frac{\theta^d-1}{2^d-1} \int_{2Q\setminus Q} |\nabla (u\circ\psi)|^p
\\&\leq C_{d,p}\frac{\theta^d-1}{2^d-1}\int_{2Q\setminus Q} |(\nabla u)\circ\psi|^p
= C_{d,p}\int_{\theta Q\setminus Q} |\nabla u|^p
.\end{align*}
Thus the Poincar\'e inequality holds in an annulus with constant independent of~$\theta$.
\end{proof}

\subsection{Sobolev norms and cutoff functions}

\label{sec:cutoff}

A particular application of Lemmas~\ref{evlem:annulus} and~\ref{lem:annulus} is the following result concerning smooth cutoff functions.

\begin{lemma}\label{uchi}
Let $m$, $d\in\NN$, $d\geq 2$, and let $1\leq p<\infty$. There is a constant $C$ depending on $m$, $d$ and $p$ with the following significance.

Let $Q\subset\RR^d$ be a cube and let $1<\theta\leq 2$. Let $\chi\in C^\infty _c(\RR^d)$ be a test function supported in~$\theta Q$ and identically equal to~$1$ in~$Q$, with $0\leq \chi\leq 1$.
Define $X=\max_{1\leq i\leq d} (\theta-1)^i|Q|^{i/d}\|\nabla^i \chi\|_{L^\infty (Q)}$.

If $u\in W^{m,p}(\theta Q)$ (equivalently, if $u\in Y^{m,p}(\theta Q)$), and if we extend $u\chi$ by zero outside of~$\theta Q$, then ${u}\chi\in Y^{m,p}(\RR^\dmn)$ and
\begin{align*}
\|{u}\chi\|_{Y^{m,p}(\RR^d)}
&\leq \|u\|_{Y^{m,p}(\theta Q)}+
\sum_{i=0}^{m-1} \frac{CX}{((\theta-1)|Q|^{1/d})^{m-i}} \|\nabla^i u\|_{L^p(\theta Q\setminus Q)}
.\end{align*}

\end{lemma}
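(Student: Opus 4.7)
The plan is a standard Leibniz-rule argument combined with the annular estimate of Lemma~\ref{evlem:annulus}. To start, since $\chi \in C^\infty_c(\R^d)$ is supported in $\theta Q$ and $u \in W^{m,p}(\theta Q)$, the product $u\chi$ extended by zero outside $\theta Q$ is a compactly supported $W^{m,p}(\R^d)$-function; by Corollary~\ref{cor:GNS:global} (applied to this compactly supported function) it then defines an element of $Y^{m,p}(\R^d)$, so it remains only to estimate its $Y^{m,p}$-norm.

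To bound the norm, I would fix a multiindex $\alpha$ with $m-d/p<|\alpha|\leq m$ and apply the Leibniz rule to write
\[
\partial^\alpha(u\chi) = \chi\,\partial^\alpha u + \sum_{\gamma<\alpha}\binom{\alpha}{\gamma}\,\partial^\gamma u\,\partial^{\alpha-\gamma}\chi.
\]
The diagonal term satisfies $\|\chi\,\partial^\alpha u\|_{L^{p_\alpha}(\R^d)} \leq \|\partial^\alpha u\|_{L^{p_\alpha}(\theta Q)}$ since $|\chi|\leq 1$, and summing over all admissible~$\alpha$ yields precisely the $\|u\|_{Y^{m,p}(\theta Q)}$ term on the right-hand side of the stated inequality.

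For each off-diagonal term, the factor $\partial^{\alpha-\gamma}\chi$ vanishes on $Q$ and outside $\theta Q$, so the product is supported in the annulus $\theta Q\setminus Q$. The definition of $X$ gives $\|\partial^{\alpha-\gamma}\chi\|_{L^\infty(\R^d)}\leq CX/((\theta-1)|Q|^{1/d})^{|\alpha|-|\gamma|}$. Combining this with Lemma~\ref{evlem:annulus} applied with $j=|\gamma|$ and $k=|\alpha|$ (so that $p_k = p_\alpha$ and the hypothesis $k>m-d/p$ is inherited from the range of~$\alpha$) produces
\[
\|\partial^\gamma u\,\partial^{\alpha-\gamma}\chi\|_{L^{p_\alpha}(\R^d)} \leq \sum_{i=|\gamma|}^{m-|\alpha|+|\gamma|}\frac{CX}{((\theta-1)|Q|^{1/d})^{m-i}}\|\nabla^i u\|_{L^p(\theta Q\setminus Q)},
\]
where the crucial arithmetic is that the exponents telescope as $(|\alpha|-|\gamma|) + (m-|\alpha|+|\gamma|-i) = m-i$. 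Because $\gamma<\alpha$ forces $|\gamma|\leq|\alpha|-1$ and $|\alpha|\leq m$, the summation index $i$ lies in $[0,m-1]$, so only $\nabla^0 u,\dots,\nabla^{m-1}u$ appear in the error term, exactly as required.

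The proof is largely bookkeeping; the only genuine subtlety is that when $m>d$, the literal formula for $X$ controls $\|\nabla^i\chi\|_{L^\infty}$ only for $i\leq d$, so one must either interpret $X$ as controlling these derivatives for all $i\leq m$ or verify that the stated bound is used only for cutoffs of the standard form (built, e.g., by convolving the indicator of a slightly smaller cube with a mollifier), for which the analogous scaling for $i\leq m$ is automatic. With that understood, summing the diagonal and off-diagonal contributions over all $\alpha$ with $m-d/p<|\alpha|\leq m$ (a finite sum, absorbed into the constant $C$) gives the asserted bound.
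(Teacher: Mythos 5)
Your proof is correct and follows essentially the same route as the paper's: expand $\partial^\alpha(u\chi)$ by the Leibniz rule, isolate the top term $\chi\,\partial^\alpha u$ to recover $\|u\|_{Y^{m,p}(\theta Q)}$, and control each off-diagonal term (supported in the annulus) via the definition of $X$ together with Lemma~\ref{evlem:annulus}, exploiting the telescoping of exponents $(|\alpha|-|\gamma|)+(m-|\alpha|+|\gamma|-i)=m-i$. Your aside about the range of $i$ in the definition of $X$ when $m>d$ is a fair observation about the statement itself (the paper implicitly uses the bound $|\nabla^{k-j}\chi|\lesssim X((\theta-1)|Q|^{1/d})^{j-k}$ for $k-j$ up to $m$), but it does not affect the substance of your argument.
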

\begin{proof}

We begin by using the definition of the $Y^{m,p}$-norm  and the Leibniz rule.
\begin{align*}\|{u}\chi\|_{Y^{m,p}(\RR^d)}
&=\sum_{m-d/p<k\leq m}\|\nabla^k({u}\chi)\|_{L^{p_k}(\RR^d)}
\\&\leq \sum_{m-d/p<k\leq m}\biggl(\int_{\RR^d}\biggl(\sum_{j=0}^k C_{j,k}|\nabla^{k-j}\chi|\,|\nabla^j{u}|\biggr)^{p_k}\biggr)^{1/p_k}.
\end{align*}
Observe that $C_{k,k}=1$. By definition of~$X$ and isolating the $j=k$ terms,
\begin{align*}
\|{u}\chi\|_{Y^{m,p}(\RR^d)}
&\leq
\sum_{m-d/p<k\leq m}\biggl(\int_{\theta Q}
|\nabla^{k}{u}|^{p_k}\biggr)^{1/p_k}
\\&\qquad+
C\sum_{m-d/p<k\leq m}\biggl(\int_{\theta Q\setminus Q}\biggl(\sum_{j=0}^{k-1}
X(\theta-1)^{j-k}|Q|^{(j-k)/d}
|\nabla^{j}{u}|\biggr)^{p_k}\biggr)^{1/p_k}
\\
&\leq \|u\|_{Y^{m,p}(\theta Q)}+
\sum_{m-d/p<k\leq m}\sum_{j=0}^{k-1}
\frac{CX}{
((\theta-1)|Q|^{1/d})^{k-j}}
\bigg(\int_{\theta Q\setminus Q}| \nabla^{j}{u}|^{p_k}\bigg)^{1/p_k}
.\end{align*}
By Lemma~\ref{evlem:annulus},
\begin{align*}
\|{u}\chi\|_{Y^{m,p}(\RR^d)}
&\leq \|u\|_{Y^{m,p}(\theta Q)}+
\sum_{i=0}^{m-1} \frac{CX}{((\theta-1)|Q|^{1/d})^{m-i}} \|\nabla^i u\|_{L^p(\theta Q\setminus Q)}
.\end{align*}
This completes the proof.
\end{proof}

\section{The Caccioppoli inequality}\label{CacSec}

The Caccioppoli inequality was established first by Caccioppoli in the early twentieth century and is a foundational result used throughout the theory of second order divergence form equations. It has been generalized to the case of second order operators with lower order terms in \cite{DavHM18}, and of higher order equations (without lower order terms) first in \cite{Cam80}, and later with some refinements in \cite{AusQ00,Bar16}.

We now generalize these results to the case of higher order equations with lower order terms.
We will follow \cite{AusQ00} and derive a Caccioppoli inequality for equations that satisfy the weak Gårding inequality~\eqref{wgi} (and not necessarily the stronger Gårding inequality~\eqref{gi}).
We will follow \cite{Cam80} and establish the Caccioppoli inequality for solutions $\vec u$ to inhomogeneous equations $L\vec{u}=T$ for a (possibly nonzero) element~$T$ of $Y^{-m,p}$.

We begin with the following lemma. This lemma was proven first in \cite{Cam80} for operators of order~$2m$ without lower order terms.

\begin{lemma}\label{CacLem}
Let $L$ be an operator of order $2m$ of the form~\eqref{dfn:L} associated to coefficients $A$ that satisfy the weak G\aa{}rding inequality \eqref{wgi} and either the bound \eqref{eqn:intro:bound} or the bound~\eqref{eqn:intro:bound:Bochner}.

Let  $Q\subset\RR^d$ be an open cube with sides parallel to the coordinate axes, and let $1<\theta\leq 2$.
Let $\vec u\in W^{m,2}(\theta Q)$. Let $T\in Y^{-m,2}(\theta Q)$.
Suppose that $L\vec u=T$ in $\theta Q$ in the sense of formula~\eqref{dfn:L}.

Then we have that
\begin{equation*}
\int_{Q}|\nabla^m\vec{u}|^2\leq \sum_{k=0}^{m-1}\frac{C}{((\theta-1)|Q|^{1/d})^{2m-2k}}
\int_{\theta Q\setminus Q}|\nabla^k\vec{u}|^2
+C\delta\int_{\theta Q}|\vec{u}|^2
+C\|T\|^2
\end{equation*}
where  $C$ is a constant depending on the dimension $d$, the order $2m$ of $L$, the number $\lambda$ in the bound~\eqref{wgi}, and the number $\Lambda$ in the bound~\eqref{eqn:intro:bound} or~\eqref{eqn:intro:bound:Bochner}. Here $\|T\|=\|T\|_{Y^{-m,2}(\theta Q)}$ is the operator norm, that is, the smallest number such that $|\langle\vec\psi,T\rangle|\leq \|\vec\psi\|_{Y^{m,2}(\theta Q)} \|T\|$ for all $\vec\psi\in Y^{m,2}_0(\theta Q)$.
\end{lemma}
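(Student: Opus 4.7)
The plan is to follow the test-function strategy of Campanato \cite{Cam80} and Auscher--Qafsaoui \cite{AusQ00}, modified for lower-order terms as in \cite{DavHM18, BorHLMP20p}. First I would fix a standard cutoff $\chi\in C^\infty_c(\theta Q)$ with $\chi\equiv 1$ on $Q$ and $\|\nabla^i\chi\|_{L^\infty}\leq C_i((\theta-1)|Q|^{1/d})^{-i}$. By Lemma~\ref{uchi}, $\vec u\chi^m$ (extended by zero) lies in $Y^{m,2}_0(\theta Q)$, and hence by density is admissible as $\vec\varphi$ in the weak G\aa{}rding inequality~\eqref{wgi}. This yields
\[
\lambda\,\|\nabla^m(\vec u\chi^m)\|_{L^2(\RR^d)}^2
\leq \text{Re}\,B[\vec u\chi^m,\vec u\chi^m] + \delta\,\|\vec u\chi^m\|_{L^2(\RR^d)}^2,
\]
where $B$ denotes the bilinear form associated with $L$ via formula~\eqref{dfn:L}.

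Next I would expand $B[\vec u\chi^m,\vec u\chi^m]$ by the Leibniz rule on both factors $\partial^\alpha(u_j\chi^m)$ and $\partial^\beta(u_k\chi^m)$, isolating the \emph{principal term} in which all $\alpha$- and $\beta$-derivatives land on $\vec u$. A second Leibniz expansion identifies this principal term with $B[\vec u\chi^{2m},\vec u]=\langle L\vec u,\vec u\chi^{2m}\rangle = \langle T,\vec u\chi^{2m}\rangle$, modulo further cross-terms. All cross-terms produced in this way have the generic shape
\[
\int_{\RR^d}\overline{\partial^\gamma u_j}\,A^{j,k}_{\alpha,\beta}\,\partial^\delta u_k\cdot\Psi^{\alpha,\beta}_{\gamma,\delta},
\]
where $\Psi^{\alpha,\beta}_{\gamma,\delta}$ is a derivative of $\chi^m$ or $\chi^{2m}$ of order $2m-|\gamma|-|\delta|\geq 1$. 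Since $\chi\equiv 1$ on $Q$, these $\Psi$'s are supported in $\theta Q\setminus Q$ and satisfy $\|\Psi^{\alpha,\beta}_{\gamma,\delta}\|_{L^\infty}\leq C((\theta-1)|Q|^{1/d})^{-(2m-|\gamma|-|\delta|)}$, precisely matching the weight in the target estimate.

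Each cross-term I would then estimate by H\"older's inequality using the coefficient bound~\eqref{eqn:intro:bound} or~\eqref{eqn:intro:bound:Bochner}. Under~\eqref{eqn:intro:bound}, the exponent triple $(2_{\alpha,\beta},2_\gamma,2_\delta)$ satisfies $1/2_{\alpha,\beta}+1/2_\gamma+1/2_\delta = 1+(|\gamma|+|\delta|-|\alpha|-|\beta|)/d\leq 1$, with the slack absorbed by the finite measure of $\theta Q\setminus Q$; the resulting Lebesgue norms of $\partial^\gamma u_j$ and $\partial^\delta u_k$ on the annulus are then controlled by $L^2$-norms of derivatives of $\vec u$ there via the annular Gagliardo--Nirenberg--Sobolev inequality of Lemma~\ref{evlem:annulus}. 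Under~\eqref{eqn:intro:bound:Bochner} the same estimate is carried out slicewise using Lemma~\ref{evlem:t}. The principal piece is bounded by $|\langle T,\vec u\chi^{2m}\rangle|\leq \|T\|\cdot\|\vec u\chi^{2m}\|_{Y^{m,2}(\theta Q)}$, and Lemma~\ref{uchi} controls the second factor by $\|\nabla^m\vec u\|_{L^2(\theta Q)}$ plus lower-order annular terms. Young's inequality with a small parameter $\varepsilon$ converts this into a $C(\varepsilon)\|T\|^2$ contribution together with an $\varepsilon\,\|\nabla^m\vec u\|_{L^2(\theta Q)}^2$ contribution.

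To conclude, a standard Campanato iteration on a chain of nested cubes $Q\subset Q_1\subset\cdots\subset\theta Q$ (compare \cite{Cam80, AusQ00, Bar16}) absorbs the $\varepsilon\|\nabla^m\vec u\|_{L^2(\theta Q)}^2$ piece into the $\lambda\|\nabla^m\vec u\|_{L^2(Q)}^2$ lower bound from the G\aa{}rding step, yielding the stated inequality. The main obstacle is the combinatorial bookkeeping of the double Leibniz expansion coupled with the verification that the unusual H\"older triple $(2_{\alpha,\beta},2_\gamma,2_\delta)$ forced by the lower-order coefficients interacts correctly with the annular Gagliardo--Nirenberg--Sobolev inequality, so that every surviving error integral is bounded by $((\theta-1)|Q|^{1/d})^{-(2m-2k)}\|\nabla^k\vec u\|^2_{L^2(\theta Q\setminus Q)}$ for some $0\leq k\leq m-1$; in the degenerate case $|\gamma|\leq m-d/2$ one must replace the exponent $2_\gamma$ by a Morrey-type $L^\infty$ bound on the annulus. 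This is the essentially new content compared with the arguments of \cite{Cam80, AusQ00, Bar16}, which only admit lower-order coefficients in $L^\infty$ (or none at all).
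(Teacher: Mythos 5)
Your proposal captures the right overall strategy (cutoff test function, double Leibniz expansion, G\aa{}rding, H\"older plus annular Gagliardo--Nirenberg--Sobolev, Young), and it would lead to a correct proof, but it takes a more circuitous route than the paper in two specific places.

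First, when you estimate the principal term $\langle T,\vec u\chi^{2m}\rangle$, you apply Lemma~\ref{uchi} to $\vec u$ with cutoff $\chi^{2m}$, so the Young byproduct is $\varepsilon\|\nabla^m\vec u\|_{L^2(\theta Q)}^2$. Since this quantity lives on a larger set than the G\aa{}rding quantity $\|\nabla^m(\vec u\chi^m)\|_{L^2}^2$, you are forced to invoke a Campanato hole-filling iteration to absorb it. The paper sidesteps this: it tests with $\varphi^{4m}\vec u$, symmetrizes the form to $\int\partial^\alpha(\varphi^{2m}\bar u)\,A\,\partial^\beta(\varphi^{2m}u)$, and then applies Lemma~\ref{uchi} to the \emph{compactly supported} function $\varphi^{2m}\vec u$ with cutoff $\varphi^{2m}$, followed by Corollary~\ref{cor:GNS:global}, so that the Young byproduct is precisely $\frac{\lambda}{4}\|\varphi^{2m}\vec u\|_{\dot W^{m,2}(\theta Q)}^2$. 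This matches the G\aa{}rding lower bound and can be subtracted directly; no iteration at all is needed in Lemma~\ref{CacLem} (the Campanato iteration, in the form of Theorem~\ref{CacTh}, enters only later, in Corollary~\ref{HoC}, and for the different purpose of removing intermediate derivatives). Your iteration does work, but it is additional machinery that the paper's sharper choice of auxiliary function renders unnecessary.

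Second, in the cross-term estimate you propose the H\"older triple $(2_{\alpha,\beta},\,2_\gamma,\,2_\delta)$ and observe, correctly, that this fails when $|\gamma|\le m-d/2$, requiring a separate Morrey-type $L^\infty$ argument on the annulus. The paper avoids this dichotomy entirely by using the exponent $2_\alpha$ (resp.\ $\widetilde 2_\alpha$ in the Bochner case) rather than $2_\gamma$, i.e.\ the exponent attached to the \emph{coefficient index} $\alpha$, not to the derivative order $|\gamma|$. Because $\mathfrak{a},\mathfrak{b}>m-d/2$, the exponents $2_\alpha$, $\widetilde 2_\alpha$ are always well defined; the annular Gagliardo--Nirenberg--Sobolev inequality (Lemma~\ref{evlem:annulus}) then bounds $\|\partial^\zeta\vec u\|_{L^{2_\alpha}(\theta Q\setminus Q)}$ (or its slicewise Bochner variant) by $\sum_i\rho^{-(m-|\alpha|+|\zeta|-i)}\|\nabla^i\vec u\|_{L^2(\theta Q\setminus Q)}$ uniformly in $|\zeta|$, with no degenerate case. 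Your Morrey workaround is valid but is extra bookkeeping that one can avoid.

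In short, both routes reach the conclusion; the paper's choices of auxiliary function and of H\"older exponent make the proof a one-pass argument, whereas yours requires a terminal hole-filling iteration and a case split.
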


\begin{proof}
Let $\rho=((\theta-1)/2)|Q|^{1/d}$ be the distance from $Q$ to $\RR^d\setminus\theta Q$.
Let $\varphi$ be a smooth, real valued test function with $0\leq\varphi\leq 1$, supported in $\theta Q$ and identically equal to 1 on~$Q$.  We require also that $|\nabla^k\varphi|\leq C_k \rho^{-k}$ for any integer $k\geq0$.

Define $\vec{\psi}=\varphi^{4m}\vec{u}$.
Notice that by Lem\-ma~\ref{uchi}, $\vec{\psi}\in Y^{m,2}_0(\theta Q)$. Furthermore, by formula~\eqref{dfn:L},
\begin{equation}\label{first}
\sum_{j,k=1}^N\sum_{|\alpha|\leq m}\sum_{|\beta|\leq m} \int_{\theta Q} \partial^\alpha(\varphi^{4m}\overline{u}_j)\, A^{j,k}_{\alpha,\beta} \,\partial^\beta u_k
=\overline{\langle T,\varphi^{4m}\vec{u}\rangle_{\theta Q}}.
\end{equation}

We first consider the left hand side of formula~\eqref{first}.
By the Leibniz rule, and separating out the $\gamma=\alpha$ terms, we see the following.
\begin{align*}
\int_{\theta Q} \partial^\alpha(\varphi^{4m}\overline{u}_j)\,A^{j,k}_{\alpha,\beta} \, \partial^\beta u_k
&=
\int_{\theta Q} \partial^\alpha(\varphi^{2m}\overline{u}_j)\, A^{j,k}_{\alpha,\beta} \,\varphi^{2m}\,\partial^\beta u_k
\\
&\qquad+\int_{\theta Q} \sum_{\gamma<\alpha} \frac{\alpha!}{\gamma!(\alpha-\gamma)!} \, \partial^{\alpha-\gamma}(\varphi^{2m}) \, \partial^\gamma(\varphi^{2m}\overline{u}_j) \,A^{j,k}_{\alpha,\beta} \,\partial^\beta u_k
.\end{align*}
Now as in \cite{Bar16}, we write
\begin{equation}\label{7}
\sum_{\gamma<\alpha} \frac{\alpha!}{\gamma!(\alpha-\gamma)!} \partial^{\alpha-\gamma}(\varphi^{2m})\partial^\gamma(\varphi^{2m}\overline{u}_j)=\sum_{\zeta<\alpha}\varphi^{2m}\Phi_{\alpha,\zeta}\partial^\zeta\overline{u}_j
\end{equation}
for some functions $\Phi_{\alpha,\zeta}$ which are supported in $\theta Q \setminus Q$ and satisfy $|\Phi_{\alpha,\zeta}|\leq C\rho^{|\zeta|-|\alpha|}$.  Thus we have
\begin{align*}
\int_{\theta Q} \partial^\alpha(\varphi^{4m}\overline{u}_j)
\, A^{j,k}_{\alpha,\beta}\, \partial^\beta u_k
&=
\int_{\theta Q} \partial^\alpha(\varphi^{2m}\overline{u}_j)\, A^{j,k}_{\alpha,\beta}\, \varphi^{2m}\, \partial^\beta u_k
\\&\qquad
+\int_{\theta Q}\sum_{\zeta<\alpha}\Phi_{\alpha,\zeta} \, \partial^\zeta\overline{u}_j\, A^{j,k}_{\alpha,\beta}\, \varphi^{2m}\, \partial^\beta u_k.
\end{align*}
It is desirable to have our final term in terms of $\partial^\beta(\varphi^{2m}u_k)$ rather than $\varphi^{2m}\partial^\beta u_k$, so after one more application of the Leibniz rule, and writing as in formula~\eqref{7}, we have for some functions $\Psi_{\beta,\xi}$ which are supported in $\theta Q\setminus Q$ and satisfy $|\Psi_{\beta,\xi}|\leq C\rho^{|\xi|-|\beta|}$
\begin{align*}
\int_{\theta Q} \partial^\alpha(\varphi^{4m}\overline{u}_j)\, A^{j,k}_{\alpha,\beta}\, \partial^\beta u_k
&=
\int_{\theta Q}\partial^\alpha(\varphi^{2m}\overline{u}_j)\, A^{j,k}_{\alpha,\beta}\, \varphi^{2m}\, \partial^\beta u_k
\\
&
+ \int_{\theta Q} \sum_{\zeta<\alpha}\Phi_{\alpha,\zeta}\, \partial^\zeta\overline{u}_j \, A^{j,k}_{\alpha,\beta}\, \partial^\beta(\varphi^{2m}u_k)
\\
&
-\int_{\theta Q} \sum_{\zeta<\alpha}\Phi_{\alpha,\zeta}\, \partial^\zeta\overline{u}_j \, A^{j,k}_{\alpha,\beta}\sum_{\xi<\beta}\varphi^m\, \Psi_{\beta,\xi}\, \partial^\xi u_k.
\end{align*}
Similar measures as taken above also give us
\begin{align*}
\int_{\theta Q}\partial^\alpha(\varphi^{2m}\overline{u}_j)\, A^{j,k}_{\alpha,\beta}\, \partial^\beta(\varphi^{2m}u_k)
&=
\int_{\theta Q}\partial^\alpha(\varphi^{2m}\overline{u}_j)\, A^{j,k}_{\alpha,\beta}\, \sum_{\xi<\beta}\varphi^{m}\, \Psi_{\beta,\xi}\, \partial^\xi u_k
\\
&\qquad+\int_{\theta Q}\partial^\alpha(\varphi^{2m}\overline{u}_j) \, A^{j,k}_{\alpha,\beta} \, \varphi^{2m}\, \partial^\beta u_k.
\end{align*}
Thus combining the previous two equations and reintroducing summation, we see that
\begin{multline*}
\sum_{j,k=1}^N
\sum_{|\alpha|\leq m} \sum_{|\beta|\leq m}
\int_{\theta Q}\partial^\alpha(\varphi^{2m}\overline{u}_j)\, A^{j,k}_{\alpha,\beta}\, \partial^\beta(\varphi^{2m}u_k)
\\\begin{aligned}&=
\sum_{j,k=1}^N\sum_{|\alpha|\leq m} \sum_{|\beta|\leq m}
\int_{\theta Q}\partial^\alpha(\varphi^{4m}\overline{u}_j)\, A^{j,k}_{\alpha,\beta}\, \partial^\beta u_k
\\&\qquad
-\sum_{j,k=1}^N\sum_{|\alpha|\leq m} \sum_{|\beta|\leq m}
\int_{\theta Q}\sum_{\zeta<\alpha}\Phi_{\alpha,\zeta}\, \partial^\zeta\overline{u}_j\, A^{j,k}_{\alpha,\beta}\, \partial^\beta(\varphi^{2m}u_k)
\\&\qquad
+\sum_{j,k=1}^N \sum_{|\alpha|\leq m} \sum_{|\beta|\leq m}
\int_{\theta Q}\sum_{\zeta<\alpha}\Phi_{\alpha,\zeta}\, \partial^\zeta\overline{u}_j\, A^{j,k}_{\alpha,\beta}\sum_{\xi<\beta}\varphi^{m}\, \Psi_{\beta,\xi}\, \partial^\xi u_k
\\&\qquad
+\sum_{j,k=1}^N\sum_{|\alpha|\leq m} \sum_{|\beta|\leq m}
\int_{\theta Q}\partial^\alpha(\varphi^{2m}\overline{u}_j)\, A^{j,k}_{\alpha,\beta}\sum_{\xi<\beta}\varphi^{m}\, \Psi_{\beta,\xi}\, \partial^\xi u_k.
\end{aligned}
\end{multline*}
We write this as \rn{1}=\rn{2}+\rn{3}+\rn{4}+\rn{5}. Observe that by formula~\eqref{first},
\begin{equation}\label{eqn:II}
\text{\rn{2}} = \overline{\langle T,\varphi^{4m}\vec u\rangle _{\theta Q}}.\end{equation}
By the condition~\eqref{wgi}, we have that
\begin{equation*}
\lambda\|\nabla^m(\varphi^{2m}\vec{u})\|^2_{L^2({\theta Q})}
\leq \mathop{\mathrm{Re}} \text{\rn{1}}+\delta\|\varphi^{2m}\vec{u}\|^2_{L^2({\theta Q})}.
\end{equation*}

Suppose that the condition~\eqref{eqn:intro:bound:Bochner} is true.
By H\"{o}lder's inequality and properties of~$\Phi_{\alpha,\zeta}$,
\begin{align*}
|\text{\rn{3}}|
&\leq
\sum_{\substack{m-(d-1)/2<|\alpha|\leq m\\m-(d-1)/2<|\beta|\leq m}}\,
\sum_{\zeta<\alpha}
\frac{C\Lambda}{\rho^{|\alpha|-|\zeta|}}
\|\partial^\zeta \vec u\|_{L_t^2 L_x^{\widetilde 2_\alpha}({\theta Q}\setminus Q)}
\|\partial^\beta(\varphi^{2m}\vec u)\|_{L_t^2 L_x^{\widetilde 2_\beta}({\theta Q})}.
\end{align*}
Recall that $\varphi^{2m}\vec u\in Y^{m,2}_0({\theta Q})$ and so may be extended by zero to a $Y^{m,2}(\RR^d)$-function. By Corollary~\ref{cor:GNS:slices} we have that
\begin{align*}\|\partial^\beta(\varphi^{2m}\vec u)\|_{L_t^2 L_x^{\widetilde 2_\beta}({\theta Q})}
&=\|\partial^\beta(\varphi^{2m}\vec u)\|_{L_t^2 L_x^{\widetilde 2_\beta}(\RR^d)}
\\&\leq C\|\nabla^m(\varphi^{2m}\vec{u})\|_{L^2(\RR^d)}
= C\|\nabla^m(\varphi^{2m}\vec{u})\|_{L^2({\theta Q})}.\end{align*}
Summing, we see that
\begin{align*}
|\text{\rn{3}}|
&\leq
\sum_{m-d/2<|\alpha|\leq m}\,
\sum_{\zeta<\alpha}
\frac{C\Lambda}{\rho^{|\alpha|-|\zeta|}}
\|\partial^\zeta \vec u\|_{L_t^2 L_x^{\widetilde 2_\alpha}({\theta Q}\setminus Q)}
\|\varphi^{2m}\vec u\|_{\dot W^{m,2}({\theta Q})}.
\end{align*}
By Lemma~\ref{evlem:annulus},
\begin{equation*}\|\partial^\zeta \vec u\|_{L_t^2 L_x^{\widetilde 2_\alpha}({\theta Q}\setminus  Q)}
\leq
\sum_{i=|\zeta|}^{m-(|\alpha|-|\zeta|)}
\frac{C}{\rho^{m-|\alpha|+|\zeta|-i}}\|\nabla^i \vec u\|_{L^2(\theta Q\setminus Q)}.
\end{equation*}
So
\begin{align*}
|\text{\rn{3}}|
&\leq
\sum_{i=0}^{m-1}
\frac{C}{\rho^{m-i}}\|\nabla^i \vec u\|_{L^2(\theta Q\setminus Q)}
\|\varphi^{2m}\vec u\|_{\dot W^{m,2}({\theta Q})}.
\end{align*}
Applying Young's inequality, we see that
\begin{align*}
|\text{\rn{3}}|
&\leq
\sum_{i=0}^{m-1}
\frac{C}{\rho^{2m-2i}}
\|\nabla^i \vec u\|_{L^2(\theta Q\setminus Q)}^2
+\frac{\lambda}{4}
\|\varphi^{2m}u_k\|_{\dot W^{m,2}({\theta Q})}.
\end{align*}
A similar argument with the roles of $\alpha$, $\zeta$ and $\beta$, $\xi$ reversed yields the same bound on \rn{5}, while an even simpler argument yields the bound
\begin{align*}
|\text{\rn{4}}|
&\leq
\sum_{i=0}^{m-1}
\frac{C}{\rho^{2m-2i}}
\|\nabla^i \vec u\|_{L^2(\theta Q\setminus Q)}^2
.\end{align*}
The argument in the case that the condition~\eqref{eqn:intro:bound} is true is similar.

We thus have that
\begin{align*}
\lambda\|\varphi^{2m}\vec u\|_{\dot W^{m,2}({\theta Q})}^2
&\leq \mathop{\mathrm{Re}} \text{\rn{1}}+\delta\|\varphi^{2m}\vec{u}\|^2_{L^2({\theta Q})}
\\&\leq
|\text{\rn{2}}|+|\text{\rn{3}}|+|\text{\rn{4}}|+|\text{\rn{5}}| +\delta\|\varphi^{2m}\vec{u}\|^2_{L^2({\theta Q})}
\\&\leq
|\text{\rn{2}}|
+C\sum_{i=0}^{m-1} \frac{\|\nabla^i\vec{u}\|_{L^{2}(\theta Q\setminus Q)}^2}{\rho^{2m-2i}}
+\delta\|\varphi^{2m}\vec{u}\|^2_{L^2({\theta Q})}
\\&\qquad+\frac{\lambda}{2}\|\varphi^{2m}\vec{u}\|^2_{\dot W^{m,2}({\theta Q})}
.\end{align*}
Subtracting the final term and applying formula~\eqref{eqn:II} yields that
\begin{align}
\label{eqn:step}
\frac{\lambda}{2}\|\varphi^{2m}\vec u\|_{\dot W^{m,2}({\theta Q})}^2
&\leq
|\langle T, \varphi^{4m}\vec u\rangle_{\theta Q}|
+C\sum_{i=0}^{m-1} \frac{\|\nabla^i\vec{u}\|_{L^{2}(\theta Q\setminus Q)}^2}{\rho^{2m-2i}}
+\delta\|\varphi^{2m}\vec{u}\|^2_{L^2({\theta Q})}
.\end{align}
By definition of~$\|T\|$,
\begin{equation*}|{\langle T,\varphi^{4m}\vec{u}\rangle_{\theta Q}}|
\leq \|T\| \,\|\varphi^{4m}\vec{u}\|_{Y^{m,2}({\theta Q})}
.\end{equation*}
By Lemma~\ref{uchi} with $\chi=\varphi^{2m}$,
\begin{equation*}\|\varphi^{4m}\vec{u}\|_{Y^{m,2}({\theta Q})}
\leq
\|\varphi^{2m}\vec{u}\|_{Y^{m,2}({\theta Q})}
+
\sum_{i=0}^{m-1} \frac{C}{\rho^{m-i}} \|\nabla^i (\varphi^{2m}u)\|_{L^2(\theta Q\setminus Q)}
.\end{equation*}
Using the Leibniz rule and arguing as before,
\begin{align*}\|\varphi^{4m}\vec{u}\|_{Y^{m,2}(\RR^d)}
&\leq
\|\varphi^{2m}\vec u\|_{Y^{m,2}({\theta Q})}
+
C
\sum_{i=0}^{m-1} \frac{C}{\rho ^{m-i} } \|\nabla^i \vec u\|_{L^2(\theta Q\setminus Q)}
.\end{align*}
By Corollary~\ref{cor:GNS:global}, $\|\varphi^{2m}\vec u\|_{Y^{m,2}({\theta Q})}\leq C\|\varphi^{2m}\vec u\|_{\dot W^{m,2}({\theta Q})}$.
By Young's inequality and formula~\eqref{eqn:step} we have
\begin{align*}
\frac{\lambda}{2}\|\varphi^{2m}\vec u\|_{\dot W^{m,2}({\theta Q})}^2
&\leq
C\|T\|^2 +\frac{\lambda}{4}\|\varphi^{2m}\vec u\|_{\dot W^{m,2}({\theta Q})}^2
\\&\qquad+
C\sum_{i=0}^{m-1} \frac{\|\nabla^i\vec{u}\|_{L^{2}(\theta Q\setminus Q)}^2}{\rho^{2m-2i}}
+\delta\|\varphi^{2m}\vec{u}\|^2_{L^2({\theta Q})}
.\end{align*}
Subtracting the second term on the right hand side and observing that $\|\nabla^m\vec u\|_{L^2(Q)}\leq \|\varphi^{2m}\vec u\|_{\dot W^{m,2}({\theta Q})}$ completes the proof.
\end{proof}

We wish to improve the Caccioppoli inequality by removing the intermediate derivatives (that is, $\nabla^k\vec u$ for $1\leq k\leq m-1$). The following theorem was proven in \cite[Theorem~18]{Bar16} in the case of balls rather than cubes; the proof in~\cite{Bar16} carries through with the obvious modifications.
\begin{theorem}\label{CacTh}
Let $Q\subset\RR^d$ be a cube with sides parallel to the coordinate axes. Let $1<\theta\leq 2$. Suppose that $\vec u\in W^{m,2}(\theta Q)$ is a function that satisfies the inequality
\begin{equation}\label{thhyp}
\int_{\vartheta Q} |\nabla^m\vec{u}|^2\leq \sum_{k=0}^{m-1} \frac{C_0}{((\mu-\vartheta)|Q|^{1/d})^{2m-2k}} \int_{\mu Q\setminus \vartheta Q}|\nabla^k\vec{u}|^2+F
\end{equation}
whenever $0<\vartheta<\mu<\theta$, for some $F>0$.

 Then $\vec{u}$ satisfies the stronger inequality
\begin{equation*}
\int_{Q}|\nabla^m\vec{u}|^2\leq \frac{C}{((\theta-1)|Q|^{1/d})^{2m}}\int_{\theta Q\setminus Q}|\vec{u}|^2+CF
\end{equation*}
for some constant $C$ depending only on $m$, the dimension $d$, and the constant $C_0$.

Furthermore, if $0\leq j\leq m$, then $\vec u$ satisfies
\begin{equation*}
\int_{Q}|\nabla^j\vec{u}|^2\leq \frac{C}{((\theta-1)|Q|^{1/d})^{2j}}\int_{\theta Q}|\vec{u}|^2+C|Q|^{(2m-2j)/d} F.
\end{equation*}
\end{theorem}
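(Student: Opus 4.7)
The plan is to combine the hypothesis \eqref{thhyp} with a Kolmogorov-type interpolation inequality to absorb the intermediate-order derivatives, then apply the standard Widman iteration lemma. Let $r=|Q|^{1/d}$ and define $\Phi(s)=\int_{sQ}|\nabla^m\vec u|^2$ for $s\in[1,\theta]$; this is bounded since $\vec u\in W^{m,2}(\theta Q)$. The key auxiliary tool is the Kolmogorov interpolation: for any cube $S$ of side length~$\ell$, any integer $0\leq k<m$, and any $\epsilon>0$,
\begin{equation*}
\int_S|\nabla^k\vec u|^2 \leq \epsilon\,\ell^{2(m-k)}\int_S|\nabla^m\vec u|^2 + C_\epsilon\,\ell^{-2k}\int_S|\vec u|^2,
\end{equation*}
which follows by scaling from the standard interpolation inequality on the unit cube. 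Extending this to the annulus $\mu Q\setminus\vartheta Q$ via the cubic decomposition used in the proof of Lemma~\ref{evlem:annulus}, with rectangles of side $\ell\sim(\mu-\vartheta)r$ and bounded aspect ratio, and summing over the rectangles, yields the same bound with $S$ replaced by $\mu Q\setminus\vartheta Q$.

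Substituting this interpolation into \eqref{thhyp} for each $0\leq k\leq m-1$, the powers of $(\mu-\vartheta)r$ cancel neatly: the $\nabla^m$ term receives coefficient $C_0\epsilon$ and the $\vec u$ term receives coefficient $C_0 C_\epsilon/((\mu-\vartheta)r)^{2m}$, both independent of~$k$. Writing $\int_{\mu Q\setminus\vartheta Q}|\nabla^m\vec u|^2=\Phi(\mu)-\Phi(\vartheta)$ and bounding $\int_{\mu Q\setminus\vartheta Q}|\vec u|^2\leq\int_{\theta Q\setminus Q}|\vec u|^2$ (valid when $1\leq\vartheta<\mu\leq\theta$), one obtains
\begin{equation*}
\Phi(\vartheta) \leq mC_0\epsilon\,(\Phi(\mu)-\Phi(\vartheta)) + \frac{C_\epsilon'}{((\mu-\vartheta)r)^{2m}}\int_{\theta Q\setminus Q}|\vec u|^2 + F.
\end{equation*}
Choose $\epsilon$ small so that $\eta:=mC_0\epsilon/(1+mC_0\epsilon)<1$ depends only on $m$, $d$, and~$C_0$; rearranging produces the recursion
\begin{equation*}
\Phi(\vartheta) \leq \eta\,\Phi(\mu) + \frac{C''}{((\mu-\vartheta)r)^{2m}}\int_{\theta Q\setminus Q}|\vec u|^2 + C''F
\end{equation*}
valid for all $1\leq\vartheta<\mu<\theta$. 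The standard iteration lemma of Widman (see, e.g., \cite[Chapter~V, Lemma~3.1]{Gia83}) applied to the bounded function $\Phi$ on $[1,\theta]$ with exponent $2m$ then yields $\Phi(1)\leq C((\theta-1)r)^{-2m}\int_{\theta Q\setminus Q}|\vec u|^2 + CF$, which is the first assertion.

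For the second inequality, the case $j=m$ follows from the first by bounding $\int_{\theta Q\setminus Q}|\vec u|^2\leq\int_{\theta Q}|\vec u|^2$. For $0\leq j<m$, cover $Q$ by essentially disjoint sub-cubes of side length $\sim(\theta-1)r$ and apply the Kolmogorov interpolation on each with $\ell=(\theta-1)r$ and $\epsilon=1$; summing produces
\begin{equation*}
\int_Q|\nabla^j\vec u|^2 \leq C\,((\theta-1)r)^{-2j}\int_Q|\vec u|^2 + C\,((\theta-1)r)^{2(m-j)}\int_Q|\nabla^m\vec u|^2.
\end{equation*}
Combining with the first conclusion and using $(\theta-1)^{2(m-j)}\leq 1$ (since $\theta\leq 2$) produces the second bound.

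The main obstacle is verifying the Kolmogorov interpolation on the potentially thin annulus $\mu Q\setminus\vartheta Q$ with the scale factor $(\mu-\vartheta)r$ rather than the much larger $\mu r$: the covering must be done with rectangles whose shortest side is comparable to the annulus width in order to avoid accumulating hidden factors of $(\mu-\vartheta)^{-1}$ in the constants. Once this is in hand, the rest is bookkeeping. This is precisely the covering strategy in the proof of Lemma~\ref{evlem:annulus}, which is why the proof of \cite[Theorem~18]{Bar16} (originally stated for balls) adapts to cubes with only notational changes.
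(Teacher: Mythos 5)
Your proof is correct, and it is the standard argument. The paper does not actually prove this theorem: it cites \cite[Theorem~18]{Bar16} (stated for balls) and remarks that the proof adapts to cubes with only notational changes, so there is nothing to compare against line by line; but the two tools you invoke --- the Kolmogorov interpolation inequality $\int_S|\nabla^k\vec u|^2\leq\epsilon\ell^{2(m-k)}\int_S|\nabla^m\vec u|^2+C_\epsilon\ell^{-2k}\int_S|\vec u|^2$, transferred to the thin annulus $\mu Q\setminus\vartheta Q$ via the bounded-aspect-ratio tiling of Lemma~\ref{evlem:annulus}, and the Widman hole-filling iteration --- are exactly the machinery that underlies the cited proof and its antecedents (Campanato, Auscher--Qafsaoui). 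The crucial observation that the powers of $(\mu-\vartheta)|Q|^{1/d}$ cancel exactly after interpolation, turning all $m$ lower-order terms into a single $\epsilon$-multiple of $\int_{\mu Q\setminus\vartheta Q}|\nabla^m\vec u|^2$ plus a zeroth-order term at scale $(\mu-\vartheta)^{-2m}$, is precisely the point, and you handle it correctly.

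Two small points worth tidying. First, $\eta=mC_0\epsilon/(1+mC_0\epsilon)$ is automatically strictly less than~$1$ for every $\epsilon>0$, so no smallness is actually being imposed; fixing any $\epsilon$ (say $\epsilon=1$) already makes $\eta$, $C_\epsilon$, and hence the iteration constant depend only on $m$, $d$, and~$C_0$, which is what matters. Second, the hypothesis~\eqref{thhyp} is stated for $\mu<\theta$ strictly, while the Widman lemma is invoked on the closed interval $[1,\theta]$; you should note that the recursion extends to $\mu=\theta$ by monotone convergence (as $\Phi(\mu)\uparrow\Phi(\theta)<\infty$), or equivalently apply the iteration lemma on $[1,\theta-\delta]$ and let $\delta\to0$. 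These are bookkeeping matters; the argument is complete.
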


Now if we combine Lemma~\ref{CacLem} and Theorem~\ref{CacTh}, we obtain the desired Caccioppoli inequality in which we bound $|\nabla^m\vec{u}|^2$ without the intermediate gradient terms, as stated in the following corollary.

\begin{corollary}\label{HoC}
Let $L$ be an operator of order $2m$ of the form~\eqref{dfn:L} associated to coefficients $A$ that satisfy the weak G\aa{}rding inequality \eqref{wgi} and either the bound \eqref{eqn:intro:bound} or the bound~\eqref{eqn:intro:bound:Bochner}.

Let  $Q\subset\RR^d$ be an open cube with sides parallel to the coordinate axes, and let $1<\theta\leq 2$.
Let $\vec u\in Y^{m,2}(\theta Q)$. Let $T\in Y^{-m,2}(\theta Q)$.
Suppose that $L\vec u=T$ in $\theta Q$ in the sense of formula~\eqref{dfn:L}.

Then we have that
\begin{equation*}
\int_{Q}|\nabla^m\vec{u}|^2\leq \frac{C}{((\theta-1)|Q|^{1/d})^{2m}}
\int_{\theta Q\setminus Q}|\vec{u}|^2
+C\delta\int_{\theta Q}|\vec{u}|^2
+C\|T\|^2
\end{equation*}
and for all $j$ with $1\leq j\leq m-1$ we have that
\begin{equation}\label{clr}
\frac{1}{|Q|^{(2m-2j)/d}}\int_{Q}|\nabla^j\vec{u}|^2\leq \frac{C}{(\theta-1)^{2j}|Q|^{2m}}
\int_{\theta Q}|\vec{u}|^2
+C\delta\int_{\theta Q}|\vec{u}|^2
+C\|T\|^2
\end{equation}
where $C$ is a constant depending on the dimension $d$, the order $2m$ of $L$, the number $\lambda$ in the bound~\eqref{wgi}, and the number $\Lambda$ in the bound~\eqref{eqn:intro:bound} or~\eqref{eqn:intro:bound:Bochner}. Here $\|T\|=\|T\|_{Y^{-m,2}(\theta Q)}$ is the operator norm, that is, the smallest number such that $|\langle\vec\psi,T\rangle|\leq \|\vec\psi\|_{Y^{m,2}(\theta Q)} \|T\|$ for all $\vec\psi\in Y^{m,2}_0(\theta Q)$.
\end{corollary}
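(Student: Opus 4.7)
The plan is to deduce this corollary by combining Lemma~\ref{CacLem}, which contains intermediate-order gradient terms on the right, with Theorem~\ref{CacTh}, which removes precisely such intermediate terms provided its hypothesis~\eqref{thhyp} holds at every intermediate scale. The bookkeeping task is to produce~\eqref{thhyp} from Lemma~\ref{CacLem} with $F = C\delta\int_{\theta Q}|\vec u|^2 + C\|T\|_{Y^{-m,2}(\theta Q)}^2$ and then read off the conclusion.

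To verify~\eqref{thhyp}, fix arbitrary $0 < \vartheta < \mu < \theta$. If $\mu/\vartheta \leq 2$, I would apply Lemma~\ref{CacLem} directly to the cube $\vartheta Q$ with dilation factor $\mu/\vartheta \in (1,2]$, obtaining
\begin{equation*}
\int_{\vartheta Q}|\nabla^m\vec u|^2 \leq \sum_{k=0}^{m-1}\frac{C}{((\mu/\vartheta-1)|\vartheta Q|^{1/d})^{2m-2k}}\int_{\mu Q\setminus\vartheta Q}|\nabla^k\vec u|^2 + C\delta\int_{\mu Q}|\vec u|^2 + C\|T\|_{Y^{-m,2}(\mu Q)}^2.
\end{equation*}
Since $(\mu/\vartheta - 1)|\vartheta Q|^{1/d} = (\mu-\vartheta)|Q|^{1/d}$, and since $\mu Q\subset\theta Q$ allows us to enlarge both the $L^2$-term and the operator-norm term, this is exactly~\eqref{thhyp} with the desired $F$. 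If instead $\mu/\vartheta > 2$, then $\mu - \vartheta > \vartheta$, so I would cover $\vartheta Q$ by a bounded (depending only on $d$) number of subcubes $Q_i$ of side length comparable to $(\mu-\vartheta)|Q|^{1/d}/4$, arranged so that $2Q_i\subset \mu Q$, apply Lemma~\ref{CacLem} on each $Q_i$ with dilation factor $2$, and sum. The finitely-many-overlap property and the comparability of $(\mu-\vartheta)|Q|^{1/d}$ with the side lengths of the $Q_i$ allow the resulting inequality to be put back into the form~\eqref{thhyp}, with constants depending only on $d$ and $m$.

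With~\eqref{thhyp} established, Theorem~\ref{CacTh} immediately yields the first inequality of the corollary, and its ``furthermore'' clause yields~\eqref{clr} for $1\leq j\leq m-1$. The only step requiring any thought is the covering argument for $\mu/\vartheta > 2$, since one must be sure the dependence on $(\mu-\vartheta)$ appears in exactly the denominator required by~\eqref{thhyp}; however, this is a standard geometric packing and is not a serious obstacle. All other items—absorbing the $\delta\int_{\mu Q}|\vec u|^2$ term into $\delta\int_{\theta Q}|\vec u|^2$, replacing $\|T\|_{Y^{-m,2}(\mu Q)}$ by $\|T\|_{Y^{-m,2}(\theta Q)}$ via restriction of test functions, and verifying that $\vec u\in Y^{m,2}(\theta Q)$ implies $\vec u\in W^{m,2}(\vartheta Q)$ via Lemma~\ref{evlem}—are routine.
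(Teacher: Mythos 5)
Your approach matches the paper's: the corollary is obtained by feeding Lemma~\ref{CacLem} (at every intermediate pair of scales) into the iteration Theorem~\ref{CacTh}, with $F = C\delta\int_{\theta Q}|\vec u|^2 + C\|T\|^2$, and then reading off the two conclusions. Your ``direct'' verification of~\eqref{thhyp}, using $(\mu/\vartheta - 1)|\vartheta Q|^{1/d} = (\mu-\vartheta)|Q|^{1/d}$ and monotonicity of the $Y^{-m,2}$ operator norm under restriction, is exactly right; and since the iteration in Theorem~\ref{CacTh} only ever touches $1 \leq \vartheta < \mu \leq \theta \leq 2$ (the conclusion concerns $Q$ and $\theta Q$), the dilation factor $\mu/\vartheta$ never exceeds $\theta \leq 2$, so the direct case already covers everything the theorem actually uses.

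A caution, though, about the backup covering argument you sketch for the hypothetical case $\mu/\vartheta > 2$: as written it does not yield~\eqref{thhyp}. If the $Q_i$ cover $\vartheta Q$, then $\bigcup_i (2Q_i\setminus Q_i)$ necessarily meets $\vartheta Q$ (cubes near the center of $\vartheta Q$ have $2Q_i \subset \vartheta Q$), so summing Lemma~\ref{CacLem} over the $Q_i$ produces intermediate-gradient integrals over essentially all of $\mu Q$, not over the annulus $\mu Q\setminus \vartheta Q$. That is a strictly \emph{weaker} inequality than~\eqref{thhyp} and cannot be fed into Theorem~\ref{CacTh}. (Applying Lemma~\ref{CacLem} once on $\vartheta Q$ with factor $2$ runs into a different problem: it gives the right annulus but with the worse prefactor $(\vartheta|Q|^{1/d})^{-(2m-2k)}$ rather than $((\mu-\vartheta)|Q|^{1/d})^{-(2m-2k)}$.) Since $\mu/\vartheta > 2$ never occurs in the range actually needed, this gap does not affect the proof, but the claim that the packing argument is ``not a serious obstacle'' should be withdrawn. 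Finally, the observation that $\vec u\in Y^{m,2}(\theta Q)$ gives $\vec u\in W^{m,2}(\vartheta Q)$ is correct but is a consequence of the Poincar\'e inequality on the bounded domain $\theta Q$ rather than of Lemma~\ref{evlem} (which assumes $W^{m,p}$ membership as a hypothesis).
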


\begin{remark} If $m-d/2<j<m$ and $\delta=0$, then we can replace the term $\int_{\theta Q}|\vec{u}|^2$ in the bound~\eqref{clr} by $\int_{\theta Q\setminus Q}|\vec{u}|^2$ at a cost of some additional negative powers of $(\theta-1)$. See Section~\ref{Lpc}. \end{remark}

\section{Invertibility of~$L$}\label{FS}

In this section we will investigate boundedness and invertibility of the operator $L:Y^{m,p}(\RR^\dmn)\to Y^{-m,p}(\RR^\dmn)$. The argument for invertibility parallels that used in \cite[Lemma 3.4]{BorHLMP20p} in the second order case.

We remark that invertibility requires the Gårding inequality~\eqref{gi}, and not only the weaker Gårding inequality~\eqref{wgi} of Section~\ref{CacSec} and \cite{AusQ00}; thus, for the remainder of this paper, we will always assume the strong Gårding inequality~\eqref{gi}.

We will begin with boundedness of $L$ for a range of~$p$.

\begin{lemma}\label{lem:p:+}
Let $L$ be an operator of the form~\eqref{dfn:L} associated to coefficients $\mat A$ that satisfy either the bound \eqref{eqn:intro:bound} or the bound~\eqref{eqn:intro:bound:Bochner}.

If $\mat A$ satisfies the bound \eqref{eqn:intro:bound} then $(\frac{2d}{d+1},\frac{2d}{d-1})\subseteq\Pi_L=(\frac{d}{d+\mathfrak{a}-m},\frac{d}{m-\mathfrak{b}})$. If $\mat A$ satisfies the bound \eqref{eqn:intro:bound:Bochner} then $(\frac{2d}{d+1},\frac{2d}{d-1})\subseteq(\frac{d-1}{d-1+\mathfrak{a}-m},\frac{d-1}{m-\mathfrak{b}})\subseteq\Pi_L$, where $\Pi_L$ is as in Definition~\ref{dfn:ppm}.

If $p\in \Pi_L$ then the constants $\Lambda(p)$ in the bound~\eqref{eqn:elliptic:bound} depend only on $p$, $d$, $m$ and the constant $\Lambda$ in the bound \eqref{eqn:intro:bound} or~\eqref{eqn:intro:bound:Bochner}.
\end{lemma}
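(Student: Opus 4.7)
The plan is to establish the bound \eqref{eqn:elliptic:bound} by applying H\"older's inequality to each summand in \eqref{eqn:PiL} and invoking the embeddings built into the definition of $Y^{m,p}$ (and, in the Bochner case, Corollary~\ref{cor:GNS:slices}). The essential identity, valid whenever $m-d/p'<|\alpha|\leq m$ and $m-d/p<|\beta|\leq m$, is
\[
\frac{1}{p'_\alpha}+\frac{1}{p_\beta}+\frac{1}{2_{\alpha,\beta}}
=\frac{1}{p'}-\frac{m-|\alpha|}{d}+\frac{1}{p}-\frac{m-|\beta|}{d}+\frac{2m-|\alpha|-|\beta|}{d}=1,
\]
which is exactly what is needed for three-factor H\"older to close.

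For coefficients satisfying \eqref{eqn:intro:bound}, I would apply H\"older directly to bound each summand by $\Lambda\,\|\partial^\alpha\varphi_j\|_{L^{p'_\alpha}(\R^d)}\,\|\partial^\beta\psi_k\|_{L^{p_\beta}(\R^d)}$. Provided $p\in(\tfrac{d}{d+\mathfrak{a}-m},\tfrac{d}{m-\mathfrak{b}})$, the conditions $|\alpha|\geq\mathfrak{a}>m-d/p'$ and $|\beta|\geq\mathfrak{b}>m-d/p$ hold, so by \eqref{dfn:Y:norm} the $L^{p'_\alpha}$ and $L^{p_\beta}$ factors appear as summands in $\|\vec\varphi\|_{Y^{m,p'}(\R^d)}$ and $\|\vec\psi\|_{Y^{m,p}(\R^d)}$ respectively. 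Summing the finite collection of $(\alpha,\beta,j,k)$ terms yields \eqref{eqn:elliptic:bound} with $\Lambda(p)$ depending only on $p$, $d$, $m$, $N$, and $\Lambda$. For coefficients satisfying \eqref{eqn:intro:bound:Bochner}, the same scheme requires two H\"older applications: first in $x$ at each fixed $t$ with exponents $(\widetilde 2_{\alpha,\beta},\widetilde p'_\alpha,\widetilde p_\beta)$ based on dimension $d-1$, and then in $t$ with exponents $(\infty,p',p)$. Corollary~\ref{cor:GNS:slices} then converts the resulting mixed-norm $L_t^{p'}L_x^{\widetilde p'_\alpha}$ and $L_t^{p}L_x^{\widetilde p_\beta}$ norms into $Y^{m,p'}$ and $Y^{m,p}$ norms, but only when $|\alpha|>m-(d-1)/p'$ and $|\beta|>m-(d-1)/p$, which forces the tighter interval $(\tfrac{d-1}{d-1+\mathfrak{a}-m},\tfrac{d-1}{m-\mathfrak{b}})$.

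The remaining task is to verify the inclusions $(\tfrac{2d}{d+1},\tfrac{2d}{d-1})\subseteq(\tfrac{d}{d+\mathfrak{a}-m},\tfrac{d}{m-\mathfrak{b}})$ in the first setting, and the analogous statement with $d-1$ replacing $d$ in the second. The hypothesis that $\mathfrak{a},\mathfrak{b}$ are nonnegative integers with $\mathfrak{a},\mathfrak{b}>m-d/2$ (respectively $m-(d-1)/2$), combined with the integrality of these parameters, forces $\mathfrak{a},\mathfrak{b}\geq m-(d-1)/2$ in the first case (and a correspondingly sharper threshold determined by the parity of $d$ in the second). A direct cross-multiplication check then supplies the required endpoint inequalities. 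The only delicate point I anticipate is the parity-of-$d$ bookkeeping in the Bochner case to confirm that the integer constraint on $\mathfrak{a},\mathfrak{b}$ suffices at the endpoint $\tfrac{2d}{d+1}$; the H\"older computation itself is entirely mechanical.
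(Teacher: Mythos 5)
Your proposal matches the paper's proof essentially step for step: the three-factor H\"older identity
$\frac{1}{p'_\alpha}+\frac{1}{p_\beta}+\frac{1}{2_{\alpha,\beta}}=1$
combined with the structure of the $Y^{m,p}$ norm, the layered H\"older argument for the Bochner case via Corollary~\ref{cor:GNS:slices}, and the integrality argument for the interval inclusions are all present in both.  The only thing I would flag is that you treat the endpoint verification for $\tfrac{2d}{d+1}$ as delicate because of parity; in fact the uniform bound $\mathfrak a,\mathfrak b\geq m-\tfrac{d}{2}+\tfrac12$ (resp.\ $\geq m-\tfrac{d-1}{2}+\tfrac12$ in the Bochner case) follows from integrality regardless of the parity of~$d$, and a single cross-multiplication closes each endpoint, so no case split is needed.
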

\begin{proof}
If $L$ satisfies the condition~\eqref{eqn:intro:bound} then $m\geq\mathfrak{a}>m-d/2$ and $m\geq\mathfrak{b}>m-d/2$. Observe that $m$, $d$ and $\mathfrak{a}$, $\mathfrak{b}$ are integers, and so $m\geq\mathfrak{a}\geq m-d/2+1/2$, $m\geq\mathfrak{b} \geq m-d/2+1/2$.
A straightforward computation yields that \begin{equation*}\biggl(\frac{2d}{d+1},\frac{2d}{d-1}\biggr)
\subseteq \biggl(\frac{d}{d+\mathfrak{a}-m}, \frac{d}{m-\mathfrak{b}}\biggr).\end{equation*}
Similarly, if $L$ satisfies the condition~\eqref{eqn:intro:bound:Bochner} then $m\geq\mathfrak{a}\geq m-(d-1)/2+1/2$ and $m\geq\mathfrak{b}\geq m-(d-1)/2+1/2$. Thus
\begin{align*}\biggl(\frac{2d}{d+1},\frac{2d}{d-1}\biggr)
&\subseteq
\biggl(\frac{2(d-1)}{d},\frac{2(d-1)}{d-2}\biggr)
\subseteq \biggl(\frac{d-1}{d-1+\mathfrak{a}-m}, \frac{d-1}{m-\mathfrak{b}}\biggr)
\\&\subseteq \biggl(\frac{d}{d+\mathfrak{a}-m}, \frac{d}{m-\mathfrak{b}}\biggr).\end{align*}

Suppose that $L$ satisfies the condition~\eqref{eqn:intro:bound}.
If $p\in (\frac{d}{d+\mathfrak{a}-m}, \frac{d}{m-\mathfrak{b}})$ then $\mathfrak{a}>m-d/p'$, $\mathfrak{b} >m-d/p$, and so if $\mathfrak{a}\leq |\alpha|\leq m$ and $\mathfrak{b}\leq |\beta|\leq m$ then $p'_\alpha$ and $p_\beta$ exist and are finite.
By formulas~\eqref{pk} and~\eqref{eqn:intro:bound},
\begin{equation*}\frac{1}{p_\beta}+\frac{1}{(p')_\alpha}+\frac{1}{2_{\alpha,\beta}}=1 .\end{equation*}
Thus by H\"older's inequality, for such $p$, $\alpha$, and~$\beta$,
\begin{equation*}\int_{\RR^d}\bigl|\partial^\alpha\varphi_j \overline{A^{j,k}_{\alpha,\beta}\partial^\beta u_k}
\bigr|
\leq
\|\partial^\alpha\varphi_j\|_{L^{(p')_\alpha}(\RR^d)} \|\partial^\beta{u}_k\|_{L^{p_\beta}(\RR^d)}
\|A^{j,k}_{\alpha,\beta}\|_{L^{2_{\alpha,\beta}}(\RR^d)}
\end{equation*}
which by the condition~\eqref{eqn:intro:bound} and the definition~\eqref{dfn:Y:norm} of $Y^{m,p}(\R^d)$ satisfies
\begin{equation*}\int_{\RR^d}\bigl|\partial^\alpha\varphi_j \overline{A^{j,k}_{\alpha,\beta}\partial^\beta u_k}
\bigr|
\leq
\Lambda
\|\vec\varphi\|_{Y^{m,p'}(\RR^d)} \|\vec{\psi}\|_{Y^{m,p}(\RR^d)}.\end{equation*}
Summing over $\alpha$, $\beta$, $j$ and~$k$ and using Definition~\ref{dfn:ppm} completes the proof.

Now suppose that $L$ satisfies the condition~\eqref{eqn:intro:bound:Bochner}.
If $p\in (\frac{d-1}{d-1+\mathfrak{a}-m}, \frac{d-1}{m-\mathfrak{b}})$ then $\mathfrak{a}>m-(d-1)/p'$, $\mathfrak{b} >m-(d-1)/p$, and so if $\mathfrak{a}\leq |\alpha|\leq m$ and $\mathfrak{b}\leq |\beta|\leq m$ then $\widetilde{p'}_\alpha$ and $\widetilde p_\beta$ exist and are finite.
Again
\begin{equation*}\frac{1}{\widetilde p_\beta}+\frac{1}{\widetilde {(p')}_\alpha}+\frac{1}{\widetilde 2_{\alpha,\beta}}=1 .\end{equation*}
Observe that
\begin{equation*}\int_{\RR^d}|\partial^\alpha\varphi_j \, A^{j,k}_{\alpha,\beta} \, \partial^\beta \psi_k|
\leq
\int_{-\infty }^\infty
\int_{\RR^{d-1}}|\partial^\alpha\varphi_j \, A^{j,k}_{\alpha,\beta} \, \partial^\beta \psi_k|
\,dx
\,dt
.\end{equation*}
Applying H\"older's inequality first in $\RR^{d-1}$ and then in $\RR$ yields that
\begin{equation*}\int_{\RR^d}|\partial^\alpha\varphi_j \, A^{j,k}_{\alpha,\beta} \, \partial^\beta \psi_k|
\leq
\Lambda
\|\partial^\alpha\varphi_j\|_{L_t^{p'}L_x^{\widetilde{(p')}_\alpha}(\RR^d)} \|\partial^\beta{u}_k\|_{L_t^{p}L_x^{\widetilde p_\beta}(\RR^d)}
.\end{equation*}
Applying Corollary~\ref{cor:GNS:slices} and summing completes the proof.
\end{proof}

We now establish invertibility of~$L$ for $p=2$. The main tool in the proof is the complex valued Lax-Milgram lemma, which we now state.
\begin{theorem}\cite[Theorem 2.1]{Bab70}\label{LM}
Let $H_1$ and $H_2$ be two Hilbert spaces, and let $B$ be a bounded sesquilinear form on $H_1\times H_2$ that is coercive in the sense that
$$\sup_{w\in H_1\setminus\{0\}}\frac{|B(w,v)|}{\|w\|_{H_1}}\geq\lambda\|v\|_{H_2}, \hspace{20pt}
\sup_{w\in H_2\setminus\{0\}}\frac{|B(u,w)|}{\|w\|_{H_2}}\geq\lambda\|u\|_{H_1}$$
for every $u\in H_1$, and $v\in H_2$, for some fixed $\lambda>0$.  Then for every linear functional $T$ defined on $H_2$ there is a unique $u_T\in H_1$ such that $B(v,u_T)=\overline{T(v)}$.  Furthermore $\|u_T\|_{H_1}\leq\frac{1}{\lambda}\|T\|_{H'_2}$.
\end{theorem}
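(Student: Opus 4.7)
The plan is to reduce the theorem to the invertibility of a bounded linear operator built from the form $B$ using the Riesz representation theorem; the solution $u_T$ is then obtained by applying the inverse of this operator to the Riesz representer of~$T$. This is the standard two-sided generalization of the classical Lax–Milgram argument, in which the symmetric coercivity $|B(u,u)| \geq \lambda\|u\|^2$ is replaced by two asymmetric inf-sup conditions that respectively give closed range and dense range.

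First I would fix a convention under which $B$ is sesquilinear (say, linear in the first argument and conjugate-linear in the second). For each $u\in H_1$ the map $v\mapsto B(u,v)$ is then a bounded conjugate-linear functional on $H_2$, so the Riesz representation theorem yields a unique $Au\in H_2$ satisfying $B(u,v)=\langle v, Au\rangle_{H_2}$ for all $v\in H_2$. Linearity of $A:H_1\to H_2$ follows from uniqueness in the Riesz representation together with the sesquilinearity of $B$, and the bound $\|Au\|_{H_2}\leq M\|u\|_{H_1}$, where $M$ is the boundedness constant for $B$, is immediate.

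Next I would use the two coercivity hypotheses to upgrade $A$ to a bijection. The second coercivity translates directly via the Riesz identity into the lower bound
\begin{equation*}
\|Au\|_{H_2}=\sup_{w\in H_2\setminus\{0\}}\frac{|\langle w,Au\rangle_{H_2}|}{\|w\|_{H_2}}=\sup_{w\in H_2\setminus\{0\}}\frac{|B(u,w)|}{\|w\|_{H_2}}\geq \lambda\|u\|_{H_1},
\end{equation*}
so $A$ is injective and has closed range. The first coercivity then gives dense range: if $w\in H_2$ is orthogonal to $A(H_1)$, so that $B(u,w)=\overline{\langle w,Au\rangle_{H_2}}=0$ for every $u\in H_1$, then the first coercivity forces $w=0$. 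Closed range plus dense range gives $A(H_1)=H_2$, so $A$ is a Banach-space isomorphism with $\|A^{-1}\|\leq 1/\lambda$.

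Finally, given a bounded linear functional $T$ on $H_2$, one further application of Riesz representation produces a unique $v_T\in H_2$ with $T(v)=\langle v,v_T\rangle_{H_2}$, and I would set $u_T=A^{-1}v_T\in H_1$. Unwinding the definition of $A$ gives the desired identity $B(v,u_T)=\overline{T(v)}$ (with the slots interpreted consistently with the chosen sesquilinearity convention), while uniqueness follows from injectivity of $A$ and the operator bound $\|u_T\|_{H_1}\leq (1/\lambda)\|T\|_{H_2'}$ follows from $\|A^{-1}\|\leq 1/\lambda$. The main obstacle is notational rather than conceptual: one must track the complex conjugations consistently across the sesquilinearity of $B$, the Riesz representation of conjugate-linear functionals on~$H_2$, and the $\overline{T(v)}$ appearing in the conclusion. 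Once a convention is fixed, the two coercivity hypotheses mesh exactly with the two conditions (closed range and dense range) needed for surjectivity of~$A$.
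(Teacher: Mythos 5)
The paper cites this theorem from Babu\v{s}ka's 1970 paper without giving a proof, so there is no internal argument to compare against. Your proof is the standard one for the Babu\v{s}ka--Lax--Milgram (or Banach--Ne\v{c}as--Babu\v{s}ka) theorem and is correct: you use Riesz representation to realize $B$ as $B(u,v)=\langle v,Au\rangle_{H_2}$, then observe that the second inf-sup condition gives $\|Au\|_{H_2}\geq\lambda\|u\|_{H_1}$ (injectivity and closed range) while the first inf-sup condition forces the orthogonal complement of $A(H_1)$ to be trivial (dense range), so $A$ is a Banach isomorphism with $\|A^{-1}\|\leq 1/\lambda$, and $u_T$ is produced as $A^{-1}$ applied to the Riesz representer of~$T$.

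Two bookkeeping remarks, both of which you anticipate but do not resolve. First, under the convention you pick (``linear in the first argument and conjugate-linear in the second'') the final identity comes out as $B(v,u_T)=T(v)$ with no conjugation; to reproduce the $\overline{T(v)}$ in the statement one should take $B$ conjugate-linear in the \emph{first} slot and linear in the second, which is in fact the convention the paper uses in its application (see the form $B(\vec u,\vec v)=\sum\int\overline{\partial^\alpha u_j}\,A^{j,k}_{\alpha,\beta}\,\partial^\beta v_k$ in Lemma~\ref{lem:L:invertible}). Second, the theorem as transcribed has a slot mismatch: $B$ lives on $H_1\times H_2$ and $u_T$ appears in the second slot, yet $u_T$ is asserted to lie in $H_1$ and $T$ to be a functional on~$H_2$; this is harmless in the paper's application where $H_1=H_2=Y^{m,2}(\R^d)$, but if one insists on distinguishing $H_1$ from $H_2$ then one should either take $T\in H_1'$ with $u_T\in H_2$ or reverse the slots in the conclusion, and your construction should be relabelled accordingly ($A:H_2\to H_1$ rather than $A:H_1\to H_2$). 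These are purely notational and do not affect the validity of the argument.
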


\begin{lemma}\label{lem:L:invertible} Let $L$ be an operator of the form~\eqref{dfn:L} order $2m$ which satisfies the ellipticity condition~\eqref{gi} and such that $2\in \Pi_L$, where $\Pi_L$ is as in Definition~\ref{dfn:ppm}. Then $L$ is invertible with bounded inverse $Y^{m,2}(\RR^d)\to Y^{-m,2}(\RR^d)$.
\end{lemma}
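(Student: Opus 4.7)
The plan is to apply the complex Lax--Milgram lemma (Theorem~\ref{LM}) with $H_1 = H_2 = Y^{m,2}(\R^d)$. By Corollary~\ref{cor:GNS:global}, on equivalence classes the $Y^{m,2}$-norm is equivalent to the $\dot W^{m,2}$-norm $\|\nabla^m\vec u\|_{L^2(\R^d)}$; in particular $Y^{m,2}(\R^d)$ becomes a Hilbert space when equipped with the inner product $\langle \vec u,\vec v\rangle = \int_{\R^d} \nabla^m\vec u\cdot\overline{\nabla^m\vec v}$.

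I would then introduce the sesquilinear form
\[
B(\vec\varphi,\vec u) = \sum_{j,k=1}^N \sum_{\substack{\mathfrak{a}\leq |\alpha|\leq m\\\mathfrak{b}\leq|\beta|\leq m}} \int_{\R^d} \partial^\alpha\varphi_j\,\overline{A^{j,k}_{\alpha,\beta}\,\partial^\beta u_k},
\]
which by \eqref{dfn:L} equals $\langle L\vec u,\vec\varphi\rangle$. Boundedness of $B$ on $Y^{m,2}(\R^d)\times Y^{m,2}(\R^d)$, with constant $\Lambda(2)$, is immediate from the hypothesis $2\in\Pi_L$ and the bound \eqref{eqn:PiL}. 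For coercivity, the G\aa{}rding inequality \eqref{gi} (extended from smooth test functions to all of $Y^{m,2}(\R^d)$ by a density argument that exploits the just-established boundedness of~$B$) yields $\mathop{\mathrm{Re}} B(\vec v,\vec v)\geq \lambda\|\vec v\|_{Y^{m,2}}^2$. Taking the supremum over $\vec w$ with the particular choice $\vec w = \vec v$ gives
\[
\sup_{\vec w\neq 0}\frac{|B(\vec w,\vec v)|}{\|\vec w\|_{Y^{m,2}}} \geq \frac{\mathop{\mathrm{Re}} B(\vec v,\vec v)}{\|\vec v\|_{Y^{m,2}}}\geq \lambda\|\vec v\|_{Y^{m,2}},
\]
and the analogous estimate in the first argument, so both coercivity hypotheses of Theorem~\ref{LM} hold with the same constant~$\lambda$.

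Applying Theorem~\ref{LM} to the antilinear functional $\vec\varphi\mapsto \overline{\langle T,\vec\varphi\rangle}$, for an arbitrary $T\in Y^{-m,2}(\R^d)$, then produces a unique $\vec u_T\in Y^{m,2}(\R^d)$ with $\langle L\vec u_T,\vec\varphi\rangle = \langle T,\vec\varphi\rangle$ for all $\vec\varphi\in Y^{m,2}(\R^d)$, that is, $L\vec u_T = T$ in $Y^{-m,2}(\R^d)$, together with the bound $\|\vec u_T\|_{Y^{m,2}}\leq \lambda^{-1}\|T\|_{Y^{-m,2}}$. Uniqueness gives injectivity of~$L$, existence gives surjectivity, and the norm bound supplies boundedness of $L^{-1}$. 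The only real bookkeeping item is matching the antilinear convention of the pairing $\langle T,\cdot\rangle$ (cf.\ the remark following \eqref{eqn:antidual}) with the sesquilinearity of~$B$; this is absorbed by the preliminary complex conjugation just described, and is not a substantive obstacle.
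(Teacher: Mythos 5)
Your proposal is correct and takes essentially the same route as the paper: both apply the complex Lax--Milgram lemma (Theorem~\ref{LM}) with $H_1=H_2=Y^{m,2}(\R^d)$, take boundedness of the form from $2\in\Pi_L$, take coercivity from \eqref{gi}, and read off invertibility from the Lax--Milgram conclusion. The paper's sesquilinear form \eqref{Bdef} is $\overline{B(\vec\varphi,\vec u)}$ in your notation (conjugate-linear in its first argument, as Theorem~\ref{LM} implicitly requires), and the statement that one can apply Theorem~\ref{LM} ``to the antilinear functional $\vec\varphi\mapsto\overline{\langle T,\vec\varphi\rangle}$'' is not quite literal (the theorem's input is a linear functional); the clean version is to feed in the linear functional $\langle T,\cdot\rangle$ against the conjugated form $\overline{B}$, which is exactly what the paper does and what you correctly flag as harmless bookkeeping.
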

\begin{proof}
Let $B(\vec{u},\vec{v})$ be the form given by
\begin{equation}\label{Bdef}
B(\vec{u},\vec{v})=\sum_{j,k=1}^N\sum_{|\alpha|\leq m}\sum_{|\beta|\leq m}\int_{\RR^d}\overline{\partial^\alpha u_j}A^{j,k}_{\alpha,\beta}\partial^\beta v_k
.\end{equation}
Notice that by formula~\eqref{gi} $B$ is a coercive sesquilinear operator on $Y^{m,2}(\RR^d)\times Y^{m,2}(\RR^d)$ in the sense of Theorem~\ref{LM}, while by Definition~\ref{dfn:ppm}, $B$ is bounded on $Y^{m,2}(\RR^d)\times Y^{m,2}(\RR^d)$ with the bound
\begin{equation}\label{bbound}
|B(\vec{u},\vec{v})|\leq \Lambda(2)\|\vec{u}\|_{Y^{m,2}(\RR^d)}\|\vec{v}\|_{Y^{m,2}(\RR^d)}.
\end{equation}

Let $T$ be an element of $Y^{-m,2}(\RR^d)$. Recall that we write bounded linear operators on $Y^{m,2}(\RR^d)$ as $\langle T,\,\cdot\,\rangle$.
Let $\vec u_T\in Y^{m,2}(\RR^d)$ be the unique element of $Y^{m,2}(\RR^d)$ given by the Lax-Milgram lemma, so
\begin{equation}\label{dfn:Linverse}
\sum_{j,k=1}^N\sum_{|\alpha|\leq m}\sum_{|\beta|\leq m}\int_{\RR^d} \overline{\partial^\alpha\varphi_j}\, A^{j,k}_{\alpha,\beta} \,\partial^\beta(u_T)_k
=\overline{\langle T,\varphi\rangle}
\end{equation}
for all $\varphi\in Y^{m,2}(\RR^d)$. Observe that by formula~\eqref{dfn:L}, $L\vec u_T=T$. By the boundedness property of the Lax-Milgram lemma, $\|\vec u_T\|_{Y^{m,2}(\RR^d)}\leq \frac{1}{\lambda}\|T\|_{Y^{-m,2}(\RR^d)}$, and by the uniqueness property in the Lax-Milgram lemma, $\vec u=\vec u_T$ is the only element of $Y^{m,2}(\RR^d)$ with $L\vec u=T$. Thus the operator $T\mapsto \vec u_T$ is well defined, bounded, linear, and an inverse to~$L$.
\end{proof}

We conclude this section by establishing invertibility of $L$ for a range of~$p$. In this case the main tool is \v{S}ne\v{\i}berg's lemma.
\begin{lemma}\label{snei}(\v{S}ne\v{\i}berg's lemma \cite[Theorem A.1]{AusBES19})
Let $\overline{X}=(X_0,X_1)$ and $\overline{Z}=(Z_0,Z_1)$ be interpolation couples, and $T\in\mathcal{B}(\overline{X},\overline{Z})$.  Suppose that for some $\theta^*\in(0,1)$ and some $\kappa>0$, the lower bound $\|Tx\|_{Z_{\theta^*}}\geq\kappa\|x\|_{X_{\theta^*}}$ holds for all $x\in X_{\theta^*}$.  Then the following are true.
\begin{itemize}
\item [(i)] Given $0<\epsilon<1/4$, the lower bound $\|Tx\|_{Z_\theta}\geq\epsilon\kappa\|x\|_{X_\theta}$ holds for all $x\in X_\theta$, provided that $|\theta-\theta^*|\leq\frac{\kappa(1-4\epsilon)\min\{\theta^*,1-\theta^*\}}{3\kappa+6M}$, where $M=\max_{j=0,1}\|T\|_{X_j\rightarrow Z_j}$.
\item [(ii)] If $T:X_{\theta^*}\longrightarrow Z_{\theta^*}$ is invertible, then the same is true for $T:X_\theta\longrightarrow Z_\theta$ if $\theta$ is as in~$(i)$.  The inverse mappings agree on $Z_\theta\cap Z_{\theta^*}$ and their norms are bounded by $\frac{1}{\epsilon\kappa}$.
\end{itemize}
\end{lemma}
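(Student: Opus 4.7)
The plan is to prove Šneĭberg's lemma using the Calderón complex-interpolation characterization: an element $x\in X_\theta$ has $\|x\|_{X_\theta}$ equal to the infimum of $\|f\|_{\mathcal{F}(\overline X)}$ over holomorphic extensions $f\colon S\to X_0+X_1$ on the strip $S=\{z\in\CC:0<\mathrm{Re}\,z<1\}$ with $f(\theta)=x$, boundary values in $X_0$ and $X_1$ respectively, and $\|f\|_{\mathcal{F}(\overline X)}=\max_j\sup_{t\in\RR}\|f(j+it)\|_{X_j}$. The operator $T$ acts on such extensions by post-composition, so $Tf\in\mathcal{F}(\overline Z)$ whenever $f\in\mathcal{F}(\overline X)$, with $\|Tf\|_{\mathcal{F}(\overline Z)}\le M\|f\|_{\mathcal{F}(\overline X)}$.

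For part~(i), given $x\in X_\theta$ with $\|x\|_{X_\theta}=1$ and $\eta>0$ small, I would first pick an almost-extremal extension $f$ with $f(\theta)=x$ and $\|f\|_{\mathcal{F}(\overline X)}\le 1+\eta$. Setting $y=f(\theta^*)\in X_{\theta^*}$, one has $\|y\|_{X_{\theta^*}}\le 1+\eta$ and the hypothesis gives $\|Ty\|_{Z_{\theta^*}}\ge\kappa\|y\|_{X_{\theta^*}}$. The crucial step is a Poisson-kernel-on-the-strip estimate: for any bounded holomorphic $F\colon S\to Y_0+Y_1$ with boundary values in the appropriate spaces,
\begin{equation*}
\|F(\theta)-F(\theta^*)\|_{Y_\theta}\le \frac{|\theta-\theta^*|}{\min(\theta^*,1-\theta^*)}\,\|F\|_{\mathcal{F}(\overline Y)},
\end{equation*}
obtained by differentiating the Poisson representation and estimating $\partial_\theta P_j(\theta,\cdot)$ uniformly on the boundary. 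Applying this to $F=Tf$ yields $\|Tx-Ty\|_{Z_\theta}\le CM(1+\eta)|\theta-\theta^*|/\min(\theta^*,1-\theta^*)$, and applying it to $F=f$ yields a comparable bound for $\|x-y\|_{X_\theta}$.

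Combining, $\|Tx\|_{Z_\theta}\ge\|Ty\|_{Z_{\theta^*}}-\|Tx-Ty\|_{Z_\theta}\cdot(\text{passage constant})$, while on the other side $\|x\|_{X_\theta}\le\|y\|_{X_{\theta^*}}+\|x-y\|_{X_\theta}$; arranging the triangle inequalities and choosing the numerical coefficients optimally should produce precisely the allowed range $|\theta-\theta^*|\le \kappa(1-4\epsilon)\min(\theta^*,1-\theta^*)/(3\kappa+6M)$. Part~(ii) then follows by combining the bounded-below estimate from (i) with a Neumann-series/perturbation argument: since $T$ is invertible at $\theta^*$ with inverse of norm $\kappa^{-1}$, the range of $T$ at a nearby $\theta$ is a closed subspace of $Z_\theta$ that contains a dense (hence full) subspace by a small-perturbation identity, and compatibility of inverses on $Z_\theta\cap Z_{\theta^*}$ is forced by the uniqueness of preimages in $X_\theta\cap X_{\theta^*}$ coming from part~(i).

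The main obstacle will be bookkeeping the numerical constants: the explicit coefficient $\kappa(1-4\epsilon)/(3\kappa+6M)$ is sharp in the scheme above only if one handles the two Poisson-type estimates (for $f$ in $X$ and for $Tf$ in $Z$) simultaneously, chooses $\eta\to0$ at the right rate, and exploits $\min(\theta^*,1-\theta^*)$ as the effective distance to the boundary of the strip. A secondary technical point is verifying that $Tf\in\mathcal{F}(\overline Z)$ really is an \emph{admissible} extension of $Tx$ (continuity up to the boundary and boundary-value regularity), which is standard but needs the hypothesis $T\in\mathcal{B}(\overline X,\overline Z)$ in both endpoint norms.
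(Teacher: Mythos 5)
The paper does not prove this lemma --- it is \v{S}ne\v{\i}berg's lemma, cited from \cite[Theorem~A.1]{AusBES19}, and the paper invokes it as a black box in the proof of Lemma~\ref{NPLp}. There is therefore no proof in the paper to compare against; what follows is an assessment of your sketch on its own terms.

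Your overall strategy --- the Calder\'on extension space, almost-extremal extensions, a Schwarz--Pick or difference-quotient estimate near an interior point --- is the right family of ideas, but the central step has a genuine gap. You posit
\[
\|F(\theta)-F(\theta^*)\|_{Y_\theta}\le \frac{|\theta-\theta^*|}{\min(\theta^*,1-\theta^*)}\,\|F\|_{\mathcal{F}(\overline Y)},
\]
and this is ill-posed as written: $F(\theta)\in Y_\theta$ but $F(\theta^*)$ lies only in $Y_{\theta^*}\subseteq Y_0+Y_1$, and neither $z\mapsto F(z)-F(\theta^*)$ nor its difference quotient at $\theta^*$ is an admissible element of $\mathcal{F}(\overline Y)$, since the boundary values $F(j+it)-F(\theta^*)$ need not lie in $Y_j$ unless $F(\theta^*)\in Y_0\cap Y_1$. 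Your subsequent triangle-inequality step suffers from the same disease, mixing $Z_\theta$ and $Z_{\theta^*}$ norms via an undefined ``passage constant.'' One standard repair is to work first on a dense subclass of extensions $f$ with $f(\theta^*)\in X_0\cap X_1$ and pass to the limit; another is to argue by contradiction --- assume $\|Tx\|_{Z_\theta}$ is small, take a small-norm extension $g\in\mathcal{F}(\overline Z)$ of $Tx$, note that $Tf-g$ vanishes at $\theta$, divide by a conformal self-map of the strip vanishing at $\theta$, and evaluate at $\theta^*$ --- which keeps every manipulation inside $\mathcal{F}(\overline Z)$ and yields the explicit constants. Either way, extracting the precise radius $\kappa(1-4\epsilon)\min(\theta^*,1-\theta^*)/(3\kappa+6M)$ is not mere ``bookkeeping''; it is the quantitative content of the cited theorem. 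The perturbation argument you sketch for part~(ii), once part~(i) is in hand, is sound.
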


\begin{lemma}\label{NPLp}
Let $L:Y^{m,2}(\R^d)\to Y^{-m,2}(\R^d)$ be bounded and invertible, and suppose that $L$ extends by density to a bounded operator $L:Y^{m,p}(\R^d)\to Y^{-m,p}(\R^d)$ for all $p$ in an open neighborhood of~$2$.

Let $\Upsilon_L$ be as in Definition~\ref{dfn:compatible}, that is, the set of all $p$ such that $L:Y^{m,p}(\RR^d)\to Y^{-m,p}(\RR^d)$ is bounded and compatibly invertible.

Then $\Upsilon_L$ is an interval, and there is a $\delta>0$ such that if $2-\delta<p<2+\delta$ then $p\in \Upsilon_L$.

In particular, these conditions are satisfied if $L$ is an operator of the form~\eqref{dfn:L} that satisfies the ellipticity condition~\eqref{gi} and such that $\Pi_L$ as given by Definition~\ref{dfn:ppm} contains an open neighborhood of~$2$. In this case $\delta$ depends only on $\Pi_L$ and the standard parameters.

\end{lemma}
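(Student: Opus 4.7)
The plan is to apply \v{S}ne\v{\i}berg's lemma (Lemma~\ref{snei}) on the complex interpolation scale of the spaces $Y^{m,p}(\R^d)$ near $p=2$.

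The first step is to establish the complex interpolation identity
\[
[Y^{m,p_0}(\R^d), Y^{m,p_1}(\R^d)]_\theta = Y^{m,p_\theta}(\R^d)
\]
with equivalent norms, whenever $p_0, p_1$ lie in a sufficiently small neighborhood of~$2$ and $1/p_\theta = (1-\theta)/p_0 + \theta/p_1$. The key observation is that for $p$ in a small enough neighborhood of~$2$, the set $S = \{\alpha : m - d/p < |\alpha| \leq m\}$ is independent of~$p$, so $Y^{m,p}(\R^d)$ is isomorphic, via the map $u \mapsto (\partial^\alpha u)_{\alpha \in S}$ and modulo a fixed finite-dimensional space of polynomials, to a closed subspace of the direct sum $\bigoplus_{\alpha \in S} L^{p_\alpha}(\R^d)$. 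Standard results on complex interpolation of Lebesgue spaces then yield the claimed identity, and by duality the analogous identity holds for $Y^{-m,p}(\R^d)$.

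Next I would apply \v{S}ne\v{\i}berg's lemma at $\theta^* = 1/2$, using interpolation couples $(Y^{m,p_0}, Y^{m,p_1})$ and $(Y^{-m,p_0}, Y^{-m,p_1})$ for some $p_0 < 2 < p_1$ in the boundedness neighborhood given by hypothesis. The assumed invertibility of $L : Y^{m,2} \to Y^{-m,2}$ supplies the lower bound $\|L\vec u\|_{Y^{-m,2}} \geq \|L^{-1}\|^{-1} \|\vec u\|_{Y^{m,2}}$ required at $\theta^*$. Parts~(i) and~(ii) of Lemma~\ref{snei} then produce a $\delta > 0$ so that for $|p - 2| < \delta$, the operator $L : Y^{m,p} \to Y^{-m,p}$ is invertible with bounded inverse, and the inverses agree on $Y^{-m,p} \cap Y^{-m,2}$, which is precisely the compatibility condition of Definition~\ref{dfn:compatible}. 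Hence $(2 - \delta, 2 + \delta) \subset \Upsilon_L$.

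To show $\Upsilon_L$ is an interval, suppose $p_0, p_1 \in \Upsilon_L$ with $p_0 < p_1$, and let $p_\theta$ lie strictly between them. Boundedness of $L : Y^{m,p_\theta} \to Y^{-m,p_\theta}$ follows by complex interpolation of the bounds at $p_0, p_1$. The inverse $L^{-1}$, being bounded and mutually compatible on the endpoint spaces, interpolates to a bounded operator $Y^{-m,p_\theta} \to Y^{m,p_\theta}$ that is also compatible with $L^{-1} : Y^{-m,2} \to Y^{m,2}$. The identities $LL^{-1} = I$ and $L^{-1}L = I$ hold on the dense subspace where all the operators are simultaneously defined and agree, hence on the full space by continuity; thus $p_\theta \in \Upsilon_L$. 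The final assertion, for operators of the form~\eqref{dfn:L} satisfying~\eqref{gi} with $\Pi_L$ containing an open neighborhood of~$2$, follows by combining Lemma~\ref{lem:L:invertible} (which provides invertibility at $p = 2$) with the general statement just established. The main obstacle is the interpolation identity in the first step: the $p$-dependent quotient structure of $Y^{m,p}$, in which functions are identified modulo polynomials of degree at most $m - d/p$, creates a potential incompatibility between the scales at different~$p$, but restricting to $p$ close to~$2$ stabilizes this structure and reduces the question to the interpolation of a closed subspace of a fixed direct sum of Lebesgue spaces, which is standard.
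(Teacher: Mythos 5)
Your overall strategy coincides with the paper's: establish that $Y^{m,p}(\R^d)$ and its antidual form complex interpolation scales, use invertibility at $p=2$ (Lemma~\ref{lem:L:invertible}), apply \v{S}ne\v{\i}berg's lemma to extrapolate to a neighborhood of~$2$, and deduce the interval property from interpolation of $L$ and $L^{-1}$. That said, the way you justify the interpolation identity is both different from the paper's and incomplete. You propose to embed $Y^{m,p}(\R^d)$ as the closed subspace $\{(\partial^\alpha u)_{\alpha\in S}\}$ of $\bigoplus_{\alpha\in S} L^{p_\alpha}(\R^d)$, with $S$ frozen for $p$ near~$2$, and then invoke ``standard results on complex interpolation of Lebesgue spaces.'' The standard result gives $[L^{p_0},L^{p_1}]_\theta = L^{p_\theta}$, but complex interpolation does \emph{not} automatically commute with passage to a closed subspace: in general $[A_0,A_1]_\theta$ need not equal $[X_0,X_1]_\theta \cap A$ when $A_i\subset X_i$ are closed, unless the subspaces are complemented in a manner uniform across the couple (a retract). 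Your embedding is an isometry, but you do not produce the projection from the ambient direct sum onto the constrained tuples $(f_\alpha)$ satisfying $\partial^\gamma f_\alpha = f_{\alpha+\gamma}$, and this is precisely the nontrivial ingredient. The paper avoids the issue by citing the known interpolation result for $\dot W^{m,p}(\R^d)$ from Triebel, then transferring it to $Y^{m,p}(\R^d)$ via the retract lemma of Kalton--Mayboroda--Mitrea (cited explicitly), and dualizing via Bergh--L\"ofstr\"om for $Y^{-m,p}(\R^d)$. If you insist on your direct-sum route, you must exhibit the retraction (e.g.\ via Riesz-transform-type Fourier multipliers, which is essentially the content of the cited retract lemma); as written, the assertion that the identity is ``standard'' for this subspace is the gap. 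The remainder of your argument --- the choice of conjugate exponents $p_0, p_1$ so that $\theta^*=1/2$ lands at $p=2$, the use of parts (i) and (ii) of Lemma~\ref{snei} to get invertibility and compatibility in a neighborhood, and the interpolation argument showing $\Upsilon_L$ is an interval --- is correct and matches the paper, modulo a glossed-over density point in reconciling compatibility at $p_0$ and $p_1$ through $p=2$.
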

\begin{proof} 
By assumption or by Lemma~\ref{lem:L:invertible},  $L:Y^{m,2}(\RR^d)\rightarrow Y^{-m,2}(\RR^d)$ is invertible. Thus $2\in \Upsilon_L$.

By \cite{Tri95}, $\dot{W}^{m,p}(\RR^d)$ forms a complex interpolation scale. The map which sends an element of $\dot{W}^{m,p}(\RR^d)$ to its unique representative in $Y^{m,p}(\RR^d)$ is a retract \cite[Lemma 7.11]{KalMM07}, and so we have that $Y^{m,p}(\RR^d)$ forms a complex interpolation scale.  Next, we have from \cite[Theorem 4.5.1]{BerL76} that the antidual space $Y^{-m,p}(\RR^d)$ also forms a complex interpolation scale.

A straightforward interpolation argument shows that if $L$ is bounded and compatibly invertible $Y^{m,p}(\RR^d)\rightarrow Y^{-m,p}(\RR^d)$ then $L$ is bounded and compatibly invertible $Y^{m,q}(\RR^d)\rightarrow Y^{-m,q}(\RR^d)$ whenever $q$ is between $p$ and~$2$, and so $\Upsilon_L$ is an interval.

Finally, by \v{S}ne\v{\i}berg's lemma, $L$ is invertible $Y^{m,q}(\RR^d)\to Y^{-m,q}(\RR^d)$ whenever $2-\delta<q<2+\delta$, where $\delta$ is as dictated by (i) from \v{S}ne\v{\i}berg's lemma. This completes the proof.
\end{proof}

\section{$L^p$ bounds on solutions and their gradients}\label{Lpc}

In \cite{Mey63}, Meyers established a reverse H\"older estimate; in the notation of the present paper, he established that if $L=-\nabla\cdot A\nabla$ is a second order divergence form operator without lower order terms, and if $Q$ is a cube, then for all $p$ and $q$ sufficiently close to~$2$ (and, in particular, for some $p>2$ and $q<2$) we have the estimate
\begin{equation*}
\|\nabla u\|_{L^{p}(Q)}
\leq
C|Q|^{1/p-1/q}
\|\nabla u\|_{L^q(2Q)}
+C\|Lu\|_{Y^{-1,p}(2Q)}
\end{equation*}
for all suitable functions~$u$. The exponent $q$ on the right hand side can be lowered if desired; see \cite[Section~9, Lemma~2]{FefS72} in the case of harmonic functions, and \cite[Lemma~33]{Bar16} for more general functions. Meyers's results can be generalized to second order systems (even nonlinear systems) without lower order terms (see \cite[Chapter~V]{Gia83}), or to higher order equations without lower order terms (see \cite{Cam80,AusQ00,Bar16}).

Theorem~\ref{umpm:intro} represents a generalization to the case of operators with lower order terms. It follows immediately from the next theorem and Lemma~\ref{NPLp}. We remark that the $m=1$ case of this theorem was essentially established in \cite[Section~3.1]{BorHLMP20p} and that the higher order case uses many of the same arguments.

\begin{theorem}\label{umpm}
Let $m\geq 1$ and $d\geq 2$ be integers. Let $L$ be an operator of order $2m$ of the form~\eqref{dfn:L} associated to coefficients $A$ that satisfy the G\aa{}rding inequality \eqref{gi} and either the bound \eqref{eqn:intro:bound} or the bound~\eqref{eqn:intro:bound:Bochner}.

Let $\Pi_L$ and $\Upsilon_L$ be as in Definitions \ref{dfn:ppm} and~\ref{dfn:compatible}. Let $p$, $\mu\in \Upsilon_L\cap\Pi_L$ with $p\geq 2$ and let $0<q\leq\infty$. Let $j$ and $\varpi$ be integers with $0\leq j\leq m$ and
$0\leq \varpi\leq \min(j,\mathfrak{b})$. If $p=2$, we impose the additional requirement that either $q\geq 2$ or $\varpi\geq 1$.

Let
$Q\subset\RR^d$ be a cube with sides parallel to the coordinate axes. Let $1<\theta\leq 2$.
Suppose that $\vec{u}\in Y^{m,\mu}(\theta Q)$ and that $L\vec{u}\in Y^{-m,p}(\theta Q)$ (in the sense that if $\vec\psi\in Y^{m,p'}_0(\theta Q)\cap Y^{m,\mu'}_0(\theta Q)$ then $|\langle L\vec u,\vec\psi\rangle_{\theta Q}| \leq C\|\vec\psi\|_{Y^{m,p'}(\theta Q)}$).

Then $\nabla^j u\in L^p(Q)$, and there exist positive constants $\kappa$ and $C$ depending on $p$, $q$, and the standard parameters such that
\begin{multline*}\frac{1}{|Q|^{(m-j)/d}}
\|\nabla^j u\|_{L^{p}(Q)}
\\\leq
\frac{C}{(\theta-1)^\kappa}\|L\vec u\|_{Y^{-m,p}(\theta Q)}
+ \frac{C|Q|^{1/p-1/q-(m-\varpi)/d}}{(\theta-1)^\kappa} \|\nabla^\varpi\vec u\|_{L^q(\theta Q\setminus Q)}
.\end{multline*}
\end{theorem}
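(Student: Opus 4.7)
I would prove Theorem~\ref{umpm} in two stages, following~\cite{Bar16,BorHLMP20p}: first the base case $p=2$ from the Caccioppoli inequality (Corollary~\ref{HoC}) together with the annular estimates of Section~\ref{sec:annuli}, and then the case $p\in(2,2+\delta)$ by bootstrapping via the compatible invertibility of $L$ on $Y^{m,p}(\mathbb{R}^d)$ supplied by Lemma~\ref{NPLp}.

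The $p=2$ case begins from the Caccioppoli bound
\[
|Q|^{(j-m)/d}\|\nabla^j\vec u\|_{L^2(Q)} \leq \frac{C}{((\theta-1)|Q|^{1/d})^m}\|\vec u\|_{L^2(\theta Q)} + C\|L\vec u\|_{Y^{-m,2}(\theta Q)}.
\]
To replace $\|\vec u\|_{L^2(\theta Q)}$ by $\|\nabla^\varpi\vec u\|_{L^q(\theta Q\setminus Q)}$ I would first move the $\vec u$-term inside the annulus $\theta Q\setminus Q$ (carrying out the variant promised in the remark after Corollary~\ref{HoC}) by a Giaquinta--Giusti iteration along a dyadic chain of subcubes, and then iterate the annular Poincar\'e inequality (Lemma~\ref{lem:annulus}) $\varpi$ times, subtracting at each step the polynomial representative allowed by the $Y^{m,2}$-equivalence class, closing with H\"older or with Lemma~\ref{evlem:annulus} to land in $L^q$. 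The side hypothesis ``$q\geq 2$ or $\varpi\geq 1$'' when $p=2$ is exactly what is needed for this descent to terminate.

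For $p>2$ with $p\in\Upsilon_L\cap\Pi_L$, fix $1<\tau<\theta$ and a cutoff $\chi\in C_c^\infty(\tau Q)$ with $\chi\equiv 1$ on $Q$ and $|\nabla^i\chi|\leq C((\tau-1)|Q|^{1/d})^{-i}$. By Lemma~\ref{uchi}, $\chi\vec u\in Y^{m,\mu}(\mathbb{R}^d)$. Expanding~\eqref{dfn:L} with Leibniz's rule in both slots of the pairing gives
\begin{equation*}
\langle L(\chi\vec u),\vec\varphi\rangle = \langle L\vec u,\chi\vec\varphi\rangle_{\theta Q} + \langle R_\chi\vec u,\vec\varphi\rangle,
\end{equation*}
where each term of the commutator $R_\chi\vec u$ carries at least one derivative of $\chi$ and so is supported in $\tau Q\setminus Q$. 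After pairing with $\vec\varphi$ and invoking H\"older's inequality together with Corollary~\ref{cor:GNS:global} or Corollary~\ref{cor:GNS:slices} on the test-function side, the commutator satisfies
\begin{equation*}
\|R_\chi\vec u\|_{Y^{-m,p}(\mathbb{R}^d)} \leq \sum_{k=0}^{m-1}\frac{C}{((\tau-1)|Q|^{1/d})^{m-k}}\|\nabla^k\vec u\|_{L^{p_k}(\tau Q\setminus Q)},
\end{equation*}
with the exponents $p_k=p_{m,d,k}$. Compatible invertibility of $L$ on $Y^{m,p}(\mathbb{R}^d)$ then yields $\|\chi\vec u\|_{Y^{m,p}(\mathbb{R}^d)}\leq C\|L(\chi\vec u)\|_{Y^{-m,p}(\mathbb{R}^d)}$, so $\|\nabla^j\vec u\|_{L^p(Q)}$ is controlled by $\|L\vec u\|_{Y^{-m,p}(\theta Q)}$ plus these intermediate annular norms.

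The main obstacle is converting the intermediate norms $\|\nabla^k\vec u\|_{L^{p_k}(\tau Q\setminus Q)}$, $0\leq k<m$, into $\|\nabla^\varpi\vec u\|_{L^q(\theta Q\setminus Q)}$ while losing only a polynomial factor of $(\theta-1)^{-1}$. I would address this by iterating the cutoff argument on a dyadic chain of cubes $Q\subset\tau_1 Q\subset\cdots\subset\tau_N Q=\theta Q$ (compare~\cite[Section~3.1]{BorHLMP20p}): on each step, the already-established $p=2$ case, applied inside a subcube of the chain, trades the $L^{p_k}$-norm of $\nabla^k\vec u$ on one annulus for an $L^q$-norm of $\nabla^\varpi\vec u$ on the next annulus plus a $\|L\vec u\|_{Y^{-m,p}}$ term, with Lemma~\ref{evlem:annulus} supplying the required integrability conversions. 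A standard absorption argument then collapses the accumulated $(\tau_{i+1}-\tau_i)^{-1}$ factors into a single $(\theta-1)^{-\kappa}$. The restriction $\varpi\leq\mathfrak{b}$ enters because the commutator expansion only produces derivatives of $\vec u$ at orders to which the coefficients of $L$ couple, while the restriction $p<2+\delta$ reflects the \v{S}ne\v{\i}berg radius in Lemma~\ref{NPLp}.
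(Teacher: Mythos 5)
Your broad architecture — cutoff function, commutator decomposition $\langle L(\chi\vec u),\vec\varphi\rangle = \langle L\vec u,\chi\vec\varphi\rangle_{\theta Q} + \langle R_\chi\vec u,\vec\varphi\rangle$, invertibility of $L:Y^{m,p}(\RR^d)\to Y^{-m,p}(\RR^d)$ supplied by \v{S}ne\v{\i}berg, Caccioppoli at $L^2$, and a final descent in $q$ — matches the paper's chain Lemma~\ref{ux-m} $\to$ Lemma~\ref{umpm2} $\to$ Lemma~\ref{umpm3} $\to$ Lemma~\ref{lem:subaverage}. You also correctly identify the polynomial subtraction (degree $<\varpi\leq\mathfrak{b}$) and the role of the \v{S}ne\v{\i}berg radius. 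However, there is a real gap at the heart of the argument.

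Your commutator bound
\[
\|R_\chi\vec u\|_{Y^{-m,p}(\RR^d)} \lesssim \sum_{k=0}^{m-1}\frac{1}{((\tau-1)|Q|^{1/d})^{m-k}}\|\nabla^k\vec u\|_{L^{p_k}(\tau Q\setminus Q)}
\]
is not the one that lets the argument close, and it is not what H\"older actually produces. When you pair $\partial^\alpha\varphi_j\,A^{j,k}_{\alpha,\beta}\,\partial^\gamma u_k\,\partial^{\beta-\gamma}\chi$ with $\|\partial^\alpha\varphi\|_{L^{(p')_\alpha}}$, $\|A\|_{L^{2_{\alpha,\beta}}}$ on the other two factors, the exponent that falls on $\partial^\gamma u$ is $p_\beta$, \emph{not} $p_\gamma$. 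Because $|\gamma|<|\beta|$, the norm $\|\partial^\gamma u\|_{L^{p_\beta}(\tau Q\setminus Q)}$ is \emph{subcritical}, and this slack is exactly what the paper exploits via Lemma~\ref{ux-m:step2}: a Gagliardo--Nirenberg--Sobolev improvement in the annulus converts $\|\partial^\gamma u\|_{L^{p_\beta}(\theta Q\setminus Q)}$ into $\sum_i |Q|^{1/p-1/\mu+\cdots}\|\nabla^i u\|_{L^\mu(\theta Q\setminus Q)}$ for \emph{any} $\mu$ with $1/\mu\leq 1/p+1/d$ — in particular $\mu=2$ even when $p>2$. Without this integrability gain the bootstrap cannot start, because the hypothesis is only $\vec u\in Y^{m,\mu}(\theta Q)$, not $\vec u\in Y^{m,p}(\theta Q)$; you need the right-hand side to be a norm your $L^2$ Caccioppoli can produce. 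Your version, keeping $L^{p_k}$ norms, leaves the right-hand side at the same or higher integrability than the left and you cannot absorb.

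Relatedly, the dyadic-chain iteration you sketch does not obviously terminate: the ``already-established $p=2$ case'' gives control of $L^2$ norms of $\nabla^j u$, not $L^{p_k}$ norms, so each iteration step would require exactly the estimate you are trying to prove. The paper instead establishes Lemma~\ref{umpm2} (cutoff + $Y^{-m,p}$-to-$Y^{m,p}$ invertibility, with the commutator controlled by $L^\mu$-norms via Lemma~\ref{ux-m:step2}) in a single application, then in Lemma~\ref{umpm3} plugs $\mu=2$, uses Corollary~\ref{HoC} to reduce $\|\nabla^i u\|_{L^2(\theta Q\setminus Q)}$ for $\varpi\leq i\leq m$ down to $\|\vec u-\vec P\|_{L^2(\theta Q\setminus Q)}$, applies the annular Poincar\'e inequality (Lemma~\ref{lem:annulus}) $\varpi$ times to arrive at $\|\nabla^\varpi\vec u\|_{L^2(\theta Q\setminus Q)}$, and finally lowers $q$ by the self-improvement Lemma~\ref{lem:subaverage}. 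Your proposal has the right destinations but is missing the key intermediate step that makes the journey possible.

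Lastly, a smaller point: you invoke the polynomial subtraction only in the $p=2$ stage. It is also indispensable in the $p>2$ commutator argument; without subtracting a polynomial of degree $<\varpi\leq\mathfrak{b}$ from $\vec u$ before applying the cutoff, the Leibniz expansion produces $\nabla^i u$ norms for all $i\geq 0$, including $i<\varpi$, and these cannot be discarded.
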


Here $\mathfrak{b}$ is as in Definition~\ref{dfn:ppm}, that is, $\mathfrak{b} =\min\{|\beta|:A_{\alpha,\beta}^{j,k}(X)\neq 0$ for some $\alpha$, $j$, $k$, and~$X\}$.

\begin{remark}If $j>m-d/p$, we may of course immediately apply the Gagliardo-Nirenberg-Sobolev inequality (Lemma~\ref{evlem}) to bound $\|\nabla^j \vec u\|_{L^{p_j}(Q)}$; if $j<m-d/p$, then improved estimates on $\nabla^j\vec u$, such as local H\"older continuity, may be derived from further Sobolev space results such as Morrey's inequality.
\end{remark}

In the case of operators without lower order terms (in which case $\mathfrak{b}=m$), we may take $j=\varpi=m$; Theorem~\ref{umpm} then yields the same bounds as the classical inequality of Meyers (and the generalizations of \cite{Cam80,AusQ00,Bar16}).

We will also establish an estimate for functions $\vec u$ with $L\vec u\in Y^{-m,p}(\theta Q)$ for $p<2$ sufficiently close to~$2$.
\begin{theorem}\label{umpm:less}
Let $m\geq 1$ and $d\geq 2$ be integers. Let $L$ be an operator of order $2m$ of the form~\eqref{dfn:L} associated to coefficients $A$ that satisfy the G\aa{}rding inequality \eqref{gi} and either the bound \eqref{eqn:intro:bound} or the bound~\eqref{eqn:intro:bound:Bochner}.

Let $\Pi_L$ and $\Upsilon_L$ be as in Definitions \ref{dfn:ppm} and~\ref{dfn:compatible}. Let $p$, $\mu\in \Upsilon_L\cap\Pi_L$ and let $0<q\leq\infty$. Let $j$ be an integer with $0\leq j\leq m$.

Let
$Q\subset\RR^d$ be a cube with sides parallel to the coordinate axes. Let $1<\theta\leq 2$.
Suppose that $\vec{u}\in Y^{m,\mu}(\theta Q)$ and that $L\vec{u}\in Y^{-m,p}(\theta Q)$.

Then $\nabla^j u\in L^p(Q)$, and there exist positive constants $\kappa$ and $C$ depending on $p$, $q$, and the standard parameters such that
\begin{multline*}
\frac{1}{|Q|^{(m-j)/d}}
\|\nabla^j u\|_{L^{p}(Q)}
\\\leq
\frac{C}{(\theta-1)^\kappa}\|L\vec u\|_{Y^{-m,p}(\theta Q)}
+ \frac{C|Q|^{1/p-1/q}}{(\theta-1)^\kappa}
\sum_{i=\min(j,\mathfrak{b})}^m \frac{1}{|Q|^{(j-i)/d}}
\|\nabla^i\vec u\|_{L^q(\theta Q\setminus Q)}
.\end{multline*}
\end{theorem}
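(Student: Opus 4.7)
\textbf{The plan} is to split into two cases. For $p\geq 2$ (apart from the corner case $p=2$, $q<2$, $\min(j,\mathfrak{b})=0$, which falls outside Theorem~\ref{umpm}'s hypotheses), the result follows essentially at once from Theorem~\ref{umpm}: apply it with $\varpi=\min(j,\mathfrak{b})$, and note that the single-term right-hand side there is one of the summands on the right-hand side of Theorem~\ref{umpm:less}. The remaining case $p<2$ with $p\in\Upsilon_L\cap\Pi_L$ is the substance of the proof, and the strategy I would adopt there is to localize and invert $L$ globally.

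To set this up, first I would fix an intermediate cube $Q\subset Q_*\subset\theta Q$, say $Q_*=\tfrac{1+\theta}{2}Q$, and a smooth cutoff $\chi$ with $\chi\equiv 1$ on $Q$, $\supp\chi\subset Q_*$, and $\|\nabla^k\chi\|_\infty\leq C_k((\theta-1)|Q|^{1/d})^{-k}$. Letting $\vec w=\chi\vec u$ extended by zero, Lemma~\ref{uchi} gives $\vec w\in Y^{m,\mu}(\mathbb{R}^d)$. A Leibniz-rule expansion then yields, for a test function $\vec\psi\in Y^{m,p'}(\mathbb{R}^d)\cap Y^{m,\mu'}(\mathbb{R}^d)$, the decomposition
\begin{equation*}
\langle L\vec w,\vec\psi\rangle = \langle L\vec u,\chi\vec\psi\rangle_{\theta Q}+\langle R,\vec\psi\rangle,
\end{equation*}
where $R$ is supported in $Q_*\setminus Q$ and is a sum of commutator terms of the form $\int(\partial^\sigma\chi)(\partial^\tau\vec\psi)\,\overline{A_{\alpha,\beta}(\partial^\rho\vec u)}$ with $|\sigma|\geq 1$. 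I would bound $|\langle R,\vec\psi\rangle|$ using H\"older's inequality together with the coefficient condition~\eqref{eqn:intro:bound} or~\eqref{eqn:intro:bound:Bochner}, Corollaries~\ref{cor:GNS:global} and~\ref{cor:GNS:slices} applied to $\vec\psi$, and the cutoff derivative bounds; the piece $\langle L\vec u,\chi\vec\psi\rangle_{\theta Q}$ is bounded by $C(\theta-1)^{-\kappa}\|L\vec u\|_{Y^{-m,p}(\theta Q)}\|\vec\psi\|_{Y^{m,p'}(\mathbb{R}^d)}$ via Lemma~\ref{uchi} applied to $\chi\vec\psi$. The compatible invertibility $L^{-1}\colon Y^{-m,p}(\mathbb{R}^d)\to Y^{m,p}(\mathbb{R}^d)$ of Lemma~\ref{NPLp} then gives $\|\vec w\|_{Y^{m,p}(\mathbb{R}^d)}\leq C\|L\vec w\|_{Y^{-m,p}(\mathbb{R}^d)}$; since $\vec w=\vec u$ on $Q$, restricting and using H\"older's inequality to pass from $L^{p_\alpha}(Q)$ to $L^p(Q)$ at cost $|Q|^{(m-|\alpha|)/d}$ delivers the estimate for $j>m-d/p$, while $j\leq m-d/p$ is then reduced to this case via Poincar\'e's inequality on $Q$ with the representative of $\vec u$ fixed by the $L^q(\theta Q\setminus Q)$ norms on the right-hand side.

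\textbf{The hard part} will be verifying that the bound on $\langle R,\vec\psi\rangle$ involves only derivatives $\|\nabla^i\vec u\|_{L^q(\theta Q\setminus Q)}$ with $i\geq\min(j,\mathfrak{b})$ rather than all $i\geq 0$. The naive Leibniz expansion produces $\partial^\gamma\vec u$ with $|\gamma|<|\beta|$, and since $|\beta|$ may be as large as $m$, $|\gamma|$ can be as low as $0$. Eliminating the unwanted low-order pieces will require an additional round of integration by parts inside $R$ (moving $\partial^\gamma$ from $\vec u$ onto $\chi\vec\psi$), use of the coefficient-structure zeros $A_{\alpha,\beta}=0$ for $|\alpha|<\mathfrak{a}$ or $|\beta|<\mathfrak{b}$, and the polynomial ambiguity of $\vec u\in Y^{m,\mu}$ to absorb residual contributions, all while carefully tracking the $\theta$-dependence so that the final constant has the form $C/(\theta-1)^\kappa$.
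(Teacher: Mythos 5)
Your localize-and-invert strategy for the difficult range of~$p$ is the right idea and matches the paper's Lemma~\ref{ux-m} and Lemma~\ref{umpm2}: cut off by $\chi$, expand $L(\chi\vec u)$ via Leibniz, split off $\langle L\vec u,\bar\chi\vec\psi\rangle_{\theta Q}$, control the commutator by H\"older and the coefficient bounds, and then invert $L$ globally on $Y^{m,p}(\RR^d)$. Two small points of detail: the paper does not need an ``additional round of integration by parts inside $R$'' --- the commutator terms are controlled directly once two polynomials have been subtracted, one from $\vec u$ (degree $<\varpi\le\mathfrak{b}$, so that $L\vec P=0$ and $\nabla^\varpi\vec u=\nabla^\varpi(\vec u-\vec P)$) and one from the test function (degree $<\mathfrak{a}$, so that $L^*\vec R=0$ and $\partial^\alpha\vec\psi=\partial^\alpha\vec\varphi$); and the separation into $p\ge2$ vs.\ $p<2$ is unnecessary, since the localize-and-invert argument runs uniformly for all $p\in\Upsilon_L\cap\Pi_L$ and in particular already covers the corner case $p=2$, $q<2$, $\min(j,\mathfrak b)=0$ that Theorem~\ref{umpm} excludes, which your plan otherwise leaves dangling.

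The genuine gap is the exponent~$q$ on the right-hand side. Your argument, just like Lemma~\ref{umpm2}, produces the estimate only with $\|\nabla^i\vec u\|_{L^\mu(\theta Q\setminus Q)}$ for $\mu\in\Upsilon_L\cap\Pi_L$ with $1/\mu\le 1/p+1/d$; that is, for a single exponent $q=\mu$ confined to a small interval around~$2$ (and, by H\"older, for all $q\ge\mu$). Theorem~\ref{umpm:less} asserts the bound for \emph{every} $0<q\le\infty$, including arbitrarily small~$q$. Passing from $q=\mu$ to small $q$ is not a routine H\"older step; it requires the self-improvement lemma (Lemma~\ref{lem:subaverage} in the paper), which iterates the estimate over a geometrically nested sequence of cubes $\vartheta_\ell Q$, interpolates the annular $L^{\widehat q}$ norm between $L^q$ and $L^p$, applies Young's inequality to move a fraction of the $L^p$ term to the left side, and sums the resulting geometric series. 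Without this iteration your proof establishes the theorem only for $q$ in a restricted range, not for the full range $0<q\le\infty$ in the statement. You should also note that invoking Theorem~\ref{umpm} for $p\ge2$ inherits this machinery implicitly, since Theorem~\ref{umpm} itself is proved through the same self-improvement step; it does not give you a way to avoid Lemma~\ref{lem:subaverage}.
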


Given operators with lower order terms, Theorem~\ref{umpm} cannot be strengthened, as shown in the following example.

\begin{theorem}\label{thm:meyers:counterexample}
Let $\dmn\geq 3$, $m\geq 1$, $\mathfrak{a}\in (m-\pdmn/2,m]$, and $\mathfrak{b}\in (m-\pdmn/2,m)$ be nonnegative integers, and let $\varepsilon>0$.

Let $Q_0\subset\RR^\dmn$ be the cube of volume~$1$ centered at the origin.
Let $\widetilde A_{\alpha,\beta}$ be real nonnegative constant coefficients such that
\begin{equation*}(-\Delta)^m = (-1)^{m} \sum_{\abs\alpha=\abs\beta= m} \widetilde A_{\alpha,\beta}\partial^{\alpha+\beta},
\qquad
\widetilde A_{\alpha,\beta}=0\text{ if }|\alpha|<m\text{ or } |\beta|<m.\end{equation*}

Then there exists a linear operator $L$ of the form~\eqref{dfn:L} with $N=1$ associated to coefficients $A_{\alpha,\beta}=A_{\alpha,\beta}^{1,1}$ and a function $u$ such that
\begin{itemize}
\item $\|A_{\alpha,\beta}-\widetilde A_{\alpha,\beta}\|_{L^\infty(Q)}\leq \varepsilon$ for all $\abs\alpha\leq m$ and~$\abs\beta\leq m$,
\item The numbers $\mathfrak{a}$ and $\mathfrak{b}$ chosen above also satisfy the conditions~\eqref{eqn:tildealpha}--\eqref{eqn:tildebeta} given in Definition~\ref{dfn:ppm},
\item $Lu=0$ in~$Q$,
\item If $\widetilde C>0$ and $2<p<\infty$ then there is a cube $Q\subseteq Q_0$ with
\begin{equation*}
\|\nabla^m \vec u\|_{L^p(Q)}
\geq \widetilde C
\sum_{i=\mathfrak{b}+1}^{m}
|Q|^{1/p-1/2-(m-i)/\pdmn}
\|\nabla^i \vec u\|_{L^{2}(2Q)}
.\end{equation*}
\end{itemize}
\end{theorem}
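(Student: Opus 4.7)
The plan is to exhibit the counterexample via a rescaled smooth compactly supported bump. Fix a point $x_0$ in the interior of $Q_0$ and a nontrivial $\phi\in C_c^\infty(\RR^d)$; for a small parameter $r'>0$, set $u(x)=\phi((x-x_0)/r')$, so $u$ is supported in a ball of radius proportional to $r'$. The idea is that on a cube $Q\subseteq Q_0$ of side $r$ containing $\supp u$ with $r\gg r'$, the ratio in the theorem will become arbitrarily large as $r/r'$ grows.

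To define $L$, I keep the top-order coefficients equal to $\widetilde A_{\alpha,\beta}$ (so the top-order part is exactly $(-\Delta)^m$) and introduce one additional nonzero coefficient $A_{\alpha_0,\beta_0}$ with $|\alpha_0|=\mathfrak{a}$, $|\beta_0|=\mathfrak{b}$, chosen on $\supp u$ precisely to annihilate $u$. When $\mathfrak{a}=0$ this gives the explicit formula $A_{0,\beta_0}=-((-\Delta)^m u)/(\partial^{\beta_0}u)$, well defined provided $\phi$ is chosen so that $\partial^{\beta_0}\phi$ does not vanish where $(-\Delta)^m\phi$ is nonzero (easily arranged by taking $\phi$ to be a strictly positive bump multiplied by a suitable polynomial). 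When $\mathfrak{a}>0$, take $\phi=\partial^{\alpha_0}\psi$ for a bump $\psi$ and set $A_{\alpha_0,\beta_0}=(-1)^{|\alpha_0|+1}((-\Delta)^m\psi)/(\partial^{\alpha_0+\beta_0}\psi)$, which yields $(-1)^{|\alpha_0|}\partial^{\alpha_0}(A_{\alpha_0,\beta_0}\partial^{\beta_0}u)=-(-\Delta)^m u$ on $\supp u$. Extending $A_{\alpha_0,\beta_0}$ by zero outside $\supp u$ and setting all other coefficients to zero yields $Lu=0$ on $\RR^d$, with $\mathfrak{a}_L,\mathfrak{b}_L$ as prescribed.

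The core calculation is a routine scaling. Since $\|\nabla^j u\|_{L^s(\RR^d)}=r'^{\,d/s-j}\|\nabla^j\phi\|_{L^s(\RR^d)}$, for any cube $Q$ of side $r$ containing $\supp u$ one obtains
\begin{equation*}
\frac{\|\nabla^m u\|_{L^p(Q)}}{|Q|^{1/p-1/2-(m-i)/d}\,\|\nabla^i u\|_{L^2(2Q)}}
= \left(\frac{r}{r'}\right)^{d/2-d/p+(m-i)}\cdot\frac{\|\nabla^m\phi\|_{L^p(\RR^d)}}{\|\nabla^i\phi\|_{L^2(\RR^d)}}.
\end{equation*}
Since $p>2$ and $i\leq m$, the exponent $d/2-d/p+(m-i)$ is strictly positive; thus shrinking $r'$ while keeping $r$ of order one drives this ratio to infinity. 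Summing over $i=\mathfrak{b}+1,\dots,m$ contributes at most an additional factor of $m-\mathfrak{b}$ times the largest single term (dominated by $i=m$), so the claimed lower bound by $\widetilde C$ follows once $r/r'$ is chosen large in terms of $\widetilde C$, $p$, and $\phi$.

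The main obstacle is verifying the coefficient size condition. A scaling computation shows $|A_{\alpha_0,\beta_0}|\lesssim r'^{\,|\alpha_0|+|\beta_0|-2m}$ on a set of measure $\sim r'^d$, so $\|A_{\alpha_0,\beta_0}\|_{L^{2_{\alpha_0,\beta_0}}(\RR^d)}$ is independent of $r'$ (the scale-critical Lebesgue exponent), giving the bound~\eqref{eqn:intro:bound} uniformly. For the $L^\infty(Q)$ proximity $\|A_{\alpha,\beta}-\widetilde A_{\alpha,\beta}\|_{L^\infty(Q)}\leq\varepsilon$ stated in the theorem, one uses that the cube $Q$ in the bad-ratio bullet is chosen after $L$ and $u$: spreading the construction into many disjoint bumps with individually small amplitudes (so that each amplitude is $\leq\varepsilon$) and then selecting $Q$ to contain exactly one such bump at scale $r'$ reconciles the pointwise bound with the freedom to take $r/r'$ arbitrarily large; alternatively, one locates $Q$ so that $\supp A_{\alpha_0,\beta_0}$ occupies only a negligible portion of $Q$ and redistributes via a convolution smoothing. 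The G\aa{}rding inequality~\eqref{gi} for such a small perturbation of $(-\Delta)^m$ is routine, as is the bookkeeping that the resulting $\mathfrak{a}_L,\mathfrak{b}_L$ coincide with the prescribed values.
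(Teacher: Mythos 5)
There is a genuine gap at the coefficient-size step, and the fixes you sketch do not repair it. The theorem requires the \emph{pointwise} proximity $\|A_{\alpha,\beta}-\widetilde A_{\alpha,\beta}\|_{L^\infty(Q)}\leq\varepsilon$, and in particular $\|A_{\alpha_0,\beta_0}\|_{L^\infty(Q)}\leq\varepsilon$ for the one lower-order coefficient you turn on. But your own scaling estimate gives $|A_{\alpha_0,\beta_0}|\sim r'^{\,|\alpha_0|+|\beta_0|-2m}$, and since $|\alpha_0|+|\beta_0|<2m$ this blows up as $r'\to 0$, which is exactly the regime you need in order to make the Meyers ratio large. Your first repair---spreading the construction into many bumps of small amplitude---fails because the coefficient is a \emph{ratio} of derivatives of $u$ and is therefore invariant under multiplying $u$ by a constant: if $u=c\,\phi((x-x_0)/r')$ then
\begin{equation*}
A_{0,\beta_0}=-\frac{(-\Delta)^m u}{\partial^{\beta_0}u}
=-r'^{\,|\beta_0|-2m}\,\frac{((-\Delta)^m\phi)((x-x_0)/r')}{(\partial^{\beta_0}\phi)((x-x_0)/r')},
\end{equation*}
with the amplitude $c$ cancelling. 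Shrinking the amplitude does nothing for $\|A_{\alpha_0,\beta_0}\|_{L^\infty}$. Your second repair (convolution smoothing) also does not reduce the sup-norm on a set of positive measure. (There is also a secondary worry you only assert away: for a compactly supported $\phi$ the denominator $\partial^{\beta_0}\phi$ vanishes at the boundary of $\supp\phi$, so even the claim that $A_{\alpha_0,\beta_0}$ is bounded for fixed $r'$ needs a careful construction, not merely ``a bump times a polynomial.'')

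The paper's proof avoids this by building $u$ of the form $u(X)=X^\zeta+\text{(small, localized perturbations)}$ with $|\zeta|=\mathfrak{b}$, so that $\partial^\zeta u$ is bounded away from zero uniformly (indeed $|\partial^\zeta u|\geq\tfrac12$) while the relevant numerators---derivatives of order $2m-\mathfrak{a}>\mathfrak{b}$, for which $\partial^\gamma X^\zeta=0$---are uniformly small, of size $O(\varepsilon/C_0)$. The coefficient is then a ratio ``small over bounded-below,'' hence uniformly small in $L^\infty$. The same construction uses a sequence of rescaled bumps $\varphi_k\, n_k^{-2m} w(n_k(\cdot-X_k))$ with $n_k\ell_k\to\infty$ to drive the Meyers ratio to infinity. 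Your scaling heuristic for the ratio is correct and essentially the same as the paper's, but the polynomial additive term $X^\zeta$ (rather than a pure bump) is the essential idea that reconciles the blow-up of the ratio with the uniform $L^\infty$-smallness of the lower-order coefficient, and it is missing from your construction.
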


Constant coefficient operators without lower order terms such as $(-\Delta)^m$ clearly satisfy the bounds \eqref{eqn:intro:bound} and~\eqref{eqn:intro:bound:Bochner} for some $\Lambda>0$. Extending $A_{\alpha,\beta}$ by zero, we see that by taking $\varepsilon$ small enough, we may ensure that $L$ satisfies the bound \eqref{eqn:intro:bound} with constant $\Lambda$ arbitrarily close to that of~$(-\Delta)^m$.

By an elementary (and very well known) argument using the Fourier transform, the operator $(-\Delta)^m$ satisfies the bound~\eqref{gi} for some $\lambda>0$. By Corollary~\ref{cor:GNS:global}, and again by taking $\varepsilon$ small enough, the operator $L$ satisfies the bound~\eqref{gi} with constant $\lambda$ arbitrarily close to that of~$(-\Delta)^m$.

We will prove Theorems~\ref{umpm} and~\ref{umpm:less} in Section~\ref{sec:meyers:proof}, and prove Theorem~\ref{thm:meyers:counterexample} in Section~\ref{sec:meyers:counter}.

\subsection{Proof of Theorems~\ref{umpm} and~\ref{umpm:less}}
\label{sec:meyers:proof}

We begin with the following variant of Lemmas~\ref{evlem}, \ref{evlem:t}, and~\ref{evlem:annulus} in the case where the exponents on each side are different.

\begin{lemma}\label{ux-m:step2}
Let $m$, $d\in\NN$, $d\geq 2$, $p\in[1,\infty )$, and let $j$, $k\in\NN_0$ satisfy
$0\leq j\leq k-1$ and $m-d/p< k\leq m$. Let $p_k=p_{m,d,k}$. Let $1<\theta\leq 2$. Let $\mu$ satisfy $0<1/\mu\leq\min(1,1/p+1/d)$.

Then there is a constant $C$ depending only on $p$, $d$, and~$m$ such that if $Q\subset\RR^d$ is a cube with sides parallel to the coordinate axes and $u\in W^{m,p}(\theta Q)$, then
\begin{align*}
\|\nabla^j u\|_{L^{p_{k}}(\theta Q)}
&\leq \sum_{i=j}^{m} C|Q|^{1/p-1/\mu-(m-k+j-i)/d}
\|\nabla^i u\|_{L^\mu(\theta Q)}
,\\
\|\nabla^j u\|_{L^{p_{k}}(\theta Q\setminus Q)}
&\leq \sum_{i=j}^{m} \frac{C|Q|^{1/p-1/\mu-(m-k+j-i)/d}}{(\theta-1)^{m-k+j-i+1}}
\|\nabla^i u\|_{L^\mu(\theta Q\setminus Q)}
.\end{align*}
If in addition $k>m-(d-1)/p$, then
\begin{align*}
\|\nabla^j u\|_{L_t^pL_x^{\widetilde p_k}(\theta Q)}
&\leq
\sum_{i=j}^{m} C|Q|^{1/p-1/\mu-(m-k+j-i)/d}
\|\nabla^i u\|_{L^\mu(\theta Q)}
,\\
\|\nabla^j u\|_{L_t^pL_x^{\widetilde p_k}(\theta Q\setminus Q)}
&\leq
\sum_{i=j}^{m} \frac{C|Q|^{1/p-1/\mu-(m-k+j-i)/d}}{(\theta-1)^{m-k+j-i+1}}
\|\nabla^i u\|_{L^\mu(\theta Q\setminus Q)}
.\end{align*}

\end{lemma}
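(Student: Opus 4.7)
The plan is to reduce all four estimates to Lemmas~\ref{evlem}, \ref{evlem:t}, and~\ref{evlem:annulus} via a two-case analysis on the relative size of $\mu$ and $p$. In every case, I first apply the appropriate base lemma with parameters $(m, p, k, j)$ on the relevant set $\Omega$ (with $\Omega = \theta Q$ for the cube versions and $\Omega = \theta Q \setminus Q$ for the annulus versions; Lemma~\ref{evlem:t} or the second half of Lemma~\ref{evlem:annulus} is used in the Bochner versions). This bounds $\|\nabla^j u\|_{L^{p_k}(\Omega)}$ (or the Bochner analogue) by $C \sum_{\ell = j}^{m - k + j} |Q|^{(\ell - j + k - m)/d}\|\nabla^\ell u\|_{L^p(\Omega)}$, with an additional factor $(\theta - 1)^{-(m - k + j - \ell)}$ in the annulus cases. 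Because $j \le k - 1$, the summation index $\ell$ is always at most $m - 1$, which is essential for the second stage below.

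In the case $\mu \ge p$, a single application of H\"older's inequality on the bounded set $\Omega$ (noting $|\theta Q \setminus Q| \le 2^d |Q|$ when appropriate) gives $\|\nabla^\ell u\|_{L^p(\Omega)} \le C |Q|^{1/p - 1/\mu}\|\nabla^\ell u\|_{L^\mu(\Omega)}$, and the claim follows immediately; the target sum up to $i = m$ in the statement of course includes any smaller summation.

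In the case $\mu < p$, I apply Lemma~\ref{evlem} (or Lemma~\ref{evlem:annulus}) a second time, now with parameters $(m, \mu, m - 1, \ell)$, to each $\ell$ in the outer sum. The hypothesis $1/\mu \le 1/p + 1/d$ forces $\mu_{m - 1} \ge p$, which in turn permits a follow-up H\"older step from $L^{\mu_{m - 1}}(\Omega)$ down to $L^p(\Omega)$ at the cost of an extra factor $|Q|^{1/p - 1/\mu + 1/d}$. The resulting bound on $\|\nabla^\ell u\|_{L^p(\Omega)}$ is the sum of a term in $\|\nabla^\ell u\|_{L^\mu(\Omega)}$ and a term in $\|\nabla^{\ell + 1} u\|_{L^\mu(\Omega)}$. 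Substituting this into the first step and relabeling $i = \ell$ in the first contribution and $i = \ell + 1$ in the second, the double sum collapses onto a single sum over $j \le i \le m - k + j + 1 \le m$ with exactly the claimed exponents $|Q|^{1/p - 1/\mu - (m - k + j - i)/d}$ and, in the annulus and Bochner versions, $(\theta - 1)^{-(m - k + j - i + 1)}$.

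The main obstacle is verifying that the choice $k' = m - 1$ in the second application of Lemma~\ref{evlem} satisfies its hypothesis $m - d/\mu < k'$, that is, $\mu < d$. When $\mu \ge d$, the Sobolev step must be replaced by a Morrey-type embedding (for $\mu > d$) or by the critical Sobolev-Poincar\'e embedding into any $L^q$ (for $\mu = d$) on $\Omega$, but in either case the bookkeeping of $|Q|$ and $(\theta - 1)$ factors is identical, and the same conclusion follows. The remaining work is an index chase to confirm that the resulting double sum telescopes onto the stated range; this is routine but tedious.
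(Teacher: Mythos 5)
Your proposal shares the paper's basic strategy---two applications of the Sobolev lemmas combined with H\"older's inequality, followed by an index chase---and your exponent and $(\theta-1)$-power bookkeeping for the cases you treat explicitly is correct. However, you have missed the paper's opening move, and this leaves a genuine gap that your hand-waving does not close.

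The paper begins with a one-line reduction: since the conclusion's exponents are exactly $|Q|^{1/p-1/\mu-\cdots}$, a single application of H\"older on the bounded set $\Omega$ shows it suffices to prove the bounds for the endpoint value $1/\mu = \min(1, 1/p+1/d)$. At this endpoint one always has $\mu < d$ (because $p<\infty$ forces $1/\mu > 1/d$, and $\mu=1<d$ in the saturated case $1/p+1/d\geq 1$), and also $\mu\leq p$. Consequently the hypothesis $m-d/\mu< m-1$ for the second Sobolev application is automatic, and no case split is needed. By skipping this reduction you are forced to treat $\mu\geq p$ and $\mu<p$ separately, and in the sub-case $\mu<p$ with $\mu\geq d$ (which is nonempty: e.g.\ $d=3$, $p=10$, $\mu=5$) the Sobolev exponent $\mu_{m-1}$ is undefined. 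You acknowledge this and invoke Morrey-type or critical embeddings, asserting the bookkeeping is ``identical,'' but this is not obviously so: the paper has no Morrey-type analogue of Lemma~\ref{evlem:annulus} in thin annuli, and the $(\theta-1)$ powers there arise from a delicate covering-by-rectangles argument that is not automatically reproduced by an $L^\infty$ estimate. As it stands, that paragraph is a gap, not a proof. You should also note that your prose in the $\mu<p$ branch reverses the order of the steps---one must first pass from $L^p$ to $L^{\mu_{m-1}}$ by H\"older (using $\mu_{m-1}\geq p$) and only then invoke Lemma~\ref{evlem} with $k'=m-1$, since the quantity produced by the first stage lives in $L^p$, not $L^{\mu_{m-1}}$.
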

\begin{proof}
By H\"older's inequality, it suffices to establish the listed bounds for the endpoint value $1/\mu=\min(1,1/p+1/d)$.
We will establish the last of the listed bounds; the arguments for the three preceding bounds are similar (in the first two cases with Lemmas~\ref{evlem} or \ref{evlem:t} in place of Lemma~\ref{evlem:annulus}).

By Lemma~\ref{evlem:annulus}, and because $k-j\geq 1$, we have that
\begin{align*}
\|\nabla^j u_k\|_{L^{ p_k}(\theta Q\setminus Q)}
&\leq
\sum_{i=j}^{m-1} \frac{C}{((\theta-1)|Q|^{1/d})^{m-k+j-i} } \|\nabla^i \vec u\|_{L^p(\theta Q\setminus Q)}
.\end{align*}
Recall that we have taken $\mu$ to satisfy $1/\mu=\min(1,1/p+1/d)$. Because $d\geq 2$, we have that
\begin{equation*}
0<\frac{1}{\mu_{m-1}}=\frac{1}{\mu}-\frac{1}{d}\leq \frac{1}{p}
\end{equation*}
(in particular, $\mu_{m-1}$ exists) and so by H\"older's inequality,
\begin{align*}
\|\nabla^j u_k\|_{L^{ p_k}(\theta Q\setminus Q)}
&\leq
\sum_{i=j}^{m-1} \frac{C}{((\theta-1)|Q|^{1/d})^{m-k+j-i} }
|Q|^{1/p-1/\mu+1/\pdmn}
\|\nabla^i \vec u\|_{L^{\mu_{m-1}}(\theta Q\setminus Q)}
.\end{align*}
Another application of Lemma~\ref{evlem:annulus} yields
\begin{align*}
\|\nabla^j u_k\|_{L^{ p_k}(\theta Q\setminus Q)}
&\leq
\sum_{i=j}^{m-1} \frac{C}{((\theta-1)|Q|^{1/d})^{m-k+j-i+1} }
|Q|^{1/p-1/\mu+1/\pdmn}
\|\nabla^{i} \vec u\|_{L^{\mu}(\theta Q\setminus Q)}
\end{align*}
as desired.
\end{proof}

Now, recall from Lemma~\ref{uchi} that if $u\in Y^{m,\mu}(\theta Q)$ then $u\chi\in Y^{m,\mu}(\theta Q)$ for all $\chi\in C^\infty_0(\theta Q)$.
By Definition~\ref{dfn:ppm}, if $\mu\in\Pi_L$ then $L(u\chi)\in Y^{-m,\mu}(\RR^\dmn)$.
We now show that under some circumstances, $L(u\chi)$ is also in $Y^{-m,p}(\RR^\dmn)$.

\begin{lemma}\label{ux-m}
Let $m\geq 1$ and $d\geq 2$ be integers. Let $L$ be an operator of the form~\eqref{dfn:L} for some coefficients $\mat{A}$ that satisfy either the bound \eqref{eqn:intro:bound} or the bound~\eqref{eqn:intro:bound:Bochner}.

If $\mat{A}$ satisfies the bound~\eqref{eqn:intro:bound}, let $p$, $\mu\in (\frac{d}{d+\mathfrak{a}-m}, \frac{d}{m-\mathfrak{b}})$. If $\mat{A}$ satisfies the bound~\eqref{eqn:intro:bound:Bochner}, let $p$, $\mu\in (\frac{d-1}{d-1+\mathfrak{a}-m}, \frac{d-1}{m-\mathfrak{b}})$. By Lemma~\ref{lem:p:+}, these ranges include $(\frac{2d}{d+1},\frac{2d}{d-1})$. In either case we additionally require that $1/\mu\leq  1/p+1/d$. 

Let
$Q\subset\RR^d$ be a cube with sides parallel to the coordinate axes. Let $1<\theta\leq 2$.
Let $\vec{u}\in Y^{m,\mu}(\theta Q)$ be such that $L\vec{u}\in Y^{-m,p}(\theta Q)$ (in the sense that if $\vec\psi\in Y^{m,p'}_0(\theta Q)\cap Y^{m,\mu'}_0(\theta Q)$ then $|\langle L\vec u,\vec\psi\rangle_{\theta Q}| \leq C\|\vec\psi\|_{Y^{m,p'}(\theta Q)}$).

Let $\chi\in C^\infty _c(\RR^d)$ be a test function with $0\leq \chi\leq 1$ such that $\chi=1$ in~$Q$ and $\chi=0$ outside~$\theta Q$. We extend $\vec{u}\chi$ by $0$ outside of $\theta Q$.

Then $L(\vec{u}\chi)$ extends to a bounded operator on~$Y^{m,p'}(\RR^d)$.

Furthermore, if $0\leq \varpi\leq\mathfrak{b}$, then there is a polynomial $\vec P$ of degree less than $\varpi$ and positive constants $C$ and $\kappa$ depending on the standard parameters such that
\begin{align*}
\|L((\vec{u}-\vec P)\chi)\|_{Y^{-m,p}(\RR^d)}
&\leq
\frac{C}{(\theta-1)^m}
\|L\vec u\|_{Y^{-m,p}(\theta Q)}
\\&\qquad+
\frac{CX\Lambda|Q|^{1/p-1/\mu}}{(\theta-1)^{\kappa}}
\sum_{i=\varpi}^m \frac{1}{|Q|^{(m-i)/d}}
\| \nabla^i \vec u\|_{L^{\mu}(\theta Q\setminus Q)}
\end{align*}
where $X=\max_{1\leq i\leq d} (\theta-1)^i|Q|^{i/d}\|\nabla^i \chi\|_{L^\infty (Q)}$.
\end{lemma}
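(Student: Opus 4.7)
The plan is to expand $L((\vec u-\vec P)\chi)$ using the Leibniz rule into a main term, proportional to $\langle L\vec u,\chi\vec\varphi\rangle_{\theta Q}$, plus finitely many error terms supported in $\theta Q\setminus Q$, and to estimate each piece against $\|\vec\varphi\|_{Y^{m,p'}(\R^d)}$. By Lemma~\ref{uchi}, $(\vec u-\vec P)\chi\in Y^{m,\mu}(\R^d)$ after extension by zero, so $L((\vec u-\vec P)\chi)$ is a priori a well-defined element of $Y^{-m,\mu}(\R^d)$; a density argument will reduce the desired $Y^{-m,p}(\R^d)$ bound to inequalities on the dense subspace $Y^{m,p'}(\R^d)\cap Y^{m,\mu'}(\R^d)$. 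The boundedness of $L(\vec u\chi)$ on $Y^{m,p'}(\R^d)$ then follows from that of $L((\vec u-\vec P)\chi)$ together with the trivial boundedness of $L(\vec P\chi)$, since $\vec P\chi\in C^\infty_c(\R^d)$.

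I would construct $\vec P$ by imposing the orthogonality conditions $\fint_{\theta Q\setminus Q}\partial^\gamma(\vec u-\vec P)=0$ for every $|\gamma|<\varpi$; the corresponding linear system on the coefficients of $\vec P$ is upper triangular in $|\gamma|$, so it has a unique solution of degree less than $\varpi$. Crucially, because $\deg\vec P<\varpi\leq\mathfrak{b}$ and the sum in formula~\eqref{dfn:L} ranges only over $\partial^\beta$ with $|\beta|\geq\mathfrak{b}$, we have $\partial^\beta\vec P=0$ for every relevant $\beta$, and hence $L\vec P=0$. Applying the Leibniz rule both to $\partial^\beta((u_k-P_k)\chi)$ and via the dual identity $\chi\partial^\alpha\varphi_j=\partial^\alpha(\chi\varphi_j)-\sum_{\delta<\alpha}\binom{\alpha}{\delta}\partial^{\alpha-\delta}\chi\,\partial^\delta\varphi_j$ then produces the decomposition
\begin{equation*}
\langle L((\vec u-\vec P)\chi),\vec\varphi\rangle = \langle L\vec u,\chi\vec\varphi\rangle_{\theta Q} + \mathcal{E},
\end{equation*}
where $\mathcal{E}$ collects all integrals of schematic form $\int\partial^{\alpha-\delta}\chi\,\partial^\delta\varphi_j\,A^{j,k}_{\alpha,\beta}\,\partial^\beta(u_k-P_k)$ or $\int\partial^\alpha\varphi_j\,A^{j,k}_{\alpha,\beta}\,\partial^{\beta-\gamma}\chi\,\partial^\gamma(u_k-P_k)$ with at least one derivative on $\chi$, and is thereby supported in $\theta Q\setminus Q$.

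For the main term, I would apply the hypothesis $|\langle L\vec u,\chi\vec\varphi\rangle_{\theta Q}|\leq\|L\vec u\|_{Y^{-m,p}(\theta Q)}\|\chi\vec\varphi\|_{Y^{m,p'}(\theta Q)}$, invoke Lemma~\ref{uchi} to bound $\|\chi\vec\varphi\|_{Y^{m,p'}(\theta Q)}$, and absorb the resulting annular gradient terms into $\|\vec\varphi\|_{Y^{m,p'}(\R^d)}$ via Lemma~\ref{ux-m:step2}, at the cost of a negative power of $(\theta-1)$. Each error term in $\mathcal{E}$ I would estimate by a three-factor H\"older inequality (or its Bochner analogue when the coefficients satisfy \eqref{eqn:intro:bound:Bochner}), placing $A^{j,k}_{\alpha,\beta}$ in $L^{2_{\alpha,\beta}}$ (respectively $L^\infty_tL^{\widetilde 2_{\alpha,\beta}}_x$), the cutoff derivative in $L^\infty$ with the factor $X/((\theta-1)|Q|^{1/d})^{|\alpha|-|\delta|}$, the $\vec\varphi$ derivative in a Lebesgue space absorbed into $\|\vec\varphi\|_{Y^{m,p'}(\R^d)}$ by Lemma~\ref{ux-m:step2}, and the $(\vec u-\vec P)$ derivative in an appropriate $L^{q_3}(\theta Q\setminus Q)$. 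If that last derivative has order at least $\varpi$ (so $\partial^\gamma\vec P=0$), Lemma~\ref{ux-m:step2} transfers it to a sum of $\|\nabla^i\vec u\|_{L^\mu(\theta Q\setminus Q)}$ with $\varpi\leq i\leq m$; if of order less than $\varpi$, the mean-zero property secured by the choice of $\vec P$, combined with iterated application of the annular Poincar\'e inequality (Lemma~\ref{lem:annulus}) and the identity $\nabla^\varpi\vec P=0$, converts it to $C|Q|^{(\varpi-|\gamma|)/d}\|\nabla^\varpi\vec u\|_{L^\mu(\theta Q\setminus Q)}$.

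The principal difficulty is not any individual inequality but the combinatorial bookkeeping of exponents: for each error term one must verify that the three chosen H\"older exponents sum to $1$ and that the accumulated powers of $|Q|^{1/d}$ arising from the cutoff scaling, the annular H\"older step, and the (possibly iterated) Sobolev embedding provided by Lemma~\ref{ux-m:step2} combine into the uniform prefactor $|Q|^{1/p-1/\mu}$ demanded on the right-hand side. The hypothesis $1/\mu\leq 1/p+1/d$ enters precisely here, guaranteeing that at most a single Sobolev step is required to move the $\vec u$-derivative from the H\"older-dictated exponent into $L^\mu$; consolidating all the negative powers of $(\theta-1)$ into a common exponent~$\kappa$ then delivers the stated bound.
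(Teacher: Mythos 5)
There is a genuine gap: you never normalize the test function $\vec\varphi$. The paper's proof introduces $\vec\psi=\vec\varphi-\vec R$, where $\vec R$ is the polynomial of degree less than $\mathfrak{a}$ chosen so that $\int_{\theta Q}\partial^\gamma(\vec\varphi-\vec R)=0$ for all $|\gamma|<\mathfrak{a}$; since $L^*\vec R=0$ this replacement is free, and it is essential because $\vec\varphi\in Y^{m,p'}(\R^d)$ has its derivatives of order at most $m-d/p'$ determined only up to polynomials, so a quantity such as $\|\nabla^i\vec\varphi\|_{L^{p'}(\theta Q\setminus Q)}$ for $i\leq m-d/p'$ cannot be bounded by $\|\vec\varphi\|_{Y^{m,p'}(\R^d)}$. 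This failure is not hypothetical: within the stated range of $p$ one can have $m-d/p'\geq 0$ (for instance $m=1$, $\mathfrak{b}=0$, $d\geq 3$ with $p$ slightly below $d/(d-1)$), and then your main-term estimate (bounding $\|\chi\vec\varphi\|_{Y^{m,p'}(\theta Q)}$ via Lemma~\ref{uchi}, which produces $\nabla^i\vec\varphi$ for $i$ down to $0$) and your error-term estimates (placing $\partial^\delta\vec\varphi$ for $\delta<\alpha$ in a Lebesgue space) both involve exactly these uncontrolled quantities. Appealing to Lemma~\ref{ux-m:step2} does not repair this, since it only trades one Lebesgue exponent for another and does not supply a mean-zero normalization.

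The fix is the dual of what you already did for $\vec u$: replace $\vec\varphi$ by $\vec\psi=\vec\varphi-\vec R$ before decomposing, note $\partial^i\vec\psi=\partial^i\vec\varphi$ for $i\geq\mathfrak{a}$, and use the Poincar\'e inequality in $\theta Q$ (valid because $\vec\psi$ has zero averages of order $<\mathfrak{a}$) to reduce the low-order derivatives of $\vec\psi$ to derivatives of order at least $\mathfrak{a}$, which are controlled by $\|\vec\varphi\|_{Y^{m,p'}(\theta Q)}$. Apart from this omission, your proposal tracks the paper's argument closely: the construction and role of $\vec P$, the density reduction, the Leibniz decomposition, the three-factor H\"older estimates (including the Bochner variant), and the purpose of the hypothesis $1/\mu\leq 1/p+1/d$ are all correctly identified.
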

We follow the convention that the zero function is a polynomial of negative degree; thus, if $\varpi=0$ then $P\equiv 0$. For any $p\in (\frac{d}{d+\mathfrak{a}-m}, \frac{d}{m-\mathfrak{b}})$ or $(\frac{d-1}{d-1+\mathfrak{a}-m}, \frac{d-1}{m-\mathfrak{b}})$, there is a $\mu$ in the same range with $\mu<p$ and with $1/\mu\leq 1/p+1/d$.

\begin{proof}[Proof of Lemma~\ref{ux-m}]
Let $\vec P$ be the polynomial of degree less than $\varpi$
with $\int_{\theta Q\setminus Q} \partial^\gamma(\vec u-\vec P)=0$ for all $|\gamma|<\varpi$. Because $\varpi\leq \mathfrak{b}$ and by defintion of~$\mathfrak{b}$,  $L\vec P=0$. The function $\chi \vec P$ is smooth and compactly supported and so $L(\chi\vec P)\in Y^{-m,p}(\RR^d)$. Thus, we need only show that $L((\vec u-\vec P)\chi)\in Y^{-m,p}(\RR^d)$ and establish an appropriate bound on its norm.
For notational convenience we will take $\vec P=0$.

Recall that $Y^{-m,p}(\RR^d)$ is the antidual space to $Y^{m,p'}(\RR^d)$. So to show that $L(\chi\vec u)\in Y^{-m,p}(\RR^d)$, we need only bound $\langle L(\chi\vec u),\vec\varphi\rangle$ for all $\vec\varphi$ in $Y^{m,p'}(\RR^d)$. By density we may assume that $\vec\varphi\in Y^{m,\mu'}(\RR^d)$, so by Lemma~\ref{uchi}, $\langle L(\vec u\chi),\varphi\rangle$ represents an absolutely convergent integral.

Let $\vec\varphi$ be (a representative of) an element of $Y^{m,p'}(\RR^d)\cap Y^{m,\mu'}(\RR^d)$.  By the weak definition~\eqref{dfn:L} of~$L$,
\begin{align*}\overline{\langle L(\vec{u}\chi),\vec\varphi\rangle}
&=
\int_{\theta Q}
\sum_{j,k=1}^N
\sum_{|\alpha|\leq m} \sum_{|\beta|\leq m}
\overline{\partial^\alpha\varphi_j}\, A_{\alpha,\beta}^{j,k}\,\partial^\beta (\chi u_k)
.\end{align*}
Let $\vec\psi=\vec\varphi-\vec R$, where $\vec R$ is the polynomial of degree less than $\mathfrak{a}$ with $\int_{\theta Q} \partial^\gamma(\vec \varphi-\vec R)=0$ for all $|\gamma|<\mathfrak{a}$.
Then $L^*\vec R=0$. Therefore,
\begin{align*}\overline{\langle L(\vec{u}\chi),\vec\varphi\rangle}
&=
\overline{\langle L(\vec{u}\chi),\vec\varphi-\vec R\rangle}
=
\overline{\langle L(\vec{u}\chi),\vec\psi\rangle}
=
\sum_{j,k=1}^N
\sum_{|\alpha|\leq m} \sum_{|\beta|\leq m} \int_{\theta Q}
\overline{\partial^\alpha\psi_j}\, A_{\alpha,\beta}^{j,k}\,\partial^\beta (\chi u_k)
.\end{align*}

We remark on the symmetry of our situation: $\vec \psi\in Y^{m,p'}(\theta Q)$, $\vec u\in Y^{m,\mu}(\theta Q)$,  $\int_{\theta Q} \partial^\gamma\vec\psi=0$ if $|\gamma|<\mathfrak{a}$, and $\int_{\theta Q\setminus Q} \partial^\delta u=0$ if $|\delta|<\varpi$.

By the Leibniz rule,
\begin{align*}
\overline{\langle L(\vec{u}\chi),\vec\varphi\rangle}
&=
\sum_{j,k=1}^N
\sum_{|\alpha|\leq m} \sum_{|\beta|\leq m} \int_{\theta Q}
\overline{\partial^\alpha(\psi_j\bar \chi)}\, A_{\alpha,\beta}^{j,k}\,\partial^\beta u_k
\\&\qquad+
\sum_{j,k=1}^N \sum_{|\alpha|\leq m} \sum_{|\beta|\leq m} \sum_{\gamma<\beta}\int_{\theta Q\setminus Q}
\frac{\beta!}{\gamma!(\beta-\gamma)!} \overline{\partial^\alpha\psi_j} A_{\alpha,\beta}^{j,k} \partial^\gamma u_k \,\partial^{\beta-\gamma}\chi
\\
&\qquad
-\sum_{j,k=1}^N \sum_{|\alpha|\leq m}\sum_{|\beta|\leq m}\sum_{\delta<\alpha}\int_{\theta Q\setminus Q} \frac{\alpha!}{\delta!(\alpha-\delta)!}
\overline{\partial^\delta\psi_j} \, \partial^{\alpha-\delta}\chi \, A_{\alpha,\beta}^{j,k}\, \partial^\beta u_k
.\end{align*}
Recall from Lemma~\ref{uchi} that $\bar\chi\vec\psi\in Y^{m,p'}_0(\theta Q)\cap Y^{m,\mu'}_0(\theta Q)$.
By the weak definition~\eqref{dfn:L} of~$L$, we have that
\begin{equation*}
\sum_{j,k=1}^N
\sum_{|\alpha|\leq m} \sum_{|\beta|\leq m} \int_{\theta Q}
\overline{\partial^\alpha(\psi_j\bar \chi)}\, A_{\alpha,\beta}^{j,k}\,\partial^\beta u_k
=
\overline{\langle L\vec{u},\bar\chi\vec\psi\rangle}_{\theta Q}
.\end{equation*}
By definition of~$Y^{-m,p}$,
\begin{equation*}|{\langle L\vec{u},\bar\chi\vec\psi\rangle}_{\theta Q}
|
\leq
\|L\vec u\|_{Y^{-m,p}(\theta Q)}\|\bar\chi\vec\psi\|_{Y^{m,p'}_0(\theta Q)}
.\end{equation*}
By Lemmas~\ref{uchi} and~\ref{evlem},
\begin{equation*}\|\bar\chi\vec\psi\|_{Y^{m,p'}_0(\theta Q)}
\leq
\sum_{i=0}^{m} \frac{CX}{(\theta-1)^{m-i}}\frac{1}{|Q|^{(m-i)/d}} \|\nabla^i \vec\psi\|_{L^{p'}(\theta Q)}.
\end{equation*}
By the Poincar\'e inequality, and because $\nabla^i\vec\psi=\nabla^i\vec\varphi$ for all $i\geq \mathfrak{a}$,
\begin{equation*}\|\bar\chi\vec\psi\|_{Y^{m,p'}_0(\theta Q)}
\leq
\frac{CX}{(\theta-1)^m}
\sum_{i=\mathfrak{a}}^{m} \frac{1}{|Q|^{(m-i)/d}} \|\nabla^i \vec\varphi\|_{L^{p'}(\theta Q)}.
\end{equation*}
Recall that $p>\frac{d}{d-m+\mathfrak{a}}$.
If $i\geq \mathfrak{a}$, then $1/p'-(m-i)/d>0$ and so by formula~\eqref{pk} $(p')_i$ is well defined and finite. Thus by H\"older's inequality
\begin{equation*}\|\bar\chi\vec\psi\|_{Y^{m,p'}_0(\theta Q)}
\leq
\frac{CX}{(\theta-1)^m}
\|\vec\varphi\|_{Y^{m,p'}(\theta Q)}.
\end{equation*}
Thus
\begin{align*}|\langle L(\vec{u}\chi),\vec\varphi\rangle|
&\leq
\frac{C}{(\theta-1)^m}
\|L\vec u\|_{Y^{-m,p}(\theta Q)}
\|\vec\varphi\|_{Y^{m,p'}(\theta Q)}
\\&\qquad+
\biggl|
\sum_{j,k=1}^N \sum_{|\alpha|\leq m} \sum_{|\beta|\leq m} \sum_{\gamma<\beta}\int_{\theta Q\setminus Q}
\frac{\beta!}{\gamma!(\beta-\gamma)!}
\overline{\partial^\alpha\psi_j} A_{\alpha,\beta}^{j,k} \partial^\gamma u_k \,\partial^{\beta-\gamma}\chi
\biggr|
\\
&\qquad
+\biggl|\sum_{j,k=1}^N \sum_{|\alpha|\leq m}\sum_{|\beta|\leq m}\sum_{\delta<\alpha}\int_{\theta Q\setminus Q} \frac{\alpha!}{\delta!(\alpha-\delta)!}
\overline{\partial^\delta\psi_j} \, \partial^{\alpha-\delta}\chi \, A_{\alpha,\beta}^{j,k}\, \partial^\beta u_k\biggr|
.\end{align*}

We will now bound the integrals over~$\theta Q\setminus Q$.

Suppose that the coefficients $\mat{A}$ satisfy the condition~\eqref{eqn:intro:bound:Bochner}. Let $\alpha$ and $\beta$ be such that $A_{\alpha,\beta}^{j,k}$ is not identically equal to zero. By assumption on $\mu$ and~$p$, this means that $\widetilde \mu_\beta$, $\widetilde p_\beta$, $\widetilde{(p')}_\alpha$, and~$\widetilde {(\mu')}_\alpha$ exist and are finite.
By H\"older's inequality in $\RR^{d-1}$ and then in~$\RR$,
\begin{multline*}
\biggl|\int_{\theta Q\setminus Q}
\overline{\partial^\alpha\psi_j} A_{\alpha,\beta}^{j,k} \partial^\gamma u_k \,\partial^{\beta-\gamma}\chi
\biggr|
\\\leq
\|{\partial^\alpha\psi_j}\|_{L_t^{p'}L_x^{\widetilde{(p')}_\alpha} (\theta Q\setminus Q)}
\| \partial^{\beta-\gamma}\chi\|_{L^\infty (\theta Q\setminus Q)}
\| A_{\alpha,\beta}^{j,k}\|_{L^{\widetilde 2_{\alpha,\beta}}(\theta Q\setminus Q)}
\| \partial^\gamma u_k\|_{L_t^pL_x^{\widetilde p_\beta}(\theta Q\setminus Q)}
\end{multline*}
and
\begin{multline*}
\biggl|\int_{\theta Q\setminus Q}
\overline{\partial^\delta\psi_j} \, \partial^{\alpha-\delta}\chi \, A_{\alpha,\beta}^{j,k}\, \partial^\beta u_k
\biggr|
\\\leq
\|{\partial^\delta\psi_j}\|_{L_t^{\mu'}L_x^{\widetilde{(\mu')}_\alpha} (\theta Q\setminus Q)}
\| \partial^{\alpha-\delta}\chi\|_{L^\infty (\theta Q\setminus Q)}
\| A_{\alpha,\beta}^{j,k}\|_{L^{\widetilde 2_{\alpha,\beta}}(\theta Q\setminus Q)}
\| \partial^\beta u_k\|_{L_t^\mu L_x^{\widetilde \mu_\beta}(\theta Q\setminus Q)}
.\end{multline*}

Because $|\alpha|\geq \mathfrak{a}$ we have that $\partial^\alpha\vec\psi=\partial^\alpha\vec\varphi$.
By Lemma~\ref{evlem:t} with $j=k=|\alpha|$, the definitions~\eqref{dfn:Y:norm} and~\eqref{pk} of $Y^{m,p'}$ and $p_\alpha$, and H\"older's inequality,
\begin{equation*}\|{\partial^\alpha\psi_j}\|_{L_t^{p'}L_x^{\widetilde{(p')}_\alpha} (\theta Q\setminus Q)}
\leq C\| \vec \varphi\|_{Y^{m,p'}(\theta Q)}.
\end{equation*}
By Lemma~\ref{evlem:annulus} with $j=k=|\beta|$,
\begin{equation*}\| \partial^\beta u_k\|_{L_t^\mu L_x^{{\widetilde \mu}_\beta}(\theta Q\setminus Q)}
\leq
\sum_{i=|\beta|}^m \frac{C}{(\theta-1)^{m-i}|Q|^{(m-i)/d}}
\| \nabla^i u_k\|_{L^{\mu}(\theta Q\setminus Q)}
.\end{equation*}
By Lemma~\ref{ux-m:step2} with $j=|\gamma|<k=|\beta|$,
\begin{equation*}\|\partial^\gamma u_k\|_{L_t^pL_x^{\widetilde p_\beta}(\theta Q\setminus Q)}
\leq
\sum_{i=|\gamma|}^{m} \frac{C|Q|^{1/p-1/\mu-(m-|\beta|+|\gamma|-i)/d}}
{(\theta-1)^{m-|\beta|+|\gamma|-i+1}}
\|\nabla^i u_k\|_{L^\mu(\theta Q\setminus Q)}
\end{equation*}
and by Lemma~\ref{lem:annulus},
\begin{equation*}\|\partial^\gamma u_k\|_{L_t^pL_x^{\widetilde p_\beta}(\theta Q\setminus Q)}
\leq
\sum_{i=\varpi}^{m} \frac{C|Q|^{1/p-1/\mu-(m-|\beta|+|\gamma|-i)/d}}
{(\theta-1)^{m-|\beta|+1}}
\|\nabla^i u_k\|_{L^\mu(\theta Q\setminus Q)}
.\end{equation*}
Observe that $1/p'\leq 1/\mu'+1/d$; thus, by Lemma~\ref{ux-m:step2} and the Poincar\'e inequality with $j=|\delta|<k=|\alpha|$, and with $p$, $\mu$, $u$ replaced by $\mu'$, $p'$,~$\psi$, we have that
\begin{equation*}\|\partial^\gamma \psi\|_{L_t^{\mu'}L_x^{\widetilde {(\mu')}_\alpha}(\theta Q)}
\leq
\sum_{i=\mathfrak{a}}^{m} {C|Q|^{1/p-1/\mu-(m-|\alpha|+|\delta|-i)/d}}
\|\nabla^i \psi\|_{L^{p'}(\theta Q)}.
\end{equation*}
Because $\nabla^i\vec\psi=\nabla^i\vec\varphi$ for all $i\geq \mathfrak{a}$, and by H\"older's inequality, we have that
\begin{align*}\|\partial^\delta \psi\|_{L_t^{\mu'}L_x^{\widetilde {(\mu')}_\alpha}(\theta Q)}
&\leq
\sum_{i=\mathfrak{a}}^{m} {C|Q|^{1/p-1/\mu+(|\alpha|-|\delta|)/d}}
\|\nabla^i \vec\varphi\|_{L^{(p')_i}(\theta Q)}
\\&\leq
{C|Q|^{1/p-1/\mu+(|\alpha|-|\delta|)/d}}
\|\vec\varphi\|_{Y^{m,p'}(\theta Q)}
.
\end{align*}
Combining all of the above estimates and the definitions of $X$ and~$\Lambda$, we see that
\begin{align*}|\langle L(\vec{u}\chi),\vec\varphi\rangle|
&\leq
\frac{C}{(\theta-1)^m}
\|L\vec u\|_{Y^{-m,p}(\theta Q)}
\|\vec\varphi\|_{Y^{m,p'}(\theta Q)}
\\&\qquad+
\| \vec \varphi\|_{Y^{m,p'}(\theta Q)}
\sum_{i=\varpi}^{m} \frac{CX\Lambda|Q|^{1/p-1/\mu-(m-i)/d}}
{(\theta-1)^{m+1}}
\|\nabla^i \vec u\|_{L^\mu(\theta Q\setminus Q)}
.\end{align*}
This completes the proof in the case where $\mat{A}$ satisfies the condition~\eqref{eqn:intro:bound:Bochner}.

If instead $\mat{A}$ satisfies the condition~\eqref{eqn:intro:bound}, a similar argument with Lemma~\ref{evlem} in place of Lemma~\ref{evlem:t} establishes the same bound.
\end{proof}

From Lem\-ma~\ref{ux-m} we have a bound on $L(\vec{u}\chi)$.  We may now prove the following result; this is Theorem~\ref{umpm:less} in the case $q=\mu$.

\begin{lemma}\label{umpm2}
Let $m$, $d$, $L$, $p$, $\mu$, $Q$, $\theta$, $u$, and~$\varpi$ be as in Lemma~\ref{ux-m}.

Suppose in addition that $p$, $\mu\in \Upsilon_L\cap\Pi_L$, where $\Pi_L$ and $\Upsilon_L$ are as in Definitions \ref{dfn:ppm} and~\ref{dfn:compatible}. 

Then there is a constant $C$ depending only on $p$ and $L$ such that, for all $j$ with $\varpi\leq j\leq m$, we have that
\begin{align*}\frac{1}{|Q|^{(m-j)/d}}
\|\nabla^j\vec{u}\|_{L^{p}(Q)}
&\leq
\frac{C}{(\theta-1)^m}
\|L\vec u\|_{Y^{-m,p}(\theta Q)}
\\&\qquad
+
\frac{C\Lambda|Q|^{1/p-1/\mu}}{(\theta-1)^{\kappa}}
\sum_{i=\varpi}^m \frac{1}{|Q|^{(m-i)/d}}
\| \nabla^i \vec u\|_{L^{\mu}(\theta Q\setminus Q)}
.\end{align*}
If $2-\delta<p<2+\delta$, where $\delta$ is the number in Lemma~\ref{NPLp}, then $C$ may be taken depending only on~$p$ and the standard parameters.
\end{lemma}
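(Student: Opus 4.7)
The plan is to combine Lemma~\ref{ux-m} with compatible invertibility of~$L$ at the exponent~$p$. Fix a smooth cutoff $\chi\in C^\infty_c(\R^d)$ with $\chi\equiv 1$ on~$Q$, $\supp\chi\subset\theta Q$, $0\leq\chi\leq 1$, and $\|\nabla^i\chi\|_{L^\infty}\leq C((\theta-1)|Q|^{1/d})^{-i}$ for $0\leq i\leq m$; this makes the quantity $X$ in Lemma~\ref{ux-m} an absolute constant. Lemma~\ref{ux-m} then yields a polynomial $\vec P$ of degree less than~$\varpi$ such that $L((\vec u-\vec P)\chi)$ extends to an element of $Y^{-m,p}(\R^d)$ with
\begin{align*}
\|L((\vec u-\vec P)\chi)\|_{Y^{-m,p}(\R^d)}
&\leq \frac{C}{(\theta-1)^m}\|L\vec u\|_{Y^{-m,p}(\theta Q)}
\\&\qquad
+ \frac{C\Lambda|Q|^{1/p-1/\mu}}{(\theta-1)^\kappa}\sum_{i=\varpi}^m \frac{\|\nabla^i\vec u\|_{L^\mu(\theta Q\setminus Q)}}{|Q|^{(m-i)/d}}.
\end{align*}

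Since $\vec u\in Y^{m,\mu}(\theta Q)$, Lemma~\ref{uchi} shows $(\vec u-\vec P)\chi\in Y^{m,\mu}(\R^d)$, and hence $L((\vec u-\vec P)\chi)\in Y^{-m,\mu}(\R^d)$ by Definition~\ref{dfn:ppm}. Because both $p,\mu\in\Upsilon_L$, compatible invertibility (Definition~\ref{dfn:compatible}) identifies $L_p^{-1}L((\vec u-\vec P)\chi)$ with $(\vec u-\vec P)\chi$ as elements of $Y^{m,p}(\R^d)$, so
\begin{equation*}
\|(\vec u-\vec P)\chi\|_{Y^{m,p}(\R^d)} \leq \|L^{-1}\|_{Y^{-m,p}\to Y^{m,p}}\,\|L((\vec u-\vec P)\chi)\|_{Y^{-m,p}(\R^d)}.
\end{equation*}

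To extract the $L^p$ bound on~$Q$, observe that for $\varpi\leq j\leq m$ we have $\chi=1$ on~$Q$ and, since $\deg\vec P<\varpi\leq j$, also $\nabla^j\vec P=0$; thus $\nabla^j\vec u=\nabla^j((\vec u-\vec P)\chi)$ a.e.\ on~$Q$. If $j>m-d/p$, the exponent $p_j$ is defined and the definition of the $Y^{m,p}$-norm gives $\|\nabla^j((\vec u-\vec P)\chi)\|_{L^{p_j}(\R^d)}\leq\|(\vec u-\vec P)\chi\|_{Y^{m,p}(\R^d)}$, while H\"older's inequality on~$Q$ produces the factor $|Q|^{1/p-1/p_j}=|Q|^{(m-j)/d}$. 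If instead $j\leq m-d/p$, let $k$ be the smallest integer exceeding $m-d/p$; iterating the Poincar\'e inequality for functions compactly supported in $\theta Q$ yields $\|\nabla^j((\vec u-\vec P)\chi)\|_{L^p(\R^d)}\leq C|Q|^{(k-j)/d}\|\nabla^k((\vec u-\vec P)\chi)\|_{L^p(\R^d)}$, and H\"older on the support contributes a further $|Q|^{(m-k)/d}$ passing from $L^p$ to $L^{p_k}$, for a total factor of $|Q|^{(m-j)/d}$. Dividing by $|Q|^{(m-j)/d}$ and chaining the three displays above produces the claimed inequality.

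The constant $\|L^{-1}\|_{Y^{-m,p}\to Y^{m,p}}$ depends on~$p$ and on~$L$ through $\Upsilon_L$; when $|p-2|<\delta$ with $\delta$ as in Lemma~\ref{NPLp}, \v{S}ne\v{\i}berg's lemma controls it in terms of~$p$ and the standard parameters alone. The main technical subtlety is justifying compatibility of $L_\mu^{-1}$ and $L_p^{-1}$ on the common input $L((\vec u-\vec P)\chi)$ when $\mu\ne p$; since Definition~\ref{dfn:compatible} states compatibility through the exponent~$2$, this is argued either by noting that $(\vec u-\vec P)\chi$ is a specific locally integrable function on which the weak definition of~$L$ is independent of the exponent used to frame it, or by a density argument approximating $(\vec u-\vec P)\chi$ by smooth compactly supported functions and passing to the limit in both $Y^{m,\mu}(\R^d)$ and $Y^{m,p}(\R^d)$.
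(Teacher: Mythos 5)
Your proposal follows essentially the same route as the paper: apply Lemma~\ref{ux-m} to $\chi(\vec u-\vec P)$, invert $L$ in $Y^{m,p}(\R^d)$ to pass from the $Y^{-m,p}$-bound on $L(\chi(\vec u-\vec P))$ to a $Y^{m,p}$-bound on $\chi(\vec u-\vec P)$, and extract the $L^p(Q)$-estimate via H\"older and Poincar\'e. You also correctly single out the one genuine subtlety, namely that Definition~\ref{dfn:compatible} only guarantees compatibility through the exponent~$2$, so some argument is needed to identify the $Y^{m,p}$-inverse of $T:=L(\chi(\vec u-\vec P))$ with the known element $\chi(\vec u-\vec P)\in Y^{m,\mu}(\R^d)$ when $p\ne\mu$.

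However, the two resolutions you sketch do not close this gap. The first (``$(\vec u-\vec P)\chi$ is a specific function on which the weak definition of $L$ is exponent-independent'') addresses the well-definedness of $T$ but not the identity of the two inverses; the question is precisely whether the unique $\vec v\in Y^{m,p}(\R^d)$ with $L\vec v=T$ equals $\chi(\vec u-\vec P)$, and that cannot be answered by looking at $T$ alone. The second (approximate $\chi(\vec u-\vec P)$ by smooth compactly supported functions and pass to the limit in both $Y^{m,\mu}(\R^d)$ and $Y^{m,p}(\R^d)$) is circular: one cannot approximate $\chi(\vec u-\vec P)$ in the $Y^{m,p}$-norm without first knowing it lies in $Y^{m,p}(\R^d)$, which is exactly the membership the argument is trying to produce.

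The correct fix is to run the density argument on the dual side, as the paper does. Lemma~\ref{ux-m} together with the hypothesis $\vec u\in Y^{m,\mu}(\theta Q)$ and Lemma~\ref{uchi} gives $T\in Y^{-m,p}(\R^d)\cap Y^{-m,\mu}(\R^d)$ \emph{a priori}. One approximates $T$ by $T_n\in Y^{-m,p}\cap Y^{-m,\mu}\cap Y^{-m,2}$, converging in both the $Y^{-m,p}$- and $Y^{-m,\mu}$-norms. For each $n$, applying Definition~\ref{dfn:compatible} twice (once with $p$, once with $\mu$) shows that $L^{-1}T_n$ is unambiguous and lies in $Y^{m,p}\cap Y^{m,\mu}\cap Y^{m,2}$. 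Boundedness of $L^{-1}$ on each of $Y^{-m,p}(\R^d)$ and $Y^{-m,\mu}(\R^d)$ makes $L^{-1}T_n$ Cauchy in both, and the two limits coincide as distributions (their $m$-th order gradients agree), so $L^{-1}T\in Y^{m,p}(\R^d)\cap Y^{m,\mu}(\R^d)$. Since $L^{-1}T=\chi(\vec u-\vec P)$ in $Y^{m,\mu}(\R^d)$, this places $\chi(\vec u-\vec P)$ in $Y^{m,p}(\R^d)$ with the required norm bound, and the remainder of your argument is then correct.
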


\begin{proof}
Let $\chi\in C^\infty _c(\theta Q)$ be as in Lemma~\ref{ux-m}; we may require that the parameter $X$ be bounded depending only on $m$ and~$\dmn$. We extend $(\vec u-\vec P)\chi$ by zero, where $P$ is the polynomial in Lemma~\ref{ux-m}.

By the definition of~$\Upsilon_L$, $L$ is invertible $Y^{m,p}(\R^d)\to Y^{-m,p}(\R^d)$, $Y^{m,\mu}(\R^d)\to Y^{-m,\mu}(\R^d)$, and~$Y^{m,2}(\R^d)\to Y^{-m,2}(\R^d)$.

Furthermore, if $T\in Y^{-m,p}(\R^d)\cap Y^{-m,2}(\R^d)$, then $L^{-1}T\in Y^{m,p}(\R^d)\cap Y^{m,2}(\R^d)$. Observe that we may approximate elements of $Y^{-m,p}(\R^d)\cap Y^{-m,\mu}(\R^d)$ by elements of $Y^{-m,p}(\R^d)\cap Y^{-m,\mu}(\R^d)\cap Y^{-m,2}(\R^d)$; thus, by density, if $T\in Y^{-m,p}(\R^d)\cap Y^{-m,\mu}(\R^d)$, then $L^{-1}T\in Y^{m,p}(\R^d)\cap Y^{m,\mu}(\R^d)$ (even if $T\notin Y^{-m,2}(\R^d)$).

Thus, because $(\chi(\vec u-\vec P))\in Y^{m,\mu}(\R^d)$, we have that
\begin{equation*}\chi(\vec u-\vec P) = L^{-1}(L(\chi(\vec u-\vec P))).\end{equation*}
Since $L(\chi(\vec u-\vec P))\in Y^{-m,p}(\R^d)\cap Y^{-m,\mu}(\R^d)$, we have that $\chi(\vec u-\vec P)\in  Y^{m,p}(\R^d)$.
By boundedness of $L^{-1}:Y^{m,p}(\R^d)\to Y^{-m,p}(\R^d)$, we have that
\begin{equation*}\|\chi(\vec u-\vec P)\|_{Y^{m,p}(\R^d)} \leq C(p,L)
\|L(\chi(\vec u-\vec P))\|_{Y^{-m,p}(\R^d)}.\end{equation*}
By Lemma~\ref{ux-m},
\begin{align*}
\|\chi(\vec u-\vec P)\|_{Y^{m,p}(\R^d)}
&\leq
\frac{C}{(\theta-1)^m}
\|L\vec u\|_{Y^{-m,p}(\theta Q)}
\\&\qquad
+
\sum_{i=\varpi}^m
\frac{C\Lambda|Q|^{1/p-1/\mu}}{(\theta-1)^{\kappa}|Q|^{(m-i)/d}}
\| \nabla^i u_k\|_{L^{\mu}(\theta Q\setminus Q)}
.\end{align*}
If $j> m-d/p$, then $p_j$ exists and by H\"older's inequality \begin{equation*}|Q|^{(j-m)/d}\|\nabla^j (\chi(\vec u-\vec P))\|_{L^{p}(Q)}\leq \|\nabla^j (\chi(\vec u-\vec P))\|_{L^{p_j}(Q)}\leq \|\chi(\vec u-\vec P)\|_{Y^{m,p}(Q)}.\end{equation*}
If $\varpi\leq j\leq m-d/p$, recall that $\chi(\vec u-\vec P)$ is compactly supported; by the Poincar\'e inequality, we again have that \begin{equation*}|Q|^{(j-m)/d}\|\nabla^j (\chi(\vec u-\vec P))\|_{L^{p}(Q)}\leq \|\nabla^m (\chi(\vec u-\vec P))\|_{L^{p}(Q)}\leq \|\chi(\vec u-\vec P)\|_{Y^{m,p}(Q)}.\end{equation*}
Therefore,
\begin{equation*}
|Q|^{(j-m)/d}
\|\nabla^j(\chi(\vec u-\vec P))\|_{L^{p}(Q)} \leq
\|u\|_{Y^{m,p}(Q)} \leq
\|\chi(\vec u-\vec P)\|_{Y^{m,p}(\R^d)}.\end{equation*}
Because $j\geq \varpi$, $\nabla^j \vec u=\nabla^j(\chi(\vec u-\vec P))$ in~$Q$
and the proof is complete.
\end{proof}

We may combine Lemma~\ref{umpm2} with the Caccioppoli inequality (Lemma~\ref{HoC}) to prove Theorem~\ref{umpm} in the case $q=2$.

\begin{lemma}\label{umpm3}
Let $m$, $d$, $L$, $p$, $\mu$, $Q$, $\theta$, $u$, $\varpi$, and~$j$ be as in Lemma~\ref{umpm2}, that is, that they are as in Lemma~\ref{ux-m} with $p$, $\mu\in \Upsilon_L\cap\Pi_L$ and $\varpi\leq j\leq m$.

Suppose in addition that $p\geq 2$.

Then there is a positive constant $\kappa$ depending only on the standard parameters and a positive constant $C$ depending on $p$ and~$L$ such that, if $0\leq j\leq m$, then
\begin{align*}
\frac{1}{|Q|^{(m-j)/d}}
\|\nabla^j\vec{u}\|_{L^{p}(Q)}
&\leq
\frac{C}{(\theta-1)^\kappa}
\|L\vec u\|_{Y^{-m,p}(\theta Q)}
\\&\qquad
+
\frac{C|Q|^{1/p-1/2-(m-\varpi)/d}}{(\theta-1)^{\kappa}}
\|\nabla^\varpi\vec u\|_{L^{2}(\theta Q\setminus Q)}
.\end{align*}
If $2\leq p<2+\delta$, where $\delta$ is the number in Lemma~\ref{NPLp}, then $C$ may be taken depending only on~$p$ and the standard parameters.
\end{lemma}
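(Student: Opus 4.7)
My plan is to combine Lemma~\ref{umpm2} (applied with $\mu = 2$) with a Caccioppoli-on-a-covering argument to convert the intermediate-order gradient terms into a single $\|\nabla^\varpi \vec u\|_{L^2(\theta Q\setminus Q)}$ term. Since $p\geq 2$ ensures $1/2\leq 1/p+1/d$ and $2\in\Upsilon_L\cap\Pi_L$, the choice $\mu=2$ is admissible. Applying Lemma~\ref{umpm2} with $\theta$ replaced by $\theta_1:=(1+\theta)/2$ gives
\[
\frac{\|\nabla^j\vec u\|_{L^p(Q)}}{|Q|^{(m-j)/d}}
\leq \frac{C}{(\theta-1)^\kappa}\|L\vec u\|_{Y^{-m,p}(\theta Q)}
+ \sum_{i=\varpi}^{m}\frac{C|Q|^{1/p-1/2+(i-m)/d}}{(\theta-1)^\kappa}\|\nabla^i\vec u\|_{L^2(\theta_1 Q\setminus Q)}.
\]
The $i=\varpi$ summand is already in the desired form (since $\theta_1 Q\setminus Q\subset\theta Q\setminus Q$), so only the terms with $\varpi<i\leq m$ require further reduction.

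To reduce such a term, I would cover $\theta_1 Q\setminus Q$ by a bounded-overlap family $\{Q_k\}$ of cubes of side length $r\approx(\theta-1)|Q|^{1/d}$ with $2Q_k\subset \theta Q$, and for each $k$ let $\vec P_k$ be the unique polynomial of degree less than $\varpi$ with $\int_{2Q_k}\partial^\gamma(\vec u-\vec P_k)=0$ for all $|\gamma|<\varpi$. Since $\varpi\leq\mathfrak{b}$, any polynomial of degree less than $\varpi$ is annihilated by $L$, so $L(\vec u-\vec P_k)=L\vec u$. Applying Corollary~\ref{HoC} to $\vec u-\vec P_k$ on $Q_k$ (with outer cube $2Q_k$), together with iterated Poincaré on $2Q_k$ to give $\|\vec u-\vec P_k\|_{L^2(2Q_k)}\leq Cr^\varpi\|\nabla^\varpi\vec u\|_{L^2(2Q_k)}$, yields
\[
\|\nabla^i\vec u\|_{L^2(Q_k)}\leq Cr^{\varpi-i}\|\nabla^\varpi\vec u\|_{L^2(2Q_k)}+Cr^{m-i}\|L\vec u\|_{Y^{-m,2}(2Q_k)}.
\]
Squaring and summing with the help of bounded overlap controls $\sum_k\|L\vec u\|_{Y^{-m,2}(2Q_k)}^2$ by $C(\theta-1)^{-d}\|L\vec u\|_{Y^{-m,2}(\theta Q)}^2$, and Hölder's inequality applied to the embedding $Y^{m,2}_0(\theta Q)\hookrightarrow Y^{m,p'}_0(\theta Q)$ yields $\|L\vec u\|_{Y^{-m,2}(\theta Q)}\leq C|Q|^{1/2-1/p}\|L\vec u\|_{Y^{-m,p}(\theta Q)}$. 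Substituting back into the Lemma~\ref{umpm2} bound and consolidating negative powers of $\theta-1$ into a single $(\theta-1)^{-\kappa}$ gives the conclusion.

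The main technical obstacle is that $2Q_k$ may intrude up to $r/2$ into $Q$ for cubes $Q_k$ adjacent to $\partial Q$, so $\bigcup 2Q_k$ need not lie in $\theta Q\setminus Q$, and the naive reduction produces $\|\nabla^\varpi\vec u\|_{L^2}$ on a slightly enlarged region $\theta Q\setminus((1-c(\theta-1))Q)$. To produce the clean annulus stated in the lemma, I would restrict the covering so that the centers of the $Q_k$ lie in the sub-annulus $\theta_1 Q\setminus(1+c(\theta-1))Q$, forcing $2Q_k\subset\theta Q\setminus Q$. The residual inner shell $(1+c(\theta-1))Q\setminus Q$ left uncovered is itself a thinner annulus, to which the same covering argument applies at scale $c^2(\theta-1)|Q|^{1/d}$, and iterating produces a geometric series whose sum is absorbed into $(\theta-1)^{-\kappa}$.
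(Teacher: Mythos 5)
Your outline has the right ingredients --- Lemma~\ref{umpm2} with $\mu=2$, a Caccioppoli covering, a polynomial correction annihilated by $L$, a Poincar\'e step, and the H\"older bound $\|L\vec u\|_{Y^{-m,2}(\theta Q)}\leq C|Q|^{1/2-1/p}\|L\vec u\|_{Y^{-m,p}(\theta Q)}$ --- but the iteration you invoke to handle the inner shell abutting $\partial Q$ does not converge. The difficulty you identify is genuine; the paper handles it not by iteration but by shifting the application of Lemma~\ref{umpm2} outward. It takes $\theta_1 Q$ as the inner cube and $\theta_2 Q$ as the outer cube (with $\theta_\ell=1+\ell(\theta-1)/3$), so the left side is $\|\nabla^j\vec u\|_{L^p(\theta_1 Q)}\geq\|\nabla^j\vec u\|_{L^p(Q)}$ and the intermediate derivatives land on the \emph{middle} annulus $\theta_2 Q\setminus\theta_1 Q$, separated from $\partial Q$ by roughly $\tfrac{1}{3}(\theta-1)|Q|^{1/d}$. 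A single covering of that middle annulus at the fixed scale $\approx(\theta-1)|Q|^{1/d}$ then satisfies $2Q_k\subset\theta Q\setminus Q$ for every $k$, and no iteration is needed. (The paper then uses one global polynomial $\vec P$ normalized over $\theta Q\setminus Q$ together with Lemma~\ref{lem:annulus}, whose $\theta$-independent constant is the point; your per-cube polynomials $\vec P_k$ with local Poincar\'e would also close the argument once the covering is at a single scale.)

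The iteration fails for the following reason. At scale $n$ the Caccioppoli estimate on a cube $Q_{n,k}$ of side $r_n\approx c^n(\theta-1)|Q|^{1/d}$, combined with local Poincar\'e, produces the factor $r_n^{2(\varpi-i)}$ in front of $\|\nabla^\varpi\vec u\|^2_{L^2(2Q_{n,k})}$; since $i>\varpi$ and $r_n\to 0$, this factor grows like $c^{-2n(i-\varpi)}$. The shell norms $a_n:=\|\nabla^\varpi\vec u\|_{L^2(\text{$n$-th shell})}^2$ satisfy only $\sum_n a_n\leq C\|\nabla^\varpi\vec u\|_{L^2(\theta Q\setminus Q)}^2$, which does not compensate: even for $\nabla^\varpi\vec u$ merely bounded near $\partial Q$ one has $a_n\sim c^n(\theta-1)|Q|$ (the shell measures), and then $\sum_n r_n^{2(\varpi-i)}a_n\sim\sum_n c^{\,n(1-2(i-\varpi))}=\infty$, because $1-2(i-\varpi)\leq-1<0$. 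So the series is not geometric, and no power of $(\theta-1)^{-1}$ can make the right-hand side comparable to $\|\nabla^\varpi\vec u\|^2_{L^2(\theta Q\setminus Q)}$. The fix is not a cleverer iteration near $\partial Q$ but the outward shift described above, which keeps all Caccioppoli cubes at a single scale.
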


\begin{proof}
Let $\theta_0=1$, $\theta_3=\theta$, and $\theta_3-\theta_2=\theta_2-\theta_1=\theta_1-\theta_0=(\theta-1)/3$.
Choose $\mu=2$. Lemma~\ref{umpm2} yields that
\begin{align*}\frac{1}{|Q|^{(m-j)/d}}
\|\nabla^j\vec{u}\|_{L^{p}(\theta_1 Q)}
&\leq
\frac{C}{(\theta-1)^m}
\|L\vec u\|_{Y^{-m,p}(\theta_2 Q)}
\\&\qquad
+
\frac{C\Lambda|Q|^{1/p-1/\mu}}{(\theta-1)^{\kappa}}
\sum_{i=\varpi}^m \frac{1}{|Q|^{(m-i)/d}}
\| \nabla^i \vec u\|_{L^{2}(\theta_2 Q\setminus \theta_1 Q)}
.\end{align*}
Let $\vec P$ be a polynomial of degree less than $\varpi\leq \min(j,\mathfrak{b})$ such that $\fint_{\theta Q\setminus Q} \partial^\gamma (\vec u-\vec P)=0$ for all $|\gamma|<\varpi$. Observe that $L\vec u=L(\vec u-\vec P)$ and $\nabla^j\vec u=\nabla^j(\vec u-\vec P)$. Applying Corollary~\ref{HoC} to $\vec u-\vec P$ and a covering argument yields that
\begin{align*}
\frac{1}{|Q|^{(m-j)/d}}
\|\nabla^j\vec{u}\|_{L^{p}(Q)}
&\leq
\frac{C}{(\theta-1)^m}
\|L\vec u\|_{Y^{-m,p}(\theta Q)}
+
\frac{C|Q|^{1/p-1/2}}{(\theta-1)^{\kappa}}
\|L\vec u\|_{Y^{-m,2}(\theta Q\setminus Q)}
\\&\qquad
+
\frac{C|Q|^{1/p-1/2-m/d}}{(\theta-1)^{\kappa+m}}
\|\vec u-\vec P\|_{L^{2}(\theta Q\setminus Q)}
.\end{align*}
Because $p\geq2$, by H\"older's inequality
$|Q|^{1/p-1/2}\|L\vec u\|_{Y^{-m,2}(\theta Q)}\leq C\|L\vec u\|_{Y^{-m,p}(\theta Q)}$. By Lemma~\ref{lem:annulus}, we may replace $\|\vec u-\vec P\|_{L^{2}(\theta Q\setminus Q)}$ by $|Q|^{\varpi/d}\|\nabla^\varpi \vec u\|_{L^{2}(\theta Q\setminus Q)}$.
Redefining $\kappa$ completes the proof.
\end{proof}

\begin{remark}\label{rmk:HoC} If $p=2$, Lemma~\ref{umpm3} still represents an improvement over the Caccioppoli inequality (Corollary~\ref{HoC}) in that, if $m-d/2<j<m$, then we can bound $\|\nabla^j u\|_{L^2(Q)}$ by
$\|\vec u\|_{L^{2}(\theta Q\setminus Q)}$ and not $
\|\vec u\|_{L^{2}(\theta Q)}$.
\end{remark}

\begin{remark}If $p=2$ and $\varpi\geq 1$, then by Lemmas~\ref{umpm3} and~\ref{evlem:annulus},
\begin{align*}
\frac{1}{|Q|^{(m-j)/d}}
\|\nabla^j\vec{u}\|_{L^{p}(Q)}
&\leq
\frac{C\|L\vec u\|_{Y^{-m,p}(\theta Q)}
}{(\theta-1)^\kappa}
+
\frac{C|Q|^{1/p-1/2-(m-\varpi+1)/d}}{(\theta-1)^{\kappa}}
\|\nabla^{\varpi-1}\vec u\|_{L^{2}(\theta Q\setminus Q)}
\\&\leq
\frac{C\|L\vec u\|_{Y^{-m,p}(\theta Q)}
}{(\theta-1)^\kappa}
+
\frac{C|Q|^{1/p-1/2-(m-\varpi)/d}}{(\theta-1)^{\kappa+1}}
\|\nabla^{\varpi}\vec u\|_{L^{q}(\theta Q\setminus Q)}
\end{align*}
for $q$ satisfying $1/\mu=1/2+1/\pdmn$; notice that this $q$ satisfies $q<2$.
\end{remark}

We have now established that Theorem~\ref{umpm:less} is valid if $q=\mu$, and that Theorem~\ref{umpm} is valid if $q=2$ or if $\varpi\geq 1$ and $q$ takes a specific value less than~$2$. In particular, these theorems are valid for at least one $q<p$. By H\"older's inequality, these theorems are valid for all $q\geq p$. The following lemma will complete the proof by establishing validity for all positive but smaller~$q$.

\begin{lemma}\label{lem:subaverage}
Let $\dmn\geq 2$ and $0\leq \varpi\leq n\leq m$ be integers.
Let $Q\subset\R^\dmn$ be a cube and let $1<\theta\leq 2$.

For each $i$ with $\varpi\leq i\leq m$, let $p_i$, $u_i$ satisfy $0<p_i\leq \infty$ and $u_i\in L^{p_i}(\theta Q)$; if in addition $\varpi\leq i\leq n$, let $\widehat q_i$ satisfy $0<\widehat q_i< p_i$.

Suppose that, whenever $1\leq\vartheta<\zeta\leq\theta$, we have the bound
\begin{equation}
\label{epn:local-bound:steq1}
\sum_{j=\varpi}^m \|u_j\|_{L^{p_j}(\vartheta Q)}
\leq
\frac{F}{(\zeta-\vartheta)^{\kappa}}
+\frac{c_0}{(\zeta-\vartheta)^{\kappa}}
\sum_{i=\varpi}^n
\|u_i\|_{L^{\widehat q_i}(\tau Q\setminus \vartheta Q)}
\end{equation}
for some nonnegative constants~$c_0$, $\kappa$ and~$F$ independent of $\zeta$ and~$\vartheta$.

Then for every set of numbers $q_i$ with $0<q_i\leq\widehat q_i$, there are some constants $C$ and $\widetilde\kappa$, deqending only on the $q_i$s, $\widehat q_i$s, $p_i$s, $c_0$, and~$\kappa$, such that
\begin{align*}
\sum_{j=\varpi}^m \|u_j\|_{L^{p_j}(\vartheta Q)}
&\leq
\frac{C}{(\theta-1)^{\widetilde\kappa}}
\biggl(F+
\sum_{i=\varpi}^n
\doublebar{u_i}_{L^{q_i}(\theta Q\setminus Q)}\biggr)
.\end{align*}
\end{lemma}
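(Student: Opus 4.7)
The plan is to set $f(\vartheta)=\sum_{j=\varpi}^m\|u_j\|_{L^{p_j}(\vartheta Q)}$ and run a standard hole-filling/iteration argument. The function $f$ is nondecreasing in $\vartheta$, and is finite on $[1,\theta]$ by the assumption $u_i\in L^{p_i}(\theta Q)$. The goal is to use interpolation and Young's inequality to trade $L^{\widehat q_i}$ norms on the right of~\eqref{epn:local-bound:steq1} for a small multiple of $f(\zeta)$ plus a large multiple of the $L^{q_i}(\theta Q\setminus Q)$ norms, and then absorb the $f(\zeta)$ term via a Giaquinta--Giusti iteration.

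First I would interpolate. For each $i$ with $q_i<\widehat q_i$, pick $t_i\in(0,1]$ with $1/\widehat q_i=t_i/q_i+(1-t_i)/p_i$; log-convexity of $L^p$ norms gives $\|u_i\|_{L^{\widehat q_i}(\Omega)}\leq \|u_i\|_{L^{q_i}(\Omega)}^{t_i}\|u_i\|_{L^{p_i}(\Omega)}^{1-t_i}$ for every measurable $\Omega$. By Young's inequality with a small parameter $\varepsilon>0$ to be chosen,
\begin{equation*}
\|u_i\|_{L^{\widehat q_i}(\Omega)}\leq \varepsilon \|u_i\|_{L^{p_i}(\Omega)}+C_i\,\varepsilon^{-(1-t_i)/t_i}\|u_i\|_{L^{q_i}(\Omega)}.
\end{equation*}
Applied with $\Omega=\zeta Q\setminus\vartheta Q\subseteq\zeta Q$ and with $\varepsilon=\varepsilon_0(\zeta-\vartheta)^{\kappa}$ (and using $\Omega\subseteq\theta Q\setminus Q$ in the last factor), this gives
\begin{equation*}
\frac{c_0}{(\zeta-\vartheta)^{\kappa}}\|u_i\|_{L^{\widehat q_i}(\zeta Q\setminus\vartheta Q)}
\leq c_0\varepsilon_0\,\|u_i\|_{L^{p_i}(\zeta Q)}+\frac{C'_i}{(\zeta-\vartheta)^{K_i}}\|u_i\|_{L^{q_i}(\theta Q\setminus Q)},
\end{equation*}
where $K_i=\kappa/t_i\geq\kappa$.

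Substituting this into the hypothesis~\eqref{epn:local-bound:steq1}, and bounding $\sum_{i=\varpi}^n\|u_i\|_{L^{p_i}(\zeta Q)}\leq f(\zeta)$, I obtain
\begin{equation*}
f(\vartheta)\leq c_0\varepsilon_0(n-\varpi+1)f(\zeta)+\frac{C}{(\zeta-\vartheta)^{K}}\Bigl(F+\sum_{i=\varpi}^n\|u_i\|_{L^{q_i}(\theta Q\setminus Q)}\Bigr)
\end{equation*}
valid for all $1\leq \vartheta<\zeta\leq\theta$, where $K=\max_i K_i$ (using $(\zeta-\vartheta)^{-\kappa}\leq(\zeta-\vartheta)^{-K}$ since $\zeta-\vartheta\leq 1$). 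Now fix $\varepsilon_0$ so that $c_0\varepsilon_0(n-\varpi+1)\leq 1/2$; this renders $f(\vartheta)\leq \tfrac12 f(\zeta)+A(\zeta-\vartheta)^{-K}$ with $A$ equal to the large constant times $F+\sum_i\|u_i\|_{L^{q_i}(\theta Q\setminus Q)}$.

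The remaining step is the classical iteration lemma of Giaquinta--Giusti \cite[Chapter V, Lemma 3.1]{Gia83}: if $f$ is a nonnegative bounded function on $[1,\theta]$ satisfying $f(\vartheta)\leq \sigma f(\zeta)+A(\zeta-\vartheta)^{-K}$ for all $1\leq\vartheta<\zeta\leq\theta$ with $0<\sigma<1$, then $f(1)\leq C_{\sigma,K}\,A(\theta-1)^{-K}$. Applying this with $\sigma=1/2$ yields the claim with $\widetilde\kappa=K$. The only point that requires verification is boundedness of $f$ on $[1,\theta]$, which is immediate from $u_i\in L^{p_i}(\theta Q)$; everything else is routine, and the main conceptual step is the choice of $\varepsilon$ proportional to $(\zeta-\vartheta)^{\kappa}$ that converts the bad $(\zeta-\vartheta)^{-\kappa}$ factor into a constant coefficient in front of $f(\zeta)$.
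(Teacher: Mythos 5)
Your proof is correct and follows the same route as the paper's: interpolate between $L^{q_i}$ and $L^{p_i}$ to control the $L^{\widehat q_i}$ norm, apply Young's inequality with a weight proportional to $(\zeta-\vartheta)^{\kappa}$ so that multiplying by the $c_0(\zeta-\vartheta)^{-\kappa}$ prefactor leaves a small absorption coefficient, then iterate. The only real difference is bookkeeping: the paper carries out the absorption explicitly, choosing $\vartheta_{\ell+1}=\vartheta_\ell+(\theta-1)(1-\sigma)\sigma^\ell$ and summing a geometric series with $\sigma$ adjusted so that the natural Young factor $1-\min_i\tau_i$ is beaten by $\sigma^{\kappa/\tau}$, whereas you introduce a free parameter $\varepsilon_0$ to force the absorption factor to $1/2$ and then invoke the Giaquinta--Giusti iteration lemma \cite[Chapter V]{Gia83} as a black box. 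Both variants yield the same exponent $\widetilde\kappa=\kappa/\min_i\tau_i$ (your $K$); citing the standard lemma is cleaner, while the paper's hand iteration keeps the argument self-contained.
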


\begin{proof} If $c_0=0$ we are done, so throughout we may assume $c_0>0$. We are also done if $q_i=\widehat q_i$ for all~$i$; we will consider the case where $q_i<\widehat q_i$ for at least one~$i$. In the present paper we will only need the case where $q_i=q$, $\widehat q_i=\widehat q$ for some $q$, $\widehat q$ independent of~$i$, but for completeness we present the general case.

Let $1=\vartheta_0<\vartheta_1<\vartheta_2<\dots$ for some $\vartheta_\ell \in [1,\theta)$ to be chosen momentarily, and let $Q_\ell =\vartheta_\ell  Q$. Let $A_\ell =Q_{\ell+1} \setminus Q_{\ell}$. If $\varpi\leq i\leq n$, let
\begin{equation*}\tau_i = \frac{1/\widehat q_i-1/p_i}{1/q_i-1/p_i}
=\frac{q_i(p_i-\widehat q_i)}{\widehat q_i(p_i-q_i)}
.\end{equation*}
If $0<q_i< \widehat q_i < p_i$, we have that $0<\tau_i<1$. Thus
\begin{align*}
\sum_{i=\varpi}^n \|u_i\|_{L^{\widehat q_i}(A_\ell )}
&=
\sum_{i=\varpi}^n
\biggl(\int_{A_\ell } \abs{u_i}^{\tau_i \widehat q_i} \abs{u_i}^{(1-\tau_i) \widehat q_i}
\biggr)^{1/\widehat q_i}
.\end{align*}
We compute that
\begin{equation*}\frac{q_i}{\tau_i \widehat q_i}
=
\frac{p_i-q_i}{p_i-\widehat q_i}
\in (1,\infty ),
\qquad
\biggl(\frac{q_i}{\tau_i \widehat q_i} \biggr)'(1-\tau_i)\widehat q_i=p_i
.\end{equation*}
So we may apply H\"older's inequality to see that
\begin{align*}
\sum_{i=\varpi}^n \|u_i\|_{L^{\widehat q_i}(A_\ell )}
&\leq
\sum_{i=\varpi}^n
\doublebar{u_i}_{L^{q_i}({A_\ell })}^{\tau_i}
\doublebar{u_i}_{L^{p_i}({A_\ell })}^{1-\tau_i}
.\end{align*}
By Young's inequality,
\begin{align*}
\sum_{i=\varpi}^n \|u_i\|_{L^{\widehat q_i}(A_\ell )}
&\leq
\sum_{i=\varpi}^n
\tau_i
\biggl(\frac{c_0}{(\vartheta_{\ell+1}-\vartheta_\ell)^{\kappa}}
\biggr)^{(1-\tau_i)/\tau_i}
\doublebar{u_i}_{L^{q_i}({A_\ell })}
\\&\qquad+
\sum_{i=\varpi}^n
(1-\tau_i)
\frac{(\vartheta_{\ell+1}-\vartheta_\ell)^{\kappa}}{c_0}
\doublebar{u_i}_{L^{p_i}({A_\ell })}
\end{align*}
If $q_i=\widehat q_i$ and so $\tau_i=1$, this bound is still true.
By the bound~\eqref{epn:local-bound:steq1},
\begin{align*}
\sum_{j=\varpi}^m \|u_j\|_{L^{p_j}(Q_{\ell})}
&\leq
\frac{F}{(\vartheta_{\ell+1}-\vartheta_\ell)^{\kappa}}
+\frac{c_0}{(\vartheta_{\ell+1}-\vartheta_\ell)^{\kappa}}
\sum_{i=\varpi}^n
\|u_i\|_{L^{\widehat q_i}(A_{\ell})}
\\&\leq
\frac{F}{(\vartheta_{\ell+1}-\vartheta_\ell)^{\kappa}}
+
\sum_{i=\varpi}^n
\tau_i
\biggl(\frac{c_0}{(\vartheta_{\ell+1}-\vartheta_\ell)^{\kappa}}
\biggr)^{1/\tau_i}
\doublebar{u_i}_{L^{q_i}({A_\ell })}
\\&\qquad
+\sum_{i=\varpi}^n
(1-\tau_i)
\doublebar{u_i}_{L^{p_i}({A_\ell })}
.\end{align*}
Recall that $\vartheta_0=1$. We now let $\vartheta_{\ell+1}=\vartheta_\ell + (\theta-1)(1-\sigma) \sigma^\ell$ for some constant $\sigma\in (0,1)$ to be chosen momentarily. Notice that $\lim_{\ell \to\infty } \vartheta_\ell=\theta$. Recall that $A_\ell\subseteq Q_{\ell+1}$. Then
\begin{align*}
\sum_{j=\varpi}^m \|u_j\|_{L^{p_j}(Q_{\ell})}
&\leq
\frac{F}{(\theta-1)^\kappa(1-\sigma)^\kappa \sigma^{\kappa\ell}}
+
\sum_{i=\varpi}^n
\frac{\tau_i c_0^{1/\tau_i}}{(\theta-1)^{\kappa/\tau_i} (1-\sigma)^{\kappa/\tau_i} \sigma^{\kappa\ell/\tau_i}}
\doublebar{u_i}_{L^{q_i}({A_\ell })}
\\&\qquad
+\sum_{i=\varpi}^n
(1-\tau_i)
\doublebar{u_i}_{L^{p_i}({Q_{\ell+1} })}
.\end{align*}
Let $\tau=\min_i\tau_i$. If $\tau=1$ then $q_i=\widehat q_i$ for all~$i$ and there is nothing to prove; otherwise, $\tau\in (0,1)$.
Recall that $\varpi\leq n\leq m$.
Iterating, we see that if $K\geq 0$ is an integer, then
\begin{align*}
\sum_{j=\varpi}^m \|u_j\|_{L^{p_j}(Q_{0})}
&\leq
\sum_{\ell=0}^K (1-\tau)^\ell
\frac{F}{(\theta-1)^\kappa(1-\sigma)^\kappa \sigma^{\kappa\ell}}
\\&\qquad
+
\sum_{\ell=0}^K (1-\tau)^\ell
\sum_{i=\varpi}^n
\frac{\tau_i c_0^{1/\tau_i}}{(\theta-1)^{\kappa/\tau} (1-\sigma)^{\kappa/\tau} \sigma^{\kappa\ell/\tau}}
\doublebar{u_i}_{L^{q_i}({A_\ell })}
\\&\qquad
+\sum_{j=\varpi}^m
(1-\tau)^{K+1}
\doublebar{u_j}_{L^{p_j}({Q_{\ell+1} })}
.\end{align*}
Recall that $Q_0=Q$ and $Q_\ell\subset \theta Q$, $A_\ell\subset \theta Q\setminus Q$ for all $\ell\geq 0$. Changing the order of summation, we see that
\begin{align*}
\sum_{j=\varpi}^m \|u_j\|_{L^{p_j}(Q)}
&\leq
\frac{F}{(\theta-1)^\kappa(1-\sigma)^\kappa }
\sum_{\ell=0}^K
\biggl(\frac{1-\tau}{\sigma^{\kappa}}\biggr)^\ell
\\&\qquad
+
\sum_{i=\varpi}^n
\frac{\tau_i c_0^{1/\tau_i}}{(\theta-1)^{\kappa/\tau} (1-\sigma)^{\kappa/\tau} }
\doublebar{u_i}_{L^{q_i}(\theta Q\setminus Q)}
\sum_{\ell=0}^K \biggl(\frac{1-\tau}{\sigma^{\kappa/\tau}}
\biggr)^\ell
\\&\qquad
+
(1-\tau)^{K+1}
\sum_{j=\varpi}^m
\doublebar{u_j}_{L^{p_j}(\theta Q)}
.\end{align*}
Choose $\sigma\in (0,1)$ such that $1-\tau<\sigma^{\kappa/\tau}$; since $\tau\in (0,1)$, this implies $1-\tau<\sigma^\kappa$. Taking the limit as $K\to\infty $, we have that the geometric series converge and the final term approaches zero, and so
\begin{align*}
\sum_{j=\varpi}^m \|u_j\|_{L^{p_j}(Q)}
&\leq
C\frac{F}{(\theta-1)^\kappa}
+
C\sum_{i=\varpi}^n
\frac{1}{(\theta-1)^{\kappa/\tau} }
\doublebar{u_i}_{L^{q_i}(\theta Q\setminus Q)}
\end{align*}
as desired.
\end{proof}

\subsection{A counterexample}
\label{sec:meyers:counter}

In this section we will prove Theorem~\ref{thm:meyers:counterexample}.

Let $\mathfrak{a}$, $\mathfrak{b}$ and $\varepsilon$ be as in the theorem statement. Without loss of generality we may require $0<\varepsilon\leq 1$.
Fix a multiindex $\zeta$ with $|\zeta|=\mathfrak{b}$.

Define $w(X) = (1+|X|^2)^{-d}$. We may easily compute that $\nabla^m w\in L^p(\R^d)$ for any $p>d/(2d+m)$ (in particular, for all $p\geq2$).

Let $\{Q_k\}_{k=1}^\infty$ be a sequence of pairwise-disjoint cubes contained in $Q$ (whose volumes necessarily tend to zero).
Let $\varphi$ be a smooth cutoff function with $\varphi$ supported in $Q$ and with $\varphi=1$ in $\frac{1}{2}Q$, and let $\varphi_k(X)=\varphi((X-X_k)/\ell_k)$, where $X_k$ is the midpoint of $Q_k$ and $\ell_k=|Q_k|^{1/\dmn}$ is the side length of~$Q_k$. Then $\varphi_k$ is a smooth cutoff function supported in $Q_k$ and identically $1$ in $\frac{1}{2}Q_k$.

Let $\{n_k\}_{k=1}^\infty$ be a sequence of positive numbers such that $n_k\ell_k\to \infty$ and $n_k\ell_k\geq 1$ for all~$k$. Notice that $\ell_k<1$ so $n_k>1$ for all~$k$.
Define
\begin{equation*}u(X)=X^\zeta+\frac{\varepsilon}{C_0}\sum_{k=1}^\infty \varphi_k(X)\frac{1}{n_k^{2m}}w(n_k(X-X_k))
\end{equation*}
for a positive constant $C_0$ to be chosen momentarily.
We may easily compute that
if $X\in \frac{1}{2}Q_k$ and $\gamma$ is a multiindex, then
\begin{equation}\partial^\gamma u(X)=
\partial^\gamma X^\zeta
+\frac{\varepsilon}{C_0}
\frac{1}{n_k^{2m-|\gamma|}} (\partial^\gamma w)(n_k(X-X_k)). \end{equation}
Furthermore, if $X\in Q_k$ and $0\leq |\gamma|\leq 2m$, then
\begin{equation*}
|\partial^\gamma u(X)-
\partial^\gamma X^\zeta|
\leq
\frac{\varepsilon}{C_0}
C(\gamma,\varphi,d) \, n_k^{|\gamma|-2m}
\leq
\frac{\varepsilon}{C_0}
C(\gamma,\varphi,d)
.\end{equation*}
We choose $C_0\geq 2 C(\zeta,\varphi,d)$; this ensures that
\begin{equation*}|\partial^\zeta u-\zeta!|=|\partial^\zeta u-\partial^\zeta X^\zeta|\leq \frac{1}{2}\leq \frac{1}{2}\zeta!\end{equation*}
and so $|\partial^\zeta u(X)|\geq \frac{1}{2}$ for all~$X$.

Recall that $\widetilde A_{\alpha,\beta}$ is a set of real nonnegative constants that satisfies
\begin{equation*}(-\Delta)^m = (-1)^m\sum_{|\alpha|=m} \sum_{|\beta|=m} \widetilde A_{\alpha,\beta} \partial^{\alpha+\beta}.\end{equation*}
(Many possible families of such constants exist.)
Similarly, for any $\mathfrak{a}\leq m$, there exist families of constants $\widetilde B_{\alpha,\gamma}$ such that
\begin{equation*}(-\Delta)^m = (-1)^{m} \sum_{|\alpha|=\mathfrak{a}} \sum_{|\gamma|=2m-\mathfrak{a}} \widetilde B_{\alpha,\gamma} \partial^{\alpha+\gamma}.\end{equation*}
Choose some such family. 

Define the coefficients $A_{\alpha,\beta}=A^{1,1}_{\alpha,\beta}$ as follows.
\begin{itemize}
\item If $|\alpha|=|\beta|=m$, let $A_{\alpha,\beta}=\widetilde A_{\alpha,\beta}$.
\item If $|\alpha|=\mathfrak{a}$ and $\beta=\zeta$, let \begin{equation*}
A_{\alpha,\zeta}=-\sum_{|\gamma|=2m-\mathfrak{a}} \widetilde B_{\alpha,\gamma} \frac{\partial^\gamma u}{\partial^\zeta u}.\end{equation*}
\item Otherwise, let $A_{\alpha,\beta}=0$.
\end{itemize}
Because $|\zeta|=\mathfrak{b}<m$, $A_{\alpha,\beta}$ is well defined.

If $L$ is as given by formula~\eqref{dfn:L}, a straightforward computation yields that $Lu=0$, formulas~\eqref{eqn:tildealpha} and~\eqref{eqn:tildebeta} are valid, and if $C_0$ is large enough then $|A_{\alpha,\beta}(X)-\widetilde A_{\alpha,\beta}(X)|<\varepsilon$ for all $X$, $\alpha$, and~$\beta$.
It remains only to establish a lower bound on $\int_{\frac{1}{2}Q_k} |\nabla^m u|$.

If $p\geq 2$ and $\mathfrak{b}+1\leq j\leq m$, then by definition of~$u$ and a change of variables,
\begin{align*}
\biggl(
\fint_{\frac{1}{2}Q_k}|\nabla^j u|^p\biggr)^{1/p}
&=
\frac{\varepsilon}{C_0 n_k^{2m-j}}
\biggl(\fint_{\frac{1}{2}Q_k}|(\nabla^j w) (n_k(X-X_k))|^p\,dX\biggr)^{1/p}
\\&=
\frac{\varepsilon}{C_0 n_k^{2m-j}}
\biggl(\fint_{\frac{1}{2}\ell_k n_k Q}|\nabla^j w(X)|^p\,dX\biggr)^{1/p}
\\&=
\frac{2^{d/p}\varepsilon}{C_0 n_k^{2m-j} (\ell_k n_k)^{d/p}}
\biggl(\int_{\frac{1}{2}\ell_k n_k Q}|\nabla^j w(X)|^p\,dX\biggr)^{1/p}
.\end{align*}
Thus, recalling that $n_k\ell_k\geq 1$, we have that
\begin{equation*}
\biggl(
\fint_{\frac{1}{2}Q_k}|\nabla^m u|^p\biggr)^{1/p}
\geq
\frac{2^{d/p}\varepsilon}{C_0 n_k^{m} (\ell_k n_k)^{d/p}}
\biggl(\int_{\frac{1}{2} Q}|\nabla^m w(X)|^p\,dX\biggr)^{1/p}
\geq
\frac{c_1}{n_k^{m} (\ell_k n_k)^{d/p}}
\end{equation*}
where $c_1>0$ is independent of~$k$.

Furthermore, if $X\in Q_k\setminus \frac{1}{2}Q_k$ then
\begin{equation*}|\nabla^j u(X)|\leq \frac{C\varepsilon}{C_0n_k^{2m-j}(n_k\ell_k)^{2d+j}}. \end{equation*}
Thus,
\begin{align*}
\frac{1}{\ell_k^{m-j}}
\biggl(
\fint_{Q_k}|\nabla^j u|^2\biggr)^{1/2}
&\leq
\frac{1}{\ell_k^{m-j}}
\biggl(
2^{-d}
\fint_{\frac{1}{2}Q_k}|\nabla^j u|^{2}
+
\biggl(\frac{C\varepsilon}{C_0n_k^{2m-j}(n_k\ell_k)^{2d+j}}\biggr)^{2}
\biggr)^{1/2}
\\&\leq
\frac{\varepsilon}{C_0 n_k^{2m-j} \ell_k^{m-j}}
\biggl(\frac{1}{(\ell_k n_k)^{d}}
\int_{\R^d}|\nabla^j w|^2
+
\frac{C}{(n_k\ell_k)^{4d+2j}}
\biggr)^{1/2}
.\end{align*}
Again using the fact that $n_k\ell_k\geq 1$ and the fact that $\nabla^j w\in L^2(\R^\dmn)$ for any $j\geq 0$, we have that
\begin{align*}
\sum_{j=\mathfrak{b}+1}^m
\frac{1}{\ell_k^{m-j}}
\biggl(
\fint_{Q_k}|\nabla^j u|^2\biggr)^{1/2}
&\leq
\frac{C_2}{n_k^{m}(\ell_k n_k)^{d/2}}
.\end{align*}
If $p>2$, then because $n_k\ell_k\to \infty$, there is some $k$ large enough that
\begin{equation*}
\widetilde C
\frac{C_2}{n_k^{m}(\ell_k n_k)^{d/2}}
\leq
\frac{c_1}{n_k^{m} (\ell_k n_k)^{d/p}}
\end{equation*}
as desired. This completes the proof of Theorem~\ref{thm:meyers:counterexample}.

\section{The fundamental solution}\label{FSS}
In this section we will construct the fundamental solution. We will begin in Section~\ref{sec:FS:high:preliminary} with local estimates on functions in $Y^{m,p}(\RR^d)$ for $m$ large enough. Using these estimates, in Section~\ref{sec:FS:high} we will construct a preliminary version of the fundamental solution in the case $2m>d$. We will investigate the properties of this fundamental solution in Sections~\ref{sec:FS:high}--\ref{sec:FS:Fubini}. We will slightly modify our definition in Section~\ref{sec:FS:even}.
In Section~\ref{sec:FS:low} we will construct the fundamental solution in the case $2m\leq d$, and will address uniqueness in Section~\ref{sec:FS:unique}.

\subsection{Preliminaries for operators of high order}\label{sec:FS:high:preliminary}

Recall from the definition~\eqref{dfn:Y:norm} of $Y^{m,q}(\RR^d)$ that if $u\in Y^{m,q}(\R^d)$, then the derivatives $\partial^\gamma u$ of~$u$ are defined as locally integrable functions if $|\gamma|>m-d/q$, and are defined only up to adding polynomials if $|\gamma|\leq m-d/q$. We will now wish to fix a family of normalizations of functions in $Y^{m,q}(\R^d)$ and investigate their properties.

If $d/m<q<\infty $, let $s_{m,d,q}$ be the number of multiindicies $\gamma\in (\NN_0)^d$ so that $|\gamma|\leq m-d/q$. Observe that $s_{m,d,q}$ is nonnegative, nondecreasing in $q$ and that if $q<\infty $ then $s_{m,d,q}\leq s_{m,d,d}$.
Choose distinct points $H_1$, $H_2,\dots, H_{s_{m,d,d}}$ in $B(0,1)\setminus \overline{B(0,1/2)}$ (so $1/2<|H_i|<1$ for all $1\leq i\leq s_{m,d,d}$).  If the points $H_i$ are chosen appropriately (see \cite{GasS00} for a survey on polynomial interpolation in several variables) then for any $q$ with $d/m<q<\infty $ and any numbers $a_i$ there is a unique polynomial
\begin{equation*}P(X)=\sum_{|\gamma|\leq m-d/q} p_\gamma X^\gamma\text{ such that }P(H_i)=a_i
\text{ for all }1\leq i\leq s_{m,d,q}.\end{equation*}
(We emphasize that if $q<d$ then we cannot specify the values of $P(H_i)$ for $s_{m,d,q}<i\leq s_{m,d,d}$.) Also there is some constant $h<\infty $ depending only on $H_i$ such that \begin{equation*}\sup_{|\gamma|\leq m-d/q}|p_\gamma|\leq h\sup_{1\leq i\leq s_{m,d,q}}|a_i|.\end{equation*}

We now show that this gives a normalization in~$Y^{m,q}(\RR^d)$. We will need some additional properties of this normalization.

\begin{lemma}\label{SFbound} Let $m$, $d\in\NN$ with $d\geq 2$, let $r>0$, and let $Z_0\in\RR^d$. Let $\max(1,d/m)<\mu\leq q<\infty $. Let $U$ satisfy $\|U\|_{Y^{m,\mu}(\RR^d)}<\infty $.

Then there is a unique function $U_{Z_0,r,q}$ that is continuous and satisfies
\begin{equation*}U_{Z_0,r,q}(Z_0+rH_i)=0, \qquad \partial^\zeta U=\partial^\zeta U_{Z_0,r,q} \text{ almost everywhere}\end{equation*}
for all $1\leq i\leq s_{m,d,q}$ and all multiindices $\zeta$ with $m-d/q<|\zeta|\leq m$. In particular, if $q=\mu$ then $U$ and $U_{Z_0,r,q}$ are representatives of the same element of~$Y^{m,\mu}(\RR^d)$.

Furthermore, if $X$, $Y\in\RR^d$, $R=r+|X-Z_0|$, $|X-Y|\leq \frac{1}{2}R$, and $|\gamma|<m-d/\mu$, then we have the bounds
\begin{align*}
| \partial^\gamma U_{Z_0,r,q}(X)|
&\leq C_{\mu}
R^{m-d/\mu-|\gamma|}
\bigg(\frac{R}{r}\bigg)^{\omega_q-1}
\|U\|_{Y^{m,\mu}(\RR^d)}
,\\
| \partial^\gamma U_{Z_0,r,q}(X)-\partial^\gamma U_{Z_0,r,q}(Y)|
&\leq
C_{\mu}R^{m-d/\mu-|\gamma|}
\bigg(\frac{R}{r}\bigg)^{\omega_q-1}
\|U\|_{Y^{m,\mu}(\RR^d)}\biggl(\frac{|X-Y|}{R}\biggr)^{\varepsilon}
,\end{align*}
where $C_{\mu}$ and $\varepsilon>0$ depend on $d$, $m$, and~$\mu$, and $\omega_q$ is the smallest (necessarily positive) integer with $m-d/q<\omega_q$.
\end{lemma}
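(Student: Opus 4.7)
The plan is to construct $U_{Z_0,r,q}$ by a polynomial interpolation normalization of a continuous representative of $U$, and then to obtain the pointwise bounds by comparing this normalization with a second one adapted to the ball $B=B(Z_0,2R)$ with $R=r+|X-Z_0|$.

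For existence: since $\mu>d/m$, Corollary~\ref{cor:GNS:global} together with an iterated application of Morrey's embedding (which may be invoked once one reaches the first level $p_k>d$) yields a locally Hölder continuous representative $\widehat U$ of $U$, unique up to adding polynomials of degree at most $m-d/\mu$. Define $U_{Z_0,r,q}=\widehat U-P$, where $P$ is the unique polynomial of degree at most $m-d/q$ solving $P(Z_0+rH_i)=\widehat U(Z_0+rH_i)$ for $1\leq i\leq s_{m,d,q}$; existence and uniqueness of $P$ are guaranteed by the interpolation property of $\{H_i\}$ discussed just before the lemma. All derivatives of $P$ of order exceeding $m-d/q$ vanish, so the derivative-matching condition holds. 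Uniqueness of $U_{Z_0,r,q}$ follows because any difference of two candidates is a polynomial of degree at most $m-d/q$ vanishing at all $s_{m,d,q}$ interpolation nodes, hence identically zero.

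For the bounds, let $V$ be the representative of $U$ satisfying $\int_B\partial^\gamma V=0$ for all $|\gamma|\leq m-d/\mu$. Iterated Poincaré and Gagliardo--Nirenberg--Sobolev in $B$ (as in Lemma~\ref{evlem}) followed by Morrey's embedding give, for every $|\gamma|<m-d/\mu$,
\begin{align*}
\|\partial^\gamma V\|_{L^\infty(B)} &\leq CR^{m-d/\mu-|\gamma|}\|\nabla^m U\|_{L^\mu(B)},\\
[\partial^\gamma V]_{C^{0,\varepsilon}(B)} &\leq CR^{m-d/\mu-|\gamma|-\varepsilon}\|\nabla^m U\|_{L^\mu(B)},
\end{align*}
for some $\varepsilon\in(0,1)$ depending only on $m$, $d$, $\mu$. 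Since $V$ and $U_{Z_0,r,q}$ agree in every derivative of order $\geq \omega_q$, the difference $Q:=V-U_{Z_0,r,q}$ is a polynomial of degree at most $\omega_q-1$. Evaluating $Q$ at the interpolation nodes $Z_0+rH_i\in B(Z_0,r)\subset B$ and applying the coefficient bound $h$ cited before the lemma to the rescaled polynomial $\widetilde Q(Y):=Q(Z_0+rY)$ gives $|\widetilde q_\gamma|\leq CR^{m-d/\mu}\|\nabla^m U\|_{L^\mu(B)}$ for each coefficient of $\widetilde Q$.

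Writing $Q(W)=\widetilde Q((W-Z_0)/r)$ and using $|X-Z_0|/r\leq R/r$, a direct computation yields
\begin{equation*}
|\partial^\gamma Q(X)| \leq CR^{m-d/\mu-|\gamma|}(R/r)^{\omega_q-1}\|\nabla^m U\|_{L^\mu(B)};
\end{equation*}
the factor $(R/r)^{\omega_q-1}$ is precisely the scale mismatch incurred by pinning down a polynomial of degree $\omega_q-1$ at scale $r$ and then evaluating it at the larger scale $R$. Since $R/r\geq 1$, combining this with the $L^\infty$-bound on $V$ and the trivial estimate $\|\nabla^m U\|_{L^\mu(B)}\leq\|U\|_{Y^{m,\mu}(\R^d)}$ yields the stated pointwise bound. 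The Hölder difference bound is obtained analogously: the Hölder estimate on $V$ contributes a factor $(|X-Y|/R)^\varepsilon$, while the mean value theorem applied to $Q$ yields a factor $(|X-Y|/R)$, which is dominated by $(|X-Y|/R)^\varepsilon$ since $|X-Y|\leq R/2$ and $\varepsilon\leq 1$. The main technical point throughout is the scale separation between the interpolation scale $r$ and the evaluation scale $R$, which is what forces the appearance of $(R/r)^{\omega_q-1}$; the rest reduces to careful bookkeeping of the two normalizations.
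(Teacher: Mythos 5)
Your proof is correct and follows essentially the same approach as the paper: construct a continuous representative by iterated Gagliardo--Nirenberg--Sobolev and Morrey embedding, normalize via the polynomial interpolation at the nodes $Z_0+rH_i$, and track the factor $(R/r)^{\omega_q-1}$ arising from the rescaling of the interpolation polynomial from scale $r$ to scale $R$. The only differences are cosmetic (you localize on the ball $B(Z_0,2R)$ where the paper uses a cube of side $4R$, and the paper also deals explicitly with the degenerate case where $d/\mu$ is an integer by slightly perturbing the exponent before invoking Morrey, which you leave implicit in ``some $\varepsilon\in(0,1)$'').
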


\begin{proof}
Fix $X\in\RR^d$. Let $Q$ be a cube centered at $Z_0$ of side length~$4R$.
Observe that $\|U\|_{Y^{m,\mu}(Q)}\leq\|U\|_{Y^{m,\mu}(\RR^d)}<\infty $. Then $\nabla^m U\in L^\mu(Q)$, so by the Poincar\'e inequality, we have that $\nabla^i U\in L^\mu(Q)$ (and thus is integrable) for any $0\leq i\leq m$. Let $V=U+P$, where $P$ is a polynomial of degree at most $m-d/\mu$ so that $\int_Q \partial^\gamma V=0$ for all $\gamma$ with $|\gamma|\leq m-d/\mu$ (that is, all $\gamma$ with $|\gamma|<\omega_\mu$). Observe that $\|U-V\|_{Y^{m,\mu}(\RR^d)}=0$, so $\|V\|_{Y^{m,\mu}(Q)}=\|U\|_{Y^{m,\mu}(Q)}<\infty $.

If $d/\mu$ is not an integer, let $\theta=\mu$. Otherwise, let $\theta$ satisfy $d/\theta=d/\mu+1/2$. In either case, $d/\theta$ is not an integer and $\theta\leq \mu$. Since $m>d/\mu+|\gamma|$, if $d/\mu$ is an integer then $m\geq d/\mu+|\gamma|+1$ and so $m>d/\theta+|\gamma|$.
Because $\mu>1$ we have that $d>d/\mu$, so similarly $d>d/\theta$ and so $\theta>1$.

Let $k$ be the unique integer such that $m-d/\theta<k<m-d/\theta+1$. Thus $|\gamma|<m-d/\theta<k<m+1$ and so $|\gamma|+1\leq k\leq m$.
By Lemma~\ref{evlem},
\begin{equation*}\|\nabla \partial^\gamma V\|_{L^{\theta_k}(Q)}
\leq
C_\mu\sum_{i=1}^{m-k+1} R^{i-1+k-m} \|\nabla^i \partial^\gamma V\|_{L^\theta(Q)}\end{equation*}
and by H\"older's inequality and because $k\geq 1+|\gamma|$,
\begin{equation*}\|\nabla \partial^\gamma V\|_{L^{\theta_k}(Q)}
\leq
C_\mu\sum_{i=1}^{m} R^{i-1-|\gamma|+k-m+d/\theta-d/\mu} \|\nabla^i V\|_{L^\mu(Q)}.\end{equation*}
By formula~\eqref{pk},
\begin{equation*}\frac{1}{\theta_k}=\frac{1}{\theta}-\frac{m-k}{d} \in \left(0,\frac{1}{d}\right).\end{equation*}
By Morrey's inequality (see \cite[Section~5.6.2]{Eva98}), we may redefine the weak derivative $\partial^\gamma V$ of $V$ on a set of measure zero in a unique way so that it is continuous (thus defined pointwise everywhere) and, if $\widetilde X\in \frac{1}{2}Q$ and $|\widetilde X-Y|<R/2$, then
\begin{equation*}
|\partial^\gamma V(\widetilde X)-\partial^\gamma V(Y)| \leq C_\mu|\widetilde X-Y|^{1-d/\theta_k} \|\nabla \partial^\gamma V\|_{L^{\theta_k}(Q)}
.\end{equation*}
Let $\varepsilon=1-d/\theta_k=1-d/\theta+m-k$. Observe that $0<\varepsilon<1$. Then
\begin{equation}\label{eqn:V:holder}
|\partial^\gamma V(\widetilde X)-\partial^\gamma V(Y)| \leq C_\mu\frac{|\widetilde X-Y|^{\varepsilon} } {R^{\varepsilon+|\gamma|+d/\mu}} \sum_{i=1}^{m} R^{i} \|\nabla^i V\|_{L^\mu(Q)}
.\end{equation}
Averaging $|\partial^\gamma V(\widetilde X)|\leq |\partial^\gamma V(\widetilde X)-\partial^\gamma V(Y)|+\partial^\gamma |V(Y)|$ over $Y\in B(\widetilde X,R/2)$ we have that
\begin{equation}\label{eqn:V:pointwise}
|\partial^\gamma V(\widetilde X)|
\leq
\frac{C_\mu}{R^{|\gamma|+d/\mu}}
\sum_{i=0}^{m} R^{i} \|\nabla^i V\|_{L^\mu(Q)}
.\end{equation}
We will consider the cases $\widetilde X=X$ and $\widetilde X=Z_0+rH_j$.

We may write
\begin{equation*}
\sum_{i=0}^{m} R^{i} \|\nabla^i V\|_{L^\mu(Q)}
=
\sum_{i=0}^{\omega_\mu-1}R^{i} \biggl(\int_{Q}|\nabla^i V|^\mu\biggr)^{1/\mu}
+\sum_{i=\omega_\mu}^m R^{i} \biggl(\int_{Q}|\nabla^i V|^{\mu}\biggr)^{1/\mu}.
\end{equation*}
Recall that $V$ satisfies $\int_Q \nabla^i V=0$ for all $0\leq i\leq \omega_\mu-1$. We may apply the Poincar\'{e} inequality in the first sum so that
\begin{equation*}R^i\biggl(\int_{Q}|\nabla^i V|^\mu\biggr)^{1/\mu}\leq C_\mu R^{\omega_\mu}\biggl(\int_{Q}|\nabla^{\omega_\mu} V|^\mu\biggr)^{1/\mu}.\end{equation*}
Thus
\begin{equation*}\sum_{i=0}^{m} R^{i} \|\nabla^i V\|_{L^\mu(Q)}
\leq C_\mu\sum_{i=\omega_\mu}^m R^{i} \biggl(\int_{Q}|\nabla^i V|^{\mu}\biggr)^{1/\mu}.
\end{equation*}
By H\"{o}lder's inequality, we have that
\begin{align*}
\sum_{i=0}^{m} R^{i} \|\nabla^i V\|_{L^\mu(Q)}
&\leq
C_\mu\sum_{i=\omega_\mu}^mR^{i+d/\mu-d/\mu_i} \bigg(\int_{Q}|\nabla^i V|^{\mu_i}\bigg)^{1/\mu_i}
.\end{align*}
By formula~\eqref{pk} we have that $i+d/\mu-d/\mu_i=m$.  Thus, by the definition~\eqref{dfn:Y:norm} of the norm on $Y^{m,\mu}(Q)$, we have that
\begin{equation}\label{upwbound2}
\sum_{i=0}^{m} R^{i} \|\nabla^i V\|_{L^\mu(Q)}
\leq C_\mu R^{m}\| U\|_{Y^{m,\mu}(Q)}.
\end{equation}

Let $P_1$ be the (unique) polynomial of degree at most $m-d/q$ with $P_1(Z_0+rH_j)=V(Z_0+rH_j)$ for each $1\leq j\leq s_{m,d,q}$, and let $U_{Z_0,r,q}=V-P_1$. Then $U_{Z_0,r,q}$ is the unique continuous function with $U_{Z_0,r,q}(Z_0+rH_j)=0$ for all $1\leq j\leq \mu$ and with $\partial^\zeta U_{Z_0,r,q}=\partial^\zeta V=\partial^\zeta U$ almost everywhere for all $|\zeta|>m-d/q$. Thus the specified function $U_{Z_0,r,q}$ is constructed; we need only establish the desired bounds on~$U_{Z_0,r,q}$.

We now take $\widetilde X=Z_0+rH_j$ for some~$j$. By formulas~\eqref{eqn:V:pointwise} and~\eqref{upwbound2},
\begin{align*}
|P_1(Z_0+rH_j)|=|V(Z_0+rH_j)|&
\leq
C_\mu
\sum_{i=0}^{m} R^{i-d/\mu} \|\nabla^i V\|_{L^\mu(Q)}
\leq C_\mu R^{m-d/\mu}\| U\|_{Y^{m,\mu}(Q)}.
\end{align*}
Let $P_1(Z)=P_2((Z-Z_0)/r)$ so that $P_2(H_i)=P_1(Z_0+rH_i)$ and
$P_2(Z)=\sum_{|\gamma|\leq\omega_q-1}p_\gamma Z^\gamma$ for some $p_\gamma$ where $|p_\gamma|\leq h\sup_j|P_2(Z_0+rH_j)|\leq C_\mu R^{m-d/\mu}\| U\|_{Y^{m,\mu}(Q)}$.
We then have that $P_1(Z)=\sum_{|\gamma|\leq\omega_q-1}p_\gamma r^{-|\gamma|}(Z-Z_0)^\gamma$.
We may then compute that if $Z\in Q$ and $0\leq i\leq \omega_q-1$, then
\begin{equation*}
|\nabla^i P_1(Z)|
\leq C_\mu R^{m-d/\mu-i}\| U\|_{Y^{m,\mu}(Q)}(R/r)^{\omega_q-1}
.\end{equation*}
Combining these pointwise bounds on~$P_1$ with the bound~\eqref{upwbound2}
yields that
\begin{equation}\label{eqn:U:gamma}
\sum_{i=0}^{m} R^{i} \|\nabla^i U_{Z_0,r,q}\|_{L^\mu(Q)}
\leq C_\mu R^{m}\| U\|_{Y^{m,\mu}(Q)}(R/r)^{\omega_q-1}.
\end{equation}
Combining this bound with the bounds~\eqref{eqn:V:holder}, \eqref{eqn:V:pointwise} with $\widetilde X=X$ completes the proof.
\end{proof}

\begin{remark}\label{rmk:BMO}
We observe that if $U\in Y^{m,\mu}(\RR^d)$, then $\partial^\gamma U\in L^{\mu_\gamma}(\RR^d)$ is defined up to sets of measure zero whenever $|\gamma|>m-d/\mu$, while $\partial^\gamma U_{Z_0,r,q}$ is continuous and satisfies the bounds given by Lemma~\ref{SFbound} whenever $q\geq \mu$ and $|\gamma|<m-d/\mu$.

Suppose $|\gamma|=m-d/\mu$. If $k=|\gamma|+1$, then by formula~\eqref{pk} $\mu_k=d$ and so $\nabla\partial^\gamma U\in L^d(\RR^d)$.
By \cite[Section~5.8.1]{Eva98}, we have that $\partial^\gamma U$ lies in the space $BMO$ of bounded mean oscillation with $\doublebar{\partial^\gamma U}_{BMO}\leq C_\mu\|U\|_{Y^{m,\mu}(\RR^d)}$. By the John-Nirenberg inequality (see, for example, \cite{Ste93}) we have that if $1\leq p<\infty $ and $Q$ is any cube then
\begin{equation*}\biggl(\fint_{Q}\abs{\partial^\gamma U-\textstyle\fint_{Q}\partial^\gamma U}^p\biggr)^{1/p}
\leq C_{p,\mu}\|U\|_{Y^{m,\mu}(\RR^d)}
.\end{equation*}
Let $Z_0$, $r$, and $U_{Z_0,r,q}$ be as in Lemma~\ref{SFbound}.
Observe that $\partial^\zeta U=\partial^\zeta U_{Z_0,r,q}$ for all $|\zeta|>|\gamma|$, and so $\partial^\zeta U$ differs from $\partial^\zeta U_{Z_0,r,q}$ by a constant. Thus
\begin{equation*}\biggl(\fint_{Q}\abs{\partial^\gamma U_{Z_0,r,q}-\textstyle\fint_{Q}\partial^\gamma U_{Z_0,r,q}}^p\biggr)^{1/p}
\leq C_{p,\mu}\|U\|_{Y^{m,\mu}(\RR^d)}
.\end{equation*}
By the bound~\eqref{eqn:U:gamma} and H\"older's inequality, if $Q$ is a cube centered at $Z_0$ of side length $4R>4r$, then
\begin{equation*}
|\textstyle\fint_{Q}\partial^\gamma U_{Z_0,r,q}|
\leq |Q|^{-1/\mu}
\|\partial^\gamma U_{Z_0,r,q}\|_{L^\mu(Q)}
\leq C_\mu R^{m-|\gamma|}(R/r)^{\omega_q-1}\| U\|_{Y^{m,\mu}(Q)}
\end{equation*}
and so
\begin{align*}\biggl(\int_{Q}\abs{\partial^\gamma U_{Z_0,r,q}}^p\biggr)^{1/p}
&\leq C_{p,\mu} R^{d/p} (R/r)^{\omega_q-1} \|U\|_{Y^{m,\mu}(\RR^d)}
.\end{align*}
\end{remark}

\subsection{The fundamental solution for operators of high order}
\label{sec:FS:high}

We now define a preliminary version of our fundamental solution for operators of high order. If $d$ is odd, we will use this definition throughout; if $d$ is even then we will modify the definition somewhat in Section~\ref{sec:FS:even}. We will consider operators of lower order in Section~\ref{sec:FS:low}.

\begin{definition}\label{dfn:E:}

Let $m$ and $d$ be integers with $2m>d\geq 2$.
Let $L$ be a bounded and invertible linear operator $L:Y^{m,q}(\R^d)\to Y^{-m,q}(\R^d)$ for some $q$ with $1<q<\infty$ and $1-m/d<1/q<m/d$.
Let $Z_0\in\RR^d$, let $r>0$, and let $1\leq j\leq N$.

Let $T_{X,j,Z_0,r,q}$ be given by
\begin{equation*}\langle T_{X,j,Z_0,r,q},\vec\Phi\rangle = (\Phi_j)_{Z_0,r,q'}(X)\end{equation*}
where $1/q+1/q'=1$.
By Lemma~\ref{SFbound}, this is a well defined bounded linear operator on $Y^{m,q'}(\RR^d)$; that is, $T_{X,j,Z_0,r,q}\in Y^{-m,q}(\RR^d)$.

We define the fundamental solution $\vec E^L_{X,j,Z_0,r,q}$ by
\begin{equation*}
\vec E^L_{X,j,Z_0,r,q} =(L^{-1}T_{X,j,Z_0,r,q})_{Z_0,r,q}.\end{equation*}
\end{definition}

\begin{remark}
If $L$ is bounded and invertible $L:Y^{m,2}(\R^d)\to Y^{-m,2}(\R^d)$, and if $L$ is defined and bounded $Y^{m,q}(\R^d)\to Y^{-m,q}(\R^d)$ for all $q$ in an open neighborhood of~$2$, then by Lemma~\ref{NPLp}, $q$ satisfies the conditions of Definition~\ref{dfn:E:} for all $q$ in a (possibly smaller) neighborhood of~$2$.
\end{remark}

\begin{remark}\label{rmk:L:sym}
Since $Y^{-m,q}(\R^d)$ is by definition the dual space to $Y^{m,q'}(\R^d)$, by standard function theoretic arguments $L:Y^{m,q}(\R^d)\to Y^{-m,q}(\R^d)$ is bounded and invertible if and only if its adjoint operator $L^*:Y^{m,q'}(\R^d)\to Y^{-m,q'}(\R^d)$ is bounded and invertible. Furthermore, $(L^{-1})^*=(L^*)^{-1}$. Also observe that $\max(0,1-m/d)< 1/q<\min(1, m/d)$ if and only if $\max(0,1-m/d)< 1/q'<\min(1, m/d)$. Thus, $L$ and~$q$ satisfy the conditions of Definition~\ref{dfn:E:} if and only if $L^*$ and~$q'$ satisfy those conditions.

That is, $\vec E^L_{X,j,Z_0,r,q}$ exists (for all $X$, $j$, $Z_0$, $r$) if and only if $\vec E^{L^*}_{Y,k,Z_0,r,q'}$ exists (for all $Y$, $k$, $Z_0$, and~$r$).
\end{remark}

In the remainder of this subsection we will establish some basic properties of the fundamental solution; we will establish further properties in Sections~\ref{sec:FS:mixed}--\ref{sec:FS:Fubini}.  We will begin with a symmetry property for the operators $L$ and $L^*$; we will use this property to establish certain symmetries of the fundamental solution.

\begin{theorem}\label{thm:fundamental:1}
Let $L$ and $q$ satisfy the conditions of Definition~\ref{dfn:E:}.
Let $Z_0\in\R^d$, let $r>0$, and let $j$, $k$ be integers in $[1,N]$.

For all $X$, $Y\in\RR^d$ we have that
\begin{equation}\label{eqn:E:sym}
(\vec E^L_{X,j,Z_0,r,q})_k(Y)
=\overline{(\vec E^{L^*}_{Y,k,Z_0,r,q'})_j(X)}
.\end{equation}

For every $S\in Y^{-m,q'}(\RR^d)$ and every $X\in\RR^d$ we have that
\begin{equation}\label{eqn:E:Linv}
\overline{\langle S,\vec E^L_{X,j,Z_0,r,q}\rangle }
=(((L^*)^{-1}S)_j)_{Z_0,r,q'}(X)
.\end{equation}

Finally, if we let
\begin{equation*}E^L_{j,k,Z_0,r,q}(X,Y)
=(\vec E^L_{Y,k,Z_0,r,q})_j(X)
=\overline{(\vec E^{L^*}_{X,j,Z_0,r,q'})_k(Y)},
\end{equation*}
then $E^L_{j,k,Z_0,r,q}$ is continuous on $\RR^d\times\RR^d$.
\end{theorem}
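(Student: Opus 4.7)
The plan is to establish the three conclusions in the order given, with \eqref{eqn:E:Linv} serving as the main technical step and \eqref{eqn:E:sym} following as a direct corollary.

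First I would prove \eqref{eqn:E:Linv}. The key observation is that the normalization $U \mapsto U_{Z_0,r,q}$ modifies $U$ only by subtracting a polynomial of degree $\leq m - d/q$, which has zero $Y^{m,q}$-seminorm, so $\vec E^L_{X,j,Z_0,r,q}$ and $L^{-1}T_{X,j,Z_0,r,q}$ represent the same element of $Y^{m,q}(\R^d)$; in particular, $L\vec E^L_{X,j,Z_0,r,q}=T_{X,j,Z_0,r,q}$ in $Y^{-m,q}(\R^d)$. Next I would verify from \eqref{dfn:L} and the standard choice of adjoint coefficients $(A^*)^{k,j}_{\alpha,\beta} = \overline{A^{j,k}_{\beta,\alpha}}$ the adjoint identity $\overline{\langle L\vec u, \vec\varphi\rangle} = \langle L^*\vec\varphi, \vec u\rangle$; this is a quick index relabeling. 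For $S \in Y^{-m,q'}(\R^d)$, set $\vec U = (L^*)^{-1}S \in Y^{m,q'}(\R^d)$, which is legitimate by Remark~\ref{rmk:L:sym}. Then
\[
\overline{\langle S, \vec E^L_{X,j,Z_0,r,q}\rangle} = \overline{\langle L^*\vec U, \vec E^L_{X,j,Z_0,r,q}\rangle} = \langle L\vec E^L_{X,j,Z_0,r,q}, \vec U\rangle = \langle T_{X,j,Z_0,r,q}, \vec U\rangle = (U_j)_{Z_0,r,q'}(X),
\]
which is exactly \eqref{eqn:E:Linv}.

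To derive \eqref{eqn:E:sym} from \eqref{eqn:E:Linv}, I would apply the latter with $S$ equal to the $L^*$-analogue of $T_{X,j,Z_0,r,q}$: namely the functional $\widetilde T_{Y,k}$ defined by $\langle \widetilde T_{Y,k}, \vec\Phi\rangle = (\Phi_k)_{Z_0,r,q}(Y)$ for $\vec\Phi \in Y^{m,q}(\R^d)$. By Lemma~\ref{SFbound} this functional lies in $Y^{-m,q'}(\R^d)$, and the $L^*$-version of Definition~\ref{dfn:E:} identifies $(L^*)^{-1}\widetilde T_{Y,k}$, after $(Z_0,r,q')$-normalization component-wise, with $\vec E^{L^*}_{Y,k,Z_0,r,q'}$. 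Hence the right-hand side of \eqref{eqn:E:Linv} is $(\vec E^{L^*}_{Y,k,Z_0,r,q'})_j(X)$. For the left, I use that the normalization in Definition~\ref{dfn:E:} is applied component-wise and that the uniqueness in Lemma~\ref{SFbound} forces $((\vec E^L_{X,j,Z_0,r,q})_k)_{Z_0,r,q} = (\vec E^L_{X,j,Z_0,r,q})_k$, so the left-hand side collapses to $\overline{(\vec E^L_{X,j,Z_0,r,q})_k(Y)}$. Conjugating yields \eqref{eqn:E:sym}.

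Finally, joint continuity of $E^L_{j,k,Z_0,r,q}$ follows from Lemma~\ref{SFbound} applied separately in each variable. The hypothesis $1/q<m/d$ gives $m>d/q$, so Lemma~\ref{SFbound} with $\mu=q$ and $|\gamma|=0$ yields H\"older continuity of $X\mapsto (\vec E^L_{Y,k,Z_0,r,q})_j(X)$ with modulus governed by $\|\vec E^L_{Y,k,Z_0,r,q}\|_{Y^{m,q}} \leq \|L^{-1}\|\,\|T_{Y,k,Z_0,r,q}\|_{Y^{-m,q}}$; another application of Lemma~\ref{SFbound} to the defining formula for $T_{Y,k,Z_0,r,q}$ bounds the latter by a quantity polynomial in $r+|Y-Z_0|$, hence uniformly for $Y$ in any bounded set. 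By \eqref{eqn:E:sym} the same argument applied to $\vec E^{L^*}_{X,j,Z_0,r,q'}$ provides an analogous uniform modulus in $Y$ for $X$ in compacts. Splitting $|E(X,Y)-E(X',Y')| \leq |E(X,Y)-E(X',Y)| + |E(X',Y)-E(X',Y')|$ then yields joint continuity on $\R^d\times\R^d$.

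The main obstacle throughout is purely notational bookkeeping: keeping straight whether each object lives in $Y^{m,q}$ or $Y^{m,q'}$, which normalization ($q$ versus $q'$) is being applied, and how the complex conjugation enters through the antidual pairing. Once the adjoint identity and the fact that $\vec E^L_{X,j,Z_0,r,q}$ is its own component-wise normalization are in hand, every step reduces to a direct computation or invocation of Lemma~\ref{SFbound}.
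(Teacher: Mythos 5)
Your proof is correct and uses essentially the same argument as the paper: the duality/adjoint relation for $L$ and $L^*$ together with the idempotency of the normalization $(\cdot)_{Z_0,r,q}$, with Lemma~\ref{SFbound} furnishing the uniform H\"older moduli needed for joint continuity. The only difference is organizational — you prove the general identity \eqref{eqn:E:Linv} first and obtain \eqref{eqn:E:sym} by specializing $S=T_{Y,k,Z_0,r,q'}$, whereas the paper establishes \eqref{eqn:E:sym} directly by an equivalent chain of equalities and then proves \eqref{eqn:E:Linv} separately afterward — so the total content is the same.
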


\begin{proof} That $\vec E^{L^*}_{Y,k,Z_0,r,q'} $ exists is Remark~\ref{rmk:L:sym}.

If $X$, $Y\in \RR^d$ and $1\leq j\leq N$, $1\leq k\leq N$, then by Definition~\ref{dfn:E:} and Remark~\ref{rmk:L:sym},
\begin{align*}
(\vec E^L_{X,j,Z_0,r,q})_k(Y)
&=\langle T_{Y,k,Z_0,r,q'},\vec E^L_{X,j,Z_0,r,q}\rangle
=\langle T_{Y,k,Z_0,r,q'},L^{-1}T_{X,j,Z_0,r,q}\rangle
\\&=\overline{\langle T_{X,j,Z_0,r,q}, (L^*)^{-1}T_{Y,k,Z_0,r,q'}\rangle}
=\overline{\langle T_{X,j,Z_0,r,q}, \vec E^{L^*}_{Y,k,Z_0,r,q'}\rangle}
\\&=\overline{(\vec E^{L^*}_{Y,k,Z_0,r,q'})_j(X)}
.\end{align*}
In particular, observe that by Lemma~\ref{SFbound}, $\vec E^L_{Y,k,Z_0,r,q}(X)$ is locally uniformly continuous in both $X$ and~$Y$, and so $E^L_{j,k,Z_0,r,q}$ is continuous on $\RR^d\times\RR^d$.

Similarly, we have that if $S\in Y^{-m,q'}(\RR^d)$, then
\begin{equation*}\overline{\langle S,\vec E^L_{X,j,Z_0,r,q}\rangle }
=\overline{\langle S,L^{-1}T_{X,j,Z_0,r,q}\rangle }
=\langle T_{X,j,Z_0,r,q},(L^*)^{-1}S\rangle
=(((L^*)^{-1}S)_j)_{Z_0,r,q'}(X)
.\end{equation*}
This establishes formula~\eqref{eqn:E:Linv}.
\end{proof}

We will conclude this section with a preliminary bound on the derivatives of the function $\vec E^L_{X,j,Z_0,r,q}$.

\begin{theorem}\label{thm:derivs}
Let $L$ and $q$ satisfy the conditions of Definition~\ref{dfn:E:}.
Let $1<{{p}}\leq 2q$.
Suppose that $L$ also satisfies the conditions of Definition~\ref{dfn:E:} with $q$ replaced by~${{p}}$, and that the inverses are compatible in the sense of Definition~\ref{dfn:compatible}, that is, if $T\in Y^{-m,{{p}}}(\R^d)\cap Y^{-m,q}(\R^d)$ then $L^{-1}T\in Y^{m,{{p}}}(\R^d)\cap Y^{m,q}(\R^d)$.

Suppose that $\beta$ is a multiindex with $0\leq |\beta|\leq m$. Let $Q\subset\RR^d$ be a cube.
Then we have the bound
\begin{equation}\label{eqn:E:2bound}
\biggl(\int_{Q} |\partial^\beta\vec E^L_{X,j,Z_0,r,q}|^{{p}}\biggr)^{1/{{p}}}
\leq
CR^{2m-d+d/{{p}}-|\beta|}\bigg(\frac{R}{r}\bigg)^{\kappa}
\end{equation}
where $R=\max(r,|X-Z_0|,\dist(Z_0,Q)+\diam Q)$, and where $C$ and $\kappa$ are positive constants depending on $q$, ${{p}}$, the norms of $L^{-1}:Y^{-m,q}(\R^d)\to Y^{m,q}(\R^d)$ and $L^{-1}:Y^{-m,{{p}}}(\R^d)\to Y^{m,{{p}}}(\R^d)$, and the standard parameters.
\end{theorem}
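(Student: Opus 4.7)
The plan is to derive the bound by first controlling $\|\vec E^L_{X,j,Z_0,r,q}\|_{Y^{m,q}(\R^d)}$ via Lemma~\ref{SFbound}, and then passing to $L^p(Q)$ bounds on $\partial^\beta\vec E^L_{X,j,Z_0,r,q}$ by case analysis on $|\beta|$. For any $\vec\Phi\in C^\infty_c(\R^d)\subseteq Y^{m,q'}(\R^d)$, the definition of $T_{X,j,Z_0,r,q}$ together with Lemma~\ref{SFbound} applied to $\Phi_j$ with $\mu=q'$, normalization parameter $q'$, and $|\gamma|=0$ gives, writing $R_X=r+|X-Z_0|$,
\[
|\langle T_{X,j,Z_0,r,q},\vec\Phi\rangle|=|(\Phi_j)_{Z_0,r,q'}(X)|\leq C R_X^{m-d/q'}(R_X/r)^{\omega_{q'}-1}\|\vec\Phi\|_{Y^{m,q'}(\R^d)}.
\]
By density and the boundedness of $L^{-1}:Y^{-m,q}(\R^d)\to Y^{m,q}(\R^d)$, together with the fact that the normalization $(\,\cdot\,)_{Z_0,r,q}$ only changes a representative within its $Y^{m,q}$ equivalence class, this upgrades to
\[
\|\vec E^L_{X,j,Z_0,r,q}\|_{Y^{m,q}(\R^d)}\leq C R_X^{m-d/q'}(R_X/r)^{\omega_{q'}-1}. \qquad(\ast)
\]

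Setting $R=\max(r,|X-Z_0|,\dist(Z_0,Q)+\diam Q)$, so $R_X\leq 2R$ and $|Q|^{1/d}\leq CR$, I would split the estimate into two cases. When $|\beta|>m-d/q$, one has $\partial^\beta\vec E^L_{X,j,Z_0,r,q}\in L^{q_\beta}(\R^d)$ with norm bounded by $\|\vec E^L_{X,j,Z_0,r,q}\|_{Y^{m,q}(\R^d)}$, so H\"older's inequality on $Q$ gives $\|\partial^\beta\vec E\|_{L^p(Q)}\leq|Q|^{1/p-1/q_\beta}\|\vec E\|_{Y^{m,q}(\R^d)}$ provided $q_\beta\geq p$. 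When $|\beta|< m-d/q$, Lemma~\ref{SFbound} applied to $\vec E^L_{X,j,Z_0,r,q}$ itself with $\mu=q$ and normalization $q$ (noting that $(\vec E)_{Z_0,r,q}=\vec E$) yields the pointwise estimate
\[
|\partial^\beta\vec E(\widetilde Y)|\leq CR^{m-d/q-|\beta|}(R/r)^{\omega_q-1}\|\vec E\|_{Y^{m,q}(\R^d)}\quad\text{for all }\widetilde Y\in Q,
\]
whose $L^p(Q)$ norm is at most $C|Q|^{1/p}R^{m-d/q-|\beta|}(R/r)^{\omega_q-1}\|\vec E\|_{Y^{m,q}(\R^d)}$; the borderline $|\beta|=m-d/q$ is handled in the same way using Remark~\ref{rmk:BMO}. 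In either case, combining with $(\ast)$ and using the identity $d/q+d/q'=d$ collapses all the $R$-exponents to the desired $R^{2m-d+d/p-|\beta|}$, with the remaining $(R/r)$-factors absorbed into $(R/r)^\kappa$.

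The main obstacle is the H\"older condition $q_\beta\geq p$ required in the first case, which reads $m-|\beta|\geq d(1/q-1/p)$. It is automatic when $p\leq q$, but for $p>q$ it is delicate in the subrange $m-d/q<|\beta|\leq m-d/p$, where no pointwise bound is available (since $|\beta|>m-d/q$) and $\partial^\beta\vec E$ lies in no $L^{p_\beta}(\R^d)$ controlled by the $Y^{m,p}$-norm (since $|\beta|\leq m-d/p$). At the endpoint $|\beta|=m-d/p$ the condition becomes $d/p\geq d(1/q-1/p)$, i.e. $p\leq 2q$, which is exactly the hypothesis of the theorem; thus the constraint $p\leq 2q$ is precisely what is needed to push the $L^{q_\beta}$-control all the way down to $L^p(Q)$-control in this borderline range. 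The compatibility of the inverses and the extra Definition~\ref{dfn:E:} conditions on $p$ enter here to guarantee that the $Y^{m,q}$-representative $\vec E^L_{X,j,Z_0,r,q}$ agrees with the $Y^{m,p}$-one whenever both make sense, so that the $L^p(Q)$-bound one derives is unambiguous.
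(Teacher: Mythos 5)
Your proposal follows the same overall strategy as the paper: bound $\|\vec E^L_{X,j,Z_0,r,q}\|_{Y^{m,q}(\R^d)}$ via Lemma~\ref{SFbound} and the invertibility of~$L$, then split on $|\beta|$ into pointwise, $BMO$, and H\"older cases. The first three cases ($|\beta|<m-d/q$, $|\beta|=m-d/q$, and $m-d/q<|\beta|$ with $q_\beta\geq p$) are exactly the paper's argument.

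However, there is a genuine gap, and your diagnosis of where the difficulty lies is off. When $p>q$, the H\"older condition $q_\beta\geq p$ is equivalent to $|\beta|\leq m-d/q+d/p$. You call the range $m-d/q<|\beta|\leq m-d/p$ delicate, but under the hypothesis $p\leq 2q$ one has $m-d/p\leq m-d/q+d/p$, so the $Y^{m,q}$ H\"older argument already covers that whole range (and a bit more). The range it does \emph{not} cover is $m-d/q+d/p<|\beta|\leq m$, and this is where your proposal stops short. There, $q_\beta<p$ and the $Y^{m,q}$ control is simply insufficient; saying that compatibility makes the bound ``unambiguous'' does not produce a quantitative estimate. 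The paper's missing step: since $p>q$ gives $q'>p'$, Lemma~\ref{SFbound} (now applied with $\mu=p'$ and normalization parameter~$q'$) yields $\|T_{X,j,Z_0,r,q}\|_{Y^{-m,p}(\R^d)}\leq CR^{m-d/p'}(R/r)^{\omega_{q'}-1}$; then compatible invertibility of $L:Y^{m,p}(\R^d)\to Y^{-m,p}(\R^d)$ upgrades this to a quantitative bound $\|\vec E^L_{X,j,Z_0,r,q}\|_{Y^{m,p}(\R^d)}\leq CR^{m-d/p'}(R/r)^{\omega_{q'}-1}$, and finally $p\leq 2q$ guarantees $|\beta|>m-d/q+d/p\geq m-d/p$, so $p_\beta$ exists and exceeds~$p$, and H\"older on~$Q$ closes the estimate. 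Without that explicit $Y^{m,p}$-norm bound on~$\vec E$ — which is exactly where the boundedness assumption on $L^{-1}:Y^{-m,p}(\R^d)\to Y^{m,p}(\R^d)$ in the theorem statement is actually used — the estimate~\eqref{eqn:E:2bound} is not established for large~$|\beta|$ when $p>q$.
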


Recall from Definition~\ref{dfn:compatible} that $\Upsilon_L$ is the set of all $q$ such that
$L^{-1}$ is compatible between $Y^{m,2}(\R^d)$ and $Y^{m,q}(\R^d)$. By density, if $p$, $q\in\Upsilon_L$, then $L^{-1}$ is compatible between $Y^{m,p}(\R^d)$ and $Y^{m,q}(\R^d)$, as required by the lemma.

\begin{proof}[Proof of Theorem~\ref{thm:derivs}]
By Lemma~\ref{SFbound}, if $T_{X,j,Z_0,r,q}$ is as in Definition~\ref{dfn:E:}, then
\begin{align*}
\|T_{X,j,Z_0,r,q}\|_{Y^{-m,q}(\RR^d)} \leq C_q R^{m-d/{q'}} \bigg(\frac{R}{r}\bigg)^{\omega_{{q'}}-1}\end{align*}
and so by invertibility of~$L$,
\begin{equation}\label{eqn:E:Y:norm}
\|\vec E^L_{X,j,Z_0,r,q}\|_{Y^{m,q}(\RR^d)}
\leq C R^{m-d/{q'}} \bigg(\frac{R}{r}\bigg)^{\omega_{{q'}}-1} .\end{equation}
By Lemma~\ref{SFbound}, if $|\beta|<m-d/q$ and $|Y-Z_0|<R$ then
\begin{equation*}
|\partial_Y^\beta\vec E^L_{X,j,Z_0,r,q}(Y)|\leq CR^{2m-d-|\beta|}
\biggl(\frac{R}{r}\biggr)^{\kappa}
.\end{equation*}
Integration yields the bound~\eqref{eqn:E:2bound} in this case (for all ${{p}}\in [1,\infty]$).

By Remark~\ref{rmk:BMO}, if $|\beta|=m-d/q$ and $|\widetilde Q|=4R$ with $\widetilde Q$ centered at~$Z_0$, then 
\begin{equation*}\biggl(\int_{\widetilde Q}\abs{\partial_Y^\beta \vec E^L_{X,j,Z_0,r,q}(Y)}^{{p}}\,dY\biggr)^{1/{{p}}}
\leq CR^{d/{{p}}+m-d/{q'}} \biggl(\frac{R}{r}\biggr)^{\kappa}
.\end{equation*}
Because $|\beta|=m-d/q=m-d+d/{q'}$, the bound~\eqref{eqn:E:2bound} is valid in this case (for all ${{p}}\in [1,\infty)$).

We are left with the case $|\beta|>m-d/q$.
If $q\geq {{p}}$ and $m-d/q<|\beta|$, or if $q<{{p}}$ and $m-d/q<|\beta|
\leq m-d/q+d/{{p}}$,
then by formula~\eqref{pk} we have that ${{p}}\leq q_\beta<\infty $. By the bound~\eqref{eqn:E:Y:norm} and
H\"older's inequality,
\begin{equation*}
\biggl(\int_{Q} |\partial^\beta\vec E^L_{X,j,Z_0,r,q}|^{{p}}\biggr)^{1/{{p}}}
\leq
CR^{2m-d+d/{{p}}-|\beta|}
\bigg(\frac{R}{r}\bigg)^{\kappa}
.\end{equation*}

Finally, suppose that $q<{{p}}$ and that $m-d/q+d/{{p}}<|\beta|\leq m$. If $q< {{p}}$ then ${q'}>{{p}}'$,
and so by Lemma~\ref{SFbound}
\begin{equation*}
\|T_{X,j,Z_0,r,q}\|_{Y^{-m,{{p}}}(\RR^d)} \leq C R^{m-d/{{p}}'} \bigg(\frac{R}{r}\bigg)^{\omega_{{q'}}-1}.
\end{equation*}
By compatible invertibility of $L:Y^{m,{{p}}}(\RR^d)\to Y^{-m,{{p}}}(\RR^d)$, we have that 
\begin{equation}
\|\vec E^L_{X,j,Z_0,r,q}\|_{Y^{-m,{{p}}}(\RR^d)} \leq C R^{m+d-d/{{p}}} \bigg(\frac{R}{r}\bigg)^{\omega_{{q'}}-1}.\end{equation}
If $|\beta|>m-d/q+d/{{p}}$ and ${{p}}\leq 2q$ then $|\beta|>m-d/{{p}}$ and so this provides a Lebesgue space bound on $\partial^\beta \vec E^L_{X,j,Z_0,r,q}$. By H\"older's inequality,
\begin{equation*}\biggl(\int_{Q} |\partial^\beta\vec E^L_{X,j,Z_0,r,q}|^{{p}}\biggr)^{1/{{p}}}
\leq
CR^{2m-d+d/{{p}}-|\beta|}
\bigg(\frac{R}{r}\bigg)^{\kappa}
\end{equation*}
which is the bound~\eqref{eqn:E:2bound}.

In any case, the bound~\eqref{eqn:E:2bound} holds.
\end{proof}

\subsection{Mixed derivatives of the fundamental solution}
\label{sec:FS:mixed}

Recall that $\vec E^L_{X,j,Z_0,r,q}(Y)$ is a function of both $X$ and~$Y$. We may control derivatives in~$Y$ using Theorem~\ref{thm:derivs}, and derivatives in~$X$ using formula~\eqref{eqn:E:sym} and Theorem~\ref{thm:derivs} applied to $\vec E^{L^*}_{Y,k,Z_0,r,{q'}}$. We will also wish to control mixed derivatives, that is, derivatives in both $X$ and~$Y$. This subsection will consist of the following theorem and its proof.

\begin{theorem}\label{thm:fundamental}
Let $L$ be an operator of the form~\eqref{dfn:L} with $2\in\Upsilon_L\cap\Pi_L$, and let $q\in\Upsilon_L\cap\Pi_L$ with $1-m/d<1/q<m/d$, where $\Pi_L$ and $\Upsilon_L$ are as in Definitions \ref{dfn:ppm} and~\ref{dfn:compatible}. Then $L$ and $q$ satisfy the conditions of Definition~\ref{dfn:E:} and Theorem~\ref{thm:derivs} for all ${{p}}\in\Upsilon_L\cap \Pi_L\cap(1,2q]$ with $1-m/d<1/{{p}}<m/d$.

Let ${{p}}\in \Upsilon_L\cap\Pi_L\cap(1,2q]$ with $1-m/d<1/{{p}}<m/d$. Suppose that the Caccioppoli-Meyers inequality
\begin{multline}
\label{eqn:Meyers:FS}
\sum_{j=0}^m
|Q|^{j/d}
\biggl(\int_Q |\nabla^j\vec u|^{{p}}\biggr)^{1/{{p}}}
\\\leq
C|Q|^{1/{{p}}-1/2}
\biggl(\int_{2Q} |\vec u|^2\biggr)^{1/2}
+C|Q|^{m/d}\|L\vec u\|_{Y^{-m,{{p}}}(2Q)}
\end{multline}
holds whenever $Q\subset\RR^d$ is a cube with sides parallel to the coordinate axes and whenever $\vec u$ is a representative of an element of $Y^{m,{{p}}}(2Q)$, with $C$ independent of $\vec u$ and~$Q$. Suppose in addition this statement is valid with ${{p}}$ replaced by~$2$.

Suppose that $\alpha$ is a multiindex with $0\leq |\alpha|\leq m$. 

Then for every compact set $K\subseteq\R^d$, the function $\partial_X^\alpha \vec E^L_{X,j,Z_0,r,q}$ is in $Y^{m,{{p}}}(K)$ for almost every $X\in\R^d\setminus K$.
If $|\alpha|<\min(m-d/{{p}}',m-d/2)$ then $\partial_X^\alpha \vec E^L_{X,j,Z_0,r,q}\in Y^{m,{{p}}}(K)$ for almost every $X\in\R^d$.
Furthermore, we have the bound
\begin{align}\label{eqn:E:Bochner}
\int_{\Gamma}
\|\partial_X^\alpha{\vec E^L_{X,j,Z_0,r,q}}\|_{Y^{m,{{p}}}(Q)}^2\,dX
&\leq
CR^{2m-d+2d/{{p}}-2|\alpha|} \biggl(\frac{R}{\min(r,|Q|^{1/d})}\biggr)^\kappa
\end{align}
whenever $\Gamma$ and $Q$ are cubes with $|\Gamma|=|Q|$, $\Gamma\subset 8Q$, and either $\Gamma\subset 8Q\setminus 4Q$ or $|\alpha|<m-d/{{p}}'$.
Here $R=\max(r,|Q|^{1/d},\dist(Z_0,Q))$ and $\kappa$ is a positive constant depending on the standard parameters.

In particular, if the Caccioppoli inequality~\eqref{eqn:Meyers:FS} is valid for ${{p}}=2$, then for all multiindices $\beta$ with $0\leq |\beta|\leq m$, the mixed partial derivative $\partial_X^\alpha \partial_Y^\beta \vec E^L_{X,j,Z_0,r,q}(Y)$ exists as a locally $L^2$ function defined on $\RR^d\times\RR^d\setminus\{(X,X):X\in\RR^d\}$. Furthermore, if $Q$, $\Gamma\subset\RR^d$ are two cubes with $|Q|=|\Gamma|$ and $\Gamma\subset 8Q\setminus 4Q$, then
\begin{align}
\label{eqn:E:gradbd}
\int_{\Gamma}\int_{Q}|\partial^\alpha_X \partial^\beta_Y \vec E^L_{X,j,Z_0,r,q}(Y)|^2 dY\,dX&\leq C\biggl(\frac{R}{\min(|Q|^{1/d},r)}\biggr)^\kappa\,
R^{4m-2|\alpha|-2|\beta|}
\end{align}

If $|\alpha|<m-d/2$, then $\partial_X^\alpha \partial_Y^\beta \vec E^L_{X,j,Z_0,r,q}(Y)$ exists as a locally $L^2$ function on all of $\RR^d\times\RR^d$. Furthermore, if $Q\subset\RR^d$ is a cube, then
\begin{align}
\label{eqn:E:gradbd:lower}
\int_{Q}\int_{Q}|\partial^\alpha_X \partial^\beta_Y \vec E^L_{X,j,Z_0,r,q}(Y)|^2dY\,dX&\leq C\biggl(\frac{R}{\min(|Q|^{1/d},r)}\biggr)^\kappa\,
R^{4m-2|\alpha|-2|\beta|}
\end{align}
where $R=\max(r,|Q|^{1/d},\dist(Z_0,Q))$ and $\kappa$ is a positive constant depending on the standard parameters. If the Caccioppoli inequality is valid for~$L^*$, that is, if the bound~\eqref{eqn:Meyers:FS} is valid with ${{p}}=2$ and $L$ replaced by~$L^*$, then the bound~\eqref{eqn:E:gradbd:lower} is valid whenever $|\beta|<m-d/2$ even if $m-d/2\leq |\alpha|\leq m$.

\end{theorem}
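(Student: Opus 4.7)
The plan is to establish the Bochner bound \eqref{eqn:E:Bochner} by applying the Caccioppoli-Meyers inequality \eqref{eqn:Meyers:FS} to the function $Y\mapsto\vec u^X(Y):=\partial_X^\alpha\vec E^L_{X,j,Z_0,r,q}(Y)$ on the cube $Q$, then integrating in $X\in\Gamma$ and invoking the symmetry identity \eqref{eqn:E:sym} to reduce the resulting $L^2$-norm in $X$ to a quantity controlled by Theorem~\ref{thm:derivs} applied to $\vec E^{L^*}_{Y,k,Z_0,r,q'}$. The hypotheses of Definition~\ref{dfn:E:} and Theorem~\ref{thm:derivs} are immediate: $q\in\Upsilon_L\cap\Pi_L$ with $1-m/d<1/q<m/d$ gives bounded invertibility and the correct range of exponents, while $p,q\in\Upsilon_L$ gives the compatibility of $L^{-1}$ required by Definition~\ref{dfn:compatible}.

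For the main step, fix $X$ and compute that for any $\vec\Phi\in Y^{m,p'}_0(2Q)$,
\[
\overline{\langle L\vec u^X,\vec\Phi\rangle_{2Q}}=\partial_X^\alpha(\Phi_j)_{Z_0,r,q'}(X).
\]
When $\Gamma\subset 8Q\setminus 4Q$, every $X\in\Gamma$ lies outside $2Q$, and a brief case analysis using Lemma~\ref{SFbound} shows that the right-hand side either vanishes identically (when every normalization point $Z_0+rH_i$ also lies outside $2Q$) or is bounded by the scaling factor $(R/r)^{\omega_{q'}-1}$ times the $Y^{m,p'}(2Q)$-norm of $\vec\Phi$; this controls $\|L\vec u^X\|_{Y^{-m,p}(2Q)}$. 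Applying \eqref{eqn:Meyers:FS} to $\vec u^X$ and using Lemma~\ref{evlem} to pass from the weighted sum $\sum_{j=0}^m|Q|^{j/d}\|\nabla^j\vec u^X\|_{L^p(Q)}$ to the full $Y^{m,p}(Q)$-norm then yields
\[\|\vec u^X\|_{Y^{m,p}(Q)}\le C|Q|^{1/p-1/2-m/d}\|\vec u^X\|_{L^2(2Q)}+\text{(controllable polynomial terms)}.\]

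Squaring, integrating over $X\in\Gamma$, and invoking Fubini together with \eqref{eqn:E:sym} yields
\[\int_\Gamma\|\vec u^X\|_{Y^{m,p}(Q)}^2\,dX\le C|Q|^{2/p-1-2m/d}\int_{2Q}\int_\Gamma|\partial_X^\alpha\vec E^{L^*}_{Y,k,Z_0,r,q'}(X)|^2\,dX\,dY+\ldots.\]
For each fixed $Y\in 2Q$, Theorem~\ref{thm:derivs} applied to $\vec E^{L^*}$ with $p=2$ and $\beta=\alpha$ bounds the inner integral by $CR^{4m-d-2|\alpha|}(R/r)^{2\kappa}$, with $R\sim\max(r,|Q|^{1/d},\dist(Z_0,Q))$; integrating out $Y$ contributes a factor $|Q|$ and produces $C|Q|^{2/p-2m/d}R^{4m-d-2|\alpha|}(R/r)^{2\kappa}$, which can be rewritten as $CR^{2m-d+2d/p-2|\alpha|}(R/\min(r,|Q|^{1/d}))^{\kappa'}$ by absorbing the mismatched factor $(|Q|^{1/d}/R)^{2d/p-2m}$ (which is $\le 1$ because $1/p<m/d$) into a larger $\kappa'$. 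The a.e.\ membership statements follow from finiteness of the Bochner integrand over a dyadic cover of $\R^d\setminus K$ by such shells. Under the stronger restriction $|\alpha|<\min(m-d/p',m-d/2)$, the Morrey-type continuity from Lemma~\ref{SFbound} allows pointwise evaluation of both $\partial_X^\alpha\Phi_j(X)$ (needing $|\alpha|<m-d/p'$) and $\partial_X^\alpha\vec E^{L^*}(X)$ in $L^2$ (needing $|\alpha|<m-d/2$) for arbitrary $X$, so the whole argument goes through even when $\Gamma$ meets $2Q$, giving the a.e.\ statement on all of $\R^d$.

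The $L^2$ mixed-derivative bounds \eqref{eqn:E:gradbd} and \eqref{eqn:E:gradbd:lower} are the $p=2$ case of the Bochner bound: a further application of Lemma~\ref{evlem} extracts $\partial_Y^\beta$ for $0\le|\beta|\le m$ in $L^2(Q)$ from the $Y^{m,2}(Q)$-norm on the left. The variant allowing $|\beta|<m-d/2$ assuming a Caccioppoli-Meyers inequality for $L^*$ follows by swapping the roles of $X$ and $Y$ throughout (applying Caccioppoli-Meyers for $L^*$ to $X\mapsto\partial_Y^\beta\vec E^{L^*}_{Y,k,Z_0,r,q'}(X)$ and then Theorem~\ref{thm:derivs} to $\vec E^L$). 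I expect the main technical obstacle to be the careful bookkeeping of the several scaling factors ($R$, $r$, $|Q|^{1/d}$, and the exponents $\omega_{q'}-1$ and $2m-2d/p$) required to combine all of them into the single $(R/\min(r,|Q|^{1/d}))^\kappa$ factor in the final estimate, together with the case analysis when some normalization point $Z_0+rH_i$ lies inside or near $2Q$ and the polynomial correction in $L\vec u^X$ must be handled explicitly via Lemma~\ref{SFbound}.
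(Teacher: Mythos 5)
Your high-level plan — apply the Caccioppoli-Meyers inequality to the $X$-differentiated fundamental solution, compute $L$ of this via formula~\eqref{eqn:E:Linv} and the normalization polynomial from Lemma~\ref{SFbound}, bound the $L^2$ term on the right by symmetry \eqref{eqn:E:sym} and Theorem~\ref{thm:derivs}, then swap the roles of $L$ and $L^*$ for the $|\beta|<m-d/2$ case — is exactly the structure the paper uses. But there is a genuine gap: you apply the Caccioppoli-Meyers hypothesis directly to $\vec u^X(Y)=\partial_X^\alpha\vec E^L_{X,j,Z_0,r,q}(Y)$, and this is circular. The inequality~\eqref{eqn:Meyers:FS} is stated only for $\vec u$ that is a representative of an element of $Y^{m,p}(2Q)$, but the membership $\partial_X^\alpha\vec E^L_{X,j,Z_0,r,q}\in Y^{m,p}(2Q)$ for a.e.\ $X$ is precisely part of the conclusion you are trying to establish. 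Moreover, for $|\alpha|>m-d/q'$ the $X$-derivative $\partial_X^\alpha\vec E^L_{X,j,Z_0,r,q}(Y)$ exists (by the symmetry relation~\eqref{eqn:E:sym}) only as a locally integrable function of $X$ for each fixed $Y$, i.e.\ an equivalence class with no well-defined pointwise values, so the expression ``fix $X$ and apply the inequality to $Y\mapsto\vec u^X(Y)$'' does not make rigorous sense without an approximation argument.

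The paper's proof handles exactly this by introducing the mollification $\vec u_{\varepsilon,\alpha,X}(Y)=\int\partial^\alpha\eta_\varepsilon(X-X')\,\vec E^L_{X',j,Z_0,r,q}(Y)\,dX'$, which is a genuine $W^{m,p}(2Q)$ function for \emph{every} $X$ (so that Caccioppoli-Meyers applies with no circularity), and the key identity $\langle L\vec u_{\varepsilon,\alpha,X},\vec\Phi\rangle = \eta_\varepsilon*(\partial^\alpha(\Phi_j)_{Z_0,r,q'})(X)$ in Lemma~\ref{lem:E:Lu} replaces your un-mollified version. The proof then concludes via weak compactness of bounded sets in $L^2(\Gamma\times Q)$: the uniformly bounded family $\{\partial_Y^\beta\vec u_{\varepsilon,\alpha,X}\}_\varepsilon$ has a weak limit, which one identifies with the distributional mixed partial $\partial_X^\alpha\partial_Y^\beta\vec E^L_{X,j,Z_0,r,q}(Y)$ by integrating against test functions and passing the mollifier over. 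Both the mollification and this weak-limit identification step are absent from your proposal and are essential. Two smaller points: your case analysis ``the right-hand side either vanishes identically (when every normalization point $Z_0+rH_i$ also lies outside $2Q$)'' is not how the polynomial correction $P$ behaves — $P$ must be estimated regardless of where the points $Z_0+rH_i$ lie, as in the paper's estimate $|p_\gamma|\le CR^{m-d/p'}\|\vec\Phi\|_{Y^{m,p'}}$; and the continuity threshold for $\partial_X^\alpha\vec E^L_{X,j,Z_0,r,q}$ in the $X$-variable is governed by $q'$ (since $\vec E^{L^*}_{Y,k,Z_0,r,q'}\in Y^{m,q'}$), not by $p'$ or $2$ as you wrote.
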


The remainder of this subsection will be devoted to the proof of Theorem~\ref{thm:fundamental}. We remark that if $L$ is an operator of the form~\eqref{dfn:L} associated to coefficients $A$ that satisfy the Gårding inequality~\eqref{gi} and either the bound~\eqref{eqn:intro:bound} or~\eqref{eqn:intro:bound:Bochner}, then by Theorem~\ref{umpm} the condition~\eqref{eqn:Meyers:FS} is valid for $p\in \Upsilon_L\cap\Pi_L$ with $p\geq 2$. Thus the above theorem gives the bound~\eqref{eqn:E:Bochner} only for $p\geq 2$. 

Let $\alpha$ be a multiindex with $|\alpha|\leq m$. Let $1\leq j\leq N$.

Let $\eta$ be a nonnegative real-valued smooth cutoff function supported in $B(0,1)$ and integrating to~$1$ and define $\eta_\varepsilon(X')=\frac{1}{\varepsilon^d} \eta\bigl(\frac{1}{\varepsilon}X'\bigr)$ for $\varepsilon>0$. Define
\begin{equation}
\label{eqn:u:E:conv}
\vec u_{\varepsilon,\alpha,X}(Y)
= \int_{B(X,\varepsilon)}
\partial^\alpha \eta_\varepsilon(X-X') \,\vec E^L_{X',j,Z_0,r,q}(Y)\,dX'.
\end{equation}

By the weak definition of derivative and the symmetry relation~\eqref{eqn:E:sym},
\begin{equation}\label{eqn:u:dual}
(\vec u_{\varepsilon,\alpha,X}(Y))_k = \eta_\varepsilon*\overline{(\partial^\alpha \vec E^{L^*}_{Y,k,Z_0,r,{q'}})_j}( X).
\end{equation}
We now investigate $\vec u_{\varepsilon,\alpha,X}$.

\begin{lemma} With the above construction and under the conditions of Theorem~\ref{thm:derivs}, if $Q\subset\R^d$ is a cube, then $\vec u_{\varepsilon,\alpha,X} \in W^{m,{{p}}}(2Q)$,  and if $|\beta|\leq m$ then
\begin{equation}
\label{eqn:u:deriv}
\partial^\beta u_{\varepsilon,\alpha,X}(Y)
=\int \partial^\alpha \eta_\varepsilon(X-X') \,
\partial^\beta\vec E^L_{X',j,Z_0,r,q}(Y)\,dX'
.\end{equation}
\end{lemma}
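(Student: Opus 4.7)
The plan is to justify the formal differentiation under the integral sign in \eqref{eqn:u:E:conv} by combining the $L^{{p}}$ bounds of Theorem~\ref{thm:derivs}, applied uniformly in the parameter $X'$, with Minkowski's inequality and a Fubini argument against test functions.

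First I would fix the cube $Q$ and set $\widetilde R=|X-Z_0|+\varepsilon+\dist(Z_0,2Q)+\diam(2Q)+r$. For any $X'\in B(X,\varepsilon)$, the quantity $R(X')=\max(r,|X'-Z_0|,\dist(Z_0,2Q)+\diam 2Q)$ satisfies $r\leq R(X')\leq \widetilde R$. By Theorem~\ref{thm:derivs} applied to the cube $2Q$ (which is permissible since $L$, $q$, ${{p}}$ satisfy the hypotheses of that theorem by assumption), for every $\beta$ with $|\beta|\leq m$ we have
\begin{equation*}
\bigl\|\partial^\beta_Y \vec E^L_{X',j,Z_0,r,q}\bigr\|_{L^{{p}}(2Q)}
\leq C\,\widetilde R^{\,2m-d+d/{{p}}-|\beta|}(\widetilde R/r)^{\kappa}=:M_\beta,
\end{equation*}
uniformly in $X'\in B(X,\varepsilon)$.

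Next, define the candidate for $\partial^\beta \vec u_{\varepsilon,\alpha,X}$ by the right-hand side of \eqref{eqn:u:deriv}, namely
\begin{equation*}
w_\beta(Y):=\int_{B(X,\varepsilon)}\partial^\alpha\eta_\varepsilon(X-X')\,\partial^\beta_Y\vec E^L_{X',j,Z_0,r,q}(Y)\,dX'.
\end{equation*}
Minkowski's integral inequality together with the uniform bound above gives
\begin{equation*}
\|w_\beta\|_{L^{{p}}(2Q)}
\leq \int_{B(X,\varepsilon)}|\partial^\alpha\eta_\varepsilon(X-X')|\,\bigl\|\partial^\beta_Y\vec E^L_{X',j,Z_0,r,q}\bigr\|_{L^{{p}}(2Q)}\,dX'
\leq C_{\varepsilon,\alpha}\,M_\beta<\infty.
\end{equation*}
In particular each $w_\beta$ lies in $L^{{p}}(2Q)$, and the integrand is jointly integrable in $(X',Y)\in B(X,\varepsilon)\times 2Q$.

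Finally I would verify that $w_\beta$ is indeed the weak $\partial^\beta$ derivative of $\vec u_{\varepsilon,\alpha,X}$. For any test function $\vec\varphi\in C^\infty_c(2Q)$, Fubini's theorem (justified by the joint integrability just established, since $\vec E^L_{X',j,Z_0,r,q}\in Y^{m,{{p}}}(2Q)$ for each $X'$) permits the exchange
\begin{align*}
\int_{2Q}\partial^\beta\vec\varphi(Y)\cdot\vec u_{\varepsilon,\alpha,X}(Y)\,dY
&=\int_{B(X,\varepsilon)}\partial^\alpha\eta_\varepsilon(X-X')\int_{2Q}\partial^\beta\vec\varphi(Y)\cdot\vec E^L_{X',j,Z_0,r,q}(Y)\,dY\,dX'\\
&=(-1)^{|\beta|}\int_{B(X,\varepsilon)}\partial^\alpha\eta_\varepsilon(X-X')\int_{2Q}\vec\varphi(Y)\cdot\partial^\beta_Y\vec E^L_{X',j,Z_0,r,q}(Y)\,dY\,dX'\\
&=(-1)^{|\beta|}\int_{2Q}\vec\varphi(Y)\cdot w_\beta(Y)\,dY,
\end{align*}
where the integration-by-parts identity in the $Y$ variable uses that $\vec E^L_{X',j,Z_0,r,q}$ has $\partial^\beta_Y$ as a weak derivative in $L^{{p}}(2Q)$. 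Since $w_\beta\in L^{{p}}(2Q)$ with $|\beta|\leq m$, this shows $\vec u_{\varepsilon,\alpha,X}\in W^{m,{{p}}}(2Q)$ with $\partial^\beta\vec u_{\varepsilon,\alpha,X}=w_\beta$, which is precisely \eqref{eqn:u:deriv}.

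The main obstacle is the uniform-in-$X'$ control needed to apply Minkowski and Fubini; once the bound $M_\beta$ from Theorem~\ref{thm:derivs} is in hand this is essentially bookkeeping, so the lemma reduces to a careful application of the already-established pointwise-in-$X'$ $L^{{p}}$ estimates.
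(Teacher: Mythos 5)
Your argument is correct and follows essentially the same line as the paper's: justify the interchange of $\int \,dX'$ and differentiation against test functions via Fubini, using the uniform-in-$X'$ $L^{p}(2Q)$ bounds from Theorem~\ref{thm:derivs}. The one organizational difference is that you establish $w_\beta\in L^{p}(2Q)$ directly by Minkowski's integral inequality, whereas the paper deduces the same fact more indirectly by first bounding the functional $\vec\varphi\mapsto\int\partial^\beta\vec\varphi\cdot\vec u_{\varepsilon,\alpha,X}$ and then invoking the Hahn--Banach and Riesz representation theorems; your route is cleaner and avoids that detour.
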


\begin{proof}
If $0\leq|\beta|\leq m$ and $\vec\varphi\in C^\infty _0(B(Y_0,2\rho))$, then
\begin{align*}\int\partial^\beta\vec\varphi\cdot\vec u_{\varepsilon,\alpha,X}
&=
\int\partial^\beta\vec\varphi(Y)\cdot \int
\partial^\alpha \eta_\varepsilon(X-X') \, \vec E^L_{X',j,Z_0,r,q}(Y)\,dX' \,dY
.\end{align*}
By Theorem~\ref{thm:derivs}, $\vec E^L_{X',j,Z_0,r,q}$ and $\partial^\beta \vec E^L_{X',j,Z_0,r,q}$ are locally square integrable and thus locally integrable.
By Fubini's theorem and the definition of weak derivative,
\begin{align*}\int \partial^\beta\vec\varphi\cdot\vec u_{\varepsilon,\alpha,X}
&=
\int\partial^\alpha \eta_\varepsilon(X-X')\int \partial^\beta\vec\varphi(Y) \cdot
\vec E^L_{X',j,Z_0,r,q}(Y) \,dY\,dX'
\\&=
(-1)^{|\beta|}
\int\partial^\alpha \eta_\varepsilon(X-X') \int \vec\varphi(Y)\cdot
\partial^\beta\vec E^L_{X',j,Z_0,r,q}(Y) \,dY\,dX'
\\&=
(-1)^{|\beta|}
\int \vec\varphi(Y)\cdot
\int \partial^\alpha \eta_\varepsilon(X-X') \,
\partial^\beta\vec E^L_{X',j,Z_0,r,q}(Y)\,dX' \,dY
.\end{align*}
This is true for all test functions $\vec \varphi$, and so we have that formula~\eqref{eqn:u:deriv} is valid. Another application of Fubini's theorem yields that
\begin{align*}\int \partial^\beta\vec\varphi\cdot \vec u_{\varepsilon,\alpha,X}
&=
(-1)^{|\beta|}
\int \partial^\alpha \eta_\varepsilon(X-X')
\int
\vec\varphi(Y)\cdot\partial^\beta\vec E^L_{X',j,Z_0,r,q}(Y) \,dY\,dX'
.\end{align*}
By the bound~\eqref{eqn:E:2bound}, $\partial^\beta\vec E^L_{X',j,Z_0,r,q}\in L^{{p}}(2Q)$ and so there is some large constant $K_\varepsilon$ depending on~$\varepsilon$, $r$, $R$, and other parameters such that
\begin{align*}\biggl|\int \partial^\beta\vec\varphi\cdot\vec u_{\varepsilon,\alpha,X}
\biggr|
&\leq
K_\varepsilon\|\vec\varphi\|_{L^{{{p}}'}(2Q)}
.\end{align*}
So by the Hahn-Banach theorem and the Riesz representation theorem there is a function $\vec U\in L^{{p}}(2Q)$ with
\begin{equation*}\int_{B(Y_0,\rho)}(-1)^{|\beta|}\partial^\beta\vec\varphi(Y)\cdot\vec u_{\varepsilon,\alpha,X}(Y) \,dY
=\int_{B(Y_0,\rho)}\vec\varphi(Y)\cdot\vec U(Y) \,dY\end{equation*}
for all $\vec\varphi\in C^\infty _0(2Q)$.
By the weak definition of derivative,
$\vec U=\partial^\beta\vec u_{\varepsilon,\alpha,X}$,  so $\vec u_{\varepsilon,\alpha,X}\in W^{m,{{p}}}(2Q)$.
\end{proof}

We will need a better bound on $\|\partial^\beta\vec u_{\varepsilon,\alpha,X}\|_{L^2(Q)}$. We seek to apply the Caccioppoli (and Meyers) inequalities; we will need to compute $L\vec u_{\varepsilon,\alpha,X}$.

\begin{lemma}\label{lem:E:Lu}
With the above construction, if $L$ is of the form~\eqref{dfn:L} and $q\in\Pi_L$ with $1-m/d<1/q<m/d$ and with $L:Y^{m,q}(\R^d)\to Y^{-m,q}(\R^d)$ invertible, and if $Q\subset\R^d$ is a cube, then for all $\varepsilon\in (0,\frac{1}{2}|Q|^{1/d})$ and all ${{p}}$ with $1>1/{{p}}>\max(0,1-m/d)$, if $X\in 9Q$ and either $X\notin 3Q$ or $|\alpha|<m-d/{{p}}'$, then
\begin{align}
\label{eqn:E:Lu}
\|L\vec u_{\varepsilon,\alpha,X}\|_{Y^{-m,{{p}}}(2Q)}
&
\leq
CR^{m-d/{{p}}'-|\alpha|}\left(\frac{R}{r}\right)^\kappa
\end{align}
where $R=\max(r,|Q|^{1/d},\dist(Z_0,Q))$ and $C$ and $\kappa$ are constants depending on the standard parameters.
\end{lemma}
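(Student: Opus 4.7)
The natural starting point is to compute $\langle L\vec u_{\varepsilon,\alpha,X},\vec\Phi\rangle$ for a smooth test function $\vec\Phi \in C^\infty_c(2Q)$. First I would observe that $L\vec E^L_{X',j,Z_0,r,q} = T_{X',j,Z_0,r,q}$ as an element of $Y^{-m,q}(\RR^d)$: indeed, by Definition~\ref{dfn:E:}, $\vec E^L_{X',j,Z_0,r,q}$ differs from $L^{-1}T_{X',j,Z_0,r,q}$ by a polynomial of degree at most $m-d/q$, and since $q\in\Pi_L$ forces $\mathfrak{b}>m-d/q$, $L$ annihilates every such polynomial. An application of Fubini, justified by the local integrability bounds of Theorem~\ref{thm:derivs}, then yields
\begin{equation*}
\langle L\vec u_{\varepsilon,\alpha,X},\vec\Phi\rangle
=\int_{\RR^d} \partial^\alpha_X \eta_\varepsilon(X-X')\,(\Phi_j)_{Z_0,r,q'}(X')\,dX'.
\end{equation*}

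Next, I would integrate by parts $|\alpha|$ times. Using $\partial^\alpha_X\eta_\varepsilon(X-X')=(-1)^{|\alpha|}\partial^\alpha_{X'}\eta_\varepsilon(X-X')$ together with the smoothness of $(\Phi_j)_{Z_0,r,q'}$ (which differs from $\Phi_j$ by a polynomial), all derivatives fall onto the normalized test function:
\begin{equation*}
\langle L\vec u_{\varepsilon,\alpha,X},\vec\Phi\rangle
=\int_{\RR^d} \eta_\varepsilon(X-X')\,\partial^\alpha(\Phi_j)_{Z_0,r,q'}(X')\,dX'.
\end{equation*}
Since $\int \eta_\varepsilon = 1$, the task reduces to proving a pointwise bound on $\partial^\alpha(\Phi_j)_{Z_0,r,q'}$ that is uniform for $X'\in B(X,\varepsilon)$ and controlled by $\|\vec\Phi\|_{Y^{m,p'}(\RR^d)}=\|\vec\Phi\|_{Y^{m,p'}_0(2Q)}$.

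The hypothesis then splits into two cases. When $|\alpha|<m-d/p'$, Lemma~\ref{SFbound} applied with $U=\Phi_j$, $\mu=p'$, and normalization at $q'$ yields
\begin{equation*}
|\partial^\alpha(\Phi_j)_{Z_0,r,q'}(X')| \leq C R_{X'}^{m-d/p'-|\alpha|}(R_{X'}/r)^{\kappa}\|\vec\Phi\|_{Y^{m,p'}(\RR^d)},
\end{equation*}
where $R_{X'}=r+|X'-Z_0|$; the geometric assumptions $X\in 9Q$ and $\varepsilon<\tfrac12|Q|^{1/d}$ force $R_{X'}\leq CR$, so averaging against $\eta_\varepsilon$ closes the estimate. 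When $X\notin 3Q$, the ball $B(X,\varepsilon)$ is disjoint from $2Q$, so $\Phi_j\equiv 0$ there and $\partial^\alpha(\Phi_j)_{Z_0,r,q'}=-\partial^\alpha P_\Phi$ on $B(X,\varepsilon)$, where $P_\Phi$ is the interpolating polynomial of degree at most $\omega_{q'}-1$ associated with the normalization. If $|\alpha|\geq\omega_{q'}$ the integrand vanishes identically; otherwise the coefficient bound $|p_\gamma|\leq C R^{m-d/p'}\|\vec\Phi\|_{Y^{m,p'}(\RR^d)}$ extracted from the proof of Lemma~\ref{SFbound} delivers precisely the same pointwise control on $\partial^\alpha P_\Phi$ over $B(X,\varepsilon)$. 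Taking the supremum over $\vec\Phi$ with unit norm finishes the estimate on $\|L\vec u_{\varepsilon,\alpha,X}\|_{Y^{-m,p}(2Q)}$.

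The hard part will be the second case: when $|\alpha|\geq m-d/p'$ and $X\notin 3Q$, Lemma~\ref{SFbound}'s pointwise derivative bound is not directly applicable, so I must instead extract the required control from the explicit coefficient bounds inside its proof and combine these with the polynomial structure of $P_\Phi$. An auxiliary bookkeeping issue is the potential mismatch between the normalization exponent $q'$ (dictated by the definition of $T_{X',j,Z_0,r,q}$) and the measuring exponent $p'$; because $\vec\Phi$ is smooth, this is navigable by interpreting $(\Phi_j)_{Z_0,r,q'}$ as a normalization of $\Phi_j\in Y^{m,p'}(\RR^d)$, and the hypothesis $1-m/d<1/p<m/d$ keeps $p'$ within the range where Lemma~\ref{SFbound} can be invoked.
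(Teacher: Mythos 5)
Your overall strategy matches the paper's: identify $L\vec E^L_{X',j,Z_0,r,q}=T_{X',j,Z_0,r,q}$, apply Fubini to obtain $\langle L\vec u_{\varepsilon,\alpha,X},\vec\Phi\rangle=\eta_\varepsilon*\bigl(\partial^\alpha(\Phi_j)_{Z_0,r,q'}\bigr)(X)$, then reduce to a pointwise bound on $\partial^\alpha(\Phi_j)_{Z_0,r,q'}$, splitting into the two cases of the hypothesis. (The paper reaches the same identity through formula~\eqref{eqn:E:Linv}; your observation that $L$ annihilates the normalizing polynomial, because $\mathfrak{b}>m-d/q$, is an equally valid route.)

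There is, however, a gap in how you invoke Lemma~\ref{SFbound}. You propose to apply it directly to $(\Phi_j)_{Z_0,r,q'}$ with measuring exponent $\mu=p'$ and normalization exponent $q'$, but that lemma requires $\mu\leq q$, i.e.\ $p'\leq q'$, equivalently $q\leq p$ --- a relation that is \emph{not} imposed by the hypotheses ($p$ and $q$ vary independently in their respective intervals, and $p<q$ is perfectly admissible, e.g.\ in the application in Lemma~\ref{lem:u:Bochner}). You flag this mismatch at the end and assert it is ``navigable because $\vec\Phi$ is smooth,'' but smoothness alone is not the resolution and you do not say what is. The paper's fix is the observation that since $\vec\Phi\in C^\infty_0(2Q)$ vanishes outside $B(Z_0,(1+2\sqrt d)R)$ while the interpolation points $Z_0+CR\,H_i$ lie outside $B(Z_0,CR/2)$, one has $\vec\Phi=\vec\Phi_{Z_0,CR,q'}=\vec\Phi_{Z_0,CR,p'}$ identically. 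This lets one apply Lemma~\ref{SFbound} to $\Phi_j$ itself with $\mu=q_{\mathrm{SF}}=p'$ (so $\mu\leq q_{\mathrm{SF}}$ holds trivially) and radius $CR$ in place of $r$, producing both the pointwise bound $|\Phi_j(Z_0+rH_i)|\leq CR^{m-d/p'}\|\vec\Phi\|_{Y^{m,p'}(\R^d)}$ --- which is what you need to control the interpolating-polynomial coefficients $|p_\gamma|$ in the $X\notin 3Q$ case --- and the pointwise bound on $\partial^\alpha\Phi_j$ in the $|\alpha|<m-d/p'$ case. Your phrase ``extracted from the proof of Lemma~\ref{SFbound}'' papers over exactly this step; the coefficient bound does not fall out of that proof for free when $p'>q'$, but it does once you make the $\vec\Phi=\vec\Phi_{Z_0,CR,p'}$ identification and rerun the lemma at scale $CR$. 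With that ingredient supplied, the rest of your argument (the sign of the polynomial, the vanishing of $\eta_\varepsilon*\partial^\alpha\Phi_j$ off $3Q$ when $\varepsilon<\tfrac12|Q|^{1/d}$, and the case $|\alpha|\geq\omega_{q'}$) is sound.
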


\begin{proof} Let $\vec\Phi\in C^\infty_0(2Q)$.
By the bound~\eqref{eqn:E:Y:norm} and the definition of $\Pi_L$, $\langle L\vec E^L_{X',j,Z_0,r,q},\vec\Phi\rangle$ denotes an absolutely convergent integral whenever $\vec\Phi\in Y^{m,q'}(\R^d)$, and furthermore, the integrand has uniform $L^1$ norm. Thus we may apply Fubini's theorem to the integral
\begin{equation*}\int \partial^\alpha\eta_\varepsilon(X-X')
\overline{\langle L\vec E^L_{X',j,Z_0,r,q},\vec\Phi\rangle}
\,dX'
\end{equation*}
and compute that
\begin{equation*}\overline{\langle L\vec u_{\varepsilon,\alpha,X},\vec\Phi\rangle}
=\int \partial^\alpha\eta_\varepsilon(X-X')
\overline{\langle L\vec E^L_{X',j,Z_0,r,q},\vec\Phi\rangle}
\,dX'.\end{equation*}
By formula~\eqref{eqn:E:Linv},
\begin{equation*}\overline{\langle L\vec E^L_{X',j,Z_0,r,q},\vec\Phi\rangle}
=\langle {L^*}\vec\Phi,\vec E^L_{X',j,Z_0,r,q}\rangle
=\overline{((((L^*)^{-1}){L^*}\vec\Phi)_j)_{Z_0,r,{q'}}(X)}
=\overline{(\Phi_j)_{Z_0,r,{q'}}(X')}.\end{equation*}
Thus,
\begin{equation*}\langle L\vec u_{\varepsilon,\alpha,X},\vec\Phi\rangle
=\eta_\varepsilon*(\partial^\alpha(\Phi_j)_{Z_0,r,{q'}})(X).
\end{equation*}
Recall that $(\Phi_j)_{Z_0,r,{q'}}=\Phi_j+P$ for some polynomial $P$ of degree at most $m-d/{q'}$ satisfying $P(Z_0+rH_i)=-\Phi_j(Z_0+rH_i)$.
As in the proof of Lemma~\ref{SFbound}, if $P(X)=\sum_{|\gamma|\leq m-d/{q'}} p_\gamma \left(\frac{X-Z_0}{r}\right)^\gamma$, then
\begin{equation*}|p_\gamma|\leq h\sup_i \Bigl|\sum_{|\gamma|\leq m-d/{q'}} p_\gamma H_i^\gamma\Bigr|
=h\sup_i |P(Z_0+rH_i)|
=h\sup_i |\Phi_j(Z_0+rH_i)|.\end{equation*}
Because $\vec\Phi\in C^\infty _0(2Q)$, we have that $\vec \Phi=0$ outside of $B(Z_0,(1+2\sqrt{d})R)$. Thus, $\vec\Phi=\vec\Phi_{Z_0,CR,{q'}}=\vec\Phi_{Z_0,CR,{{p}}'}$ because $|H_i|>1/2$ for all~$i$. Thus
\begin{equation*}|p_\gamma|\leq h\sup_i |\vec\Phi_{Z_0,CR,{{p}}'}(Z_0+rH_i)|\end{equation*}
and by Lemma~\ref{SFbound}, since ${{p}}'>d/m$,
\begin{equation*}|p_\gamma|\leq CR^{m-d/{{p}}'}\|\vec\Phi\|_{Y^{m,{{p}}'}(\RR^d)}.\end{equation*}
Thus
\begin{align*}|\langle L\vec u_{\varepsilon,\alpha,X},\vec\Phi\rangle|
&=
|\eta_\varepsilon*(\partial^\alpha P)(X)+\eta_\varepsilon*(\partial^\alpha \Phi_j)(X)|
\\&\leq  |\eta_\varepsilon*(\partial^\alpha \Phi_j)(X)|
+
CR^{m-d/{{p}}'-|\alpha|}\left(\frac{R}{r}\right)^\kappa \|\vec\Phi\|_{Y^{m,{{p}}'}(\RR^d)}
.\end{align*}
If $X\notin {3Q}$ and $0<\varepsilon<\frac{1}{2}|Q|^{1/d}=\dist(2Q,\RR^d\setminus 3Q)$, then $\eta_\varepsilon*(\partial^\alpha \Phi_j)(X)=0$. If $|\alpha|<m-d/{{p}}'$, then again by Lemma~\ref{SFbound} applied to $\Phi_j=(\Phi_j)_{Z_0,CR,{{p}}'}$, if $0<\varepsilon<R$ then
\begin{align*}|\langle L\vec u_{\varepsilon,\alpha,X},\vec\Phi\rangle|
&
\leq
CR^{m-d/{{p}}'-|\alpha|}\left(\frac{R}{r}\right)^\kappa \|\vec\Phi\|_{Y^{m,{{p}}'}(\RR^d)}
.\end{align*}
This completes the proof.
\end{proof}

We have established that $\vec u_{\varepsilon,\alpha,X}\in W^{m,{{p}}}(2Q)$ and have a bound on~$L\vec u_{\varepsilon,\alpha,X}$. We will now bound the derivatives of $\vec u_{\varepsilon,\alpha,X}$.

\begin{lemma}\label{lem:u:Bochner}
Let $L$, $q$, and ${{p}}$ satisfy the conditions of Theorem~\ref{thm:derivs}. Suppose in addition that the conclusion~\eqref{eqn:E:Lu} of Lemma~\ref{lem:E:Lu} is valid (under the given conditions on $\varepsilon$, $X$ and~$\alpha$). Let $Q\subset\R^d$ be a cube.
Suppose further that the Caccioppoli-Meyers estimate \eqref{eqn:Meyers:FS}
is valid in~$Q$ for all $\vec u\in Y^{m,{{p}}}(2Q)$. Let $\Gamma\subset 8Q$ be a cube with $|\Gamma|=|Q|$.

Then for all $\varepsilon\in (0,\frac{1}{2}|Q|^{1/d})$, if either $\Gamma\subset 8Q\setminus 4Q$ or  $|\alpha|<m-d/{{p}}'$, then
\begin{align*}\int_{\Gamma}
\|\vec u_{\varepsilon,\alpha,X}\|_{Y^{m,{{p}}}(Q)}^2\,dX
&\leq
C|Q|^{2/{{p}}-1-2m/d}
R^{4m-d-2|\alpha|} \biggl(\frac{R}{r}\biggr)^\kappa
\end{align*}
where $R=\max(r,|Q|^{1/d},\dist(Z_0,Q))$ and $C$ and $\kappa$ are constants depending on the standard parameters.

In particular, if ${{p}}=2$ then for all $\beta$ with $|\beta|\leq m$ we have that
\begin{align*}
\int_{\Gamma}
\int_{Q} |\partial^\beta\vec u_{\varepsilon,\alpha,X}(Y)|^{2}\,dY\,dX
&\leq
C|Q|^{-2|\beta|/d}
R^{4m-d-2|\alpha|} \biggl(\frac{R}{r}\biggr)^\kappa
.\end{align*}
\end{lemma}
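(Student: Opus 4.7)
The strategy is to apply the Caccioppoli-Meyers inequality~\eqref{eqn:Meyers:FS} pointwise in $X\in\Gamma$ to $\vec u_{\varepsilon,\alpha,X}$, square the resulting bound, integrate over $X\in\Gamma$, and control the two terms separately using Lemma~\ref{lem:E:Lu} and a Fubini argument that invokes Theorem~\ref{thm:derivs} on the adjoint fundamental solution. As a preparatory step, Lemma~\ref{evlem} together with H\"older's inequality gives $|Q|^{m/d}\|\vec v\|_{Y^{m,p}(Q)}\leq C\sum_{j=0}^m|Q|^{j/d}\|\nabla^j\vec v\|_{L^p(Q)}$, so the hypothesis~\eqref{eqn:Meyers:FS} may be rewritten as
$$\|\vec v\|_{Y^{m,p}(Q)}\leq C|Q|^{1/p-1/2-m/d}\|\vec v\|_{L^2(2Q)}+C\|L\vec v\|_{Y^{-m,p}(2Q)}.$$
Specializing to $\vec v=\vec u_{\varepsilon,\alpha,X}$, squaring via $(a+b)^2\leq 2a^2+2b^2$, and integrating in $X\in\Gamma$ reduces the claim to bounding $\int_\Gamma\|\vec u_{\varepsilon,\alpha,X}\|_{L^2(2Q)}^2\,dX$ and $\int_\Gamma\|L\vec u_{\varepsilon,\alpha,X}\|_{Y^{-m,p}(2Q)}^2\,dX$.

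The operator-norm integral is immediate from Lemma~\ref{lem:E:Lu}, whose bound is uniform in $X\in\Gamma$: squaring and multiplying by $|\Gamma|=|Q|$ yields $C|Q|R^{2m-2d+2d/p-2|\alpha|}(R/r)^\kappa$, which, using $R\geq|Q|^{1/d}$, fits inside the target bound. The $L^2$-integral is the heart of the argument. By Fubini and the convolution representation~\eqref{eqn:u:dual},
$$\int_\Gamma\int_{2Q}|\vec u_{\varepsilon,\alpha,X}(Y)|^2\,dY\,dX=\int_{2Q}\int_\Gamma\bigl|\eta_\varepsilon*\overline{\partial^\alpha(\vec E^{L^*}_{Y,\cdot,Z_0,r,q'})_j}(X)\bigr|^2\,dX\,dY.$$
Since $\|\eta_\varepsilon\|_{L^1}=1$ and $\varepsilon<\tfrac{1}{2}|Q|^{1/d}$, a localized Young inequality---applied after truncating the input to $\Gamma+B(0,\varepsilon)\subset CQ$---bounds the inner integral over $X\in\Gamma$ by $\int_{\Gamma+B(0,\varepsilon)}|\partial^\alpha\vec E^{L^*}_{Y,k,Z_0,r,q'}(X)|^2\,dX$, and Theorem~\ref{thm:derivs} applied to $\vec E^{L^*}_{Y,k,Z_0,r,q'}$ (via Remark~\ref{rmk:L:sym}) with exponent~$2$ dominates this by $CR^{4m-d-2|\alpha|}(R/r)^\kappa$, since $|Y-Z_0|\leq CR$ and $\dist(Z_0,\Gamma+B(0,\varepsilon))+\diam(\Gamma+B(0,\varepsilon))\leq CR$. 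Integrating over $Y\in 2Q$ yields $\int_\Gamma\|\vec u_{\varepsilon,\alpha,X}\|_{L^2(2Q)}^2\,dX\leq C|Q|R^{4m-d-2|\alpha|}(R/r)^\kappa$.

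Combining the two contributions and absorbing remaining factors using $R\geq|Q|^{1/d}$ (at the cost of enlarging~$\kappa$) produces the stated bound. For the ``in particular'' statement with $p=2$, I would use H\"older on $Q$ with exponents $2$ and~$2_\beta$: since $1/2-1/2_\beta=(m-|\beta|)/d$ for multiindices $\beta$ with $m-d/2<|\beta|\leq m$, the inequality $\|\partial^\beta\vec v\|_{L^2(Q)}\leq|Q|^{(m-|\beta|)/d}\|\vec v\|_{Y^{m,2}(Q)}$ holds, and squaring and inserting the main estimate for $p=2$ yields $C|Q|^{-2|\beta|/d}R^{4m-d-2|\alpha|}(R/r)^\kappa$; the low-order range $|\beta|\leq m-d/2$, when it occurs, is handled directly via the pointwise bounds of Lemma~\ref{SFbound} applied to~$\vec u_{\varepsilon,\alpha,X}$. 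The principal technical difficulty is the bookkeeping of $|Q|$-powers throughout: the uniform-in-$X$ Lemma~\ref{lem:E:Lu} bound and the $L^2$-in-$X$ Fubini-Young estimate must be combined with the CM coefficient $|Q|^{2/p-1-2m/d}$ so that the stated scaling is recovered modulo admissible positive powers of $R/\min(r,|Q|^{1/d})$.
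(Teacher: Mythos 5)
Your proposal follows the paper's proof essentially verbatim: apply the Caccioppoli--Meyers inequality to $\vec u_{\varepsilon,\alpha,X}$ (after rewriting via Lemma~\ref{evlem}), bound the operator-norm term by Lemma~\ref{lem:E:Lu}, and bound the $L^2$-in-$X$ term by Fubini, $L^2$-boundedness of convolution with $\eta_\varepsilon$, and the adjoint bound of Theorem~\ref{thm:derivs} via Remark~\ref{rmk:L:sym}. The only small divergence is that you spell out the ``in particular'' step for $p=2$ (H\"older for $|\beta|>m-d/2$, Lemma~\ref{SFbound} on the normalized $\vec u_{\varepsilon,\alpha,X}$ for low-order $\beta$), which the paper leaves implicit; that added detail is correct and does not change the argument.
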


\begin{proof}
Applying the bounds \eqref{eqn:E:Lu} and~\eqref{eqn:Meyers:FS} and Lemma~\ref{evlem} to $\vec u=\vec u_{\varepsilon,\alpha,X}$ yields
\begin{equation*}
\|\vec u_{\varepsilon,\alpha,X}\|_{Y^{m,{{p}}}(Q)}
\leq
C|Q|^{1/{{p}}-1/2-m/d}
\biggl(\int_{2Q} |\vec u_{\varepsilon,\alpha,X}|^2\biggr)^{1/2}
+CR^{m-d/{{p}}'-|\alpha|}\biggl(\frac{R}{r}\biggr)^\kappa
.\end{equation*}
By formula~\eqref{eqn:u:dual}, the $L^2$ boundedness of convolution, and the bound~\eqref{eqn:E:2bound}, if $\varepsilon$ is small enough, $\Gamma\subset 8Q$ and $Y\in 2Q$, then
\begin{align*}\int_{\Gamma} |\vec u_{\varepsilon,\alpha,X}(Y)|^2\,dX
&\leq \sup_{1\leq k\leq N} \int_{2\Gamma} |\partial^\alpha \vec E^{L^*}_{Y,k,Z_0,r}( X)|^2\,dX
\leq
C\biggl(\frac{R}{r}\biggr)^\kappa R^{4m-d-2|\alpha|}.
\end{align*}
Combining the above bounds completes the proof.
\end{proof}

We now prove Theorem~\ref{thm:fundamental}. The assumptions of Theorem~\ref{thm:fundamental} include the assumptions of Theorem~\ref{thm:derivs}, Lemma~\ref{lem:E:Lu} and Lemma~\ref{lem:u:Bochner} with ${{p}}=2$; we will use only
the conclusions of Lemma~\ref{lem:u:Bochner} and the definitions \eqref{eqn:u:E:conv} and~\eqref{eqn:u:dual} of $\vec u_{\varepsilon,\alpha,X}$.

The Lebesgue space $L^2(\Gamma\times Q)$ is weakly sequentially compact.
Thus, because $\{\vec u_{\varepsilon,\alpha,X}\}_{0<\varepsilon<\frac{1}{2}|Q|^{1/d}}$ is a bounded set in $L^2(\Gamma\times Q)$, if $0\leq |\beta|\leq m$, there is a function $\vec E_{\alpha,\beta,j}$ with
\begin{align*}
\int_\Gamma\int_Q |\vec E_{\alpha,\beta,j}(X,Y)|^{2}\,dY\,dX
\leq CR^{4m-2|\alpha|-2|\beta|}\biggl(\frac{R}{\min(r,|Q|^{1/d})}\biggr)^\kappa
\end{align*}
and a sequence of positive numbers $\varepsilon_i$ with $\varepsilon_i\to 0$ and such that, for all  $\vec\varphi\in L^2(\Gamma\times Q)$, we have that
\begin{align*}
\int_\Gamma \int_Q \vec\varphi(X,Y)\cdot\vec E_{\alpha,\beta,j}(X,Y)
\,dY\,dX
&=
\lim_{i\to\infty }\int_\Gamma \int_Q \vec\varphi(X,Y)\cdot
\partial^\beta\vec u_{\varepsilon_i,\alpha,X}(Y)
\,dY\,dX
.\end{align*}
Integrating by parts and applying formula~\eqref{eqn:u:dual}, we see that if $\vec\varphi$ is smooth and compactly supported then
\begin{multline*}
\int_\Gamma \int_Q  \varphi_k(X,Y)\,(\vec E_{\alpha,\beta,j}(X,Y))_k
\,dY\,dX
\\\begin{aligned}&=
(-1)^{|\beta|}
\lim_{i\to\infty }\int_\Gamma \int_Q  \partial_Y^\beta\varphi_k(X,Y)\,
(\vec u_{\varepsilon_i,\alpha,X}(Y))_k
\,dY\,dX
\\&=
(-1)^{|\beta|}
\lim_{i\to\infty }\int_\Gamma \int_Q  \partial_Y^\beta\varphi_k(X,Y)\,
\eta_{\varepsilon_i}*\overline{\partial^\alpha (\vec E^{L^*}_{Y,k,Z_0,r,q'})_j}( X)\,dY\,dX
.\end{aligned}\end{multline*}
Using properties of convolutions, we see that
\begin{multline*}
\int_\Gamma \int_Q  \varphi_k(X,Y)\,(\vec E_{\alpha,\beta,j}(X,Y))_k
\,dY\,dX
\\=
(-1)^{|\beta|+|\alpha|}
\lim_{i\to\infty }\int_\Gamma \int_Q  \eta_{\varepsilon_i}*_X\partial_X^\alpha \partial_Y^\beta\varphi_k(X,Y)\,
\overline{(\vec E^{L^*}_{Y,k,Z_0,r,q'})_j}( X)\,dY\,dX
\end{multline*}
where $*_X$ denotes convolution in the $X$ variable only. By the dominated convergence theorem,
\begin{multline*}
\int_\Gamma \int_Q  \varphi_k(X,Y)\,(\vec E_{\alpha,\beta,j}(X,Y))_k
\,dY\,dX
\\=
(-1)^{|\beta|+|\alpha|}
\int_\Gamma \int_Q  \partial_X^\alpha \partial_Y^\beta\varphi_k(X,Y)\,
\overline{(\vec E^{L^*}_{Y,k,Z_0,r,q'})_j( X)}\,dY\,dX
\end{multline*}
and so $(\vec E_{\alpha,\beta,j}(X,Y))_k = \partial_X^\alpha\partial_Y^\beta \overline{(\vec E^{L^*}_{Y,k,Z_0,r,q'})_j( X)} = \partial_X^\alpha\partial_Y^\beta\overline{(\vec E^L_{X,j,Z_0,r,q})_k( Y)}$ in the weak sense. Furthermore, we may derive bounds on $\vec E_{\alpha,\beta,j}(X,Y)$ from our bounds on $\vec u_{\varepsilon_i,\alpha,X}$. Thus, by Lemma~\ref{lem:u:Bochner}, we have the bound~\eqref{eqn:E:gradbd} and the bound~\eqref{eqn:E:gradbd:lower} in the case $|\alpha|<m-d/2$.

Suppose $|\beta|<m-d/2$ and the Caccioppoli inequality (\eqref{eqn:Meyers:FS} with~${{p}}=2$) holds for~$L^*$. By Remark~\ref{rmk:L:sym}, we may thus apply the above results to~$\vec E^{L^*}$. By the  bound~\eqref{eqn:E:gradbd:lower} for~$\vec E^{L^*}$, if $|\beta|<m-d/2$ then
\begin{equation*}\int_{Q}\int_{Q}|\partial^\alpha_Y \partial^\beta_X \vec E^{L^*}_{Y,k,Z_0,r,q'}(X)|^2 dX\,dY
\leq C\biggl(\frac{R}{\min(|Q|^{1/d},r)}\biggr)^\kappa \,
R^{4m-2|\alpha|-2|\beta|}.
\end{equation*}
Applying formula~\eqref{eqn:E:sym} yields the bound~\eqref{eqn:E:gradbd:lower} in the case $|\beta|<m-d/2$.

The space $L^2(\Gamma;L^{{{p}}_\beta}(Q))$ is a Bochner space, and so is a reflexive Banach space with dual $L^2(\Gamma;L^{({{p}}_\beta)'}(Q))$. By Lemma~\ref{lem:u:Bochner}, we have that if $\vec\varphi\in L^2(\Gamma,Q)$ then
\begin{multline*}\biggl|\int_\Gamma\int_Q \vec\varphi(X,Y)\cdot
\partial_X^\alpha\partial_Y^\beta{\vec E^L_{X,j,Z_0,r,q}( Y)}
\,dY\,dX\biggr|
\\\begin{aligned}
&=\lim_{i\to\infty }
\biggl|\int_\Gamma\int_Q \vec\varphi(X,Y)\cdot
\partial_Y^\beta\overline{\vec u_{\varepsilon_i,\alpha,X}( Y)}
\,dY\,dX\biggr|
\\&\leq
\biggl(\int_\Gamma
\biggl(\int_Q|\vec\varphi(X,Y)|^{({{p}}_\beta)'}dY\biggr)^{2/({{p}}_\beta)'}\,dX\biggr)^{1/2}
CR^{\theta} \biggl(\frac{R}{\min(r,|Q|^{1/d})}\biggr)^\kappa
\end{aligned}
\end{multline*}
where $\theta=m-d/2+d/{{p}}-|\alpha|$.
The space $L^2(\Gamma\times Q)$ is dense in $L^2(\Gamma;L^{({{p}}_\beta)'}(Q))$. Thus, this bound is valid for all $\vec\varphi\in L^2(\Gamma;L^{({{p}}_\beta)'}(Q))$, and so $\partial_X^\alpha\partial_Y^\beta{\vec E^L_{X,j,Z_0,r,q}( Y)}\in L^2(\Gamma;L^{{{p}}_\beta}(Q))$ and satisfies the bound~\eqref{eqn:E:Bochner}.

\subsection{Extraneous parameters}
\label{sec:FS:indep}

The fundamental solution $E^L_{X,j,Z_0,r,q}(Y)$ of Definition~\ref{dfn:E:} depends on the parameters $Z_0$, $r$, and~$q$ in a somewhat artificial way: they are used only to normalize $T_{X,j,Z_0,r,q}$ and $E^L_{X,j,Z_0,r,q}$. We would like (to the extent possible) to remove the dependencies on $Z_0$, $r$, and~$q$. The following lemma will allow us to remove (or at least reduce) these dependencies. 

\begin{lemma}\label{lem:indep}
Let $q_1$, $q_2\in (1,\infty)$.
Let $L$ satisfy the conditions of Definition~\ref{dfn:E:} for both $q=q_1$ and~$q=q_2$. Suppose that $L$ is compatible in the sense that if $S\in Y^{-m,q_1}(\R^d)\cap Y^{-m,q_2}(\R^d) $, then $L^{-1}S\in Y^{m,q_1}(\R^d)\cap Y^{m,q_2}(\R^d)$.

Suppose that $\alpha$ and $\beta$ are multiindices such that
\begin{align*}
\max(m-d/{q_1'},m-d/{q_2'})<|\alpha|&\leq m
,\\
\max(m-d/{q_1},m-d/{q_2})<|\beta|&\leq m.
\end{align*}
Let $1\leq j\leq N$, $r_1>0$, $r_2>0$, $Z_1\in\RR^d$, and $Z_2\in\RR^d$. Suppose that, for $i\in\{1,2\}$, the mixed derivate $\partial^\alpha_X \partial^\beta_Y \vec E^{L}_{X,j,Z_i,r_i,{q_i}}(Y)$ exists almost everywhere and is locally integrable on $\R^d\times\R^d\setminus\{(X,X):X\in\R^d\}$.

Then we have that
\begin{align}
\label{eqn:E:indep}
\partial^\alpha_X \partial^\beta_Y \vec E^{L}_{X,j,Z_1,r_1,{q_1}}(Y)
&= \partial^\alpha_X \partial^\beta_Y \vec E^{L}_{X,j,Z_2,r_2,{q_2}}(Y)
\end{align}
for almost every $(X,Y)\in \RR^d\times\RR^d$.
\end{lemma}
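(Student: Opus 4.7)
The plan is to test the difference of the two mixed partial derivatives against tensor-product test functions $\varphi(X)\vec\Psi(Y)$ with disjoint supports, transfer all derivatives onto the test functions, apply formula~\eqref{eqn:E:Linv} to express the inner integral (over $Y$) as the normalized value of $(L^*)^{-1}S_\Psi$ at $X$, and exploit the fact that the discrepancy between the two normalized representatives is a polynomial in $X$ whose degree is strictly less than $|\alpha|$.

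Concretely, I would pick $\varphi\in C_c^\infty(\R^d)$ and $\vec\Psi\in C_c^\infty(\R^d;\CC^N)$ with $\supp\varphi\cap\supp\vec\Psi=\emptyset$, and for $i\in\{1,2\}$ define
\begin{equation*}
I_i := \int_{\R^d}\int_{\R^d}\partial_X^\alpha\partial_Y^\beta\vec E^L_{X,j,Z_i,r_i,q_i}(Y)\cdot\overline{\varphi(X)\vec\Psi(Y)}\,dY\,dX.
\end{equation*}
The assumed local integrability of the mixed derivative off the diagonal, combined with Fubini's theorem on the disjoint-support region (whose applicability is secured by the pointwise bounds of Lemma~\ref{SFbound}), lets me integrate by parts in the distributional sense so that
\begin{equation*}
I_i = (-1)^{|\alpha|+|\beta|}\int_{\R^d}\overline{\partial^\alpha\varphi(X)}\biggl[\int_{\R^d}\vec E^L_{X,j,Z_i,r_i,q_i}(Y)\cdot\overline{\partial^\beta\vec\Psi(Y)}\,dY\biggr]dX.
\end{equation*}
Next I would introduce the functional $S_\Psi$ defined by $\langle S_\Psi,\vec\Phi\rangle := (-1)^{|\beta|}\int\vec\Phi(Y)\cdot\overline{\partial^\beta\vec\Psi(Y)}\,dY$ (adjusting signs to fit the sesquilinearity convention of~\eqref{eqn:antidual}); since $|\beta|>\max(m-d/q_1,m-d/q_2)$, integration by parts shows that $S_\Psi$ annihilates every polynomial of degree at most $\max(m-d/q_1,m-d/q_2)$, so $S_\Psi\in Y^{-m,q_1'}\cap Y^{-m,q_2'}$. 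The hypothesized compatibility of $L^{-1}$ between $Y^{m,q_1}$ and $Y^{m,q_2}$ transfers via the underlying bilinear form (or a short duality argument) to compatibility of $(L^*)^{-1}$ between $Y^{m,q_1'}$ and $Y^{m,q_2'}$, so $(L^*)^{-1}S_\Psi$ is a well-defined element of that intersection, and formula~\eqref{eqn:E:Linv} identifies the bracketed inner integral with $\overline{(((L^*)^{-1}S_\Psi)_j)_{Z_i,r_i,q_i'}(X)}$.

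Since $((L^*)^{-1}S_\Psi)_j$ represents a single element of $Y^{m,q_1'}\cap Y^{m,q_2'}$, its two normalizations $(\cdot)_{Z_1,r_1,q_1'}$ and $(\cdot)_{Z_2,r_2,q_2'}$ differ by a polynomial $P_\Psi(X)$ of degree at most $\max(m-d/q_1',m-d/q_2')$, which is strictly less than $|\alpha|$ by hypothesis. Subtracting and integrating by parts in~$X$ gives
\begin{equation*}
I_1-I_2 = \pm\int_{\R^d}\overline{\partial^\alpha\varphi(X)}\,\overline{P_\Psi(X)}\,dX = \pm\int_{\R^d}\overline{\varphi(X)}\,\overline{\partial^\alpha P_\Psi(X)}\,dX = 0,
\end{equation*}
because $\partial^\alpha$ annihilates any polynomial of degree strictly less than~$|\alpha|$. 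As tensor products $\varphi\otimes\vec\Psi$ with disjoint supports are dense in $C_c^\infty(\R^d\times\R^d\setminus\{(X,X)\})$, this yields the claimed almost-everywhere equality on the complement of the diagonal, and hence on all of $\R^d\times\R^d$ since the diagonal has measure zero.

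The main obstacle I anticipate is the dual-compatibility step, propagating the hypothesized $L^{-1}$-compatibility to $(L^*)^{-1}$-compatibility on the conjugate exponents, together with carefully tracking the various conjugations and signs demanded by the sesquilinear pairing conventions of~\eqref{dfn:L} and~\eqref{eqn:antidual}; the degree-counting argument itself is straightforward once the functional-analytic setup is in place, and the role of the hypothesis $|\beta|>\max(m-d/q_1,m-d/q_2)$ is exactly to ensure that $S_\Psi$ lands in the intersection of the two dual spaces so that $(L^*)^{-1}S_\Psi$ is unambiguously defined.
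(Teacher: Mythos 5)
Your proposal is correct and follows essentially the same route as the paper's proof: both test against tensor-product test functions with disjoint supports, construct the functional ($T$ in the paper, $S_\Psi$ for you) in $Y^{-m,q_1'}\cap Y^{-m,q_2'}$ from the $\beta$-derivative of the $Y$-side test function, invoke formula~\eqref{eqn:E:Linv} and duality to identify the result with a normalization of $((L^*)^{-1}S_\Psi)_j$, and use $|\alpha|>\max(m-d/q_1',m-d/q_2')$ to show $\partial^\alpha$ is normalization-independent. The only cosmetic differences are that the paper fixes a scalar $\eta$ and a component index $k$ rather than a vector test function, and tests directly against $\partial^\alpha\varphi\cdot\partial^\beta\eta$ rather than integrating by parts from $\varphi\cdot\vec\Psi$; your explicit bookkeeping of the polynomial discrepancy $P_\Psi$ makes the paper's terse remark that ``$\partial^\alpha((L^*)^{-1}T)_j$ does not depend on $Z_i$, $r_i$, or $q_i$'' more transparent.
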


As noted after Theorem~\ref{thm:derivs}, if $\Upsilon_L$ is as in Definition~\ref{dfn:compatible} and $q_1$, $q_2\in\Upsilon_L$, then $L$, $q_1$, and $q_2$ satisfy the conditions of the lemma.

Under the conditions of Theorem~\ref{thm:fundamental}, existence and local integrability of the mixed partial derivative is valid. Furthermore, under these conditions we may combine formulas~\eqref{eqn:E:indep} and~\eqref{eqn:E:gradbd} to see that if $\alpha$ and $\beta$ are multiindices with $m-d/{q'}<|\alpha|\leq m$ and $m-d/q<|\beta|\leq m$, then by choosing $Z_0$ and $r$ appropriately, we have that if $\rho=|X_0-Y_0|/8$, then
\begin{equation}\label{Egradrho}
\int_{B(X_0,\rho)}\int_{B(Y_0,\rho)}|\partial^\alpha_X \partial^\beta_Y \vec E^{L}_{X,j,Z_0,r,q}(Y)|^2dY\,dX\leq C\rho^{4m-2|\alpha|-2|\beta|}
.\end{equation}

\begin{proof}[Proof of Lemma~\ref{lem:indep}]
Fix some such $j$, $\alpha$, and~$\beta$.

Let $\eta$ and $\varphi$ be smooth functions with disjoint compact support. Let $T$ be given by
\begin{equation*}\langle T,\vec\Phi\rangle = \int_{\R^d} \Phi_k(Y) \,\partial^\beta \eta(Y)\,dY = (-1)^{|\beta|}\int_{\R^d} \partial^\beta \Phi_k(Y) \,\eta(Y)\,dY.\end{equation*}
Because $|\beta|>m-d/q_i$, we have that if $\vec\Phi\in Y^{m,q_i}(\R^d)$ then $\partial^\beta \Phi_k$ is well defined as a $L^{(q_i)_\beta}(\RR^d)$-function (that is, up to sets of measure zero, not up to polynomials), and so $T\in Y^{-m,q_i'}(\RR^d)$ with no normalization necessary. 

By formula~\eqref{eqn:E:Linv},
\begin{equation*}(((L^*)^{-1}T)_j)_{Z_i,r_i,q_i'}(X) =\overline{\langle T,\vec E^{L}_{X,j,Z_i,r_i,{q_i}}\rangle}.\end{equation*}
By duality, if $T\in Y^{-m,q_1'}(\R^d)\cap Y^{-m,q_2'}(\R^d)$, then $(L^*)^{-1}T=(L^{-1})^*T\in Y^{m,q_1'}(\R^d)\cap Y^{m,q_2'}(\R^d)$. That is, the inverses are identical whether we consider ${L^*}:Y^{m,{q_1'}}(\RR^d)\to Y^{-m,{q_1'}}(\RR^d)$ or ${L^*}:Y^{m,{q_2'}}(\RR^d)\to Y^{-m,{q_2'}}(\RR^d)$.
Furthermore, $|\alpha|>m-d/q_i'$ and so $\partial^\alpha((L^*)^{-1}T)_j$ is a well defined locally integrable function that does not depend on $Z_i$, $r_i$, or~$q_i$. Thus
\begin{multline*}
\iint {\partial^\alpha \varphi(X)}\, \partial^\beta \eta(Y)\,(\vec E^{L}_{X,j,Z_1,r_1,q_1}(Y))_k \,dY\,dX
\\
=\int {\partial^\alpha \varphi(X)}\,\langle T,\vec E^{L}_{X,j,Z_1,r_1,q_1}\rangle\,dX
=
\int {\partial^\alpha \varphi(X)}\, \overline{((L^*)^{-1}T)_j(X)}\,dX
\\
=
\iint {\partial^\alpha \varphi(X)}\, \partial^\beta \eta(Y)\,(\vec E^{L}_{X,j,Z_2,r_2,q_2}(Y))_k \,dY\,dX
.\end{multline*}
Applying the definition of weak derivative, we see that
\begin{multline*}\iint {\varphi(X)} \,\eta(Y)\,(\partial_Y^\alpha \partial_Y^\beta \vec E^{L}_{X,j,Z_1,r_1,{q_1}}(Y))_k \, dY\,dX
\\=
\iint {\varphi(X) } \,\eta(Y)\,(\partial_Y^\alpha \partial_Y^\beta \vec E^{L}_{X,j,Z_2,r_2,{q_2}}(Y))_k \, dY\,dX
\end{multline*}
for any smooth functions with disjoint compact support. By the Lebesgue differentiation theorem, formula~\eqref{eqn:E:indep} is valid for almost every $(X,Y)\in\RR^d\times\RR^d$.
\end{proof}

We now consider the dependency of $\vec E^L_{X,j,Z_0,r,q}$ on $q$ in more detail. Define
\begin{equation*}\Xi_q=\{(\alpha,\beta):\alpha,\beta\text{ are multiindices, } m-d/q'<|\alpha|\leq m,\text{ and }m-d/q<|\beta|\leq m\}.\end{equation*}
$\Xi_q$ is illustrated in Figure~\ref{fig:zeta:xi}.
By Lemma~\ref{lem:indep}, if $(\alpha,\beta)\in \Xi_q$, then $\partial_X^\alpha\partial_Y^\beta \vec E^{L}_{X,j,Z_0,r,q}(Y)$ is independent of~$Z_0$ and~$r$. Thus, we may largely ignore the dependency on $Z_0$ and~$r$. 

\begin{figure}

\def\offset{0.1}
\def\bigoffset{0.2}

\begin{tikzpicture}[scale=0.6]
\def\m{6}
\def\d{5}
\def\horistart{4}
\def\vertstart{4}
\def\overq{0.54}
\node at (0,\m) [left] {$m$};
\node at (0,\m-\d) [left] {$m-d$};
\node at (\m,0) [below] {$\vphantom{d}m$};

\draw [white!50!black] (\m-\d,\m)--(\m,\m-\d);
\foreach \k in {0,...,\m}{
	\draw [white!50!black] (0,\k)--(\m,\k);
	\draw [white!50!black] (\k,0)--(\k,\m);
}
\draw [->] (-1,0)--(\m+1,0) node [below] {$|\alpha|$};
\draw [->] (0,-1)--(0,\m+1) node [left] {$|\beta|$};

\fill [white!50!black]
	(\m-\d*\overq+\offset,\m-\d+\d*\overq+\offset)
	rectangle (\m,\m);
\foreach \k in {\vertstart,...,\m}{
\foreach \j in {\horistart,...,\m}{
	\fill [black] (\j,\k) circle (1.5pt);
}}

\draw [black]
	(\m-\d*\overq,\m-\d+\d*\overq) circle (2pt);
\node at (\m-\d*\overq,\m-\d+\d*\overq) [left] {$(m-\frac{d}{q'},m-\frac{d}{q})$};

\end{tikzpicture}
\begin{tikzpicture}[scale=0.6]
\def\m{6}
\def\d{4}
\def\horistart{5}
\def\vertstart{5}
\def\overq{0.5}
\node at (0,\m) [left] {$m$};
\node at (0,\m-\d) [left] {$m-d$};
\node at (\m,0) [below] {$\vphantom{d}m$};

\draw [white!50!black] (\m-\d,\m)--(\m,\m-\d);
\foreach \k in {0,...,\m}{
	\draw [white!50!black] (0,\k)--(\m,\k);
	\draw [white!50!black] (\k,0)--(\k,\m);
}
\draw [->] (-1,0)--(\m+1,0) node [below] {$|\alpha|$};
\draw [->] (0,-1)--(0,\m+1) node [left] {$|\beta|$};

\fill [white!50!black]
	(\m-\d*\overq+\offset,\m-\d+\d*\overq+\offset)
	rectangle (\m,\m);
\foreach \k in {\vertstart,...,\m}{
\foreach \j in {\horistart,...,\m}{
	\fill [black] (\j,\k) circle (1.5pt);
}}

\draw [black]
	(\m-\d*\overq,\m-\d+\d*\overq) circle (2pt);
\node at (\m-\d*\overq,\m-\d+\d*\overq) [left] {$(m-\frac{d}{2},m-\frac{d}{2})$};

\end{tikzpicture}

\begin{tikzpicture}[scale=0.6]
\def\m{6}
\def\d{4}
\def\horistart{5}
\def\vertstart{4}
\def\overq{0.4}
\node at (0,\m) [left] {$m$};
\node at (0,\m-\d) [left] {$m-d$};
\node at (\m,0) [below] {$\vphantom{d}m$};

\draw [white!50!black] (\m-\d,\m)--(\m,\m-\d);
\foreach \k in {0,...,\m}{
	\draw [white!50!black] (0,\k)--(\m,\k);
	\draw [white!50!black] (\k,0)--(\k,\m);
}
\draw [->] (-1,0)--(\m+1,0) node [below] {$|\alpha|$};
\draw [->] (0,-1)--(0,\m+1) node [left] {$|\beta|$};

\fill [white!50!black]
	(\m-\d*\overq+\offset,\m-\d+\d*\overq+\offset)
	rectangle (\m,\m);
\foreach \k in {\vertstart,...,\m}{
\foreach \j in {\horistart,...,\m}{
	\fill [black] (\j,\k) circle (1.5pt);
}}

\draw [black]
	(\m-\d*\overq,\m-\d+\d*\overq) circle (2pt);
\node at (\m-\d*\overq,\m-\d+\d*\overq) [left] {$(m-\frac{d}{q'},m-\frac{d}{q})$};

\end{tikzpicture}
\begin{tikzpicture}[scale=0.6]
\def\m{6}
\def\d{4}
\def\horistart{4}
\def\vertstart{5}
\def\overq{0.6}
\node at (0,\m) [left] {$m$};
\node at (0,\m-\d) [left] {$m-d$};
\node at (\m,0) [below] {$\vphantom{d}m$};

\draw [white!50!black] (\m-\d,\m)--(\m,\m-\d);
\foreach \k in {0,...,\m}{
	\draw [white!50!black] (0,\k)--(\m,\k);
	\draw [white!50!black] (\k,0)--(\k,\m);
}
\draw [->] (-1,0)--(\m+1,0) node [below] {$|\alpha|$};
\draw [->] (0,-1)--(0,\m+1) node [left] {$|\beta|$};

\fill [white!50!black]
	(\m-\d*\overq+\offset,\m-\d+\d*\overq+\offset)
	rectangle (\m,\m);
\foreach \k in {\vertstart,...,\m}{
\foreach \j in {\horistart,...,\m}{
	\fill [black] (\j,\k) circle (1.5pt);
}}

\draw [black]
	(\m-\d*\overq,\m-\d+\d*\overq) circle (2pt);
\node at (\m-\d*\overq,\m-\d+\d*\overq) [left] {$(m-\frac{d}{q'},m-\frac{d}{q})$};

\end{tikzpicture}

\caption{
$\Xi_q$ denotes the set of lattice points in the gray rectangle (including the top and right edges, but not the bottom or left edges). In odd dimensions (upper left), $\Xi_q=\Xi_2$ if $q$ is sufficiently close to~$2$. In even dimensions, $\Xi_2$ (upper right) is a proper subset of $\Xi_q$ for all $q$ sufficiently close to~$2$ but either less than~$2$ (bottom left) or greater than~$2$ (bottom right).
}
\label{fig:zeta:xi}

\end{figure}
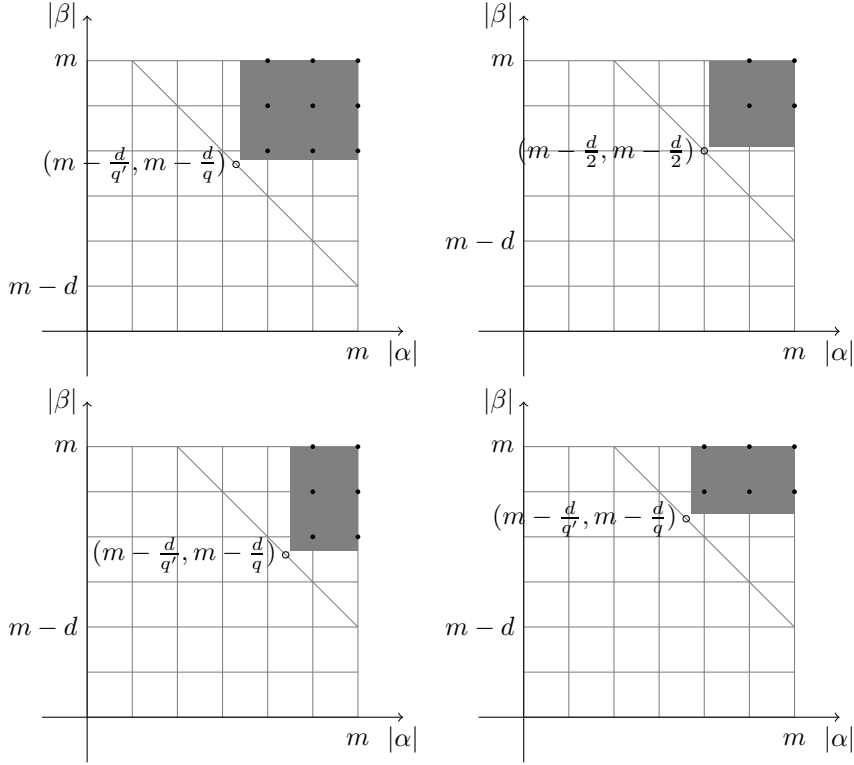

However, the range $\Xi_q$ of acceptable derivatives does depend on~$q$. We would like to discuss this dependency in more detail.

\subsubsection{Odd dimensions}\label{sec:odd}
In odd dimensions, we will let our fundamental solution be $\vec E^L_{X,j}(Y)=\vec E^{L}_{X,j,Z_0,r,2}(Y)$.
In light of the Gårding inequality~\eqref{gi} and the Lax-Milgram lemma, and their consequence Lemma~\ref{lem:L:invertible}, $q=2$ is the most natural value.
A straightforward computation yields that if the dimension $d$ is odd, then $\Xi_q=\Xi_2$ whenever $\frac{2d}{d+1}\leq q\leq \frac{2d}{d-1}$, that is, for all $q$ sufficiently close to~$2$.

Note that for general rough coefficients, it may be that $q\in \Upsilon_L$ and so $q$ satisfies the conditions of Definition~\ref{dfn:E:} only for $q$ very close to~$2$ (and in particular may not satisfy these conditions for any $q$ outside of~$[\frac{2d}{d+1},\frac{2d}{d-1}]$); thus, we cannot in general expect to improve upon~$\vec E^{L}_{X,j,Z_0,r,2}(Y)$ in terms of the number of derivatives independent of $Z_0$,~$r$.

\subsubsection{Even dimensions} The situation in even dimensions is more complicated. In this case, if $\frac{2d}{d+2}<q<\frac{2d}{d-2}$ and $q\neq 2$, then $\Xi_q\supsetneq \Xi_2$; that is, $\vec E^{L}_{X,j,Z_0,r,q}(Y)$ has strictly more derivatives independent of $Z_0$, $r$ than $\vec E^{L}_{X,j,Z_0,r,2}(Y)$. See Figure~\ref{fig:zeta:xi:2}. However, if $\frac{2d}{d+2}<q<2<s<\frac{2d}{d-2}$ and $m\geq d/2$, then $\Xi_q$ and $\Xi_s$ are not equal; indeed we have both of the two noninclusions  $\Xi_q\not\subseteq\Xi_s$ and $\Xi_s\not\subseteq\Xi_q$. Thus, neither of the functions $\vec E^{L}_{X,j,Z_0,r,q}$ and $\vec E^{L}_{X,j,Z_0,r,s}$ is entirely satisfactory; we thus wish to define a new fundamental solution $\vec E^L_{X,j,Z_0,r}(Y)$ with the correct derivatives for all multiindices in either $\Xi_s$ or~$\Xi_q$.

\begin{theorem}\label{thm:renormalize}
Let $d\geq 2$ be an even integer and let $m\in \NN$.
Let $L$ be such that there exists an open neighborhood $\widetilde\Upsilon_L$ of $2$ such that if $q$, $q_1$, $q_2\in \widetilde\Upsilon_L$, then $L$ and $q$ satisfy the conditions of Definition~\ref{dfn:E:}, the bound~\eqref{eqn:E:gradbd} is valid, and formula~\eqref{eqn:E:indep} is true whenever $(\alpha,\beta) \in \Xi_{q_1}\cap\Xi_{q_2}$.

Then there exists a function $\vec E^{L}_{X,j}(Y)$ such that if $q\in \widetilde\Upsilon_L\cap \bigl(\frac{2d}{d+2},\frac{2d}{d-2}\bigr)$ (or $q\in \widetilde\Upsilon_L\cap (1,\infty)$ if $d=2$), then
\begin{equation}
\label{eqn:norm:even}
\partial_X^\alpha\partial_Y^\beta \vec E^{L}_{X,j,Z_0,r,q}(Y)= \partial_X^\alpha\partial_Y^\beta \vec E^{L}_{X,j}(Y)
\quad
\text{for all $(\alpha,\beta)\in \Xi_q$.}
\end{equation}
Furthermore, $(\alpha,\beta)\in\Xi_q$ for some such $q$ if and only if
\begin{align}
\label{cond:zeta}m-d/2\leq |\alpha|\leq m, \qquad
m-d/2\leq |\beta|\leq m,\qquad
2m-d<|\alpha|+|\beta|.
\end{align}
\end{theorem}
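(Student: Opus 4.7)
The plan is to combine two fundamental solutions $\vec E^L_{X,j,Z_0,r,q_+}$ and $\vec E^L_{X,j,Z_0,r,q_-}$ with $q_-<2<q_+$ chosen in $\widetilde\Upsilon_L$ sufficiently close to $2$. A direct computation using that $m-d/2$ is an integer (because $d$ is even) shows
\[\Xi_{q_+}=\{(\alpha,\beta):|\alpha|\geq m-d/2,\ |\beta|\geq m-d/2+1\},\]
\[\Xi_{q_-}=\{(\alpha,\beta):|\alpha|\geq m-d/2+1,\ |\beta|\geq m-d/2\},\]
so $\Xi_{q_+}\cap \Xi_{q_-}=\Xi_2$ and $\Xi_{q_+}\cup \Xi_{q_-}$ is exactly the set described by condition~\eqref{cond:zeta}. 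An analogous bookkeeping shows $\Xi_q\subseteq\Xi_{q_+}\cup\Xi_{q_-}$ for every $q\in\widetilde\Upsilon_L\cap(\frac{2d}{d+2},\frac{2d}{d-2})$, so after invoking Lemma~\ref{lem:indep} it will suffice to verify~\eqref{eqn:norm:even} for $q\in\{q_+,q_-\}$.

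The key structural input is a Taylor-type decomposition lemma: if $F(X,Y)$ is continuous on $\RR^d\times\RR^d$ and $\partial^\alpha_X\partial^\beta_Y F=0$ in the distributional sense for all $|\alpha|,|\beta|\geq A$, then $F=P(X,Y)+Q(X,Y)$, where $P$ is polynomial in $X$ of degree $\leq A-1$ with coefficients continuous in $Y$, and $Q$ is polynomial in $Y$ of degree $\leq A-1$ with coefficients continuous in $X$. I would prove this by defining $P$ via Lagrange interpolation of $F(\cdot,Y)$ on the unisolvent grid $\{H_i\}$ from Section~\ref{sec:FS:high:preliminary}; then $P$ is automatically polynomial of degree $\leq A-1$ in $X$, and the remainder $G:=F-P$ vanishes at the interpolation grid. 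For $|\beta|\geq A$, the distribution $\partial^\beta_Y F(\cdot,Y)$ has all $X$-derivatives of order $\geq A$ vanishing and is therefore a polynomial of degree $\leq A-1$ in $X$; hence its Lagrange interpolation is exact, $\partial^\beta_Y G=0$ for $|\beta|\geq A$, and $G$ is polynomial in $Y$ of degree $\leq A-1$, yielding $Q:=G$.

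Applying this to $F(X,Y):=\vec E^L_{X,j,Z_0,r,q_+}(Y)-\vec E^L_{X,j,Z_0,r,q_-}(Y)$, which is continuous by Theorem~\ref{thm:fundamental:1} and satisfies the vanishing hypothesis with $A=m-d/2+1$ by Lemma~\ref{lem:indep}, produces a decomposition $F=P+Q$ of the required form. I then define
\[\vec E^L_{X,j}(Y):=\vec E^L_{X,j,Z_0,r,q_+}(Y)-Q(X,Y)=\vec E^L_{X,j,Z_0,r,q_-}(Y)+P(X,Y).\]
Verification of~\eqref{eqn:norm:even} splits into cases: for $(\alpha,\beta)\in\Xi_{q_+}$, $|\beta|\geq m-d/2+1>\deg_YQ$ forces $\partial^\beta_Y Q=0$, so the first expression yields the identity with $q_+$; for $(\alpha,\beta)\in\Xi_{q_-}$, $|\alpha|>\deg_X P$ forces $\partial^\alpha_X P=0$, so the second expression yields the identity with $q_-$. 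For general $q$ in the specified range, $(\alpha,\beta)\in\Xi_q\subseteq\Xi_{q_+}\cup\Xi_{q_-}$ reduces to one of these cases, and Lemma~\ref{lem:indep} transports the identity from $q_\pm$ to $q$. The concluding "if and only if" characterization is a direct inspection of the ranges of $m-d/q$ and $m-d/q'$ as $q$ traverses $(\frac{2d}{d+2},\frac{2d}{d-2})$.

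The main obstacle is making the Taylor decomposition lemma rigorous for $F$ that is only continuous with mixed weak derivatives in $L^2_{\mathrm{loc}}$. The Lagrange-interpolation construction works cleanly for smooth $F$, but for continuous $F$ one must justify the slicing operation $\partial^\beta_Y F(X_i,\cdot)$ for specific grid points $X_i$; this requires combining the polynomial-in-$X$ structure of $\partial^\beta_Y F$ established by the vanishing hypothesis with the H\"older continuity in $X$ provided by Lemma~\ref{SFbound} to see that these slices are well-defined continuous functions of $Y$.
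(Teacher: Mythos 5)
Your proposal is correct in substance and implements the same basic idea as the paper---fix $q_-<2<q_+$ in $\widetilde\Upsilon_L$ near~$2$, observe via Lemma~\ref{lem:indep} that the mixed derivatives of $E_{q_+}-E_{q_-}$ of orders $\geq m-d/2+1$ in both variables vanish, and glue a polynomial correction onto $E_{q_\pm}$---but it routes through an intermediate ``Taylor decomposition lemma'' that the paper never states or needs. The paper instead writes the correction down explicitly as a \emph{homogeneous} polynomial of degree exactly $m-d/2$ in~$X$: it sets $\vec G_{j,\zeta,Y}(W)=\partial_W^\zeta\bigl[\vec E^L_{W,j,Z_0,r,q_+}(Y)-\vec E^L_{W,j,Z_0,r,q_-}(Y)\bigr]$ for $|\zeta|=m-d/2$ and uses the Lebesgue average $\fint_{B(W_0,\rho)}\vec G_{j,\zeta,Y}(W)\,dW$ as the coefficient of $X^\zeta/\zeta!$. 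The same structural fact you exploit---that $\partial_Y^\beta\vec G_{j,\zeta,Y}(\cdot)$ is constant in~$W$ when $|\beta|>m-d/2$, because the one-higher $W$-derivative lands in $\Xi_2$---is used only to conclude that this average equals the constant value $\partial_X^\zeta\partial_Y^\beta[E_{q_+}-E_{q_-}]$, after which the two cases ($|\alpha|>m-d/2$ and $|\alpha|=m-d/2$) are checked directly, just as in your case split.

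The comparison is this. What the paper's averaging device buys is that it never evaluates $F$ or its $Y$-derivatives at a single point $X=H_i$: integration over a ball of positive measure is automatically defined for the $L^2_{\mathrm{loc}}$ objects in play, so the slicing issue you flag as your ``main obstacle'' simply never arises. Your Lagrange-interpolation construction, by contrast, requires first promoting $\partial_Y^\beta F(\cdot,Y)$ (for $|\beta|\geq m-d/2+1$) from an a.e.-defined $L^2_{\mathrm{loc}}$ function to a function with a canonical pointwise representative in $X$, then arguing that differentiation in $Y$ commutes with slicing at $X=H_i$; both steps are plausible given that $\partial_Y^\beta F$ is distributionally polynomial in $X$, but neither is free, and the paper simply avoids them. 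Your decomposition lemma also proves more than is used: you produce the full $P+Q$ splitting with $P$ of general degree $\leq m-d/2$ in $X$, whereas the verification only requires the top homogeneous piece of~$P$ (since every relevant $\alpha$ has $|\alpha|\geq m-d/2$, lower-degree terms in $X$ are annihilated). Your set arithmetic for $\Xi_{q_\pm}$, the reduction to $q\in\{q_+,q_-\}$ via Lemma~\ref{lem:indep}, and the final ``if and only if'' check of condition~\eqref{cond:zeta} coincide with the paper's.
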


\begin{figure}

\def\offset{0.1}
\def\bigoffset{0.4}

\begin{tikzpicture}[scale=0.6]
\def\m{6}
\def\d{4}
\def\horistart{4}
\def\vertstart{5}
\def\overq{0.5}
\draw (0,\m)-- (-0.2,\m) node [left] {$m$};
\draw (0,\m-\d)-- (-0.2,\m-\d) node [left] {$m-d$};
\draw (\m,0)-- (\m,-0.2) node [below] {$\vphantom{d}m$};
\draw (\m-\d,0)-- (\m-\d,-0.2) node [below] {$\vphantom{d}m-d$};

\draw [white!50!black] (\m-\d,\m)--(\m,\m-\d);
\foreach \k in {0,...,\m}{
	\draw [white!50!black] (0,\k)--(\m,\k);
	\draw [white!50!black] (\k,0)--(\k,\m);
}
\draw [->] (-1,0)--(\m+1,0) node [below] {$|\alpha|$};
\draw [->] (0,-1)--(0,\m+1) node [left] {$|\beta|$};

\fill [white!50!black]
	(\m-\d/2-1+\bigoffset+\offset,\m) --
	(\m-\d/2-1+\bigoffset+\offset,
		\m-\d/2+1-\bigoffset+\offset) --
	(\m-\d/2+1-\bigoffset+\offset,
		\m-\d/2-1+\bigoffset+\offset) --
	(\m,\m-\d/2-1+\bigoffset+\offset) --
	(\m,\m)--cycle;
\foreach \k in {\vertstart,...,\m}{
	\fill [black] (\k,\horistart) circle (1.5pt);
\foreach \j in {\horistart,...,\m}{
	\fill [black] (\j,\k) circle (1.5pt);
}}

\end{tikzpicture}

\caption{
The set of points that satisfy Conditions~(\ref{cond:zeta}--\ref{cond:zeta:xi}).
}
\label{fig:zeta:xi:2}

\end{figure}
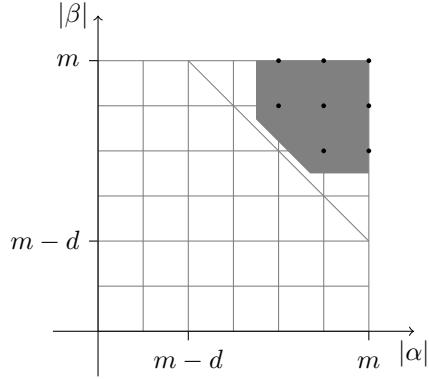

\begin{proof}
If $q$, $\widetilde q\in(\frac{2d}{d+2},2)$, then $\Xi_q=\Xi_{\widetilde q}$  and so if $q$, $\widetilde q\in\widetilde\Upsilon_L$ then $\partial_X^\alpha\partial_Y^\beta \vec E^{L}_{X,j,Z_0,r,q}(Y)=\partial_X^\alpha\partial_Y^\beta \vec E^{L}_{X,j,Z_0,r,\widetilde q}(Y)$ for all $(\alpha,\beta)\in\Xi_{q}$. 
The same is true if $q$, $\widetilde q \in(2,\frac{2d}{d-2})\cap\widetilde\Upsilon_L$. 
Thus, it suffices to find a function $\vec E^L_{X,j}$ such that the condition~\eqref{eqn:norm:even} is valid for a single $q\in(\frac{2d}{d+2},2)\cap\widetilde\Upsilon_L$ and a single $q\in(2,\frac{2d}{d-2})\cap\widetilde\Upsilon_L$.

Fix $q$, $s\in \widetilde\Upsilon_L$ such that $\frac{2d}{d+2}<q<2<s<\frac{2d}{d-2}$. By assumption, some such $q$ and~$s$ exist. An elementary computation shows that $(\alpha,\beta)\in\Xi_q\cup\Xi_s$ if and only if Condition~\eqref{cond:zeta} is true.

For each $W\in\R^d$ and each $\zeta$ with $|\zeta|=m-d/2$, define
\begin{equation*}\vec G_{j,\zeta,Y}(W) = \partial_W^\zeta \vec E^{L}_{W,j,Z_0,r,s}(Y)-\partial_W^\zeta \vec E^{L}_{W,j,Z_0,r,q}(Y)
.\end{equation*}
By Remark~\ref{rmk:L:sym}, formula~\eqref{eqn:E:sym}, and the bound~\eqref{eqn:E:Y:norm} applied to $\vec E^{L^*}$, for each $Y\in\RR^d$, $\vec G_{j,\zeta,Y}$ is a locally integrable function.

Now, fix some $W_0\in\RR^d$ and some $\rho>0$, and define
\begin{equation*}\vec E^{L}_{X,j}(Y)=\vec E^{L}_{X,j,Z_0,r,q}(Y) - \sum_{|\zeta|=m-d/2} \frac{1}{\zeta!}\,X^\zeta \fint_{B(W_0,\rho)}\vec G_{j,\zeta,Y}(W)\,dW.\end{equation*}

If $(\alpha,\beta)\in \Xi_q$, then because $q<2$ we have that $|\alpha|>m-d/2$. Thus, $\partial_X^\alpha X^\zeta=0$ and so
\begin{equation*}\partial_X^\alpha\partial_Y^\beta \vec E^{L}_{X,j}(Y)=\partial_X^\alpha\partial_Y^\beta \vec E^{L}_{X,j,Z_0,r,q}(Y).\end{equation*}

Now, suppose that $(\alpha,\beta)\in\Xi_q$. If $(\alpha,\beta)\in\Xi_s$ then
\begin{equation*}\partial_X^\alpha\partial_Y^\beta \vec E^{L}_{X,j,Z_0,r,s}(Y)=\partial_X^\alpha\partial_Y^\beta \vec E^{L}_{X,j,Z_0,r,q}(Y)=\partial_X^\alpha\partial_Y^\beta \vec E^{L}_{X,j}(Y).\end{equation*}
We thus need only consider the case $(\alpha,\beta)\in\Xi_q\setminus \Xi_s$; this implies that $|\beta|>m-d/2$ and $|\alpha|=m-d/2$. We then compute that
\begin{equation*}
\partial_X^\alpha\partial_Y^\beta \vec E^{L}_{X,j}(Y)
=\partial_X^\alpha\partial_Y^\beta \vec E^{L}_{X,j,Z_0,r,q}(Y)
- \partial_Y^\beta\fint_{B(W_0,\rho)}\vec G_{j,\alpha,Y}(W)\,dW.\end{equation*}
Let $\vec e_k$ be a unit coordinate vector and let $\xi=\alpha+\vec e_k$. Then $m\geq|\xi|>m-d/2$ and so $(\beta,\xi)\in \Xi_q\cap \Xi_s$. Thus
\begin{equation*}\partial_{W_k} \partial_Y^\beta \vec G_{j,\alpha,Y}(W) = \partial_W^\xi \partial_Y^\beta \vec E^{L}_{W,j,Z_0,r,q}(Y)-\partial_W^\xi \partial_Y^\beta \vec E^{L}_{W,j,Z_0,r,s}(Y)
=0\end{equation*}
as locally integrable functions; that is, for almost every $Y\in\R^d$ we have that
$\partial_Y^\beta \vec G_{j,\alpha,Y}(W)$ is a constant and so $\fint_{B(W_0,\rho)}\vec G_{j,\zeta,Y}(W)\,dW$ does not depend on $W_0$ or~$\rho$. By changing $W_0$ and $\rho$ appropriately and using the bound~\eqref{eqn:E:gradbd}, we see that
\[\int_Q \fint_{B(W_0,\rho)}
|\partial_Y^\beta\vec G_{j,\zeta,Y}(W)|\,dW\,dY<\infty\]
for any cube~$Q\subseteq\R^d$. Thus by Fubini's theorem
\[\partial_Y^\beta\fint_{B(W_0,\rho)}\vec G_{j,\alpha,Y}(W)\,dW
=
\fint_{B(W_0,\rho)}\partial_Y^\beta\vec G_{j,\alpha,Y}(W)\,dW\]
for almost every $Y\in\R^d$, and so 
\begin{align*}
\partial_X^\alpha\partial_Y^\beta \vec E^{L}_{X,j}(Y)
&=\partial_X^\alpha\partial_Y^\beta \vec E^{L}_{X,j,Z_0,r,q}(Y)
- \fint_{B(W_0,\rho)}\partial_Y^\beta\vec G_{j,\alpha,Y}(X)\,dW
\\&=\partial_X^\alpha\partial_Y^\beta \vec E^{L}_{X,j,Z_0,r,q}(Y)
-\partial_X^\alpha\partial_Y^\beta \vec E^{L}_{X,j,Z_0,r,q}(Y)
+\partial_X^\alpha\partial_Y^\beta \vec E^{L}_{X,j,Z_0,r,s}(Y)
\\&=\partial_X^\alpha\partial_Y^\beta \vec E^{L}_{X,j,Z_0,r,s}(Y)
\end{align*}
as desired.
\end{proof}

\subsection{Derivatives of $({L^*})^{-1}$}
\label{sec:FS:Fubini}

Recall from formula~\eqref{eqn:E:Linv} that, if $T\in Y^{-m,{q'}}(\RR^d)$, then
\begin{equation*}(((L^*)^{-1}T)_j)_{Z_0,r,{q'}}(X) = \overline{\langle T, \vec E^{L}_{X,j,Z_0,r,q}\rangle}.\end{equation*}
By the Hahn-Banach theorem, if $T\in Y^{-m,{q'}}(\RR^d)$, then there exist functions $\vec F_{\xi}$ with
\begin{equation*}
\sum_{m-d/q<|\xi|\leq m} \|\vec F_{\xi}\|_{L^{(q_\xi)'}(\RR^d)}<\infty \end{equation*} and where
\begin{equation*}\langle T,\vec\varphi\rangle
= \sum_{m-d/{q'}<|\xi|\leq m}  \int_{\RR^d} \partial^\xi \vec\varphi(Y)\cdot\overline{\vec F_{\xi}(Y)}\,dY.\end{equation*}
Thus,
\begin{equation}
\label{eqn:pi:E:repeat}
(((L^*)^{-1}T)_j)_{Z_0,r,{q'}}(X)
=
\sum_{m-d/q<|\xi|\leq m}  \int_{\RR^d} \overline{\partial_Y^\xi \vec E^{{L}}_{X,j,Z_0,r,{q}}(Y)}\cdot{\vec F_{\xi}(Y)}\,dY
.\end{equation}
We would like a similar integral formula for the derivatives of $(L^*)^{-1}T$.

\begin{theorem}\label{thm:deriv:Linv}
Let ${L}$ and ${q}$ satisfy the conditions of Definition~\ref{dfn:E:}. Assume that the bound~\eqref{eqn:E:Bochner} in Theorem~\ref{thm:fundamental} is valid for ${{p}}={q}$. 

Let $T=T_{\vec F,\xi}\in Y^{-m,{q'}}(\R^d)$ be a linear operator defined by
\begin{equation*}\langle T_{\vec F,\xi},\vec\varphi\rangle =  \int_{\RR^d} \partial^\xi \vec\varphi(Y)\cdot\overline{\vec F(Y)}\,dY\end{equation*}
for some $\xi$ and~$\vec F$ such that $m-d/q<|\xi|\leq m$ and $\vec F\in L^{(q_\xi)'}(\R^d)$ is compactly supported.

If $|\alpha|>m-d/{q'}$, and if $|\alpha|<m$ or $|\xi|<m$, then
\begin{equation}
\label{hihigradpi}
\partial^\alpha ((L^*)^{-1}T_{\vec F,\xi})_j(X)
=
\int_{\RR^d} \overline{\partial_X^\alpha\partial_Y^\xi \vec E^{{L}}_{X,j,Z_0,r,{q}}(Y)}\cdot{\vec F(Y)}\,dY
\end{equation}
and the integral converges absolutely for almost every $X\in\RR^d$. If $|\alpha|=|\xi|=m$, this formula is true for almost every $X\notin\supp F$.
\end{theorem}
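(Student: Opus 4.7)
My starting point is the integral representation \eqref{eqn:pi:E:repeat}, which for $T = T_{\vec F,\xi}$ has only one term and reads
\begin{equation*}
(((L^*)^{-1}T_{\vec F,\xi})_j)_{Z_0,r,q'}(X) = \int_{\R^d} \overline{\partial_Y^\xi \vec E^L_{X,j,Z_0,r,q}(Y)}\cdot \vec F(Y)\,dY.
\end{equation*}
Since $|\alpha| > m - d/q'$ strictly exceeds the degree of the normalizing polynomial associated with the subscript $(Z_0,r,q')$, the operator $\partial^\alpha$ annihilates that polynomial, so $\partial^\alpha((L^*)^{-1}T_{\vec F,\xi})_j$ agrees with $\partial^\alpha$ of the normalized representative as locally integrable functions. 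It therefore suffices to compute the weak derivative of the right-hand side above and match it with the integral on the right of~\eqref{hihigradpi}.

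To commute $\partial^\alpha$ with the integral, I would test against $\psi \in C_c^\infty(\R^d)$, apply Fubini, and for each fixed $Y$ use that $\partial_X^\alpha \partial_Y^\xi \vec E^L_{X,j,Z_0,r,q}(Y)$ exists as a locally $L^2$ function on $\R^d\times\R^d\setminus\{(X,X)\}$ (Theorem~\ref{thm:fundamental}) to transfer the $\alpha$-derivative from $\psi$ onto $\vec E^L$ in the weak sense. A second Fubini yields \eqref{hihigradpi} distributionally; it then upgrades to a pointwise almost-everywhere identity once absolute convergence of the right-hand side is established.

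The remaining task is absolute convergence. Let $K = \supp \vec F$. For a.e.\ $X \notin K$, Hölder's inequality with $\vec F \in L^{(q_\xi)'}(K)$ and the estimate $\|\partial^\xi u\|_{L^{q_\xi}(K)} \leq \|u\|_{Y^{m,q}(K)}$ (valid since $|\xi| > m - d/q$) gives
\begin{equation*}
\int_K |\vec F(Y)|\,|\partial_X^\alpha \partial_Y^\xi \vec E^L_{X,j,Z_0,r,q}(Y)|\,dY \leq \|\vec F\|_{L^{(q_\xi)'}(K)}\,\|\partial_X^\alpha \vec E^L_{X,j,Z_0,r,q}\|_{Y^{m,q}(K)},
\end{equation*}
which is finite for a.e.\ $X \notin K$ by Theorem~\ref{thm:fundamental}. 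In the case $|\alpha| < m$ or $|\xi| < m$, the formula must also hold at a.e.\ $X \in K$; for such $X$ I would decompose the domain of integration into dyadic annuli $A_k = \{Y : 2^{-k-1}\rho_0 \leq |X-Y| < 2^{-k}\rho_0\}$ about $X$, cover each $A_k$ by cubes of sidelength comparable to $2^{-k}\rho_0$, and combine the off-diagonal bound~\eqref{Egradrho} with Hölder to estimate the contribution of $A_k$ by a constant multiple of $2^{-k(2m-|\alpha|-|\xi|)}\|\vec F\|_{L^{(q_\xi)'}(A_k)}$. The geometric series in $k$ converges precisely because $|\alpha| + |\xi| < 2m$.

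The main technical obstacle is the last step: since \eqref{Egradrho} is an $L^2$-average in both $X$ and $Y$, extracting a pointwise-in-$X$ statement on each annular shell requires a Fubini-plus-maximal-function style measure-theoretic argument with careful tracking of constants across scales. Conversely, in the excluded case $|\alpha| = |\xi| = m$ the dyadic sum diverges logarithmically, confirming that the identity cannot extend to $X \in \supp \vec F$ without additional hypotheses, exactly as the statement asserts.
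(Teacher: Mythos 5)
Your plan follows the same strategy as the paper—represent $((L^*)^{-1}T)_j$ via \eqref{eqn:pi:E:repeat}, commute $\partial^\alpha$ past the integral by duality and Fubini, then establish absolute convergence via a dyadic decomposition near the diagonal—but two issues in the execution deserve attention, one of which is a genuine gap you yourself flag.

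First, the logical order is inverted. You propose to apply Fubini to pass the $\alpha$-derivative through the $Y$-integral, and then ``upgrade to a pointwise almost-everywhere identity once absolute convergence is established.'' But Fubini already requires the absolute integrability of $\partial_X^\alpha\partial_Y^\xi \vec E\cdot\vec F$ over $(X,Y)\in Q_0\times Q_0$; this is the double-integral bound \eqref{eqn:alpha:bound} in the paper, and it must come first. The paper establishes it before touching derivatives: for $Q_0$ away from $\supp \vec F$ it is a covering argument with \eqref{eqn:E:Bochner}, and for $Q_0$ meeting $\supp\vec F$ it is the dyadic computation.

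Second, and more substantively, your annular decomposition is centered at a fixed $X$ and aims at a pointwise-in-$X$ shell bound of the form $2^{-k(2m-|\alpha|-|\xi|)}\|\vec F\|_{L^{(q_\xi)'}(A_k)}$. This cannot be read off \eqref{Egradrho} or \eqref{eqn:E:Bochner}, which are $L^2$-averages in $X$; a single $X$ could sit in an exceptional set of each shell estimate, and those exceptional sets vary with the scale $k$, so there is no obvious way to rule out a dyadically-accumulating bad set of positive measure without additional work. You identify this obstacle but leave it unresolved, suggesting a ``Fubini-plus-maximal-function'' step; however, no maximal-function machinery is actually necessary. The paper sidesteps the difficulty entirely by never asking for a pointwise-in-$X$ bound on any shell: it decomposes $Q_0\times Q_0$ along a common dyadic grid $G_a$, regroups by cubes $Q\in G_{a+1}$ and their parents $P(Q)$, and estimates the full double integral
$\int_{Q_0}\int_{Q_0}|\partial_X^\alpha\partial_Y^\xi\vec E|\,|\vec F|$
by applying H\"older first on $Q$ in $Y$, then on the family of cubes at each scale. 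This produces an $L^1(dX)$ bound, from which a.e.-finiteness of the inner $Y$-integral is automatic by Fubini, and the convergence of the geometric sum in the scale parameter is exactly the condition $|\alpha|+|\xi|<2m$. If you instead estimate the $L^1(dX)$ (or $L^2(dX)$) contribution of each of your annular shells and then sum, rather than claiming a bound at each $X$, your argument becomes essentially the paper's computation in slightly different clothes; as written, though, the pointwise-per-shell step is not justified by the estimates available.
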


\begin{proof}
By the bound~\eqref{eqn:E:Y:norm} and H\"older's inequality,
\begin{align*}
\int_{\RR^d} |\partial_Y^\xi {\vec E^{{L}}_{X,j,Z_0,r,{q}}(Y)}|\,|{\vec F(Y)}|\,dY<\infty .
\end{align*}
Let $Q_0\subset\R^d$ be a cube. We begin with the case where $\overline{Q_0}$ and $\supp F$ are disjoint. If $|\alpha|\leq m$, and if $\supp F$ is compact, then a covering argument combined with the bound~\eqref{eqn:E:Bochner} yields
\begin{align}
\label{eqn:alpha:bound}
\int_{Q_0}\int_{\RR^d} |\partial_X^\alpha\partial_Y^\xi {\vec E^{{L}}_{X,j,Z_0,r,{q}}(Y)}|\,|{\vec F(Y)}|\,dY\,dX<\infty
.\end{align}

By Fubini's theorem, if $\varphi\in C^\infty _0(Q_0)$ then
\begin{multline*}
\int_{Q_0}\partial^\alpha \varphi(X)
\int_{\RR^d}
\overline{\partial_Y^\xi \vec E^{{L}}_{X,j,Z_0,r,{q}}(Y)}\cdot{\vec F(Y)}\,dY
\,dX
\\=
(-1)^\alpha
\int_{Q_0}\varphi(X)
\int_{\RR^d} \overline{\partial_X^\alpha\partial_Y^\xi \vec E^{{L}}_{X,j,Z_0,r,{q}}(Y)}\cdot{\vec F(Y)}\,dY
\,dX
\end{multline*}
and so
\begin{equation*}
\partial^\alpha
\int_{\RR^d} \partial_Y^\xi \overline{\vec E^{L}_{X,j,Z_0,r,{q}}(Y)}\cdot{\vec F(Y)}\,dY
=
\int_{\RR^d} \overline{\partial_X^\alpha\partial_Y^\xi \vec E^{{L}}_{X,j,Z_0,r,{q}}(Y)}\,{\vec F(Y)}\,dY
\end{equation*}
as $L^1(Q_0)$ functions. Combining this result with formula~\eqref{eqn:pi:E:repeat} yields that
\begin{equation}
\label{higradpi}
\partial^\alpha((L^*)^{-1}T_{F,k,\xi})_{Z_0,r,{q'}}(X)
=
\int_{\RR^d} \overline{\partial_X^\alpha\partial_Y^\xi \vec E^{{L}}_{X,j,Z_0,r,{q}}(Y)}\cdot{\vec F(Y)}\,dY
\end{equation}
for almost every $X\notin \supp \vec F$. If $|\alpha|>m-d/{q'}$ then
\begin{equation*}\partial^\alpha ((L^*)^{-1}T_{F,k,\xi})=\partial^\alpha((L^*)^{-1}T_{F,k,\xi})_{Z_0,r,{q'}}\end{equation*}
and so formula~\eqref{hihigradpi} is valid for almost every $X\notin\supp \vec F$.

\begin{remark} If $|\alpha|<\min(m-d/2,m-d/{q'})$, then the bound~\eqref{eqn:E:Bochner} yields the bound~\eqref{eqn:alpha:bound} even if $\overline{Q_0}$ and $\supp F$ are not disjoint, and so in this case formula~\eqref{higradpi} is valid for almost every $X\in\RR^d$. 
\end{remark}

We are left with the case where $X\in\supp F$ and $|\alpha|+|\xi|<2m$.
We will show that the bound~\eqref{eqn:alpha:bound} is still valid; the argument given above then yields formula~\eqref{higradpi} and thus formula~\eqref{hihigradpi}.

Since $F$ has compact support, we may assume that $Q_0$ is large enough that $\supp F\subseteq Q_0$. Let $G_a$ be a grid of $2^{ad}$ pairwise-disjoint dyadic open subcubes of $Q_0$ of measure $2^{-ad}|Q_0|$ whose union (up to a set of measure zero) is~$Q_0$.  If $X\in Q_0$, let $Q_a(X)$ be the cube that satisfies $X\in Q_a(X)\in G_a$.  If $Q\in G_{a+1}$, let $P(Q)$ be the dyadic parent of the cube $Q$, that is, the unique cube with $Q\subset P(Q)\in G_a$.  Then by the monotone convergence theorem,
\begin{multline*}
\int_{Q_0}\int_{Q_0}|{\partial_X^\alpha\partial_Y^\xi \vec E^{{L}}_{X,j,Z_0,r,{q}}(Y)}|\,|{\vec F(Y)}|\,dX\,dY
\\
\begin{aligned}
&= \int_{Q_0}\sum_{a=0}^\infty \int_{4Q_a(Y)\setminus4Q_{a+1}(Y)} |{\partial_X^\alpha\partial_Y^\xi \vec E^{{L}}_{X,j,Z_0,r,{q}}(Y)}|\,|{\vec F(Y)}|\,dX\,dY
\\
&=\sum_{a=0}^\infty \sum_{Q\in G_{a+1}}\int_Q\int_{4P(Q)\setminus4Q} |{\partial_X^\alpha\partial_Y^\xi \vec E^{{L}}_{X,j,Z_0,r,{q}}(Y)}|\,|{\vec F(Y)}|\,dX\,dY.
\end{aligned}
\end{multline*}
By the bound~\eqref{eqn:E:Bochner} and Fubini's theorem we may interchange the order of integration. Applying H\"older's inequality first in $Q$ and then in sequence spaces,
\begin{multline*}
\int_{Q_0}\int_{Q_0}|{\partial_X^\alpha\partial_Y^\xi \vec E^{{L}}_{X,j,Z_0,r,{q}}(Y)}|\,|{\vec F(Y)}|\,dX\,dY
\\
\begin{aligned}
&=
\sum_{a=0}^\infty \sum_{Q\in G_{a+1}}\int_{4P(Q)\setminus4Q} \biggl(\int_Q|{\partial_X^\alpha\partial_Y^\xi \vec E^{{L}}_{X,j,Z_0,r,{q}}}|^{q_\xi}\biggr)^{1/q_\xi}\,dX
\biggl(\int_Q|{F}|^{(q_\xi)'}\biggr)^{1/(q_\xi)'}
\\&\leq
\sum_{a=0}^\infty
\biggl(
	\sum_{Q\in G_{a+1}}
	\biggl(\int_{4P(Q)\setminus4Q} \biggl(\int_Q|{\partial_X^\alpha\partial_Y^\xi \vec E^{{L}}_{X,j,Z_0,r,{q}}(Y)}|^{q_\xi}\,dY\biggr)^{1/q_\xi}\,dX\biggr)^{q_\xi}
\biggr)^{1/q_\xi}
\\&\qquad\times
\biggl(
	\sum_{Q\in G_{a+1}}
	\int_Q|{F}|^{(q_\xi)'}
\biggr)^{1/(q_\xi)'}
.
\end{aligned}
\end{multline*}
The final term is $\|F\|_{L^{(q_\xi)'}(Q_0)}=\|F\|_{L^{(q_\xi)'}(\RR^d)}<\infty $, so we need only bound the previous term.

If $a\geq 0$ is an integer and $Q\in G_{a+1}$, then by the bound~\eqref{eqn:E:Bochner}, and applying Lemma~\ref{lem:indep} to change $Z_0$ and $r$ as desired, we have that
\begin{align*}
\int_{4P(Q)\setminus 4Q}
\biggl(\int_Q |\partial_X^\alpha\partial_Y^\xi{\vec E^L_{X,j,Z_0,r,{q}}(X)}|^{q_\xi}\,dY\biggr)^{2/q_\xi}
\,dX
&\leq
C|Q|^{2m/d-1+2/q-2|\alpha|/d}
.\end{align*}
By H\"older's inequality,
\begin{multline*}
\int_{4P(Q)\setminus4Q} \biggl(\int_Q|{\partial_X^\alpha\partial_Y^\xi \vec E^{{L}}_{X,j,Z_0,r,{q}}(Y)}|^{q_\xi}\,dY\biggr)^{1/q_\xi}\,dX
\\\leq
\biggl(\int_{4P(Q)\setminus4Q} 	\biggl(\int_Q|{\partial_X^\alpha\partial_Y^\xi \vec E^{{L}}_{X,j,Z_0,r,{q}}(Y)}|^{q_\xi}\,dY\biggr)^{2/q_\xi}\,dX
\biggr)^{1/2}C|Q|^{1/2}
\\\leq
C|Q|^{m/d+1/q-|\alpha|/d}
.\end{multline*}
Thus, recalling that there are $2^{d(a+1)}$ cubes $Q$ in $G_a$ each satisfying $|Q|=2^{-(a+1)d}|Q_0|$,
\begin{multline*}
\int_{Q_0}\int_{Q_0}|{\partial_X^\alpha\partial_Y^\xi \vec E^{{L}}_{X,j,Z_0,r,{q}}(Y)}|\,|{\vec F(Y)}|\,dX\,dY
\\
\begin{aligned}
&\leq
C\|F\|_{L^{(q_\xi)'}(\RR^d)}
\sum_{a=0}^\infty
\biggl(
	\sum_{Q\in G_{a+1}}
	\biggl(
	|Q|^{m/d+1/q-|\alpha|/d}\biggr)^{q_\xi}
\biggr)^{1/q_\xi}
\\&=
C\|F\|_{L^{(q_\xi)'}(\RR^d)}
	|Q_0|^{m/d+1/q-|\alpha|/d}
\sum_{a=0}^\infty
	2^{ad/q_\xi}
	2^{-a(m+d/q-|\alpha|)}
.
\end{aligned}
\end{multline*}
Recall from formula~\eqref{pk} that $d/q_\xi=d/q-m+|\xi|$. Thus the final sum reduces to
\begin{equation*}\sum_{a=0}^\infty
	2^{-a(2m-|\xi|-|\alpha|)}
\end{equation*}
which converges provided $|\alpha|<m$ or $|\xi|<m$. This completes the proof.
\end{proof}

\subsection{The fundamental solution for operators of arbitrary order}\label{sec:FS:low}

In this section we show how use the fundamental solution for operators of high order to construct the fundamental solution for operators of arbitrary order.

We begin by defining a suitable higher order operator associated to each lower order operator and investigate its properties.
\begin{lemma}\label{lem:tilde:L}
Let $L:Y^{m,2}(\R^d)\to Y^{-m,2}(\R^d)$ be a bounded linear operator. Let $M$ be a nonnegative integer. Define
\begin{equation}
\label{eqn:tilde:L}
\widetilde L=
\Delta^M L\Delta^M,
\qquad \widetilde m=m+2M.
\end{equation}
Here $\Delta^M L\Delta^M$ is the operator given by
\[\langle (\Delta^M L\Delta^M)\vec\psi,\vec\varphi\rangle = \langle L(\Delta^M\vec\psi),\Delta^M\vec\varphi\rangle\text{ for all }\vec\varphi,\vec\psi\in Y^{\widetilde m,2}(\R^d).\]

Then:
\begin{enumerate}[label={\textup{(\alph*)}}]
\item If $1<q<\infty$ and $L$ is bounded or invertible $Y^{m,q}(\R^d)\to Y^{-m,q}(\R^d)$, then $\widetilde L$ is bounded or invertible $Y^{\widetilde m,q}(\R^d)\to Y^{-\widetilde m,q}(\R^d)$. If $L$ is invertible and in addition $M$ is large enough (depending on $d$, $m$, and~$q$), then $\widetilde L$ and $q$ satisfy the conditions of Definition~\ref{dfn:E:}.

\item If $L$ is bounded and invertible $Y^{m,2}(\R^d)\to Y^{-m,2}(\R^d)$, then $\Upsilon_{\widetilde L}= \Upsilon_L$, where $\Upsilon_L$ is as in Definition~\ref{dfn:compatible}.

\item If $L$ is of the form~\eqref{dfn:L}, then so is~$\widetilde L$, and
\begin{equation*}
\widetilde m-\mathfrak{a}_{\widetilde L}=m-\mathfrak{a}_L,
\qquad
\widetilde m-\mathfrak{b}_{\widetilde L}=m-\mathfrak{b}_L,
\qquad
\Pi_{\widetilde L}\supseteq\Pi_L,
\end{equation*}
where $\mathfrak{a}_L$ and $\mathfrak{b}_L$ are as in formulas \eqref{eqn:tildealpha} and~\eqref{eqn:tildebeta} and $\Pi_L$ is as in Definition~\ref{dfn:ppm}.

\item If $T\in Y^{-m,q}(\R^d)$, define $\widetilde T\in Y^{-\widetilde m,q}(\R^d)$ by
\begin{equation*}\langle \widetilde T,\vec\psi\rangle=\langle T,\Delta^M\vec\psi\rangle\text{ for all }\vec\psi\in Y^{\widetilde m,q'}(\R^d).\end{equation*}
If $L$ is invertible $Y^{m,q}(\R^d)\to Y^{-m,q}(\R^d)$, then
\begin{equation}
\label{eqn:tilde:L:inverse}
\Delta^M (\widetilde L^{-1} \widetilde T) = L^{-1}T.\end{equation}
\end{enumerate}

\end{lemma}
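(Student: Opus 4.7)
The centerpiece of the proof is the observation that $\Delta^M$ is a bounded isomorphism $Y^{\widetilde m, q}(\R^d) \to Y^{m,q}(\R^d)$ for every $1 < q < \infty$. Well-definedness on equivalence classes is immediate since $\Delta^M$ sends polynomials of degree at most $\widetilde m - d/q$ to polynomials of degree at most $m - d/q$. Boundedness follows by expanding $\Delta^M = \sum_{|\gamma|=M} \frac{M!}{\gamma!} \partial^{2\gamma}$: every derivative $\partial^\alpha \Delta^M \vec\psi$ with $\widetilde m - d/q < |\alpha| \leq \widetilde m$ is a linear combination of derivatives $\partial^{\alpha'} \vec\psi$ with $|\alpha'| = |\alpha| + 2M$ satisfying $\widetilde m - d/q < |\alpha'| \leq \widetilde m$, which lie in the defining integrability class of $Y^{\widetilde m,q}$. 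The isomorphism property then follows from the identification $Y^{m,q} \cong \dot W^{m,q}$ from Corollary~\ref{cor:GNS:global} and the classical result that the Riesz potential $(-\Delta)^{-M}$ is an isomorphism between the corresponding homogeneous Sobolev spaces.

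With this tool in hand, Part~(a) is the assertion that the composition $\widetilde L = \Delta^M L \Delta^M$ inherits boundedness and invertibility from~$L$; the final claim in~(a) follows by observing that the conditions of Definition~\ref{dfn:E:} require $2\widetilde m > d$ and $1 - \widetilde m/d < 1/q < \widetilde m/d$, both of which hold once $M$ is taken large enough. For Part~(b), the same composition $\widetilde L^{-1} = (\Delta^M)^{-1} L^{-1} (\Delta^M)^{-1}$ preserves compatibility between $Y^{\widetilde m, q}$ and $Y^{\widetilde m, 2}$ whenever $L^{-1}$ is compatible between $Y^{m,q}$ and $Y^{m,2}$, and the reverse inclusion is obtained symmetrically. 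For Part~(c), expanding $\Delta^M$ inside the bilinear form for $\widetilde L$ produces coefficients $\widetilde A^{j,k}_{\alpha', \beta'}$ supported on multiindices with $\alpha' = \alpha + 2\gamma$, $\beta' = \beta + 2\delta$ for $|\gamma| = |\delta| = M$; since $|\alpha'|$ and $|\alpha|$ have the same parity, the sign $(-1)^{|\alpha|}$ in~\eqref{dfn:L} is preserved, and reading off the minimum orders gives $\mathfrak{a}_{\widetilde L} = \mathfrak{a}_L + 2M$, $\mathfrak{b}_{\widetilde L} = \mathfrak{b}_L + 2M$. The inclusion $\Pi_{\widetilde L} \supseteq \Pi_L$ then follows because the range constraint on $1/p$ in Definition~\ref{dfn:ppm} depends only on the differences $\widetilde m - \mathfrak{a}_{\widetilde L}$ and $\widetilde m - \mathfrak{b}_{\widetilde L}$, and the bilinear bound transfers via boundedness of $\Delta^M$.

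Part~(d) follows by directly unraveling the definitions: letting $\vec u = \widetilde L^{-1} \widetilde T \in Y^{\widetilde m, q}$, the equation $\widetilde L \vec u = \widetilde T$ says that for every $\vec\varphi \in Y^{\widetilde m, q'}$, $\langle L(\Delta^M \vec u), \Delta^M \vec\varphi\rangle = \langle \widetilde T, \vec\varphi\rangle = \langle T, \Delta^M \vec\varphi\rangle$; since $\Delta^M: Y^{\widetilde m, q'} \to Y^{m,q'}$ is surjective, the test function $\Delta^M \vec\varphi$ ranges over all of $Y^{m,q'}$, yielding $L(\Delta^M \vec u) = T$ and therefore $\Delta^M \vec u = L^{-1} T$. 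The main obstacle throughout is the rigorous justification of the isomorphism property of $\Delta^M$ on the $Y$-spaces: boundedness and well-definedness on the quotients are elementary, but surjectivity and injectivity rely on Riesz potential theory and require careful bookkeeping of the different polynomial quotient spaces that $Y^{\widetilde m, q}$ and $Y^{m,q}$ are defined modulo.
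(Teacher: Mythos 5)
Your proof is correct and follows essentially the same route as the paper's: both hinge on $\Delta^M$ being a bounded isomorphism $Y^{\widetilde m,q}(\R^d)\to Y^{m,q}(\R^d)$ (the paper cites Triebel for the equivalent statement about homogeneous Sobolev spaces, where you invoke Riesz potentials — these are the same classical fact), and the remaining parts (coefficient expansion for (c), compatibility preservation for (b), unraveling adjoints for (d)) are handled in the same way.
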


\begin{proof}
We need only consider the case $M>0$.
The polylaplacian is obviously bounded $\Delta^M:Y^{\widetilde m,p}(\R^d)\to Y^{ m,p}(\R^d)$ for any $1<p<\infty$ (in particular, for both $p=q$ and $p=q'$), and so if $L$ is bounded $Y^{\widetilde m,q}(\R^d)\to Y^{-\widetilde m,q}(\R^d)$ then $\widetilde L$ is bounded $Y^{\widetilde m,q}(\R^d)\to Y^{-\widetilde m,q}(\R^d)$.

It is well known (see, for example, \cite[Section 5.2.3]{Tri83}) that the Laplacian is a bounded and invertible operator $\dot W^{s,p}(\RR^d)\to \dot W^{s-2,p}(\RR^d)$ for any $1<p<\infty$ and any $-\infty<s<\infty$. Recall that there is a natural isomorphism between $Y^{m,p}(\R^d)$ and $\dot W^{m,p}(\R^d)$. 

Thus $L:Y^{m,q}(\R^d)\to Y^{-m,q}(\R^d)$ is bounded if and only if $\widetilde L:Y^{\widetilde m,q}(\R^d)\to Y^{-\widetilde m,q}(\R^d)$ is bounded, and  $L:Y^{m,q}(\R^d)\to Y^{-m,q}(\R^d)$ is invertible if and only if $\widetilde L:Y^{\widetilde m,q}(\R^d)\to Y^{-\widetilde m,q}(\R^d)$ is invertible.

If in addition $M>(d/2)\max(1/q,1/q')-m/2$, then $1-\widetilde m/d<1/q<\widetilde m/d$ and so $\widetilde L$ and $q$ satisfy the conditions of Definition~\ref{dfn:E:}.

Furthermore, $\Delta^{-1}$ is compatible, and so $(\widetilde L)^{-1}=\Delta^{-M}L^{-1}\Delta^{-M}$ is compatible if and only if $L^{-1}$ is compatible. Thus, $\Upsilon_L=\Upsilon_{\widetilde L}$.

There are real nonnegative constants $\kappa_\zeta$ such that $\Delta^M =\sum_{|\zeta|=M} \kappa_\zeta \partial^{2\zeta}$.
If $\vec u$ and $\vec\varphi$ lie in suitable function spaces, and $L$ is of the form~\eqref{dfn:L}, we have that
\begin{align*}
\langle \widetilde L\vec{u},\vec \varphi\rangle
&=
\langle L\Delta^M\vec{u},\Delta^M\vec{\varphi}\rangle
\\&=\int
\sum_{j,k=1}^N\sum_{|\alpha|\leq m}\sum_{|\beta|\leq m}
\sum_{|\xi|=M}\sum_{|\zeta|=M}
\partial^{\alpha+2\zeta}\varphi_j\,
\overline{\kappa_\zeta \kappa_\xi A^{j,k}_{\alpha,\beta} \,\partial^{\beta+2\xi} u_k}
.\end{align*}
We may rearrange our order of summation to see that $\widetilde L$ is an operator of the form~\eqref{dfn:L} of order $2\widetilde m$ with coefficients
\begin{equation}
\label{eqn:tilde:L:coeffs}
\widetilde A^{j,k}_{\nu,\varpi}=
\sum_{\substack{|\xi|=M\\2\xi<\varpi}}
\sum_{\substack{|\zeta|=M\\2\zeta<\nu}}
\kappa_\zeta \kappa_\xi A^{j,k}_{(\nu-2\zeta),(\varpi-2\xi)}
.\end{equation}
Furthermore,
\begin{equation*}
\sum_{j,k=1}^N\sum_{|\nu|\leq \widetilde m} \sum_{|\varpi|\leq \widetilde m}
\partial^{\nu}\varphi_j\,
\overline{\widetilde A^{j,k}_{\nu,\varpi} \,\partial^{\varpi }  u_k}
=
\sum_{j,k=1}^N\sum_{|\alpha|\leq m}\sum_{|\beta|\leq m}
\partial^{\alpha}\Delta^M\varphi_j\,
\overline{A^{j,k}_{\alpha,\beta} \,\partial^{\beta} \Delta^M u_k}
.\end{equation*}
If $\vec\varphi\in Y^{\widetilde m,q'}(\R^d)$ and $\vec u\in Y^{\widetilde m,q}(\R^d)$, then $\Delta^M\vec\varphi\in Y^{ m,q'}(\R^d)$ and $\Delta^M\vec u\in Y^{ m,q}(\R^d)$. Thus, if $q\in \Pi_L$ then the right hand side represents a $L^1(\R^d)$ function, and so $q\in \Pi_{\widetilde L}$. 

Finally, recall that $\Delta^M$ is invertible $Y^{\widetilde m,q}(\R^d)\to Y^{m,q}(\R^d)$. Thus if $\vec \Phi\in Y^{m,q}(\R^d)$, and $L:Y^{ m,q}(\R^d)\to Y^{m,q}(\R^d)$ and $\widetilde L:Y^{\widetilde m,q}(\R^d)\to Y^{\widetilde m,q}(\R^d)$ are bounded and invertible, then
\begin{align*}
\langle L(\Delta^M(\widetilde L)^{-1}\widetilde T),\vec\Phi\rangle
&=
\langle L(\Delta^M(\widetilde L)^{-1}\widetilde T),\Delta^M \Delta^{-M}\vec\Phi\rangle
\\&=
\langle \widetilde L((\widetilde L)^{-1}\widetilde T), \Delta^{-M}\vec\Phi\rangle
\\&=
\langle \widetilde T,\Delta^{-M}\vec\Phi\rangle
\\&=
\langle  T,\Delta^M\Delta^{-M}\vec\Phi\rangle
\\&=
\langle  T,\vec\Phi\rangle
\end{align*}
and so $\Delta^M(\widetilde L)^{-1}\widetilde T=(L)^{-1}T$. This completes the proof.
\end{proof}

Thus, natural conditions on~$L$ guarantee that $\widetilde L$ has a fundamental solution.

We now use $\vec E^{\widetilde L}$ to construct $\vec E^L$ for operators of arbitrary order. Theorem~\ref{Fslowth} (with $E^L_{j,k}(Y,X)=(\vec E^L_{X,k})_j(Y)$ and $L$ and $L^*$ interchanged as needed) comprises most of Theorem~\ref{thm:intro:fundamental}; the remaining property cited in Theorem~\ref{thm:intro:fundamental} (the uniqueness of the fundamental solution) will be addressed in Section~\ref{sec:FS:unique}.

\begin{theorem}\label{Fslowth}
Let $L$ be an operator of order $2m$ of the form~\eqref{dfn:L} that satisfies the ellipticity condition~\eqref{gi} such that $2\in\Pi_L$, where $\Pi_L$ is  the interval of Definition~\ref{dfn:ppm}. Let $M$ be the smallest nonnegative integer with $m+2M>d/2$.
Let $\widetilde L$ be given by formula~\eqref{eqn:tilde:L}.

Suppose in addition that the Caccioppoli-Meyers inequality for $\widetilde L$ holds, that is, that there is an interval $ S_{\widetilde L}$ with $2\in S_{\widetilde L} \subseteq [2,4]\cap\Pi_L$ such that if $p\in S_{\widetilde L}$, if $Q\subset\RR^d$ is a cube with sides parallel to the coordinate axes, and if $\vec u$ is a representative of an element of $Y^{\widetilde m,p}(2Q)$, then we have the estimate
\begin{equation}
\label{eqn:Meyers:FS:2}
\sum_{j=0}^{\widetilde m}
|Q|^{j/d}
\|\nabla^j\vec u\|_{L^p(Q)}
\\\leq
C|Q|^{1/p-1/2}\|\vec u\|_{L^2(2Q)}
+C|Q|^{\widetilde m/d}\|\widetilde L\vec u\|_{Y^{-\widetilde m,p}(2Q)}
.\end{equation}
If $L$ satisfies either the bound \eqref{eqn:intro:bound} or the bound~\eqref{eqn:intro:bound:Bochner}, then this condition is true with $S_{\widetilde L} =\Pi_{\widetilde L}\cap  \Upsilon_L\cap[2,4]\supsetneq\{2\}$, with $\Upsilon_L$ given by formula~\eqref{eqn:SL}.

Then there exists some array of functions $\vec E^{L}_{X,j}(Y)$ with the following properties.

Suppose that $\alpha$ and $\beta$ are two multiindices with
$m-d/2\leq |\alpha|\leq m$, $m-d/2\leq |\beta|\leq m$, and $(|\alpha|,|\beta|)\neq (m-d/2,m-d/2)$.
If $\Pi_L$ does not contain a neighborhood of~$2$, then we impose the stronger condition $m-d/2<|\alpha|\leq m$, $m-d/2<|\beta|\leq m$.

Suppose further that $Q$ and $\Gamma$ are two cubes in $\R^d$ with $|Q|=|\Gamma|$ and $\Gamma\subset 8Q\setminus 4Q$. Then the partial derivative $\partial^\alpha_X \partial^\beta_Y \vec E^{L}_{X,j}(Y)$ exists as a locally $L^2(Q\times\Gamma)$ function and satisfies the bounds
\begin{align}\label{Elowgradbd}
\int_{Q}\int_{\Gamma}|\partial^\alpha_X \partial^\beta_Y \vec E^{L}_{X,j}(Y)|^2 &\leq C|Q|^{(4m-2|\alpha|-2|\beta|)/d}
,\\
\label{Elowgradbdmu}
\int_\Gamma \biggl(\int_{Q} |\partial_X^\alpha \partial_Y^\beta \vec E^{L}_{X,j}(Y)|^{p_\beta}\,dY\biggr)^{2/p_\beta}\,dX
&\leq C|Q|^{2m/d-1+2/p-2|\alpha|}
\end{align}
for all $p\in \Upsilon_L\cap S_{\widetilde L}$ with $m-d/p'<|\alpha|$, $m-d/p<|\beta|$.

Furthermore, we have the symmetry property
\begin{equation}\label{sym}
\partial^\alpha_X \partial^\beta_Y (\vec E^{L}_{X,j}(Y))_k=\overline{\partial^\alpha_X \partial^\beta_Y (\vec E^{L^*}_{Y,k}(X))_j}
\end{equation}
for almost every $X$, $Y\in\R^d\times\R^d$.

Finally, let $\Upsilon_L$ be as in Definition~\ref{dfn:compatible}. Suppose that $q\in \Upsilon_L\cap ((-\infty,2)\cup S_{\widetilde L})$ and $m-d/q<|\xi|\leq m$.
Let $T=T_{\vec F,\xi}\in Y^{-m,{q'}}(\R^d)$ be a linear operator defined by
\begin{equation*}\langle T_{\vec F,\xi},\vec\varphi\rangle =  \int_{\RR^d} \partial^\xi \vec\varphi(Y)\cdot\overline{\vec F(Y)}\,dY\end{equation*}
for some compactly supported $\vec F\in L^{(q_\xi)'}(\R^d)$.
Whenever $|\zeta|>m-d/q'$, we have that
\begin{equation}
\label{hihigradpi:lower}
\partial^\zeta ((L^*)^{-1}T_{\vec F,\xi})_j(X)
=
\int_{\RR^d} \overline{\partial_X^\zeta\partial_Y^\xi \vec E^{{L}}_{X,j}(Y)}\cdot{\vec F(Y)}\,dY
\end{equation}
and the integral converges absolutely for almost every $X\notin\supp F$. If in addition $|\zeta|<m$ or $|\xi|<m$ then formula~\eqref{hihigradpi:lower} is valid for almost every $X\in\RR^d$.
\end{theorem}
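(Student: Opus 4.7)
The plan is to reduce this theorem to the already-established results for operators of high order applied to $\widetilde L = \Delta^M L\Delta^M$, which has order $2\widetilde m = 2m+4M > d$, and then to transfer structure back to $L$ through the Laplacian.

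First I would verify that $\widetilde L$ is within the scope of Definition~\ref{dfn:E:} and Theorems~\ref{thm:fundamental}, \ref{thm:renormalize}, and~\ref{thm:deriv:Linv}. By Lemma~\ref{lem:tilde:L}, $\widetilde L$ is again of the form \eqref{dfn:L}, has $\Pi_{\widetilde L}\supseteq\Pi_L$ and $\Upsilon_{\widetilde L}=\Upsilon_L$, and inherits the G\aa{}rding inequality. By Lemma~\ref{lem:L:invertible}, it is bounded and invertible $Y^{\widetilde m,2}(\R^d)\to Y^{-\widetilde m,2}(\R^d)$, and $2\widetilde m>d$ so that Definition~\ref{dfn:E:} applies with $q=2$. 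The hypothesis~\eqref{eqn:Meyers:FS:2} is precisely the Caccioppoli--Meyers estimate \eqref{eqn:Meyers:FS} for $\widetilde L$ demanded by Theorem~\ref{thm:fundamental}; when $L$ satisfies \eqref{eqn:intro:bound} or \eqref{eqn:intro:bound:Bochner} this is supplied by Theorem~\ref{umpm}. Thus all of Sections~\ref{sec:FS:high}--\ref{sec:FS:Fubini} are available for $\widetilde L$, producing a function $\vec E^{\widetilde L}_{X,j}$ enjoying the symmetry \eqref{eqn:E:sym}, the bounds~\eqref{eqn:E:gradbd} and~\eqref{eqn:E:Bochner}, and the representation~\eqref{hihigradpi}.

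Next I would define $\vec E^L_{X,j}(Y):=\Delta_X^M\Delta_Y^M \vec E^{\widetilde L}_{X,j}(Y)$, understood at the level of its partial derivatives: writing $\Delta^M=\sum_{|\zeta|=M}\kappa_\zeta\partial^{2\zeta}$ as in formula~\eqref{eqn:tilde:L:coeffs}, set $\partial_X^\alpha\partial_Y^\beta \vec E^L_{X,j}(Y)=\sum_{|\zeta|=|\eta|=M}\kappa_\zeta\kappa_\eta\,\partial_X^{\alpha+2\zeta}\partial_Y^{\beta+2\eta}\vec E^{\widetilde L}_{X,j}(Y)$ for each admissible $(\alpha,\beta)$. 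The admissibility range $m-d/2\leq|\alpha|,|\beta|\leq m$ with $(|\alpha|,|\beta|)\neq(m-d/2,m-d/2)$ corresponds to $\widetilde m-d/2\leq|\alpha|+2M,|\beta|+2M\leq\widetilde m$ with at least one strict inequality, which is exactly the range handled by Theorem~\ref{thm:fundamental} (together with Theorem~\ref{thm:renormalize} in even dimensions, which removes the dependence of the mixed partials on $Z_0,r,q$). The bounds \eqref{Elowgradbd} and \eqref{Elowgradbdmu} now follow from the triangle inequality applied to the sum, combined with \eqref{eqn:E:gradbd} and \eqref{eqn:E:Bochner} for $\widetilde L$, using the identities $4\widetilde m-2(|\alpha|+2M)-2(|\beta|+2M)=4m-2|\alpha|-2|\beta|$ and $2\widetilde m-d+2d/p-2(|\alpha|+2M)=2m-d+2d/p-2|\alpha|$. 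Because $\Delta$ is self-adjoint, $\widetilde L^*=\Delta^M L^*\Delta^M$, so the symmetry \eqref{sym} is immediate from \eqref{eqn:E:sym} applied to $\widetilde L$.

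For the representation \eqref{hihigradpi:lower}, given $T_{\vec F,\xi}\in Y^{-m,q'}(\R^d)$ I would introduce the associated higher-order functional $\widetilde T\in Y^{-\widetilde m,q'}(\R^d)$ of Lemma~\ref{lem:tilde:L}(d), defined by $\langle\widetilde T,\vec\psi\rangle=\langle T_{\vec F,\xi},\Delta^M\vec\psi\rangle$. Moving $\Delta^M$ through $\partial^\xi$ gives $\widetilde T=\sum_{|\eta|=M}\kappa_\eta T_{\vec F,\xi+2\eta}$, a sum of functionals of the form handled in Theorem~\ref{thm:deriv:Linv} with derivative order $|\xi|+2M>\widetilde m-d/q$. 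By \eqref{eqn:tilde:L:inverse}, $((L^*)^{-1}T_{\vec F,\xi})_j=\Delta^M((\widetilde L^*)^{-1}\widetilde T)_j$, so $\partial^\zeta$ of the left hand side equals $\sum_{|\eta'|=M}\kappa_{\eta'}\partial^{\zeta+2\eta'}((\widetilde L^*)^{-1}\widetilde T)_j$, where the exponent $|\zeta|+2M>\widetilde m-d/q'$ meets Theorem~\ref{thm:deriv:Linv}'s hypothesis. Invoking \eqref{hihigradpi} for each summand and reassembling the Laplacians under the integral yields exactly \eqref{hihigradpi:lower}. Absolute convergence is inherited: the off-support case gives convergence for $X\notin\supp F$, while the condition $|\zeta|+2M<\widetilde m$ or $|\xi|+2M<\widetilde m$ (that is, $|\zeta|<m$ or $|\xi|<m$) provides convergence on all of $\R^d$.

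The main obstacle is the normalization bookkeeping around the BMO-type threshold. The exclusion $(|\alpha|,|\beta|)=(m-d/2,m-d/2)$ arises because at this corner, both shifted multiindices $|\alpha|+2M$ and $|\beta|+2M$ equal $\widetilde m-d/2$ simultaneously, which is precisely the case that Theorem~\ref{thm:renormalize} (see condition~\eqref{cond:zeta}) does not handle; the strengthened condition needed when $\Pi_L$ fails to contain a neighborhood of~$2$ reflects the fact that only $q=2$ is available, so even the renormalization of Section~\ref{sec:FS:even} is unavailable and one must stay strictly inside the ``non-BMO'' range. Checking that the shift $|\alpha|\mapsto|\alpha|+2M$ translates the conditions of Theorem~\ref{thm:renormalize} correctly, and verifying that the definition of $\partial_X^\alpha\partial_Y^\beta \vec E^L_{X,j}$ is genuinely independent of the normalization parameters $Z_0,r,q$ chosen at the $\widetilde L$ level (via Lemma~\ref{lem:indep}), are where the care must be concentrated.
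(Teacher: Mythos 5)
Your proposal is correct and follows essentially the same route as the paper's proof: reduce to $\widetilde L=\Delta^M L\Delta^M$, take $\vec E^L=\Delta_X^M\Delta_Y^M\vec E^{\widetilde L}$ (defined at the level of mixed partials via $\Delta^M=\sum_{|\zeta|=M}\kappa_\zeta\partial^{2\zeta}$), transfer the bounds via the exponent arithmetic $\widetilde m=m+2M$, derive the symmetry from $\widetilde{(L^*)}=(\widetilde L)^*$, and obtain formula~\eqref{hihigradpi:lower} by passing through $\widetilde T$ and Theorem~\ref{thm:deriv:Linv}. Your concluding paragraph about the renormalization subtleties at the threshold $(|\alpha|,|\beta|)=(m-d/2,m-d/2)$ and the strengthened restriction when $\Pi_L$ does not contain a neighborhood of $2$ matches the case distinction the paper makes between Definition~\ref{dfn:E:} at $q=2$ and Theorem~\ref{thm:renormalize}.
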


\begin{proof}
If $L$ satisfies either the bound \eqref{eqn:intro:bound} or the bound~\eqref{eqn:intro:bound:Bochner}, then by Lemma~\ref{lem:tilde:L} and formula~\eqref{eqn:tilde:L:coeffs}, so does~$\widetilde L$. By Lemma~\ref{lem:tilde:L}, $\Upsilon_{\widetilde L} =\Upsilon_L$. By Lemma~\ref{NPLp}, $\Upsilon_L$ contains a neighborhood of~$2$ and so $\Upsilon_L\cap [2,\infty)$ contains values greater than~$2$.
The inequality~\eqref{eqn:Meyers:FS:2} is valid for all $p\in\Pi_{\widetilde L}\cap  \Upsilon_{\widetilde L}\cap[2,\infty)$ by Theorem~\ref{umpm}.

By assumption and Lemma~\ref{lem:L:invertible}, $2\in\Upsilon\cap\Pi_L$ and $1-\widetilde m/d<1/2<\widetilde m/d$. Also observe that $\widetilde L$ satisfies the conditions of Definition~\ref{dfn:E:} and Theorem~\ref{thm:fundamental} for all $q\in \Upsilon_L\cap\Pi_L$ with $1-\widetilde m/d<1/q<\widetilde m/d$ and all $p\in S_{\widetilde L}\cap\Upsilon_{\widetilde L}\cap \Pi_{\widetilde L}\cap(1,2q]$ with $1-\widetilde m/d<1/p<\widetilde m/d$ (in particular, for $q=p=2$).
Consequently $\widetilde L$ satisfies the conditions of Lemma~\ref{lem:indep} for all $q_1$, $q_2\in\Upsilon_L\cap\Pi_L$ with $1/q_1$, $1/q_2\in (1-m/d,m/d)$.

If $\Pi_{ L}$ contains an open neighborhood of~$2$, then by Lemma~\ref{NPLp}, $\Upsilon_L$ also contains an open neighborhood of~$2$. Thus the conditions of Theorem~\ref{thm:renormalize} are valid whenever $d$ is even.

If $d$ is odd, or if $\Pi_L$ does not contain a neighborhood of~$2$, let $\vec E^{\widetilde L}_{X,j}=\vec E^L_{X,j,0,1,2}$ be as in Definition~\ref{dfn:E:}. 

If $d$ is even, and if $\Pi_L$ contains a neighborhood of~$2$, we let $\vec E^{\widetilde L}_{X,j}$ be as in Theorem~\ref{thm:renormalize}. 

In either case, by Theorem~\ref{thm:fundamental}, $\partial_X^\zeta\partial_Y^\xi\vec E^{\widetilde L}_{X,j}(Y)$ exists for almost every $(X,Y)\in\RR^d\times\RR^d$ and every $\xi$, $\zeta$ with $|\xi|$, $|\zeta|\in [0,\widetilde m]$. 
We define
\begin{equation*}
\vec E^{L}_{X,j}(Y)=\sum_{|\varpi|=M}\sum_{|\nu|=M} \kappa_\varpi \kappa_\nu\partial^{2\varpi}_X \partial^{2\nu}_Y \vec E^{\widetilde L}_{X,j}(Y)
.\end{equation*}

The bounds~\eqref{Elowgradbd} and~\eqref{Elowgradbdmu} follow from Theorem~\ref{thm:fundamental}, Lemma~\ref{lem:indep} and Lemma~\ref{lem:tilde:L}. The symmetry property~\eqref{sym} follows from the symmetry property~\eqref{eqn:E:sym} for $\vec E^{\widetilde L}_{X,j}$.

We are left with formula~\eqref{hihigradpi:lower}. This property follows from Theorem~\ref{thm:deriv:Linv} if $2m>d$ and so $M=0$. If $2m\leq d$, let $m-d/q<|\xi|\leq m$ and $\vec F$ satisfy the conditions given in the theorem statement. Let $T=T_{\vec F,\xi}$ and let $\widetilde T$ be as in formula~\eqref{eqn:tilde:L:inverse}. Observe that
\begin{equation*}\langle \widetilde T,\vec\psi\rangle
=
\langle T,\Delta^M\psi\rangle
= \sum_{|\nu|=M}\kappa_\nu
\int_{\RR^d} \partial^{\xi+2\nu} \vec\psi(Y)\cdot\overline{\vec F(Y)}\,dY\end{equation*}
and so $\widetilde T$ is a (linear combination of) operators as in Theorem~\ref{thm:deriv:Linv}.
By formula~\eqref{hihigradpi} and linearity, we have that if $|\widetilde\zeta|>\widetilde m-d/q$, then
\begin{equation*}
\partial^{\widetilde\zeta} ((\widetilde L^*)^{-1}\widetilde T)_j(X)
=
\sum_{|\nu|=M}\kappa_\nu \int_{\RR^d} \overline{\partial_X^{\widetilde\zeta} \partial_Y^{\xi+2\nu} \vec E^{{\widetilde L}}_{X,j}(Y)}\cdot{\vec F(Y)}\,dY
\end{equation*}
for almost every $X$ or almost every $X\notin\supp \vec F$.
In particular, if $m-d/q'<|\zeta|\leq m$ and $|\varpi|=M$, then $\widetilde m-d/q<|\zeta+2\varpi|\leq \widetilde m$, and so
\begin{align*}
\partial^{\zeta} (\Delta^M(\widetilde L^*)^{-1}\widetilde T)_j(X)
&=
\sum_{|\varpi|=M}
\sum_{|\nu|=M}\kappa_\varpi\kappa_\nu \int_{\RR^d} \overline{\partial_X^{2\varpi+\zeta} \partial_Y^{\xi+2\nu} \vec E^{{\widetilde L}}_{X,j}(Y)}\cdot{\vec F(Y)}\,dY
\\&=
\int_{\RR^d} \overline{\partial_X^{\zeta} \partial_Y^{\xi} \vec E^{{L}}_{X,j}(Y)}\cdot{\vec F(Y)}\,dY
\end{align*}
Observe that $\widetilde{(L^*)}=(\widetilde L)^*$. By formula~\eqref{eqn:tilde:L:inverse} with $L$ replaced by $L^*$, formula~\eqref{hihigradpi:lower} is valid.
\end{proof}

\begin{remark}
Theorem~\ref{Fslowth} involves conditions on~$\widetilde L=\Delta^ML\Delta^M$ for the smallest $M$ such that $\vec E^{\widetilde L}_{X,j,Z_0,r,2}$ exists. The fundamental solution also exists for larger values of~$M$. However, there is no loss of generality in Theorem~\ref{Fslowth} in taking the smallest available $M$; that is, we claim that if the Caccioppoli-Meyers inequality \eqref{eqn:Meyers:FS:2} is valid for $\widetilde L=\Delta^ML\Delta^M$, and if $L:Y^{m,p}(\Omega)\to Y^{-m,p}(\Omega)$ is bounded for all open sets~$\Omega$, then it is valid for $\widetilde L=\Delta^NL\Delta^N$ for any integer $N$ with $0\leq N\leq M$.

We now prove the claim.
Suppose that $p\geq 2$ and the Caccioppoli or Meyers inequality
\begin{multline*}
\sum_{j=0}^{m+2M}
|Q|^{j/d}
\biggl(\int_Q |\nabla^j\vec w|^p\biggr)^{1/p}
\\\leq
C|Q|^{1/p-1/2}
\biggl(\int_{2Q} |\vec w|^2\biggr)^{1/2}
+C|Q|^{(m+2M)/d}\|\Delta^M L\Delta^M\vec w\|_{Y^{-m-2M,p}(2Q)}
\end{multline*}
is valid for all $\vec w\in Y^{m+2M,p}(2Q)$ for some cube~$Q$. Let $0\leq N<M$.

It is well known (see \cite[Chapter~VI, Section~3]{Ste70}) that there is a bounded, linear extension operator $E$ such that for all $k\in\NN_0$ and all $1\leq p<\infty$ we have that $\|E\vec u\|_{W^{k,p}(\R^d)}\leq C_{k,p}\|\vec u\|_{W^{k,p}(2Q)}$. Recall that $\Delta^{M-N}$ is an isomorphism from $W^{k+2M-2N,p}(\R^d)$ to $W^{k,p}(\R^d)$.

Choose some $\vec u\in W^{m+2N,p}(2Q)$. Let $\vec v=\Delta^{-(M-N)}(E\vec u)$. Then $\vec v\in W^{m+2M,p}(\R^d)$ and also satisfies 
\[\|\nabla^{2M-2N}v\|_{L^2(2Q)}\leq \|\nabla^{2M-2N}v\|_{L^2(\R^d)}\leq C\|E\vec u\|_{L^2(\R^d)}\leq C^2 \|\vec u\|_{L^2(2Q)}.\] 
Let $\vec w=\vec v+\vec P$, where $\vec P$ is a polynomial of degree at most $2M-2N-1$ such that  $\int_{2Q}\partial^\gamma \vec w=0$ for all $|\gamma|\leq 2M-2N-1$. We have that $\Delta^{M-N}\vec w=\Delta^{M-N}\vec v=E\vec u=u$ in~$2Q$.
We compute
\begin{align*}
\sum_{j=0}^{m+2N}
|Q|^{j/d}
\biggl(\int_Q |\nabla^j\vec u|^p\biggr)^{1/p}
&=
\sum_{j=0}^{m+2N}
|Q|^{j/d}
\biggl(\int_Q |\nabla^j\Delta^{M-N}\vec w|^p \biggr)^{1/p }
\\&\leq
\sum_{k=2M-2N}^{m+2M}
|Q|^{(k-2M+2N)/d}
\biggl(\int_Q |\nabla^k \vec w|^p \biggr)^{1/p }
.\end{align*}
By the Meyers inequality for $\Delta^M L\Delta^M$,
\begin{align*}
\sum_{j=0}^{m+2N}
|Q|^{j/d}
\biggl(\int_Q |\nabla^j\vec u|^p \biggr)^{1/p }
&\leq
C|Q|^{1/p -1/2-(2M-2N)/d}
\biggl(\int_{2Q} |\vec w|^2\biggr)^{1/2}
\\&\qquad
+C|Q|^{(m+2N)/d}\|\Delta^M L\Delta^M\vec w\|_{Y^{-m-2M,p }(2Q)}
.\end{align*}
By the Poincar\'e inequality and because $\Delta^M\vec w=\Delta^N\vec u$,
\begin{align*}
\sum_{j=0}^{m+2N}
|Q|^{j/d}
\biggl(\int_Q |\nabla^j\vec u|^p \biggr)^{1/p }
&\leq
C|Q|^{1/p -1/2}
\biggl(\int_{2Q} |\nabla^{2M-2N}\vec w|^2\biggr)^{1/2}
\\&\qquad
+C|Q|^{(m+2N)/d}\|\Delta^{N} L\Delta^N\vec u\|_{Y^{-m-2N,p }(2Q)}
.\end{align*}
Finally, using the estimate $\|\nabla^{2M-2N}\vec w\|_{L^2(2Q)}
=\|\nabla^{2M-2N}\vec v\|_{L^2(2Q)}
\leq C \|\vec u\|_{L^2 (2Q)}$, we see that the Caccioppoli-Meyers estimate for $\Delta^NL\Delta^N$ is also valid.
\end{remark}

\subsection{Uniqueness}\label{sec:FS:unique}

We have constructed a fundamental solution; we now show that it is unique.

\begin{theorem}\label{thm:unique} Let $L:Y^{m,q}(\R^d)\to Y^{-m,q}(\R^d)$ be bounded and invertible. Suppose that $\vec \Psi_{X,j}$ and $\vec \Gamma_{X,j}$ are such that the bound~\eqref{Elowgradbd} and formula~\eqref{hihigradpi:lower} are valid with $\vec E^L$ replaced by either $\vec \Psi$ or $\vec \Gamma$.

Then $\partial_X^\alpha \partial_Y^\beta \vec \Psi_{X,j}(Y)=\partial_X^\alpha \partial_Y^\beta \vec \Gamma_{X,j}(Y)$ for almost every $(X,Y)\in\RR^d\times\R^d$ and all $\alpha$, $\beta$ as in Theorem~\ref{Fslowth}.
\end{theorem}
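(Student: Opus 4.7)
The uniqueness proof will rest on the integral representation~\eqref{hihigradpi:lower}. Setting $\vec D_{X,j}=\vec\Psi_{X,j}-\vec\Gamma_{X,j}$, the fact that both $\vec\Psi$ and $\vec\Gamma$ reproduce $\partial^\zeta((L^*)^{-1}T_{\vec F,\xi})_j(X)$ through that formula implies the orthogonality relation
\[
\int_{\R^d}\overline{\partial_X^\zeta\partial_Y^\xi \vec D_{X,j}(Y)}\cdot \vec F(Y)\,dY = 0,
\]
valid for every compactly supported $\vec F\in L^{(q_\xi)'}(\R^d)$, for almost every $X\in\R^d$ (or almost every $X\notin\supp\vec F$ in the case $|\zeta|=|\xi|=m$), for every admissible choice of $q$. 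The mixed derivative on the left-hand side is a locally $L^2$ function by the assumption~\eqref{Elowgradbd} applied to each of $\vec\Psi$ and $\vec\Gamma$, so everything here makes sense.

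First, for a given pair of multiindices $(\zeta,\xi)$ as in the theorem statement, I would select $q$ so that formula~\eqref{hihigradpi:lower} applies with these $\zeta$ and $\xi$ to both $\vec\Psi$ and $\vec\Gamma$: when $|\zeta|>m-d/2$ and $|\xi|>m-d/2$, take $q=2$; when $|\xi|=m-d/2$ and $|\zeta|>m-d/2$, take $q\in\Upsilon_L\cap(1,2)$ close enough to $2$; when $|\zeta|=m-d/2$ and $|\xi|>m-d/2$, take $q\in\Upsilon_L\cap S_{\widetilde L}\cap(2,\infty)$ close to~$2$ (such $q$ exists whenever $S_{\widetilde L}\supsetneq\{2\}$, as is the case under the standard hypotheses of Theorem~\ref{Fslowth}). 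In each case the conditions $|\xi|>m-d/q$ and $|\zeta|>m-d/q'$ required by~\eqref{hihigradpi:lower} are satisfied; the integer-valued nature of $|\zeta|$ and $|\xi|$ provides the necessary slack to perturb $q$ off $2$ when needed.

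Next, I would deduce pointwise (a.e.) vanishing of $\partial_X^\zeta\partial_Y^\xi\vec D_{X,j}$ from the orthogonality by a standard density argument. Cover $\R^d\times\R^d$ by countably many products $B_X\times B_Y$ of open balls with rational centers and radii, requiring $\overline{B_X}\cap\overline{B_Y}=\emptyset$ in the endpoint case $|\zeta|=|\xi|=m$ (this leaves out only the diagonal, which is a null set). For each such box, fix a countable dense subset $\{\vec F_n\}\subset C_c^\infty(B_Y)^N$ of $L^{(q_\xi)'}(B_Y)$. For each $n$, the orthogonality relation holds for $X$ in a full-measure subset $E_n\subseteq B_X$; on the full-measure intersection $E=\bigcap_n E_n$, the function $Y\mapsto \partial_X^\zeta\partial_Y^\xi\vec D_{X,j}(Y)$ (locally $L^2$, hence locally $L^1$, on $B_Y$ by~\eqref{Elowgradbd}) is orthogonal to a dense family of smooth compactly supported test functions on $B_Y$, so by the fundamental lemma of the calculus of variations it vanishes a.e.\ on $B_Y$. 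Fubini's theorem then yields $\partial_X^\zeta\partial_Y^\xi\vec D_{X,j}=0$ a.e.\ on $B_X\times B_Y$, and summing over the countable cover gives the conclusion.

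The main obstacle is the endpoint case $|\zeta|=|\xi|=m$, where~\eqref{hihigradpi:lower} only supplies orthogonality away from $\supp\vec F$; this is precisely why the covering must be built from pairs of balls with disjoint closures. That this still exhausts $\R^d\times\R^d$ up to the null diagonal is what makes the argument succeed in the endpoint case. A secondary subtlety is confirming the existence of an appropriate $q>2$ when $|\zeta|=m-d/2$, which is exactly the content of the hypothesis $S_{\widetilde L}\supsetneq\{2\}$ built into the Caccioppoli-Meyers assumption of Theorem~\ref{Fslowth}.
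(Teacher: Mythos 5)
Your proof is correct and follows the same approach as the paper's: deduce from~\eqref{hihigradpi:lower} that the difference $\vec\Psi-\vec\Gamma$ has mixed derivatives orthogonal to test functions, then conclude pointwise vanishing. The paper dispatches the final step in one line (``The result follows from the Lebesgue differentiation theorem''), whereas you spell out the details that line compresses: choosing $q$ near $2$ (on the correct side) to cover the endpoint cases $|\alpha|=m-d/2$ or $|\beta|=m-d/2$, working with a countable dense family of test functions so the null set of bad $X$ values does not depend on $\vec F$, and covering $\R^d\times\R^d$ minus the diagonal by products of balls with disjoint closures to handle $|\alpha|=|\beta|=m$. These are genuine subtleties the paper glosses over, and your treatment of them is sound.
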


\begin{proof}
By the bound~\eqref{Elowgradbd}, we have that $\partial_X^\alpha \partial_Y^\beta \vec \Psi_{X,j}$ and $\partial_X^\alpha \partial_Y^\beta \vec \Gamma_{X,j}$ are locally integrable away from $Y=X$ for almost every $X\in\R^d$. By formula~\eqref{hihigradpi:lower},
\[\int_{\RR^d} \overline{\partial_X^\alpha\partial_Y^\beta \vec \Psi_{X,j}(Y)}\cdot{\vec F(Y)}\,dY
=
\int_{\RR^d} \overline{\partial_X^\alpha\partial_Y^\beta \vec \Gamma_{X,j}(Y)}\cdot{\vec F(Y)}\,dY
\]
for all sufficiently nice test functions~$\vec F$. The result follows from the Lebesgue differentiation theorem.
\end{proof}

\bibliographystyle{amsalpha}
\bibliography{bibli}
\end{document}